\newtheorem{theorem}{Theorem}[section]
\newtheorem{corollary}[theorem]{Corollary}
\newtheorem{definition}[theorem]{Definition}
\newtheorem{lemma}[theorem]{Lemma}
\newtheorem{proposition}[theorem]{Proposition}
\theoremstyle{definition}
\newtheorem{remark}[theorem]{Remark}
\newcommand{\R}{\mathbb{R}}
\newcommand{\C}{\mathbb{C}}
\newcommand{\Z}{\mathbb{Z}}
\newcommand{\N}{\mathbb{N}}
\newcommand{\zb}{\mathbf{z}}
\newcommand{\tz}{\tilde{z}}
\newcommand{\tzb}{\tilde{\zb}}
\renewcommand{\v}{\mathbf{v}}
\newcommand{\w}{\mathbf{w}}
\newcommand{\tw}{\tilde{\w}}
\newcommand{\p}{\partial}
\renewcommand{\d}{\partial}
\newcommand{\U}{\mathscr U}
\newcommand{\V}{\mathscr V}
\newcommand{\W}{\mathscr W}
\newcommand{\B}{\mathscr B}
\newcommand{\G}{\mathscr G}
\newcommand{\Norm}[1]{\left\Vert#1\right\Vert}
\newcommand{\bigNorm}[1]{\big\Vert#1\big\Vert}
\newcommand{\norm}[1]{\left| #1 \right|}
\newcommand{\bignorm}[1]{\big| #1 \big|}
\newcommand{\abs}[1]{\left|#1\right|}
\newcommand{\bigabs}[1]{\big|#1\big|}
\newcommand{\Bigabs}[1]{\Big|#1\Big|}
\newcommand{\set}[1]{\left\{#1\right\}}
\newcommand{\para}[1]{\left(#1\right)}
\newcommand{\bigpara}[1]{\big(#1\big)}
\newcommand{\cro}[1]{\left[#1\right]}
\newcommand{\y}{\varrho}
\newcommand{\hy}{\hat{\varrho}}
\newcommand{\ty}{\tilde{\varrho}}
\newcommand{\uu}{\underline{u}}
\newcommand{\vv}{\underline{v}}
\newcommand{\hchi}{\hat{\chi}}
\newcommand{\tchi}{\tilde{\chi}}
\newcommand{\htchi}{\hat{\tilde{\chi}}}
\newcommand{\ha}{\hat{a}}
\let \Re \relax
\DeclareMathOperator{\Re}{Re}
\let \Im \relax
\DeclareMathOperator{\Im}{Im}
\newcommand{\id}{\textrm{Id}}
\newcommand{\ovl}[1]{\overline{#1}}
\newcommand{\udl}[1]{\underline{#1}}
\newcommand{\imp}{\Rightarrow}
\newcommand{\Con}{\ensuremath{\mathscr C}}
\newcommand{\Cinf}{\ensuremath{\Con^\infty}}
\newcommand{\Cinfc}{\ensuremath{\Con^\infty}_{c}}
\renewcommand{\S}{\ensuremath{\mathscr S}}
\newcommand{\Rp}{\R^n_+}
\newcommand{\Rpb}{\ovl{\R}^n_+}
\newcommand{\Ssc}{S_\tau}
\newcommand{\Psisc}{\Psi_\tau}
\newcommand{\T}{{\mbox{\tiny ${\mathsf T}$}}}
\newcommand{\Ssct}{S_{\T,\tau}}
\newcommand{\Psisct}{\Psi_{\T,\tau}}
\newcommand{\Ssctn}{S_{\T,\ttau}}
\newcommand{\Sscl}{S_{\tau,\mathrm{cl}}}
\newcommand{\Psiscl}{\Psi_{\tau,\mathrm{cl}}}
\newcommand{\Scl}{S_{\mathrm{cl}}}
\newcommand{\Psicl}{\Psi_{\mathrm{cl}}}
\newcommand{\Ssctcl}{S_{{\mathsf T},\tau,\mathrm{cl}}}
\newcommand{\Psisctcl}{\Psi_{{\mathsf T},\tau,\mathrm{cl}}}
\newcommand{\lambdat}{\lambda_\T}
\newcommand{\Lambdat}{\Lambda_\T}
\newcommand{\mulr}{\mu_\lr}
\newcommand{\mut}{\mu_\T}
\newcommand{\mutlr}{\mu_{\T,\lr}}
\newcommand{\mutl}{\mu_{\T,\l}}
\newcommand{\mutr}{\mu_{\T,r}}
\newcommand{\glr}{g_\lr}
\newcommand{\gt}{g_\T}
\newcommand{\gtlr}{g_{\T,\lr}}
\newcommand{\sphbundle}{{\mathbb S}^\ast_{\T,\tau}}
\newcommand{\lr}{{\mbox{\scriptsize $^r\!\!\slash_{\!\ell}$}}}
\renewcommand{\l}{\ell}
\newcommand{\Pl}{P_\ell}
\renewcommand{\Pr}{P_r}
\newcommand{\Plr}{P_\lr}
\newcommand{\Tl}{T_\ell}
\newcommand{\Tr}{T_r}
\newcommand{\Tlr}{T_\lr}
\newcommand{\Plv}{P_{\ell,\varphi}}
\newcommand{\Prv}{P_{r,\varphi}}
\newcommand{\Plrv}{P_{\lr,\varphi}}
\newcommand{\Tlv}{T_{\ell,\varphi}}
\newcommand{\Trv}{T_{r,\varphi}}
\newcommand{\Tlrv}{T_{\lr,\varphi}}
\newcommand{\Elv}{E_{\ell,\varphi}}
\newcommand{\Erv}{E_{r,\varphi}}
\newcommand{\Elrv}{E_{\lr,\varphi}}
\newcommand{\pl}{p_\ell}
\newcommand{\pr}{p_r}
\newcommand{\plr}{p_\lr}
\newcommand{\tl}{t_\ell}
\newcommand{\tlr}{t_\lr}
\newcommand{\vl}{\varphi_\ell}
\newcommand{\vr}{\varphi_r}
\newcommand{\vlr}{\varphi_\lr}
\newcommand{\ul}{u_\ell}
\newcommand{\ur}{u_r}
\newcommand{\fl}{f_\ell}
\newcommand{\fr}{f_r}
\newcommand{\plv}{p_{\ell,\varphi}}
\newcommand{\prv}{p_{r,\varphi}}
\newcommand{\plrv}{p_{\lr,\varphi}}
\newcommand{\klv}{\kappa_{\ell,\varphi}}
\newcommand{\krv}{\kappa_{r,\varphi}}
\newcommand{\klrv}{\kappa_{\lr,\varphi}}
\newcommand{\tlv}{t_{\ell,\varphi}}
\newcommand{\trv}{t_{r,\varphi}}
\newcommand{\tlrv}{t_{\lr,\varphi}}
\newcommand{\tlp}{t_{\ell,\psi}}
\newcommand{\trp}{t_{r,\psi}}
\newcommand{\elv}{e_{\ell,\varphi}}
\newcommand{\erv}{e_{r,\varphi}}
\newcommand{\elrv}{e_{\lr,\varphi}}
\newcommand{\elp}{e_{\ell,\psi}}
\newcommand{\erp}{e_{r,\psi}}
\newcommand{\tplv}{{\tilde p}_{\ell,\varphi}}
\newcommand{\tprv}{{\tilde p}_{r,\varphi}}
\newcommand{\tplrv}{{\tilde p}_{\lr,\varphi}}
\DeclareMathOperator{\supp}{supp}
\DeclareMathOperator{\Op}{\ensuremath{Op}}
\newcommand{\br}{_{|x_n=0^+}}
\newcommand{\de}{\delta}
\newcommand{\eps}{\varepsilon}
\newcommand{\hf}{\frac{1}{2}}
\newcommand{\transp}{\ensuremath{\phantom{}^{t}}}
\DeclareMathOperator{\rank}{rank}
\newcommand{\bld}[1]{\mbox{\boldmath $#1$}}
\newcommand{\wrt}{w.r.t.\@\xspace}
\newcommand{\rhs}{r.h.s.\@\xspace}
\newcommand{\lhs}{l.h.s.\@\xspace}
\newcommand{\ie}{i.e.\@\xspace}
\newcommand{\viz}{viz.\@\xspace}
\newcommand{\eg}{e.g.\@\xspace}
\newcommand{\resp}{resp.\@\xspace}
\newcommand{\suff}{sufficiently\xspace}
\newcommand{\nhd}{neighborhood\xspace}
\newcommand{\nhds}{neighborhoods\xspace}
\newcommand{\st}{such that\xspace}
\newcommand{\spc}{strongly pseudo-convex\xspace}
\newcommand{\spcty}{strong pseudo-convexity\xspace}
\newcommand{\ttau}{\tilde{\tau}}
\newcommand{\htau}{\hat{\tau}}
\newcommand{\ttaulr}{\tilde{\tau}_\lr}
\newcommand{\ttaul}{\tilde{\tau}_\l}
\newcommand{\ttaur}{\tilde{\tau}_r}
\newcommand{\varphilr}{\varphi_\lr}
\newcommand{\varphil}{\varphi_\l}
\newcommand{\varphir}{\varphi_r}
\newcommand{\psilr}{\psi_\lr}
\newcommand{\psil}{\psi_\l}
\newcommand{\psir}{\psi_r}
\DeclareMathSymbol{\intop}{\mathop}{symbols}{115}
\def\int{\intop}
\let \iint \relax
\DeclareMathOperator*{\iint}{\intop\!\!\intop}
\DeclareMathSymbol{\sumop}{\mathop}{largesymbols}{80}
\def\ssum{\sumop}
\let \sum \relax
\DeclareMathOperator*{\sum}{\textstyle{\ssum}}
\DeclareMathSymbol{\prodop}{\mathop}{largesymbols}{81}
\def\sprod{\prodop}
\let \prod \relax
\DeclareMathOperator*{\prod}{\textstyle{\sprod}}
\newcommand{\csp}{\gamma}
\newcommand{\mi}{\alpha}
\newcommand{\fmi}{\gamma}
\newcommand{\smi}{\beta}
\newcommand{\torder}{\beta}
\DeclareMathOperator{\trace}{tr}
\begin{document}

\title[Elliptic transmission problems]
{Carleman estimates for elliptic operators with complex
  coefficients\\Part II: transmission problems}
\author[M.~Bellassoued and J.~Le~Rousseau]
{Mourad~Bellassoued and J\'er\^ome~Le~Rousseau}

\address{M.~Bellassoued. University of Tunis El Manar, National Engineering School of Tunis, ENIT-LAMSIN, B.P. 37, 1002 Tunis, Tunisia.}
\email{mourad.bellassoued@fsb.rnu.tn}

\address{ J.~Le~Rousseau. Laboratoire de math\'ematiques - analyse, probabilit\'es, mod\'elisation - Orl\'eans, CNRS UMR 7349, F\'ed\'eration
Denis-Poisson, FR CNRS 2964, Universit\'e d'Orl\'eans, B.P. 6759, 45067 Orl\'eans cedex 2, France.
Institut universitaire de France.}
\email{jlr@univ-orleans.fr}

\date{\today}

\thanks{The second author acknowledges support from Agence Nationale de
  la Recherche (grant ANR-13-JS01-0006 - iproblems - Probl\`emes Inverses).}

\begin{abstract}
 \noindent
 We consider elliptic transmission problems with
 complex coefficients across an interface. Under proper transmission
 conditions, that extend known conditions for well-posedness, and sub-ellipticity we derive microlocal and
 local Carleman estimates near the interface. Carleman estimates are
 weighted {\em a priori} estimates of the solutions of the 
 elliptic transmission problem. The weight is of exponential form,
 $\exp(\tau \varphi)$ where $\tau$ can be taken as large as
 desired. Such estimates have numerous applications in unique
 continuation, inverse problems, and control theory. The proof relies
 on microlocal factorizations of the symbols of the conjugated
 operators in connection with the sign of the imaginary part of their
 roots. We further consider weight functions where $\varphi= \exp(\csp
 \psi)$, with  $\csp$ acting as a second large paremeter, and we derive estimates where the
 dependency upon the two parameters, $\tau$ and $\csp$, is made
 explicit. Applications to unique continuation properties are
 given. \\

 \noindent {\sc R\'esum\'e:} Nous consid\'erons des probl\`emes de
 transmission elliptiques \`a coefficients complexes. En \'etendant des conditions qui rendent ce probl\`eme
 bien pos\'e, et sous condition de  sous-ellipticit\'e nous obtenons des in\'egalit\'es
 de Carleman microlocales et locales \`a l'interface qui sont des
 in\'egalit\'es a priori \`a poids pour les solutions du
 probl\`eme. Les fonctions poids
 sont exponentielles, $\exp(\tau \varphi)$, o\`u le param\`etre
 $\tau$ peut \^etre choisi arbitrairement grand. De telles estimations
 ont de nombreuses applications comme pour les questions de
 prolongement unique, les probl\`emes inverses et le contr\^ole.
La d\'emonstration repose sur des factorisations
  microlocales du symbole des op\'erateurs conjugu\'es li\'ees aux signes
  des parties imaginaires de leurs racines. Nous consid\'erons le cas $\varphi= \exp(\csp \psi)$, o\`u
  $\csp$ peut-\^etre arbitrairement grand et nous obtenons
  des in\'egalit\'es de Carleman pour lesquelles la d\'ependence en
  les deux grands param\`etres,  $\tau$ et $\csp$, est rendue
  explicite. Des applications aux questions de prolongement unique
  sont propos\'ees.\\

 \noindent
  {\sc Keywords:} {Carleman estimate; elliptic operators; transmission 
    problem and condition; unique continuation}\\
  
  \noindent
  {\sc  AMS 2010 subject classification: 35B45; 35J30; 35J40.}
\end{abstract}

\maketitle

\tableofcontents

\section{Introduction and main result}
\label{sec: introduction}
Let $\Omega$ be an open subset of $\R^n$ with a smooth boundary and  let
$\Omega_1$ be an open  subset  of $\Omega$ such that $\Omega_1 \Subset
\Omega$ and such that  $S = \d \Omega_1$ is smooth.
We set $\Omega_2 = \Omega\setminus \Omega_1$. We
thus have $\d \Omega_2 = S \cup \d \Omega$. \medskip

Points in $\R^n$ are denoted by $x =(x_1,\dots, x_n)$ and we write $D_j =-i\p/\d x_j$ where $i=\sqrt{-1}$.
Let us consider two linear partial differential operators $P_k$,
$k=1,2$ of respective order $m_k=2\mu_k$, with $\mu_k\geq 1$,
\begin{equation}
  \label{eq: Definition P}
  P_k= \sum_{\abs{\mi}\leq m_k} a^k_\mi(x)D^\mi,\quad k=1,2,
\end{equation}
where the coefficients $a^k_\mi(x)$ are bounded measurable
complex-valued functions defined in $\ovl{\Omega}$. The higher-order
coefficients $a^k_\mi(x)$ with $\abs{\mi}= m_k$ are required to be $\Cinf$
in $\ovl{\Omega}_k$. In what follows, we assume that boths operators $P_k$, $k=1,2$ are elliptic.

In addition, we consider a system of $m_1+m_2$ linear transmission 
operators 
\begin{equation}
  \label{eq: Definition Tkj}
  T_{k}^j=\sum_{\abs{\mi}\leq  \torder^j_{k}}
  t^j_{k,\mi}(x)D^\mi, 
  \quad k=1,2,\quad j=1,\dots,m=\mu_1+\mu_2,
\end{equation}
with $0\leq \torder^j_{k}< m_k$, and where the coefficients
$t^j_{k,\mi}(x)$ are $\Cinf$ complex-valued functions defined in some
neighborhood of $S$.
Setting $\torder^j = (\torder^j_1 + \torder^j_2)/2$ we assume that 
\begin{align}
  \label{eq: assumption beta}
  m_1 - \torder_1^j = m_2 - \torder_2^j = m - \torder^j, \quad j=1, \dots, m=\mu_1+\mu_2. 
\end{align}

We consider a system of $\mu_2 = m_2/2$ linear boundary
operators of order less than $m_2$
\begin{equation}
  \label{eq: boundary operators}
  B^j= \sum_{\abs{\mi}\leq \beta_\d^j} b^j_\mi(x)D^\mi,\quad j=1,\dots,\mu_2,
\end{equation}
where the coefficients $b^j_\mi(x)$ are $\Cinf$ complex-valued
functions defined in some neighborhood of $\d \Omega$.

We can then consider the following elliptic boundary-value
transmission problem
\begin{equation*}
  \begin{cases}
    P_k u_k = f_k &  \text{in} \ \Omega_k, \ , k=1,2\\
    T^j_1u_1+T_2^ju_2=g^j, &  \text{in} \ S,\quad j=1,\dots,m.\\
    B^j u =h^j , &   \text{in} \ \d\Omega,\quad j=1,\dots,\mu_2.
  \end{cases}
\end{equation*}

The aim of the present  article is to derive  a Carleman estimate for
this transmission problem. 
Carleman estimates are weighted {\em a priori} inequalities for the solutions of
a partial differential equation (PDE), where the weight is of exponential type.  For the partial
differential operator $P$ {\em away from the boundary and from the
  interface}, say for $w \in \Cinfc(\Omega_1)$ or $\Cinfc(\Omega_2)$, it takes the form:
\begin{equation}
  \label{eq: intro Carleman}
  \Norm{e^{\tau \varphi} w}_{L^2}
  \leq C \Norm{e^{\tau \varphi} P w}_{L^2}, \qquad 
  \tau \geq \tau_0.
\end{equation}
The exponential weight involves a parameter $\tau$ that can be taken
as large as desired. The weight function $\varphi$ needs to be chosen
carefully. Additional terms in the \lhs, involving derivatives of $u$, can be
obtained depending on the order of $P$ and on the {\em joint}
properties of $P$ and $\varphi$.  For instance for a second-order
operator $P$ such an estimate can take the form
\begin{align}
  \label{eq: example Carleman}
  \tau^3 \Norm{e^{\tau \varphi} w}_{L^2}^2 
  + \tau \Norm{e^{\tau \varphi} \nabla_x w}_{L^2}^2 
  \leq C 
  \Norm{e^{\tau \varphi} P  w}_{L^2}^2, \qquad 
  \tau \geq \tau_0,\  w \in \Cinfc(\Omega_1)\ \text{or}\
  \Cinfc(\Omega_2).
\end{align}
This type of estimate was used for the first time by T.~Carleman
\cite{Carleman:39} to achieve uniqueness properties for the Cauchy
problem of an elliptic operator. Later, A.-P.~Calder\'on and
L.~H\"ormander further developed Carleman's method
\cite{Calderon:58,Hoermander:58}.  To this day, Carleman estimates
remain an essential method to prove unique continuation properties;
see for instance \cite{Zuily:83} for manifold results.  On such questions
more recent advances have been concerned with differential operators
with singular potentials, starting with the contribution of D.~Jerison
and C.~Kenig \cite{JK:85}. The reader is also referred to
\cite{Sogge:89,KT:01,KT:02}.  In more recent years, the field of
applications of Carleman estimates has gone beyond the original
domain; they are also used in the study of:
\begin{itemize}
\item Inverse problems, where Carleman estimates are used to obtain
  stability estimates for the unknown sought quantity (\eg
  coefficient, source term) with respect to norms on measurements
  performed on the solution of the PDE, see \eg
  \cite{BK:81,Isakov:98,Kubo:00,IIY:03}; Carleman estimates are also
  fundamental in the construction of complex geometrical optic solutions
  that lead to the resolution of inverse problems such as the Calder\'on
  problem with partial data \cite{KSU:07,DKSU:09}.
  
\item Control theory for PDEs; through unique continuation properties,
 Carleman estimates are used for the exact controllability of hyperbolic equations \cite{BLR:92}.  They also
yield the null controllability of linear parabolic equations
\cite{LR:95} and the null controllability of classes of semi-linear
parabolic equations \cite{FI:96,Barbu:00,FZ:00}.
\end{itemize}
For general elliptic operators, Carleman estimates away from
boundaries and interfaces can be found in \cite[Chapter
8]{Hoermander:63}. The essential condition for the derivation of such an
estimate is a compatibility property between the elliptic operator $P$
and the weight function $\varphi$, the so-called sub-ellipticity
condition which is known to be necessary and sufficient for the
estimate to hold in the case of an elliptic operator.  At the boundary
$\d \Omega$, a Lopatinskii-type compatibility condition involving $P$,
$\varphi$, and the operators $B^k$ can be put forward yielding a
Carleman estimate in conjunction with the sub-ellipticity condition
\cite{Tataru:96,BLR:13}. The main goal of the present article is the
extension of this analysis to transmission problems.

Note that Carleman estimates of the form given here are local. Yet,
they can be patched together to form global estimates. Our goal here
is to derive such an estimate in the \nhd of a point of the interface
$S$. Derivation of Carleman estimates away for the interface can be
found in the aforementionned references. Then, the patching procedure
allows one to obtain a global estimate in the whole $\Omega$,
following for instance \cite[Lemma 8.3.1]{Hoermander:63} and
\cite{LRL:12}. We do not cover this issue here. 

Here the weight function $\varphi$ will be chosen continuous and
piecewise smooth, that is,
$\varphi_k = \varphi_{|\Omega_k} \in \Cinf(\ovl{\Omega_k})$.  The
estimate we shall obtain will exhibit additional terms that account
for the transmission conditions given by the operators $T^j_k$,
$k=1,2$, $j=1,\dots,\mu_1+\mu_2$.  The key conditions for the
derivation of the present Carleman estimate are compatibility
properties between the elliptic operator $P$, the weight function
$\varphi$, and the transmission operators $T_k^j$. Those are the 
sub-ellipticity condition described above that expresses compatibility
between $P$ and $\varphi$, and in addition a condition that connects
them to $T_k^j$ at the interface; we shall refer to this latter
condition as to the {\em transmission condition}. This condition is an
extension of the condition presented in \cite{Schechter:60} in the
case of a conjugated operator. There, it was introduced towards to
understanding of the well-posedness of the elliptic transmission
problem. The condition we use is very close in its formulation to the
Lopatinskii type boundary condition used in the first part of this
work~\cite{BLR:13}. In \cite{Tataru:96,BLR:13} the derivation of
Carleman estimates at a boundary is based on the study of interior and
boundary differential quadratic forms, an approach that originates in
the work of \cite{Hoermander:63} for estimates away from boundaries
and in \cite{Sakamoto:70,Sakamoto:82,Miyatake:75} for the treatment of
boundaries.  This approach is here extended to interface tranmission problems.
By proper (tangential)
  microlocalizations at the interface we show the precise action of
  our transmission condition.  These
  microlocalizations are important as the transmission condition is
  function of the sign of the imaginary parts of the roots
  of\footnote{Here to simplify we consider the case $S = \{ x_n
    =0\}$. Then $\xi_n$ corresponds the (co)normal direction at the
    interface. In the main text we shall use
    change of variables to reach this configuration locally.  }
  $p_{k,\varphi}(x,\xi',\tau,\xi_n) = p_k(x,\xi + i \tau
  \varphi'(x))$, $k=1,2$, 
  viewed as a polynomial in $\xi_n$. Of course the configuration of
  the roots changes as the other parameters $(x',\xi',\tau)$ are modified. Roots can
  for instance cross the real axis. Each configuration needs to be
  addressed separately through a microlocalization procedure. For the
  Laplace operator at a  {\em boundary} this was exploited to obtain a
  Carleman estimate in \cite{LR:97} for the purpose of proving a
  stabilization result for the wave equation.
  This approach was used for the study of an interface problem in
  \cite{Bellassoued:03,LRR:10,LR-L:13} in the case of second-order
  elliptic operators. The present article
  provides a generalization of these earlier works, both with respect
  to the order of the operators and with respect to the generality of
  the transmission operators used.

   The Carleman estimate we prove here is of the form, with $u_k = u_{|\Omega_k}$,
  \begin{multline*}
    \sum_{k=1,2}
    \Norm{e^{\tau\varphi_k}u_k}^2
    +\sum_{k=1,2}|e^{\tau\varphi}\trace(u_k)|^2\\
    \leq C\Big(\sum_{k=1,2}\Norm{e^{\tau\varphi_k}P(x,D)u_k}^2+
      \sum_{j=1}^{\mu_1+ \mu_2}|e^{\tau\varphi_{|S}} (T_1^j(x,D){u_1}_{|S} + T_2^j(x,D){u_2}_{|S} )|^2\Big),
  \end{multline*}
  for $u$ supported near a point at the interface, 
  where $\trace(u_k)$ stands for the trace of $(u_k,D_\nu u_k, \dots,
  D_\nu^{m-1} u_k)$, the successive normal derivatives of $u_k$, at
  the interface $S$. In this form, the estimate is
  incorrect as norms needs to be made precise. For a correct statement
  please refer to Theorem~\ref{theorem: Carleman} below.

For Carleman estimates, one is often inclined to choose a weight
  function of the form $\varphi = \exp(\csp \psi)$, with the parameter
  $\csp>0$ chosen large. Several authors have derived Carleman
  estimates for some operators in which the dependency upon the second
  parameters $\csp$ is kept explicit. See for instance
  \cite{FI:96}. Such results can be very useful to address systems of
  PDEs, in particular for the purpose of solving inverse problems. On
  such questions see for instance \cite{Eller:00,EI:00,IK:08,BY:10}.
  
  Compatibility conditions need to be introduced between the operator
  $P$ and the weight $\psi$. Those are the so-called \spcty conditions
  introduced by L.~H\"ormander \cite{Hoermander:63,Hoermander:V4}.
  With the weight function $\varphi$ of the form $\varphi = \exp(\csp
  \psi)$, the parameter $\csp$ can be viewed as a convexification
  parameter.  As shown in Proposition~28.3.3 in \cite{Hoermander:V4}
  the \spcty of the function $\psi$ with respect to $P$
  implies the sub-ellipticity condition for $\varphi$ mentioned
  above\footnote{The terminology for the \spcty condition and the
    sub-ellipticity condition are often confused by authors. Here we
    make a clear distinction of the two notions.} for $\csp$ chosen
  \suff large. Away from any
  boundary and interface, for a second-order estimate the resulting Carleman
  estimate can take the form (compare with \eqref{eq: example Carleman}):
  \begin{align}
    \label{eq: example Carleman -2p}
    (\csp \tau)^3 \bigNorm{\varphi^{3/2} e^{\tau \varphi} u }_{L^2}^2 
    + \csp \tau \bigNorm{\varphi^{1/2} e^{\tau
        \varphi} \nabla_x u }_{L^2}^2 
    \lesssim \Norm{e^{\tau \varphi} P  u }_{L^2}^2, 
    \qquad \tau \geq \tau_0,\ \csp \geq \csp_0,\  u \in
    \Cinfc(\Omega_1) \ \text{or}\ \Cinfc(\Omega_2).
  \end{align}
  We aim to extend such estimate in the \nhd of the interface $S$. We
  then assume that the transmission condition holds for the operators
  $P$, $T_k^j$, $k=1,2$, $j=1, \dots, \mu_1+ \mu_2$, and the weight
  $\psi$. The work \cite{LeRousseau:12} provides a general framework
  for the analysis and the derivation of Carleman estimates with two
  large parameters away from boundaries. For that purpose it
  introduces a pseudo-differential calculus of the Weyl-H\"ormander
  type that resembles the semi-classical calculus and takes into
  account the two large parameters $\tau$ and $\csp$ as well as the
  weight function $\varphi = \exp(\csp \psi)$. Here, following the
  first part of this article \cite{BLR:13}, the analysis of
  \cite{LeRousseau:12} is adapted to the case of an estimate at the
  interface.  Estimates with the two large parameters $\tau$ and $\csp$
  are derived in the case of general elliptic operators.

  If we strengthen \spcty condition of $\psi$ and $P$, assuming the
  so-called simple characteristic property, sharper estimates can be
  obtained \cite{LeRousseau:12}. We also derive such estimates at the
  interface.

  \medskip
  With the different Carleman estimate that we obtain here she shall
  be able to achieve unique continuation properties at an interface
  across some hypersurface for some classes of elliptic operators and
  some products of such operators. 

\subsection{Setting} 
Now, we give the precise setting of the problem we consider.  For
$x=(x_1,\dots,x_n) \in \R^n$, we denote by $\xi=(\xi_1,\dots,\xi_n)$
the corresponding Fourier variables. Moreover, for every $\xi\in\R^n$
and $\mi\in\N^n$ we define
$\xi^\mi=\xi_1^{\mi_1}\cdots\xi_n^{\mi_n}$. We denote by
\begin{equation*}
  p_k(x,\xi)=\sum_{\abs{\mi}=m_k}a^k_\mi(x)\xi^\mi
\end{equation*}
the principal symbol of the operator $P_k$ given in \eqref{eq:
  Definition P}, $k=1,2$.
The operators $P_k$ are assumed to be elliptic, \viz,
\begin{equation*}
p_k(x,\xi)\neq 0,\quad \forall x\in \ovl{\Omega}_k,\,\forall \xi\in\R^n\backslash\set{0}.
\end{equation*}

With $m = \mu_1 + \mu_2 = (m_1 + m_2)/2$, we denote by
\begin{equation*}
  t_{k}^j(x,\xi)=\sum_{\abs{\mi}=\torder^j_k} t^j_{k,\mi}(x)\xi^\mi,\quad
  k=1,2\,\, j=1,\dots,m, 
\end{equation*}
the principal symbol of the transmission operator $T_k^j$ defined in
\eqref{eq: Definition Tkj}.  Each set $\set{T_{1}^j}_{1\leq j\leq m}$
and $\set{T_{2}^j}_{1\leq j\leq m}$ is assumed normal, that is 
$$
0\leq \torder^1_k \leq \torder^2_k 
\leq \cdots \leq \torder^{m}_k<m_k, 
$$
and for all $x \in S$ the conormal vector $\nu(x)$ is non characteristic, \ie, $t_{k}^j(x,\nu(x)) \neq 0$.

We recall that we assume 
\begin{align*}
  m_1 - \torder^j_1 = m_2 - \torder_2^j = m - \torder^j, \quad j=1, \dots, m=\mu_1+\mu_2. 
\end{align*}
We now review the definition of two inportant notions that will
be used in what follows:
\begin{itemize}
\item the sub-ellipticity condition between the operators $P_k$ and
  the weight function $\varphi$;
\item the transmission condition stating the compatibility between the transmission operators
  $T_k^j$, the operators $P_k$, and the weight function $\varphi$ at a
  point of the interface.
\end{itemize}

\subsection{Sub-ellipticity condition}
For any two functions $f(x,\xi)$ and $g(x,\xi)$ in
$\Cinf(\Omega_k\times\R^n)$ we denote their Poisson bracket in phase-space by
\begin{equation*}
  \set{f,g}=\sum_{j=0}^n\bigpara{\frac{\d f}{\d\xi_j}\frac{\d g}{\d x_j}
    -\frac{\d f}{\d\xi_j}\frac{\d g}{\d x_j}}.
\end{equation*}
It is to be connected with the commutator of two (pseudo-)differential
operators. 
In fact, if $f$ and $g$ are polynomials in $\xi$, then the principal
symbol of the commutator $[f(x,D), g(x,D)]$ is precisely $- i \{f,g
\}(x,\xi)$.   

\medskip
The sub-ellipticity condition connecting the symbol $p_k$ and a weight
function $\varphi$ is the following (See \cite[Chapter
8]{Hoermander:63} and \cite[Sections 28.2--3]{Hoermander:V4}).
\begin{definition}
  \label{def: sub-ellipticity}
  Let $k\in \{1,2\}$ and let $U$ be an open subset of $\Omega_k$ and
  set $\varphi_k = \varphi_{|\Omega_k}$. The
pair $\{P_k,\varphi_k\}$ satisfies the sub-ellipticity condition on
$\ovl{U}$ if $\varphi_k'(x):=\nabla\varphi(x)\neq 0$ at every point
in $\ovl{U}$ and if
  \begin{equation*}
    p_k(x, \xi+i\tau\varphi'(x)) = 0 \quad \imp \quad 
    \frac{1}{2 i}\set{\ovl{p}_k (x,\xi-i\tau\varphi'(x)),
      p_k(x,\xi+i\tau\varphi'(x))}>0,
  \end{equation*}
  for all $x \in \ovl{U}$ and all non-zero $\xi\in\R^n$, $\tau>0$.
\end{definition}
For an elliptic operator the sub-ellipticity condition is
{\em necessary and sufficient} for a Carleman
estimate of the form of \eqref{eq: intro Carleman} to hold away from
the boundary
\cite[Section 28.2]{Hoermander:V4}. 
For a simple exposition of the derivation of Carleman estimates for
second-order elliptic operators under the sub-ellipticity condition we
refer to \cite{LRL:12}.

Note also that the sub-ellipticity condition is invariant under
changes of coordinates. This is an important fact here as we shall work in
local coordinates in what follows.

\begin{remark}
  \label{remark: sub-ellipticity tau =0}
 Note that here, as the operator $P_k$ are elliptic, we have
  $p_k(x,\xi)\neq 0$ for each $\xi\in\R^n$, $\xi\neq 0$. The
  sub-ellipticity condition thus holds naturally at $\tau=0$.
\end{remark}
\begin{remark}
  Setting $p_{k,\varphi}(x,\xi, \tau) = p_k(x,\xi+ i \tau \varphi_k')$  and
  writing $p_{k,\varphi} = a + i b$ with $a$ and $b$ real, we have 
  $$
  \frac{1}{2 i}\set{\ovl{p_k} (x,\xi-i\tau\varphi_k'),
      p_k(x,\xi+i\tau\varphi_k')}
    = \frac{1}{2 i}\{ \ovl{p_{k,\varphi}}, p_{k,\varphi}\}(x,\xi,\tau)
    = \{ a, b\} (x,\xi,\tau).
  $$
  Below, we shall use the sub-ellipticity condition in the form
  \begin{equation*}
    p_k(x, \xi+i\tau\varphi_k') = 0 \quad \imp \quad 
    \{ a, b\} (x,\xi,\tau)>0,
  \end{equation*}
  for all $x \in \ovl{U}$ and all non-zero $\xi\in\R^n$, $\tau>0$.

  In connection with the symbol interpretation of the Poisson bracket
  given above, we see that the sub-ellipticity condition guarantees
  some positivity for the operator $i [a(x,D,\tau), b(x,D,\tau)]$ on
  the characteristic set of $p_k(x, D+i\tau\varphi_k') = a(x,D,\tau) +i
  b(x,D,\tau)$.  A proper combination of $ a(x,D,\tau)^\ast
  a(x,D,\tau) + b(x,D,\tau)^\ast b(x,D,\tau)$ and $i [a(x,D,\tau),
  b(x,D,\tau)]$ thus leads to a positive operator. This is the heart of
  the proof of Carleman estimates.
\end{remark}

\subsection{Transmission conditions}
\label{sec: intro transmission}

\medskip We consider a \nhd $X$ of a point of the interface $S$,
chosen sufficiently small, so that there exists a smooth function $\theta(x)$
such that $ d \theta (x) \neq 0\ \text{in}\ X$ and
\begin{equation}
  \label{eq: theta}
 \{x\in X; \  \theta(x) < 0 \} = \Omega_1 \cap X, \quad 
 \{x\in X; \  \theta(x) = 0\} =  S \cap X, \quad 
  \{x\in X; \  \theta(x) > 0\} = \Omega_2 \cap X. 
\end{equation}

For $x \in S$,  we denote by $N_x^*(S)$ the
conormal space above $x$ given by
$$
N_x^*(S)=\set{\nu \in T_x^*(\Omega);
  \,\nu(Z)=0,\,\forall Z\in T_x(S)}.
$$
The conormal bundle of $S$ is given by
$$
N^*(S)=\set{(x,\nu)\in T^*(\Omega);\ x\in S,\, \nu\in N_x^*(S)}.
$$
In fact, if $x \in X \cap S$ and $(x,\nu) \in N^*(S)$ then $\nu = t\
d \theta(x)$ for some $t\in \R$.

\medskip
By an interface quadruple $\omega=(x,Y,\nu,\tau)$ we shall mean
\begin{equation*}
  x\in S \cap X, 
  \quad Y\in T^*_x(S),
  \quad \nu = t d \theta (x) \in N_x^\ast(S) \ \text{with}\ t>0, 
  \quad \text{and}\ \tau\geq 0.
\end{equation*}
In particular $\nu$ ``points'' from $\Omega_1$ into $\Omega_2$. 
For an interface quadruple $\omega$  and $\lambda\in\C$, we set, for
$k=1,2$, 
\begin{align}
  \label{eq: polynomial transmission}
  \tilde{p}_{k,\varphi}(\omega,\lambda):=p_k (x,Y+\lambda \nu_k+i\tau
    d\varphi_k(x)), \quad \text{with}\ \varphi_k =
  \varphi_{|\Omega_k}, \text{and}\  \nu_k = (-1)^k \nu \in N_x^*(S).
\end{align}
Note that for $\Omega_k$ the covector $\nu_k$ points inward and we have
$\nu_1 = -\nu_2= - \nu$.

For a fixed interface quadruple $\omega_0=(x_0,Y_0,\nu_0,\tau_0)$, we
denote by $\sigma_k^j$, $k=1,2$, the roots of $\tilde{p}_{k,\varphi}(\omega_0,\lambda)
$ with multiplicity $\mu_k^j$, viewed as a polynomial of degree $m$ in
$\lambda$, with leading-order coefficient $c_{k,0}$. We can then factorize
this polynomial as follows:
\begin{equation*}
  \tilde{p}_{k,\varphi}(\omega_0,\lambda)
  = c_{k,0} \tilde{p}_{k,\varphi}^+(\omega_0,\lambda)
  \tilde{p}_{k,\varphi}^-(\omega_0,\lambda)
  \tilde{p}_{k,\varphi}^0(\omega_0,\lambda),
\end{equation*}
  with
  \begin{equation*}
    \tilde{p}_{k,\varphi}^\pm(\omega_0,\lambda) = \prod_{\pm\Im \sigma_k^j >0} (\lambda - \sigma_k^j)^{\mu_k^j},
    \quad
     \tilde{p}_{k,\varphi}^0(\omega_0,\lambda) = \prod_{\Im \sigma_k^j =0} (\lambda - \sigma_k^j)^{\mu_k^j}.
  \end{equation*}
\medskip

We define the polynomial $\kappa_{k,\varphi}(\omega_0,\lambda)$ by
\begin{equation}
  \label{eq: def kappa lopatinskii}
  \kappa_{k,\varphi}(\omega_0,\lambda)
  =\tilde{p}_{k,\varphi}^+(\omega_0,\lambda)
  \tilde{p}_{k,\varphi}^0(\omega_0,\lambda).
\end{equation}

\bigskip Similarly, for the set of transmission operators,
$\big\{T_k^j\big\}_{k=1,2;j=1,\dots,m}$, $m = \mu_1 + \mu_2$, and their
principal symbols, $t^j_k(x,\xi)$, for an interface quadruple
$\omega=(x,Y,\nu, \tau)$ we set
\begin{equation}
  \label{eq: transmission lopatinskii}
  \tilde{t}^j_{k,\varphi}(\omega,\lambda)=t^j_k(x,Y+\lambda \nu_k+i\tau
  d\varphi_k(x)), 
\end{equation}
with $\nu_k = (-1)^k\nu$ as above.
\begin{definition}
  \label{def: transmission Lopatinskii}
  We say that $\big\{P_k,T^j_k,\varphi,\ k=1,2;j=1,\dots,m\big\}$
  satisfies the transmission condition at an interface quadruple
  $\omega_0=(x_0,Y_0,\nu_0,\tau_0)$ if for all pairs of polynomials,
  $q_k(\lambda)$, $k=1,2$, there exist $U_k$, $k=1,2$, polynomials and
  $c_j\in\C$, $j=1,\dots,m$, such that:
  \begin{equation*} 
    q_1(\lambda)=\sum_{j=1}^{m}c_j\tilde{t}^j_{1,\varphi}(\omega_0,\lambda)
  +U_1(\lambda)\kappa_{1,\varphi}(\omega_0,\lambda)
 \quad \text{and} \ \
  q_2(\lambda)=
  \sum_{j=1}^{m}c_j\tilde{t}^j_{2,\varphi}(\omega_0,\lambda)
  +U_2(\lambda) \kappa_{2,\varphi}(\omega_0,\lambda),
  \end{equation*}
  where the polynomials $\kappa_{k,\varphi}(\omega_0,\lambda)$ are
  those defined by \eqref{eq: def kappa lopatinskii}.

  Additionally, for $x_0 \in S$, we say that
  $\big\{P_k,T^j_k,\varphi,\ k=1,2;j=1,\dots,m\big\}$
   satisfies the transmission condition at $x_0$ if the above property
   holds for all interace quadruples $\omega = (x_0,
   Y_0,\nu_0,\tau_0)$ with $Y \in T^*_{x_0}(S)$, $\nu =  t\
d \theta(x_0)$ with $t>0$, and $\tau\geq 0$. 
\end{definition}
It should be noted that the same coefficients $c_j$ are used in both
decompositions.

\begin{remark}
\begin{enumerate}
\item There is a strong similarity in the form between
  Definition~\ref{def: transmission Lopatinskii} and the strong
  Lopatinskii condition that is used in the derivation of a Carleman
  estimate at the boundary.  The latter condition connects  the elliptic operator, the weight
  functions and the boundary operators given in \eqref{eq: boundary
    operators} in \cite{Tataru:96,BLR:13}.
\item Note that we did not choose any  particular co-normal vectors $\nu_k$ connected with
  the function $\theta$ that locally defines $S$, apart from their orientation. In fact, for any $t>0$
  replacing $\nu_k$ by $t \nu_k$ does not affect the transmission
  condition of Definition~\ref{def: transmission Lopatinskii}. We
  could for instance use normalized conormal vectors, yet keeping the
  directions of $\nu_k$.
\end{enumerate}
\end{remark}

\subsubsection{Alternative formulation}
\label{sec: transmission alternative formulation}
Setting $n_k = d^\circ \kappa_{k,\varphi}$  we have $n_k=m_k -
m_k^-$ with $m_k^- = d^\circ \tilde{p}_{k,\varphi}^-$. 
We have $n_k \leq
m_k$. Hence, it is sufficient to consider the polynomials $q_k$,
$k=1,2$, to be of degree less than $m_k-1$ respectively. Then both
polynomials $U_k$ are of degree less than or equal to $m_k - n_k-1 =
m_k^- -1$, recalling that
$\tilde{t}^j_{k,\varphi}$ is of degree $\torder_k^j < m_k$. 

We then set
\begin{align*}
  \tilde{e}_{1}^j &= \begin{cases} 
     \tilde{t}^j_{1,\varphi} & \text{if}\  j=1, \dots, m, \\
     \lambda^{j-(m+1)} \kappa_{1,\varphi}
    & \text{if}\  j=m+1, \dots, m+ m_1^-, \\
\end{cases}\\
  \tilde{e}_{2}^j &= \begin{cases} 
     \tilde{t}^j_{2,\varphi} & \text{if}\  j=1, \dots, m, \\
    \lambda^{j-(m+1)} \kappa_{2,\varphi}
    & \text{if}\  j=m +1, \dots, m+ m_2^-, 
\end{cases}
\end{align*}
and the linear map 
\begin{align}
  \label{eq: polynomial map}
  \Phi: \C^{m} \times \C^{m_1^-} \times \C^{m_2^-} &\to \C_{m_1-1}[\lambda]
  \times  \C_{m_2-1}[\lambda], \\
  (c, \gamma_1, \gamma_2) 
  & \mapsto \Big( \sum_{j=1}^m c_j \tilde{e}_{1}^j +
  \sum_{j=1}^{m_1^-}  \gamma_{1,j} \tilde{e}_{1}^{j+m} , 
  \sum_{j=1}^m c_j \tilde{e}_{2}^j +
  \sum_{j=1}^{m_2^-}  \gamma_{2,j} \tilde{e}_{2}^{j+m}\Big).
  \notag
\end{align}
The transmission condition of Definition~\ref{def: transmission
  Lopatinskii} means precisely that the map $\Phi$ is {\em surjective}. 
In particular this implies that $m'= m + m_1^- + m_2^- \geq m_1 + m_2
= 2m$.

\subsubsection{Transmission condition and well-posedness}
Transmission conditions across an interface for elliptic problems can
be found in \cite{Schechter:60} to prove well-posedness of the
transmission elliptic problem. In \cite{Schechter:60} it corresponds to the case $\varphi=0$,
as no conjugation with a weight function is performed and, there, the
transmission condition reads as follows: let $U_k$, $k=1,2$,
polynomials and $c_j\in\C$, $j=1,\dots,m$, be such that
  \begin{equation*} 
   0=\sum_{j=1}^{m}c_j\tilde{t}^j_{1,\varphi=0}(\omega_0,\lambda)
  +U_1(\lambda)\kappa_{1,\varphi=0}(\omega_0,\lambda)
 \quad \text{and} \ \
  0=
  \sum_{j=1}^{m}c_j\tilde{t}^j_{2,\varphi=0}(\omega_0,\lambda)
  +U_2(\lambda) \kappa_{2,\varphi=0}(\omega_0,\lambda),
  \end{equation*}
  then $U_k \equiv 0$, $k=1,2$, and $c_j=0$, $j=1,\dots,m$.
  This condition precisely means that the map $\Phi$ introduced in \eqref{eq: polynomial map} is {\em injective} in the case $\varphi=0$. 
  Above we gave a surjectivity condition that suits the purpose of the present article. We now explain how the two conditions coincide in the cases studied in \cite{Schechter:60}.
  
  In fact if $\varphi=0$ none of the roots can be real as the operator is elliptic.
  If  $\#\{ \Im
  \sigma_k^j>0 \} = \#\{ \Im \sigma_k^j <0\}$, such an operator is called properly elliptic by Schechter \cite{Schechter:60} and he only studies this type of elliptic operators. 
  In fact, if the dimension $n\geq 3$ then every elliptic operator is properly elliptic (see \eg the proof of Proposition 1.1 in \cite[Chapter 2, Section 1]{LM:68}).  
  In such a case we have $m_k^- = m_k /2 = \mu_k$. Hence for
  the map $\Phi$ defined in \eqref{eq: polynomial map} we have
  \begin{equation*}
  	\dim \C^{m} \times \C^{m_1^-} \times \C^{m_2^-} 
	= \dim \C_{m_1-1}[\lambda] \times  \C_{m_2-1}[\lambda], 
  \end{equation*}
  as $m +
  m_1^- + m_2^- = 2m = m_1 + m_2$, meaning that in this case the surjectivity of
  $\Phi$ is equivalent to its injectivity. In the case of a properly elliptic operator and $\varphi=0$ the  transmission condition of \cite{Schechter:60} thus coincides with
  Definition~\ref{def: transmission Lopatinskii} in the case $\varphi=0$.

  Note that the injectivity of the map $\Phi$ may be lost for $\varphi \neq 0$
  and $\tau > 0$, as the conjugated operator $P_\varphi$ may not be elliptic, yet
  the transmission condition we give here precisely states that $\Phi$
  is surjective.

\subsubsection{Invariance by change of local coordinates}
\label{sec: invariation by change of coordinates}
We finish the presentation of the transmission condition by observing
that this definition is of geometrical nature, independent of the
choice of coordinates. This fact is important as we shall make use of
local coordinates at the interface $S$
in what follows.

In fact, for a point $x \in S$ we consider an open  \nhd $X
\subset \Omega$ of $x$, chosen \suff small so that  there exists $\theta$ as in \eqref{eq: theta}.

We consider two coordinate systems $(X^{(i)}, \psi^{(i)})$, $i=1,2$,
that is $\psi^{(i)}: X \to X^{(i)}$ is a diffeomorphism and $X^{(i)}$
is an open set in $\R^n$.  We set $x^{(i)} = \psi^{(i)}(x)$.
We moreover set 
\begin{equation*}
  \theta^{(i)} = \theta \circ (\psi^{(i)})^{-1}, \quad 
  X_k^{(i)} = \{x \in X^{(i)}; \ (-1)^k \theta^{(i)}(x) > 0\} 
  = \psi^{(i)} (X \cap \Omega_k).
\end{equation*}

We then introduce the diffeomorphism $\kappa: X^{(1)} \to X^{(2)}$
given by $\kappa = \psi^{(2)} \circ (\psi^{(1)})^{-1}$ and we have
$\kappa (x^{(1)}) = x^{(2)}$, $\theta^{(1)} = \theta^{(2)} \circ \kappa$,
yielding $\kappa (X_k^{(1)}) = X_k^{(2)}$.

We also define $\varphi^{(i)} = \varphi \circ \psi^{(i)}$, $i=1,2$,
the local versions of the weight function in the coordinate patches
and we set $\varphi^{(i)}_k = {\varphi^{(i)}}_{|X_k^{(i)}}$. 

Let $Y^{(i)}, \nu_k^{(i)}$, $k=1,2$, $i=1,2$, be the local versions of $Y$
and $\nu_k$ in the two coordinate systems. With standard differential geometry arguments we have the following relations:
\begin{align*}
  Y^{(1)} =  \transp  \kappa '(x^{(1)}) Y^{(2)}, 
  \quad \nu_k^{(1)} =  \transp  \kappa '(x^{(1)}) \nu_k^{(2)},
  \quad d \varphi^{(1)}_k (x^{(1)}) 
  = \transp  \kappa '(x^{(1)}) d \varphi^{(2)}_k (x^{(2)}),
\end{align*}

Similarly let $p^{(i)}_k$ and $t^{j(i)}_k$, $k =1,2$, $j =1, \dots,
m$, $i=1,2$, be the local versions of the principal symbols of the
differential operators $P_k$ and $T_k^j$.  We have
\begin{align*}
  p^{(1)}_k(x,\xi) = p^{(2)}_k (\kappa(x), \transp  \kappa '(x)^{-1} \xi),
  \quad 
  t_k^{j(1)}(x,\xi) =  t_k^{j(2)}(\kappa(x), \transp  \kappa '(x)^{-1} \xi).
\end{align*}

If we set $f^{(i)}_k(\lambda) = p^{(i)}_k (x^{(i)}, Y^{(i)} + \lambda
\nu^{(i)}_k+ i \tau d \varphi^{(i)}_k(x^{(i)}) )$, $i=1,2$, we find
\begin{align*}
  f_k^{(1)} (\lambda) 
  &= p^{(1)}_k (x^{(1)}, Y^{(1)} + \lambda \nu^{(1)}_k
  + i \tau d \varphi^{(1)}_k(x^{(1)})) \\
  &= p^{(2)}_k \big(\kappa(x^{(2)}), \transp  \kappa '(x^{(1)})^{-1} 
  (Y^{(1)} + \lambda \nu^{(1)}_k+ i \tau d \varphi_k^{(1)}(x^{(1)}))\big)\\
  &= p^{(2)}_k  (x^{(2)},  Y^{(2)} + \lambda \nu_k^{(2)}
  + i \tau d \varphi_k^{(2)}(x^{(2)}))\\
  &= f_k^{(2)} (\lambda),
\end{align*}
which simply means that the polynomial function
$\tilde{p}_{k,\varphi}$ defined in \eqref{eq: polynomial transmission}
does not depend on the coordinate system chosen. The same holds for
the polynomial function $\tilde{t}_{k,\varphi}^j$ defined in
\eqref{eq: transmission lopatinskii}, which allows one to conclude
that the transmission condition of Definition~\ref{def: transmission
  Lopatinskii} can be stated (and checked) in any coordinate system.

\subsection{Sobolev norms with a parameter}
\label{sec: sobolev norms}
For non-negative integer $m$ and a real number $\tau\geq 0$, we
introduce the Sobolev spaces $H^m_\tau(\Omega_k)$ and
$H^m_\tau(S)$ defined by the following norms respectively:
\begin{equation}
  \label{eq: def norms}
  \Norm{u}^2_{m,\tau}=\sum_{k=0}^m
  \tau^{2(m-k)}\Norm{u}^2_{H^k(\Omega_k)}
  \quad 
  \text{and} 
  \quad 
  \norm{u}^2_{m,\tau}=\sum_{k=0}^m \tau^{2(m-k)}\norm{u}^2_{H^k(S)},
\end{equation}
where we denote the usual Sobolev norms on $\Omega_k$ and $S$ by
$\Norm{.}_{H^s(\Omega_k)}$ and $\norm{.}_{H^s(S)}$. The $L^2$
inner-products on $\Omega_k$ and $S$ will be denoted by $\para{.,.}$
and $\para{.,.}_\d$ respectively.  Observe that for the
norm  $\Norm{.}_{m,\tau}$ on $H^m_\tau(\Omega_k)$ we do not specify
explicitly the integer $k$ that refers to which side of the interface we
consider, $\Omega_1$ or $\Omega_2$. In the main text there should
never be any ambiguity as the norm will be used for functions that are
clearly defined on one of the open sets.
\\

For $m \in \N$ and $s\in \R$ we introduce the following interface space
$$
H^{m,s}_\tau(S)=\prod_{j=0}^m H^{m-j+s}_\tau(S),
$$
equipped with the norm
\begin{equation}
  \label{eq: norm Sobolev m s}
  \norm{\mathbf{u}}^2_{m,s,\tau}=\sum_{j=0}^m
  \norm{u_j}_{m-j+s,\tau}^2,
  \quad \mathbf{u}=(u_0,\dots,u_m).
\end{equation}

If $u\in \Cinf(\ovl{\Omega_k})$ we set
$\trace^m(u)=(\trace_0(u),\dots,\trace_m(u))$ where
$\trace_j(u)=(\frac{1}{i}\d_\nu)^ju_{|S}$, with $\nu$ conormal to $S$,
is the sectional trace of $u$ of order $j$ and, in accordance with~\eqref{eq: norm Sobolev m s},  we define
$$
\norm{\trace^m(u)}^2_{m,s,\tau}
=\sum_{j=0}^m \norm{\trace_j(u)}_{m-j+s,\tau}^2.
$$
In what follows we shall write $\trace(u)$ in place of $\trace^m(u)$
for concision. We shall also write norms of the form
$|e^{\tau\varphi}\trace(u)|^2_{m,s,\tau}$
actually meaning
\begin{equation*}
\norm{e^{\tau\varphi} \trace^m(u)}^2_{m,s,\tau}
=\sum_{j=0}^m  \norm{e^{\tau\varphi} \trace_j(u)}_{m-j+s,\tau}^2.
\end{equation*}

\subsection{Statement of the main result}
We can now state the local Carleman estimate that we prove in the
\nhd of a point of the interface, with the sub-ellipticity and transmission conditions.
\begin{theorem}
  \label{theorem: Carleman}
  Let $x_0 \in  S$ and let $\varphi\in \Con^0(\Omega)$ be \st
  $\varphi_k = \varphi_{|\Omega_k} \in \Cinf(\Omega_k)$ for $k=1,2$
  and \st 
  the pairs $\{P_k,\varphi_k\}$ have the sub-ellipticity property of
  Definition~\ref{def: sub-ellipticity} in a \nhd of $x_0$ in
  $\ovl{\Omega_k}$. Moreover, assume that $\big\{P_k,\varphi, T_k^j,\
    k=1,2, \ j=1,\dots,m\big\}$ satisfies the transmission condition at
  $x_0$.  Then there exist a \nhd $W$ of $x_0$ in $\R^n$ and two
  constants $C$ and $\tau_\ast>0$ \st
  \begin{multline}
    \label{eq: Carleman main result}
    \sum_{k=1,2} \big( \tau^{-1}\Norm{e^{\tau\varphi_k}u_k}^{2}_{m_k,\tau}
    +\norm{e^{\tau\varphi_{|S}}\trace(u_k)}_{m_k-1,1/2,\tau}^2 \big)\\
    \leq C \Big( \sum_{k=1,2}\Norm{e^{\tau\varphi_k}
        P_k(x,D)u_k}_{L^2(\Omega_k)}^2
      + \sum_{j=1}^m \big|e^{\tau\varphi_{|S}} 
      \big(T_1^j(x,D)u_1 +T_2^j(x,D)u_2\big)_{|S} \big|^2_{m-1/2-\torder^j,\tau}\Big),
  \end{multline}
  for all $u_k = {w_k}_{|\Omega_k}$ with $w_k\in \Cinfc(W)$ and $\tau\geq \tau_\ast$.
\end{theorem}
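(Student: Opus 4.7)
The plan is to follow the standard strategy for interface Carleman estimates: reduce to local coordinates flattening $S$, conjugate by the exponential weight, and then derive a microlocal estimate in each regime of the tangential phase space via factorization of the conjugated symbols in the conormal variable. By the invariance recorded in Section~\ref{sec: invariation by change of coordinates}, one may work in local coordinates in which $x_0 = 0$, $S = \{x_n = 0\}$, $\Omega_1 \cap W = \{x_n < 0\}$, $\Omega_2 \cap W = \{x_n > 0\}$, and $\nu = dx_n$. Setting $v_k = e^{\tau\varphi_k} u_k$ and introducing the conjugated operators $P_{k,\varphi} = e^{\tau\varphi_k} P_k e^{-\tau\varphi_k}$ and $T^j_{k,\varphi} = e^{\tau\varphi_k} T^j_k e^{-\tau\varphi_k}$, the estimate~\eqref{eq: Carleman main result} is equivalent to the analogous inequality for $v_k$, $P_{k,\varphi} v_k$, and the transmission data $(T^j_{1,\varphi} v_1 + T^j_{2,\varphi} v_2)_{|x_n = 0}$, $j = 1, \dots, m$, with the $e^{\tau\varphi}$ weights removed. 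Since the interior sub-ellipticity estimate of \cite[Section~28.2]{Hoermander:V4} handles the part supported away from $\{x_n = 0\}$, the whole problem reduces to an estimate in a half-neighborhood of $x_n = 0$, and a quadratic partition of unity in $(x', \xi', \tau)$ on the cosphere $|\xi'|^2 + \tau^2 = 1$ reduces it further to a microlocal estimate in a conic neighborhood of an arbitrary interface quadruple $\omega_0 = (0, Y_0, \nu_0, \tau_0)$.

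In such a conic neighborhood the roots of $\tilde p_{k,\varphi}(\omega, \cdot)$ depend continuously on $\omega$ and split smoothly into three groups according to the sign of $\Im \sigma_k^j$, producing a smooth factorization $p_{k,\varphi} = c_{k,0}\, p^+_{k,\varphi}\, p^-_{k,\varphi}\, p^0_{k,\varphi}$. Lifting this factorization to pseudo-differential operators in the conormal variable via a Malgrange-type preparation theorem, the factor $p^-_{k,\varphi}$ is outgoing from $\Omega_k$ and can be inverted modulo smoothing from the appropriate half-space, while the real-root factor $p^0_{k,\varphi}$ is handled through the Poisson-bracket positivity provided by the sub-ellipticity condition of Definition~\ref{def: sub-ellipticity}, via a G{\aa}rding inequality applied to $i[\Re p_{k,\varphi}, \Im p_{k,\varphi}]$ on the characteristic set. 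Combined, these ingredients yield a bulk estimate of the form
\begin{equation*}
  \sum_{k=1,2} \tau^{-1}\Norm{v_k}^2_{m_k,\tau}
  \lesssim \sum_{k=1,2}\Norm{P_{k,\varphi} v_k}^2_{L^2(\Omega_k)}
  + \sum_{k=1,2}\norm{\trace(v_k)}^2_{m_k-1,1/2,\tau},
\end{equation*}
reducing the proof to an estimate of the Cauchy data $(\trace(v_1),\trace(v_2))$ at $x_n = 0$.

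The Cauchy-data estimate is precisely where the transmission condition enters. In the alternative form of Section~\ref{sec: transmission alternative formulation}, the condition asserts that the map $\Phi$ of~\eqref{eq: polynomial map} is surjective in a conic neighborhood of $\omega_0$, hence admits a continuous right inverse. Realizing this right inverse pseudo-differentially on $S$ with tangential symbols in $(x', \xi', \tau)$, one expresses each component of $(\trace(v_1), \trace(v_2))$ as a linear combination of the transmission data $(T^j_1 v_1 + T^j_2 v_2)_{|x_n=0}$ (for $j = 1,\dots,m$) and of the traces at $x_n = 0$ of $\kappa_{k,\varphi}(x, D', D_n, \tau) v_k$ (for $k = 1, 2$). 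The latter are in turn controlled by $P_{k,\varphi} v_k$ through the factorization: applying $p^-_{k,\varphi}$ to $\kappa_{k,\varphi} v_k$ reproduces $p_{k,\varphi} v_k$ modulo lower-order tangential operators, and the ellipticity of $p^-_{k,\varphi}$ on the appropriate half-space allows a parametrix construction. Substituting the resulting trace bound into the bulk estimate and absorbing the remaining lower-order contributions for $\tau$ large closes the microlocal estimate; summing over a finite sub-covering of the cosphere and undoing the conjugation then yields~\eqref{eq: Carleman main result}.

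The main obstacle is the coupling of the two sides of the interface through the common coefficients $c_j$ appearing in Definition~\ref{def: transmission Lopatinskii}: the estimates on $\trace(v_1)$ and on $\trace(v_2)$ must be produced \emph{jointly} from the combined transmission data $(T^j_1 v_1 + T^j_2 v_2)_{|x_n=0}$, not side by side. Implementing the surjectivity of $\Phi$ as a pseudo-differential right inverse with smooth dependence on $\omega$, and doing so uniformly across configurations in which two roots of $\tilde p_{k,\varphi}$ collide on the real axis as $\omega$ varies, is the delicate part of the argument; these configurations are stabilized by enlarging the conic neighborhoods so that the triple $(d^\circ p^+_{k,\varphi}, d^\circ p^-_{k,\varphi}, d^\circ p^0_{k,\varphi})$ is locally constant, and by exploiting the joint regularity of the factors $p^\pm_{k,\varphi}$, $p^0_{k,\varphi}$ and of $\kappa_{k,\varphi}$ under smooth variations of $\omega$.
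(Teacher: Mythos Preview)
Your proposal is correct and follows the paper's strategy closely: local reduction and conjugation, a tangential microlocal partition of unity on the cosphere, smooth factorization $p_{k,\varphi}=p^{+}p^{-}p^{0}$ in a conic neighborhood, elliptic recovery from $p^{-}$ (Lemma~\ref{lemma: elliptic estimate}), sub-ellipticity via the Poisson bracket for the bulk term (Lemma~\ref{lemma: positivity char}), and use of the transmission condition to control the traces (Proposition~\ref{prop: transmission step}), all assembled into the microlocal estimate of Theorem~\ref{theorem: microlocal Carleman} and then patched. The only cosmetic differences are that the paper reflects $\Omega_1$ onto $\{x_n>0\}$ so as to work with a system on a single half-space (Section~\ref{sec: system formulation}), and implements the surjectivity of $\Phi$ not as an explicit pseudo-differential right inverse but as positive definiteness of an interface quadratic form to which a matrix G{\aa}rding inequality is applied (Section~\ref{sec: interface forms}, Proposition~\ref{prop: interface symbol positivity} and Lemma~\ref{lemma: Gaarding for interface forms}).
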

First, this results will be established microlocally: at an interface 
point $x_0$ we shall assume that the transmission condition holds for
some interface quadruple $(x_0, Y_0, \nu_0, \tau_0)$ and we
shall prove that a Carleman estimate of the form above holds in a
conic \nhd of this interface quadruple in phase-space; localization
in phase-space will be done by means of cut-off functions and
associated pseudo-differential operators. We refer the reader to
Section~\ref{sec: microlocal Carleman}. Second, we will deduce
Theorem~\ref{theorem: Carleman} from such microlocal estimates.

\medskip
Estimate \eqref{eq: Carleman main result} concerns function located
near the interface and vanishing near the boundary $\d_\Omega$. Hence
this estimate involves the transmission operators $T_k^j$ and not the
boundary operators $B^j$. 

Estimates of the form of \eqref{eq: Carleman main result} are
local. Yet, such estimates and their counterpart estimates at the
boundary proven in \cite{BLR:13} can be patched together to form
global estimates.  We do not cover such details here and we refer to
\cite{LRR:11} where this is done in the case of a transmission
problem.

In Section~\ref{sec: two parameters} we shall prove Carleman estimates
with a weight function of the form $\varphi(x) = \exp(\csp \psi(x))$ as
is usually done in practice with the parameter $\csp$ chosen as large
as desired. We shall provide the precise dependency of the Carleman estimate
with respect to this second large parameter.

Examples of elliptic transmission problem and weight function for
which the above result applies, and other for which it does not, will
be given in Section~\ref{sec: examples} below. In fact, as the
condition for the Carleman estimate to hold are geometrical, that is,
coordinate invariant, we shall postpone the exposition of example
after we introduce local variables that ease the writting of the
transmission condition.

\subsection{Local reduction of the problem near the interface}
\label{sec: local setting}

All the different aspects of the problem we consider --operators,
sub-ellipticity condition, transmission condition-- are coordinate
invariant as we saw above. We shall thus work locally, in the \nhd of
a point of the interface and choose coordinates that allow use to ease
the subsequent analysis and derivation of the Carleman estimate.  

\subsubsection{Choice of local coordinates}
\label{sec: choice of coordinates}
Let $x_0 \in S$. There exists a \nhd $V$ of $x_0$ and a local
system of coordinates $x=(x_1,\dots, x_n)$ where $V \cap \Omega_1 \subset \{
x_n>0\}$, $V \cap \Omega_2 \subset \{
x_n<0\}$ and $x'=(x_1,\dots, x_{n-1})$ parametrizes the interface$V
\cap S \subset \{ x_n =0\}$.  We assume that $V \cap \d \Omega
=\emptyset$, that is, we focus our analysis on the interface and
remain away
from the boudnary.

We denote by $\R^n_\pm$ the half space $\{\pm x_n > 0\}$ and $V_\pm = V \cap \R^n_\pm$.
For our purpose here, without any loss of generality, we may assume that
$V_\pm$ is bounded.

\medskip

In such local coordinates, in $V_\pm$, the differential
operator\footnote{By abuse of notation, in the new local coordinates, we
keep the notation $P_k$ and $T_k^j$, $j=1, \dots, m$, $k=1,2$, for the operators
introduced in the beginning of Section~\ref{sec: introduction}.}  $P_k$ of
order $m$ with complex coefficients takes the form
\begin{equation*}
  P_k = P_k(x,D)=\sum_{j=1}^{m_k} P_{k,j}(x,D') D_n^j,\qquad D_n=\frac{1}{i}\d_n,\quad k=1,2,
\end{equation*}
where $P_{k,j}(x,D')$, $j=1, \dots, m_k$, $k=1,2$, are tangential
differential operators with complex coefficients of order $m_k-j$.  
Similarly the transmission operators take the form
\begin{equation*}
  T^j_k = T_k^j(x,D)=\sum_{i=0}^{\torder_k^j} T^j_{k,i}(x,D') D_n^i,\quad 1\leq j\leq m,\,\,k=1,2
\end{equation*}
where $T^j_{k,i}(x,D')$, $i=0, \dots, \torder_k^j$, are tangential differential
operators of order $\torder_k^j-i$. The local transmission problem we
study thus takes the form
\begin{align}
  \label{eq: local transmission problem}
  \begin{cases}
    P_1 u_1 = f_1 &  \text{in} \  V_1 = \{ x_n <0\},\\
    P_2 u_2 = f_2 &  \text{in} \  V_2 = \{ x_n >0\},\\
    T^j_1u_1+T_2^ju_2=g^j, &  \text{in} \ S,\quad j=1,\dots,m.
  \end{cases}
\end{align}
We have $P_{k,m} = P_{k,m}(x) \neq 0$. Upon dividing the functions $f_k$ by $P_{k,m} (x)$ 
we may assume that $P_{k,m}=1$.

\bigskip
Calling $(\xi', \xi_n )$ the Fourier variables corresponding to $(x',
x_n)$ we have, for the principal symbol of $P_k$,
\begin{equation*}
p_k(x,\xi)=\sum_{j=0}^{m_k}p_{k,j}(x,\xi')\xi_n^j,
\end{equation*}
which is a polynomial homogeneous  of degree $m_k$ in the $n$ variables $(\xi',\xi_n)$.
\smallskip

We introduce
$$p_{k,\varphi}(x,\xi,\tau):=p_k (x,\xi+i\tau\varphi'_k(x)),$$ which
is the principal symbol of the operator $e^{\tau \varphi_k} P_k
e^{-\tau \varphi_k}$ viewed in the class of (pseudo-)differential operators with a large 
parameter presented in Section~\ref{sec: pseudo}.

 Setting
$\y'=(x,\xi',\tau)$ and $\y=(\y',\xi_n)$, for simplicity we shall
write $p_{k,\varphi}(\y)$ in place of $p_{k,\varphi}(x,\xi,\tau)$ and often
$p_{k,\varphi}(\y',\xi_n)$ to emphasize that the symbol is polynomial in
$\xi_n$. 
Similarly  we introduce $t_{k,\varphi}^j (x,\xi,\tau):=t_k^j
(x,\xi+i\tau\varphi'_k(x)) = t_{k,\varphi}^j (\y) = t_{k,\varphi}^j (\y',\xi_n)$.

In the present local coordinate,  we have $\theta = x_n$ and thus $\nu_k = (-1)^k d x_n$ (see
Section~\ref{sec: intro transmission}).
For a fixed point $\y'=(x,\xi',\tau)$ with $x \in S$,  setting $Y=
\sum_{j=1}^{n-1} \xi_{j} d x_j$   and $\omega = (x, Y, \tau)$
this gives 
\begin{align}
  \label{eq: connexion pkvarphi tildepk}
  \tilde{p}_{k,\varphi}(\omega,(-1)^k \xi_n)
  &= p_k (x,Y+(-1)^k \xi_n \nu_k+i\tau d\varphi_k(x))\\
  &=  p_k (x, \xi +i\tau d\varphi_k(x))\notag\\
  &= p_{k,\varphi}(x,\xi,\tau),\notag
\end{align}
with $\xi =(\xi',\xi_n)$ and $\tilde{p}_{k,\varphi}$ as defined in \eqref{eq: polynomial transmission}.
Similarly, we have 
\begin{align}
  \label{eq: connexion tkvarphi tildetk}
  \tilde{t}^j_{k,\varphi} (\omega,(-1)^k \xi_n)
  = t^j_{k,\varphi}(x, \xi),
\end{align}
with $\tilde{t}^j_{k,\varphi}$ as defined in \eqref{eq: transmission
  lopatinskii}.

\subsubsection{A system formulation}
\label{sec: system formulation}

To avoid the $(-1)^{k}$ terms in the two previous equations and to simplify
a large part of the analysis that will follow we shall write the local
transmission problem as a system of equation in $V_+$.

Without any loss of generality we can choose the open \nhd $V$ to be of the
form $V' \times (-\eps, \eps)$ with $V'$ an open set of $S$ and $\eps
>0$. 

In $V_- = V'\times (\eps,0)$ we apply the change of variables
$(x',x_n) \mapsto \sigma(x',x_n) = (x',- x_n)$. We denote by $\Pl$ and
$\Tl^j$ the operators obtained from $P_1$ and $T_1^j$ through this
change of variable. For the principal symbols we have
\begin{align*}
  \pl(x,\xi) = p_1 \big(\sigma(x); \sigma(\xi)\big), \ \ 
  \tl(x,\xi) = t_1 \big(\sigma(x); \sigma(\xi)\big), 
  \quad \text{for} \ x_n >0,
\end{align*}
using that $\transp \sigma' (x)^{-1} \xi = (\xi',-\xi_n)$. 
We also define $\vl  = \varphi_1 \circ \sigma $ for $x_n >0$. 
In $V_+$ we do not apply any change of variable and for the benefit of
readibility we set $\Pr = P_2$, $\pr = p_2$, and $\vr = \varphi_2$. 
The subscripts $\l$ and $r$ are chosen to keep in mind that part of
the system we shall write comes from the left-hand side of the
interface and the second part from the right-hand side. 

The transmission problems now reads as the following system
\begin{align}
  \label{eq: local transmission system}
  \begin{cases}
    \Pl \ul = \fl, \ \ \Pr \ur = \fr  &  \text{in} \  V_+ = \{ x_n >0\},\\
    {\Tl^j \ul}  + {\Tr^j \ur} = g^j, &  \text{in} \ S = \{ x_n =0\},\quad j=1,\dots,m,
  \end{cases}
\end{align}
where $\ul = u_1 \circ \sigma$, $\fl = f_1 \circ \sigma$, $\ur =
u_2$, $\fr = f_2$.

We set  $\Plrv = e^{\tau \vlr} \Plr e^{-\tau \vlr}$ and $\Tlrv^j =
e^{\tau \vlr} \Tlr^j e^{-\tau \vlr}$. They have for respective principal symbols (in
the calculus with a large parameter of Section~\ref{sec: pseudo})
\begin{align*}
  \plrv(x,\xi,\tau)   = \plr (x, \xi + i \tau d \vlr (x)), 
  \quad 
  \tlrv^j (x,\xi,\tau)   = \tlr^j (x, \xi + i \tau d\vlr (x)).
\end{align*}
We have 
\begin{align*}
  \plv(x,\xi,\tau) 
  = p_1 (\sigma(x), \sigma(\xi + i  \tau d \vl (x)))
  = p_1 \big(\sigma(x), \sigma(\xi)  + i  \tau d \varphi_1 (\sigma(x)))\big)
\end{align*}

Now, as $\transp \sigma' (x)^{-1} \nu_1 = \nu_2= \nu$, for a fixed point
$\y'=(x,\xi',\tau)$ with $x \in S$, \ie, $x_n=0$ giving $\sigma(x) =
x$, if we  set $Y =
\sum_{j=1}^{n-1} \xi_{j} d x_j$ and $\omega = (x, Y, \tau)$
we have, for $\tilde{p}_{1,\varphi}$ as defined in \eqref{eq: polynomial transmission}, 
\begin{align}
  \label{eq: tilde p1}
  \tilde{p}_{1,\varphi}(\omega, \xi_n) 
  &= p_1 (x,Y -  \xi_n \nu +i\tau d\varphi_1(x))\\
  &= \pl (x,Y + \xi_n \nu + i \tau d \vl (x))\notag\\
  & = \plv(x, \xi,\tau), \notag
\end{align}
with $\xi = (\xi',\xi_n)$.
Similarly we have 
\begin{align}
  \label{eq: tilde t1}
  \tilde{t}_{1,\varphi}^j(\omega, \xi_n) 
  & = \tlv^j(x, \xi,\tau).
\end{align}
Naturally by \eqref{eq: connexion pkvarphi tildepk}--\eqref{eq: connexion tkvarphi tildetk} we have 
\begin{align*}
  \tilde{p}_{2,\varphi}(\omega, \xi_n) 
  = \prv(x, \xi,\tau), \qquad
  \tilde{t}_{2,\varphi}^j(\omega, \xi_n) 
  = \trv^j(x, \xi,\tau).
\end{align*}

\subsubsection{Symbol factorizations}
\label{sec: symbol factorization}
For a fixed point $\y_0'=(x_0,\xi'_0,\tau_0) \in \sphbundle(V)$  (see the
  definition below in Section~\ref{sec: notation}) with
$ x_0\in S$
we denote the roots of $\plrv(\y_0',\xi_n)$, viewed as a polynomial in
$\xi_n$, by $\alpha_{\lr,1}, \dots,
\alpha_{\lr, n_\lr}$, with respective multiplicities $\mu_{\lr,1}, \dots,
\mu_{\lr, n_\lr}$ satisfying $\mu_{\lr,1} + \cdots + \mu_{\lr,
  n_\lr}=m_\lr$, with $m_\l = m_1$ and $m_r = m_2$. By
\cite[Lemma~A.2]{BLR:13}, there exists a conic open \nhd $\U$ of
$\y_0'$ \st
  \begin{equation}
    \label{eq: symbol factorization}
    \plrv(\y',\xi_n)=\plrv^+(\y',\xi_n)\, \plrv^-(\y',\xi_n)\, \plrv^0(\y',\xi_n),
    \quad \y' \in \U, \ \xi_n \in \R,
\end{equation}
with $\plrv^\pm$ and $\plrv^0$, polynomials in $\xi_n$ of
constant degrees in $\U$, smooth and homogeneous; in $\U$ the
imaginary parts of the roots of $\plrv^+(\y',\xi_n)$ (\resp
$\plrv^-(\y',\xi_n)$) are all positive (\resp negative) and we
have
  \begin{align*}
    \plrv^\pm(\y'_0,\xi_n) = \prod_{\pm \Im \alpha_{\lr,j} >0} (\xi_n - \alpha_{\lr,j})^{\mu_{\lr,j}},
    \quad
    \plrv^0(\y'_0,\xi_n) = \prod_{\Im \alpha_{\lr,j}  =0} (\xi_n - \alpha_{\lr,j} )^{\mu_{\lr,j}}.
  \end{align*}
The polynomials $\plrv$ are thus decomposed into three factors in the
\nhd $\U$ of $\y'_0$. For $\plrv^\pm$ the sign of the imaginary part of
their roots remain constant equal to $\pm$ respectively; for
$\plrv^0$ this sign may change and the roots are precisely real at
$\y' = \y'_0$.  

We then define the polynomial $\klrv(\y',\xi_n)$ by
\begin{equation}\label{poly r}
  \klrv(\y',\xi_n)=\plrv^+(\y',\xi_n)\,  \plrv^0 (\y',\xi_n).
\end{equation}

As above, for the principal symbols of the conjugated transmission
operators $\Tlrv^j$, $j=1, \dots, m$,  we write $\tlrv^j(x,\xi,\tau) = \tlrv^j(\y',\xi_n)$ where
$\y'=(x,\xi',\tau)$ to emphasize that the symbol is polynomial in
$\xi_n$.

\begin{remark}
  \label{rem: factorization stability}
 Observe that the factorizations in \eqref{eq: symbol factorization} depends quite significantly on the
  point $\y'_0$. They may actually be different even for point $\y'$ in
  the \nhd $\U$ introduced above.
  We should rather write something like
  \begin{equation*}
   \plrv(\y',\xi_n)=p_{\lr, \varphi, \y'_0}^+(\y',\xi_n)\, 
    p_{\lr, \varphi,\y'_0}^-(\y',\xi_n)\, 
    p_{\lr,\varphi,\y'_0}^0(\y',\xi_n),
    \quad \y' \in \U, \ \xi_n \in \R,
  \end{equation*}
  in place of \eqref{eq: symbol factorization} and 
  set 
  \begin{equation*}
    \kappa_{\lr,\varphi,\y'_0} (\y',\xi_n) = p_{\lr, \varphi,
      \y'_0}^+(\y',\xi_n)\, p_{\lr, \varphi,\y'_0}^0(\y',\xi_n).
 \end{equation*}
 For $\y'_1 \in \U$ we
  may very well have 
  \begin{equation*}
    p_{\lr, \varphi, \y'_0}^+(\y',\xi_n) 
    \neq p_{\lr, \varphi, \y'_1}^+(\y',\xi_n), 
    \ \ \text{or} \ \
    p_{\lr, \varphi,\y'_0}^-(\y',\xi_n)  
    \neq p_{\lr, \varphi, \y'_1}^-(\y',\xi_n),
    \end{equation*}
    \begin{equation*} 
      \text{or} \ \
    p_{\lr, \varphi,\y'_0}^0(\y',\xi_n)
    \neq p_{\lr, \varphi, \y'_1}^0(\y',\xi_n).
  \end{equation*}
  Yet, we shall see below that the notation in \eqref{eq: symbol
    factorization}  is sufficiently clear for our purpose.

  Still, if we denote by $M_\lr^\pm(\y')$ the number of roots (counted with their
  multiplicities) with postive (\resp negative) imaginary parts of
  $\plrv(\y',\xi_n)$ for $\y' \in
  \U$ we may have $M_\lr^\pm(\y'_0) \neq
  M_\lr^\pm(\y')$ for some $\y' \in \U$. Note that in such case we have
  $M_\lr^\pm(\y'_0) \leq  M_\lr^\pm(\y')$ from the construction of the \nhd $\U$ given in
  \cite[Lemma~A.2]{BLR:13}. Arguing as in the proof of
   \cite[Lemma~A.2]{BLR:13}, using the continuity of the roots
  \wrt $\y'$  we can in
  fact prove that for $\y_1' \in \U$ there exists a conic \nhd $\U'
  \subset \U$ of $\y'_1$ such that 
  \begin{align}
    \label{eq: instability kappa}
    \kappa_{\lr, \varphi, \y'_0}(\y',\xi_n) = h_\lr (\y', \xi_n) 
    \kappa_{\lr, \varphi, \y'_1}(\y',\xi_n), \qquad \y' \in \U', 
  \end{align}
  where $h_\lr (\y', \xi_n)$ is  polynomial in $\xi_n$ with coefficients
  that are smooth \wrt $\y' \in \U'$. 
  \end{remark}
\subsubsection{The transmission conditions in the local coordinates}
\label{sec: transmission condition in local coordinates}

In the present coordinate system  $(x',x_n)$ in $V_+$, a conormal vector
$\nu$ pointing from $\Omega_1$ to $\Omega_2$ is given by $(0,\dots,
0,\nu_n)$ with $\nu_n >0$. For the statement of the transmission condition we may
choose $\nu = (0,\dots,0,1)$. A boundary quadruple
$\omega = (x, Y,\nu, \tau)$, with $Y = (\xi',0)$  can thus be identified with
$\y' = (x,\xi',\tau)$. 

The transmission condition of Definition~\ref{def: transmission
  Lopatinskii} being invariant under change of variables as seen in Section~\ref{sec: invariation by change of
  coordinates}, 
because of \eqref{eq: tilde p1} and \eqref{eq: tilde t1}, 
we may use the polynomials $\plrv$ and $\tlrv^j$
to state locally this condition for
$\big\{P_\lr,T^j_\lr,\varphi_\lr,\ j=1,\dots,m\big\}$ at $\y'_0=(x_0,
\xi'_0,\tau_0)$.  It reads:\\
\indent {\em
For all pairs of polynomials,
  $q_\lr(\xi_n)$, there exist $U_\lr$,  polynomials, and
  $c_j\in\C$, $j=1,\dots,m$, such that
  \begin{align}
    \label{eq: local reformulation transmission 1}
    q_\l(\xi_n)=\sum_{j=1}^{m}c_j\tlv^j(\y',\xi_n)
  +U_\l(\xi_n)\klv(\y',\xi_n),
 \intertext{and}
 \label{eq: local reformulation transmission 2}
  q_r(\xi_n)=\sum_{j=1}^{m}c_j\trv^j(\y',\xi_n)
  +U_r(\xi_n)\krv(\y',\xi_n), 
  \end{align}
for $\y' = \y'_0$. }

\bigskip
We set $m_\lr^- = d^\circ \big(\plrv^-(\y',.)\big)$, that is
{\em independent} of $\y' \in \U$, with the open conic \nhd $\U$ as
introduced above, and we let
$\klrv(\y',\xi_n)$ be the polynomial function given in \eqref{poly r}. It
takes the form 
\begin{equation*}
  \klrv(\y',\xi_n)=\sum_{i=0}^{m_{\lr}-m_{\lr}^-}\kappa_{\lr,i}(\y')\xi_n^i,
  \quad \y'\in \U,\ \xi_n \in \R,
\end{equation*}
where $\kappa_{\lr,i}$ is homogeneous of degree $m_\lr-m^-_\lr-i$ \wrt $(\xi',\tau)$.
Similarly we write
\begin{equation*}
  \tlrv^j(\y',\xi_n)=\sum_{i=0}^{\torder_\lr^j} t_{\lr,i}^j(\y')\xi_n^i,
  \quad \y'\in \U,\ \xi_n \in \R,\quad j=1,2,
\end{equation*}
with $\torder_\l^j = \torder_1^j$ and $\torder_r^j=\torder_2^j$, and where
$t_{\lr,i}^j$ is homogeneous of degree $\torder_\lr^j - i$ \wrt
$(\xi',\tau)$. We recall that we have (see \eqref{eq: assumption beta})
\begin{align}
  \label{eq: assumption beta lr}
  m_\l - \torder^j_\l = m_r - \torder_r^j = m - \torder^j, \quad j=1, \dots, m. 
\end{align}

Now, we introduce two famillies of polynomial functions,
$e_\lr^j(\y', .)$, of degree less than or equal to $m_\lr-1$, for
$j=1, \dots,m_\lr'$, with $m_\lr'= m+m_\lr^-$. We recall that $m =(m_\l+m_r)/2$.
We set 
\begin{align*}
  e_\lr^j (\y', \xi_n) = 
  \begin{cases}
    \tlrv^j (\y', \xi_n) & \text{for}\ 1\leq j \leq m, \\
    \xi_n^{j-(m+1)} \klrv (\y', \xi_n) & \text{for}\ m+1\leq j \leq
    m_\lr', \quad \text{if} \ m_\lr' > m.
  \end{cases}
\end{align*}
Observe that $m_\lr' > m$ if $m_\lr^- >0$.
If we write 
\begin{equation*}
  e_\lr^j(\y',\xi_n)=\sum_{i=0}^{m_\lr-1} e_{\lr,i}^j(\y')\xi_n^i,
\end{equation*}
we thus obtain
\begin{align*}
  &\bullet \text{for}\ 1\leq j \leq m, \ \ e_{\lr,i}^j = \begin{cases}
    t_{\lr,i}^j & \text{if}\  0\leq i \leq \torder_\lr^j, \\
    0          & \text{otherwise}.
   \end{cases}
   \\
   & \bullet \text{for}\ m+1\leq j \leq m_\lr', \ \ e_{\lr,i}^j = \begin{cases}
     \kappa_{\lr,i-j+m+1}  &\text{if}\  j-(m+1) \leq  i \leq j-(m+1)+m_\lr-m_\lr^-, \\
    0          & \text{otherwise}, 
   \end{cases}\\
   & \quad \text{if} \ m_\lr' > m.
\end{align*}

For $1\leq j \leq m$, $e_{\lr,i}^j$ is homogeneous of degree
$\torder_\lr^j-i$  \wrt $(\xi',\tau)$. 
If $m_\lr' > m$, setting 
\begin{equation}
  \label{eq: extended transmission order}
  \torder_\lr^j=j+m_\lr-(m+m_\lr^-+1), \quad j = m+1, \dots,  m_\lr', 
\end{equation}
 we see that, for $m+1\leq j \leq m_\lr'$, 
the tangential symbol $e^j_{\lr,i}$ is  homogeneous of degree $\torder_\lr^j-i$  \wrt
$(\xi',\tau)$ and the symbol $e^j_{\lr}$ is  homogeneous of degree
$\torder_\lr^j$ \wrt $(\xi,\tau)$. 

Introducing the matrices
\begin{align*}
  \mathcal{T}_\lr^1(\y')= \begin{pmatrix} e^j_{\lr,i-1} 
\end{pmatrix}_{{1\leq i\leq m_\lr} \atop {1\leq j\leq m} }, 
\qquad 
\mathcal{T}_\lr^2(\y')= \begin{pmatrix} e^{j+m}_{\lr,i-1} 
\end{pmatrix}_{{1\leq i\leq m_\lr} \atop {1\leq j\leq m_\lr^-} }, 
\end{align*}
by Section~\ref{sec: transmission alternative formulation}, we see
that the transmission condition of Definition~\ref{def: transmission
  Lopatinskii} for $\big\{P_\lr,T^j_\lr,\varphi_\lr,\
j=1,\dots,m\big\}$ at $\y'_0 = (x_0, \xi'_0, \tau_0)\in \sphbundle
(V)$, with $x_0 \in S$, also stated in \eqref{eq: local reformulation
  transmission 1}--\eqref{eq: local reformulation transmission 2} with
the local setting introduced here, reads as follows
  \begin{align}
    \label{eq: rank formulation transmission condition}
    \rank \mathcal{T}  (\y')  = m_\l + m_r = 2m, 
    \qquad \text{with}\ \
    \mathcal{T}  (\y') =\begin{pmatrix}
      \mathcal{T}_\l^1(\y') & \mathcal{T}_\l^2(\y')  & 0 \\ 
      \mathcal{T}_r^1(\y') & 0 & \mathcal{T}_r^2(\y') 
    \end{pmatrix}, 
  \end{align}
  for $\y' = \y'_0$. Note that $2m$ is the number of rows in $\mathcal{T}  (\y')$. 

\medskip
We find again that $m' = m + m_\l^- + m_r^-\geq 2m$. Moreover, there
exists a $2m\times 2m$ sub-matrix $\mathcal{T}_0(\y'_0)$ \st $\det
\mathcal{T}_0(\y'_0)\neq 0$. 
As the coefficients of $\mathcal{T}_\lr^1(\y')$
and $\mathcal{T}_\lr^2(\y')$
are continuous and homogeneous of degree $\torder^j_\lr-i+1$ and $\torder^{j+m}_\lr-i+1$ \wrt $(\xi',\tau)$ respectively, 
where $j$ is  the column number and $i$ is the line number, we then have
 $\det \mathcal{T}_0(\y')\neq 0$ homogenous \wrt
$(\xi',\tau)$.  It follows that $\det \mathcal{T}_0(\y')\neq 0$ for $\y'$ in a
small conic neighborhood $\V \subset \U$ of $\y'_0$. Note that the
homogeneity of the coefficients is important for $\V$ to be chosen
conic since $\det \mathcal{T}_0(\y')$ is itself homogeneous \wrt
$(\xi',\tau)$.  The rank of $\mathcal{T}(\y')$ thus remains equal to
$2m$ in $\V$, meaning that condition \eqref{eq: rank formulation
  transmission condition} is valid for $\y'$ in the whole $\V$.

\medskip
We have thus reached the following result. 
\begin{proposition}
  \label{prop: stability transmission}
  Let the transmission condition $\big\{P_\lr,T^j_\lr,\varphi_\lr,\
  j=1,\dots,m\big\}$  hold at
  $\y'_0=(x_0,\xi'_0,\tau_0)$. Then we have $m_\l^- + m_r^- \geq
  m$. Moreover there exists a conic \nhd $\V$ of $\y'_0$ \st 
\eqref{eq: rank formulation
  transmission condition} is valid for $\y' \in \V$.
\end{proposition}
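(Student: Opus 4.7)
The plan is to unwind the transmission condition into the linear-algebraic statement \eqref{eq: rank formulation transmission condition} at $\y'_0$, read the inequality $m_\l^-+m_r^-\geq m$ directly off the shape of $\mathcal{T}(\y'_0)$, and then use the homogeneity in $(\xi',\tau)$ of its entries to propagate a non-vanishing $2m\times 2m$ minor from $\y'_0$ to an open cone.

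Using the identifications \eqref{eq: connexion pkvarphi tildepk}--\eqref{eq: connexion tkvarphi tildetk} together with \eqref{eq: tilde p1}--\eqref{eq: tilde t1}, the transmission condition of Definition~\ref{def: transmission Lopatinskii} at the interface quadruple attached to $\y'_0$ is equivalent, in local coordinates, to the surjectivity of the map $\Phi$ from Section~\ref{sec: transmission alternative formulation}, with the polynomials $\tilde{e}^j_\lr$ replaced by the local polynomials $e^j_\lr(\y'_0,\cdot)$ built in Section~\ref{sec: transmission condition in local coordinates}. Expanding each $e^j_\lr(\y'_0,\xi_n)$ in the monomial basis $1,\xi_n,\dots,\xi_n^{m_\lr-1}$ of $\C_{m_\lr-1}[\xi_n]$, the matrix of $\Phi$ is precisely $\mathcal{T}(\y'_0)$, and surjectivity becomes $\rank\mathcal{T}(\y'_0)=m_\l+m_r=2m$, that is, \eqref{eq: rank formulation transmission condition} at $\y'=\y'_0$. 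Since $\mathcal{T}(\y'_0)$ has $2m$ rows and $m'=m+m_\l^-+m_r^-$ columns, full row rank forces $m'\geq 2m$, giving the first assertion $m_\l^-+m_r^-\geq m$.

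For the stability in a conic neighborhood, I would then select a $2m\times 2m$ submatrix $\mathcal{T}_0(\y'_0)$ of $\mathcal{T}(\y'_0)$ with $\det\mathcal{T}_0(\y'_0)\neq 0$. The entries of $\mathcal{T}_\lr^1(\y')$ and $\mathcal{T}_\lr^2(\y')$ are smooth on the open conic neighborhood $\U$ produced by the factorization \eqref{eq: symbol factorization} and homogeneous in $(\xi',\tau)$ of the degrees recorded in Section~\ref{sec: transmission condition in local coordinates}, once the extended orders \eqref{eq: extended transmission order} are used for $j>m$. Consequently $\det\mathcal{T}_0(\y')$ is continuous on $\U$ and homogeneous of a fixed degree in $(\xi',\tau)$. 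Continuity supplies an ordinary open neighborhood of $\y'_0$ on which $\det\mathcal{T}_0$ does not vanish, and homogeneity upgrades this to a full open cone $\V\subset\U$ about $\y'_0$; on $\V$ the rank of $\mathcal{T}(\y')$ stays equal to $2m$, which is \eqref{eq: rank formulation transmission condition}. The only point requiring attention, and the reason for the careful homogeneity bookkeeping performed in Section~\ref{sec: transmission condition in local coordinates}, is precisely this passage from a non-conic to a conic neighborhood; without it the microlocal analysis in the sequel would lack the conic stability it needs.
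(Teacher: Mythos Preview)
Your proof is correct and follows essentially the same route as the paper: translate the transmission condition into the full-row-rank statement \eqref{eq: rank formulation transmission condition} at $\y'_0$, read off $m_\l^-+m_r^-\geq m$ from the column count, pick a nonvanishing $2m\times 2m$ minor, and use continuity together with the homogeneity in $(\xi',\tau)$ of the entries to obtain a conic neighborhood $\V\subset\U$ on which the minor stays nonzero. You also correctly flag, as the paper does, that homogeneity is precisely what lets one pass from an ordinary open neighborhood to a conic one.
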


\begin{remark}
 Observe that the result of Proposition~\ref{prop: stability
    transmission} implies the condition \eqref{eq: local reformulation
    transmission 1}--\eqref{eq: local reformulation transmission 2}
  holds for $\y' \in \V \subset \U$, yet with $\kappa_{\lr, \varphi}$
  defined by the symbol factorizations at $\y'_0$, that is
  $\kappa_{\lr, \varphi} = \kappa_{\lr, \varphi,\y'_0}$ using the
  notation of Remark~\ref{rem: factorization stability}.
Now using \eqref{eq: instability kappa} we see that this implies that 
the transmission condition also holds at $\y'_1$.  We thus see that
the transmission condition remains valid in a conic \nhd of
$\y'_0$. However, we shall not use this aspect here. The importance
aspect we shall use is the local persistence of condition \eqref{eq: local reformulation
    transmission 1}--\eqref{eq: local reformulation transmission 2}
  for $\y'$ in a conic \nhd of $\y'_0$ as
stated in Proposition~\ref{prop: stability transmission} (of course
the two are very related). This explains why we do not use the ``more
precise'' notation of Remark~\ref{rem: factorization stability} throughout the article.
\end{remark}

With the locally presistant decomposition of the conjugated opeators 
$\plrv=\plrv^-  \klrv$ in the \nhd $\U$ of $\y'_0$,  the following states roughly the proof
strategy we shall adopt: 
\begin{enumerate}
  \item The factors $\plrv^-$ associated with roots with negative
    imaginary parts yields two perfect elliptic estimates at the
    interface and no transmission condition is needed.
  \item Each  factors $\klrv$ yields an estimate at the interface
    that involves trace  terms. These terms will be  estimated via the
    actions of the transmission  operators $\Tlrv^j$ by means of to the
   transmission condition.
\end{enumerate}
Note that  we have 
$2 m - m_\l^- - m_r^- \leq m$
by Proposition~\ref{prop: stability transmission}.  The number of
trace relations available at the interface in \eqref{eq: local
  transmission system} is precisely $m$.  This indicates that we shall
have at hand a sufficiently large number of transmission relation to
control the terms originating from the estimate with the factors
$\klrv$ that are of degree $m_\lr-m_\lr^-$ whose sum is $ m_\l-m_\l^- +
m_r-m_r^- = 2m - m_\l^- - m_r^- $.

\subsection{Examples}
\label{sec: examples}
We provide several examples that illustrate the generality of the
elliptic transmission problems that can be addressed through the
results of the present article. 

\subsubsection{Second-order elliptic operators}
\label{Sec: examples Second-order elliptic operators}
We start by providing fairly classical cases of transmission problems, where the
operators are of second order on both sides of the interface.
Consider the operators 
\begin{align*}
  P_{k} = \sum_{1\leq i,j \leq n} D_i a_{ij}^{(k)}(x) D_{j},  \quad k=1,2, 
\end{align*}
with real coefficients, 
assumed to be ellitpic, that is, $a_{ij}^{(k)} = a_{ji}^{(k)}$ and $(a_{ij}^{(k)}(x) ) \geq C>0$
uniformly for $x \in \Omega_k$.  
Let $x_0 \in S$. As above, local coordinates  are chosen so that
$\Omega_1 = \{ x_n < 0\}$ and $\Omega_2 = \{ x_n >0 \}$  in a \nhd of $x_0$.

\medskip
\paragraph{\bfseries A. A natural transmission problem for second-order elliptic
  operators}
A natural and classical  transmission problem can be stated with the following interface 
operators 
\begin{align*}
  T_{k}^1 = (-1)^k, \quad T_k^2 = (-1)^k \sum_{1\leq j \leq n} 
  a_{nj}^{(k)}(x) D_j, \qquad k=1,2.
\end{align*}
The transmission problem thus reads
\begin{align*}
  P_1 u_1 = f_1\ \text{in} \ \Omega_1, \quad P_2 u_2 = f_2  \ \text{in} \ \Omega_2, 
\end{align*}
and
\begin{align*}
 {u_1}_{|S} = {u_2}_{|S}  \quad \text{and}  \quad \sum_{1\leq j \leq n} 
  a_{nj}^{(1)}(x) {D_j u_1}_{|S} =  \sum_{1\leq j \leq n} 
  a_{nj}^{(2)}(x) {D_j u_2}_{|S}, 
\end{align*}
that is,  we impose,  at the interface, the  continuity 
of the solution, as well as that of the normal flux (in the sense of
the anisotropic diffusion matrices $(a_{ij}^{(1)})$ and
$(a_{ij}^{(2)})$) . This is physically
very natural, and, mathematically, it implies that piecewise smooth functions satisfying 
these two conditions are in the domain of the self-adjoint operator
$P = \nabla \cdot A \nabla$, where $A= (a_{ij}(x))$ with $a_{ij}(x) =
a^{(k)}_{ij}(x)$ if $x \in \Omega_k$, $k=1,2$.

This
configuration was treated in \cite{LR-L:13} following some works
on some particular ``conformal'' cases
\cite{Bellassoued:03,LRR:10}. Since this example has been extensively
studied, it is natural to question if the results of the present
article generalize those provided in these references.

We choose a weight function $\varphi(x)$ that is smooth on both sides
of $S$ and continuous across $S$. Then for an interface quadruple
$\omega = (x_0, Y, \nu, \tau)$, with here $\nu = (0, \dots, 0,1)$,
$Y=(\xi',0) = (\xi_1, \dots, \xi_{n-1}, 0)$, we have, for $k=1,2$,  
\begin{align*}
  \tilde{t}_{k,\varphi}^1(\omega, \lambda) = (-1)^k,
\end{align*}
and 
\begin{align*}
  \tilde{t}_{k,\varphi}^2(\omega, \lambda) 
  =(-1)^k a_{nn}^{(k)}(x_0) \big((-1)^k\lambda + i  \tau\d_{x_n} \varphi_k(x_0)\big) 
  +  (-1)^k \sum_{1\leq j \leq
    n-1} a_{nj}^{(k)}(x_0) \big(\xi_j + i \tau \d_{x_j}\varphi_k(x_0)\big).
\end{align*}
As $a_{nn}^{(k)}\neq 0$ because of the ellipticity of $P_k$ we see
that $\tilde{t}_{k,\varphi}^2(\omega, \lambda) $ are exactly of degree
1 in $\lambda$.

Observe that we can write the principal symbol of $P_k$, $k=1,2$, as
  \begin{align*}
    p_k(x,\xi) = a_{nn}^{(k)} \Big( 
    \big( \xi_n + \sum_{j=1}^{n-1} a_{nj}^{(k)}/ a_{nn}^{(k)}  \xi_j  \big)^2 + b_k(x,\xi')
    \Big), 
    \end{align*}
    with the quadratic form
    \begin{align*}
    b_k(x,\xi') = \big( a_{nn}^{(k)} \big)^{-2}\sum_{i,j=1}^{n-1}  \big(a_{ij}^{(k)}a_{nn}^{(k)} -
    a_{ni}^{(k)} a_{nj}^{(k)} \big) \xi_i \xi_j,
  \end{align*}
  which is positive definite in $\xi'$. We thus find
  \begin{align*}
    \tilde{p}_{k,\varphi}(\omega, \lambda) &= a_{nn}^{(k)}(x_0)  \Big( 
    \big( (-1)^k \lambda + i \tau \d_{x_n} \varphi_k (x_0)  + \sum_{j=1}^{n-1} a_{nj}^{(k)}(x_0) / a_{nn}^{(k)}(x_0) 
    (\xi_j + i \tau \d_{x_j}\varphi_k (x_0) )  \big)^2\\
    &\qquad \qquad \qquad \qquad+ b_k(x_0 ,\xi' + i\tau d_{x'}\varphi_k (x_0) )
    \Big)
\end{align*}
In fact, we may write $b_k(x,\xi' + i\tau d_{x'}\varphi_k) =(A_k - i B_k)^2$, 
  where $A_k$ and $B_k$ are functions of $x$, $\xi'$  and $\tau$,
  homogeneous of degree one in $(\xi',\tau)$, with
  $A_k \geq 0$, \and we
  thus find
  \begin{align*}
    \tilde{p}_{k,\varphi}(\omega,
    \lambda) 
    &= a_{nn}^{(k)} \prod_{k'=1,2} \Big( (-1)^k \lambda 
      + i  \tau \d_{x_n} \varphi_k  (x_0)
      + \sum_{j=1}^{n-1} a_{nj}^{(k)}/ a_{nn}^{(k)}(x_0)
    (\xi_j + i \tau \d_{x_j}\varphi_k (x_0)) \\
    & \qquad \qquad  \qquad \qquad+  (-1)^{k'} ( B_k + i
      A_k) \Big).
  \end{align*}

Writing $\tilde{p}_{k,\varphi}(\omega,
    \lambda) = a_{nn}^{(k)} (\lambda - \sigma_k^1(\omega)) (\lambda -
    \sigma_k^2(\omega))$, 
several cases can occur depending on the signs of the
imaginary part of the roots $\sigma_k^j$, $k=1,2$, $j=1,2$. 

\medskip
\paragraph{{\bfseries Case 1. Either $\bld{\tilde{p}_{1,\varphi}(\omega, \lambda)}$ or
    $\bld{\tilde{p}_{2,\varphi}(\omega, \lambda)}$ has two roots in $\bld{\{ \Im z
    <0\}}$.}} 
Assume that for instance $\bld{\tilde{p}_{1,\varphi}(\omega,
  \lambda)}$ has its two roots in $\{ \Im z<0\}$. Then
$\kappa_{1,\varphi} =1$ while $\kappa_{2,\varphi}(\omega, \lambda) $ is of degree 0, 1,
or 2. Let then $q_1(\lambda),   q_2(\lambda)$ be two polynomial functions.

As $\tilde{t}_{2,\varphi}^1(\omega, \lambda)= 1$, and $\tilde{t}_{2,\varphi}^2(\omega, \lambda)$ is
exaclty of degree 1, we
may write 
\begin{align*}
  q_2(\lambda) = c_1 \tilde{t}_{2,\varphi}^1(\omega, \lambda) + c_2 \tilde{t}_{2,\varphi}^2(\omega, \lambda) +
  U_2(\lambda) \kappa_{2,\varphi}(\omega,\lambda).
\end{align*}
by means of a Euclidean division by $\kappa_{2,\varphi}(\omega,
\lambda)$, writing the remainder polynomial as a linear combination of $\tilde{t}_{2,\varphi}^1(\omega, \lambda)$
and $\tilde{t}_{2,\varphi}^2(\omega, \lambda)$.
We then have 
\begin{align*}
  q_1(\lambda) = c_1 \tilde{t}_{1,\varphi}^1(\omega, \lambda) + c_2 \tilde{t}_{1,\varphi}^2(\omega, \lambda) +
  U_1(\lambda) \kappa_{1,\varphi}(\omega,\lambda), 
\end{align*}
by simply choosing $U_1(\lambda)  = q_1(\lambda) - c_1
\tilde{t}_{1,\varphi}^1(\omega, \lambda) - c_2
\tilde{t}_{1,\varphi}^2(\omega, \lambda)$.
The transmission condition of Definition~\ref{def: transmission Lopatinskii} thus holds in this case.

\medskip
\paragraph{{\bfseries Case 2. Each symbol 
    $\bld{\tilde{p}_{k,\varphi}(\omega, \lambda)}$, $\bld{k=1,2}$ has only one roots in $\bld{\{ \Im z
    <0\}}$.}} 

In this case, both polynomials $\kappa_{1,\varphi}(\omega,\lambda)$ and
$\kappa_{2,\varphi}(\omega,\lambda)$ are of degree 1 in $\lambda$.
As we chose $A_k \geq 0$, the root of $\tilde{p}_{k,\varphi}(\omega,
    \lambda)$ with non-negative imaginary
part is given by 
$\sigma_k^+ = - \zeta_k / a_{nn}^{(k)}(x_0)  
  + B_k+i   A_k$ with
\begin{align*}
 \zeta_k &= (-1)^k \big(\sum_{j=1}^{n-1} a_{nj}^{(k)}(x_0)(\xi_j + i
            \tau \d_{x_j} \varphi_j(x_0)) + i \tau
            a_{nn}^{(k)}(x_0) \d_{x_n} \varphi_k(x_0) \big) .
\end{align*}

To understand whether the transmission condition holds or not, it is
handy to use the matrix $\mathcal{T}$ introduced in \eqref{eq: rank
  formulation transmission condition}. It is a $4 \times 4$ matrix
here.  Using \eqref{eq: tilde p1} and \eqref{eq: tilde t1}, we find
\begin{align*}
    \mathcal{T}  =\begin{pmatrix}
      -1 & \zeta_1 &  - \sigma_1^+& 0 \\
      0 & a_{nn}^{(1)} & 1 & 0\\
      1 & \zeta_2 & 0 & -\sigma_2^+\\
      0 & a_{nn}^{(2)} & 0 & 1
    \end{pmatrix}.
  \end{align*}
Observe that the rank of $\mathcal{T}$ is
4 if and only if $\zeta_1 + \zeta_2 \neq - a_{nn}^{(1)} \sigma_1^+ -
a_{nn}^{(2)} \sigma_2^+$. This yields the condition 
\begin{align*}
  a_{nn}^{(1)} ( B_1 + i A_1) + a_{nn}^{(2)} ( B_2 + i A_2) \neq 0. 
\end{align*}
As we have $A_1 \geq 0$ and $A_2 \geq 0$ this condition holds if $A_1>0$
or $A_2>0$. In fact if $A_k =0$ this mean that $\Im \sigma_k^1 = \Im
\sigma_k^2$ which is exluded here. Hence we have $A_1>0$
and $A_2>0$ and the transmission condition holds in this case. 

\medskip
\paragraph{{\bfseries Case 3. One symbol $\bld{\tilde{p}_{k,\varphi}(\omega, \lambda)}$ has two roots in $\bld{\{ \Im z
    \geq 0\}}$ and the second one has at most one root in   $\bld{\{ \Im z
    < 0\}}$.}}
Assume, for instance, that $\tilde{p}_{1,\varphi}(\omega, \lambda)$
has two roots in $\{ \Im z \geq 0\}$. Then, $\kappa_{1,\varphi}$ is of
degree 2. With the assumption on $\tilde{p}_{2,\varphi}(\omega,
\lambda)$, then, $\kappa_{1,\varphi}$ is at least of degree 1.  In
this case, we find that the matrix $\mathcal{T} (\y') $ has 4 lines
and, at most, 3 columns. It cannot be of rank 4. Hence, the
transmission condition cannot hold in this case.

  \bigskip
  From the three exhaustive cases studied above we thus conclude that the derivation of a Carleman estimate in a \nhd
  of the point $x_0$ can be achieved, according to Theorem~\ref{theorem: Carleman},  if one chooses the weight function
  $\varphi$ so that Case~3 does not occur.

  \bigskip 
  Using the computation made above we write 
  \begin{align*}
    \tilde{p}_{k,\varphi}(\omega,
    \lambda) 
    &= a_{nn}^{(k)} \prod_{k'=1,2} \Big( (-1)^k \lambda     
      + \sum_{j=1}^{n-1} a_{nj}^{(k)}/ a_{nn}^{(k)}(x_0)\xi_j 
      +  i \tau \gamma_k + (-1)^{k'} ( B_k + i A_k) \Big).
  \end{align*}
  where 
  \begin{align}
    \label{eq: example def gamma}
    \gamma_k =  \d_{x_n} \varphi_k  (x_0) + \sum_{j=1}^{n-1} a_{nj}^{(k)}/ a_{nn}^{(k)}(x_0) \d_{x_j}\varphi_k (x_0)
  \end{align}
  The values of $\gamma_k$ are fixed by the
  choice of the weight function $\varphi$. 
  The imaginary parts of the roots are given by 
  \begin{align}
    \label{eq: imaginary part roots}
    \Im \alpha_{k}^{k'} = - (-1)^k 
    \big(\tau   \gamma_k + (-1)^{k'}A_k \big).
  \end{align}

  If both   $b_k(x,\xi' + i\tau d_{x'}\varphi_k)$, $k=1,2$,  are  nonpositive
  real numbers, that is $A_k=0$,  in particular if $\xi'=0$ and $\tau>0$,
  then, the imaginary parts of the roots of
  $\tilde{p}_{k,\varphi}(\omega, \lambda)$ coincide:  $\Im
  \alpha_{k}^{k'}= - (-1)^k \tau \gamma_k$, $k'=1,2$.
  In particular, this requires 
  \begin{align}
    \label{eq: condition slope}
    \gamma_2> 0 \ \ \text{if} \ \  \gamma_1 \geq 0,\qquad \text{and}
    \qquad \gamma_1< 0 \ \ \text{if} \ \  \gamma_2 \leq 0,
  \end{align}
  as, otherwise, we face
  the occurence of Case~3. The case $\gamma_1 \geq 0$ and $\gamma_2
  \leq 0$ is thus excluded.
 
  Let us assume that $\gamma_1 \geq 0$ and $\gamma_2 >0$. 
  On the one hand, as we have assumed $A_k\geq 0$, the root $\alpha_1^2$ remains in $\{ \Im
  z \geq 0\}$ and the root $\alpha_2^2$ remains in $\{ \Im
  z <0\}$. On the other hand, we have
  \begin{align*}
    \Im \alpha_1^1 = \tau \gamma_1 - A_1, \qquad \Im \alpha_2^1 = - \tau \gamma_2 + A_2.
  \end{align*}
  Hence, Case~3 does not occur if and only if $\Im \alpha_2^1 \geq 0 \ \imp \
  \Im \alpha_1^1 < 0 $,
  that is, 
  \begin{align*}
    0< \tau \gamma_2  \leq  A_2\ \imp \ 0 \leq \tau \gamma_1 < A_1.
  \end{align*}
 A sufficient condition is then
  \begin{align}
    \label{eq: slope condition}
    \frac{\gamma_1}{\gamma_2} 
   <  \frac{A_1}{A_2} \quad \text{if} \ A_2 \neq 0.
 \end{align}
Observe that in the case where $\varphi_k$ are only  functions of $x_n$,
then $A_1$ and $A_2$ are independent of $\tau$ and \eqref{eq: slope
  condition} becomes a {\em necessary and sufficient} condition for the
transmission condition to hold.

\medskip
 The case $\gamma_1 < 0$ and $\gamma_2 \leq 0$ leads similarly to the
sufficient condition
 \begin{align*}
    \frac{\gamma_2}{\gamma_1} 
   <  \frac{A_2}{A_1} \quad \text{if} \ A_1 \neq 0.
 \end{align*}

Finally, we consider the case $\gamma_1< 0$ and $\gamma_2>0$. 
Because of \eqref{eq: imaginary part roots} we then find that Case~3
cannot occur with this choice of $\gamma_1$ and $\gamma_2$. 
This particular choice, is however not very interesting, as it somehow corresponds to an observation\footnote{This
   interpretation makes sense  in the case $\varphi =
  \varphi(x_n)$. Then $\gamma_k = \d_{x_n} \varphi_k$. In Carleman
  estimates, ``observation'' region are associated with regions where
  the weight function is the largest.} of the transmission
  problem both from $\Omega_2$ and $\Omega_1$. 
  The choice $\gamma_1\geq 0$ and $\gamma_2>0$ correspond to an
  observation of the transmission problem from $\Omega_2$ only. This
  is relevant for pratical applications, for instance, in unique
  continuation problems, as one may want to find uniqueness across an
  interface, having information on the solution on one side of the
  interface only. Similarly, the choice $\gamma_1< 0$ and $\gamma_2\leq 0$
  corresponds to an observation  from $\Omega_1$ only. 
 
\medskip
\paragraph{\bfseries Remark and open question.}
Note that with \eqref{eq: slope condition} we recover the condition
stated in \cite{LR-L:13}. There, in the case
$\varphi_k = \varphi_k(x_n)$ it is proven to be sharp for a Carleman
estimate to hold. This raises the following question: is the
transmission condition presented here necessary and sufficient for the
Carleman estimate to hold? Sufficiency is the subject of the present
article. Necessity is clear in particular cases as shown in \cite{LR-L:13} but it
is not clear in general. Second-order transmission problems, in the case where $\varphi_k$
depend on $x_n$ and also $x'$ would be a natural field of investigation, but
the question extends to higher order transmission problems.

\medskip
\paragraph{\bfseries  B. Two ``non communicating'' Dirichlet problems}
If we consider 
\begin{align*}
T^1_1 = T_2^1 = 1, \qquad T^2_1 = -T_2^2=1, 
\end{align*}
observe, then, that the transmission problem
\begin{align*}
  P_1 u_1 = f_1 \ \text{in} \ \Omega_1, 
  \quad  P_2 u_2 = f_2\ \text{in} \ \Omega_2, 
\end{align*}
and
\begin{align*}
  {T^1_1 u_1}_{|S} +   {T^1_2 u_2}_{|S}= g_1, \quad 
  {T^2_1 u_1}_{|S} +   {T^2_2 u_2}_{|S}= g_2,
\end{align*}
corresponds two  having the following two problems
\begin{align*}
  P_1 u_1 = f_1,\quad  {u_1}_{|S} = (g_1 + g_2)/2,  
\end{align*}
and
\begin{align*}
  P_2 u_2 = f_2,\quad  {u_2}_{|S} = (g_1 - g_2)/2,  
\end{align*}
that is,  two well-posed elliptic equations with {\em independent}  Dirichlet
boundary conditions.

Using the notation and symbol computations of the previous example,
let us 
assume that one of the symbols $\tilde{p}_{k,\varphi}(\omega,
\lambda)$,  say for $k=1$, has two roots in $\{ \Im z \geq 0\}$.  Then,
the matrix  $\mathcal{T}$ introduced in \eqref{eq: rank
  formulation transmission condition} takes the form
\begin{center}
\begin{tikzpicture}[
style1/.style={
  matrix of math nodes,
  every node/.append style={text width=#1,align=center,minimum height=3.2ex},
  nodes in empty cells,
  left delimiter=(,
  right delimiter=),
  }
]
\matrix[style1=0.2cm] (1mat)
{
  & & &  & \\
  &  & &  & \\
  & & &   &\\
  & & &  &\\
};
\node[font=\normalsize] 
  at (1mat-1-1.west) {1};
  \node[font=\normalsize] 
  at (1mat-2-1.west) {0};
 \node[font=\normalsize] 
  at (1mat-3-1.west) {1};
   \node[font=\normalsize] 
  at (1mat-4-1.west) {0};
  
  \node[font=\normalsize] 
  at (1mat-1-2) {1};
  \node[font=\normalsize] 
  at (1mat-2-2) {0};
 \node[font=\normalsize] 
  at (1mat-3-2) {-1};
   \node[font=\normalsize] 
  at (1mat-4-2) {0};

   \node[font=\huge] 
  at (1mat-2-4.north east) {0};
  
   \node[font=\huge] 
  at (1mat-3-4.south east) {$\otimes$};
  
  \node[font=\normalsize] 
  at (1mat-2-1.south west) {\hspace*{-2cm}$\mathcal T=$};

  \draw[dashed]
  (1mat-2-3.south) -- (1mat-2-5.south east);
   \draw[dashed]
  (1mat-2-3.south)-- (1mat-1-3.north);
   \draw[dashed]
  (1mat-2-3.south) -- (1mat-4-3.south);
\end{tikzpicture}
\end{center}
and thus cannot be of rank 4. Hence, to derive a Carleman
estimate, according to Theorem~\ref{theorem: Carleman}, we need to avoid an
occurance of two roots associated with the same symbol $\tilde{p}_{k,\varphi}(\omega,
\lambda)$ in $\{ \Im z \geq 0\}$. With $\gamma_k$, as
defined in \eqref{eq: example def gamma}, and the imaginary parts of
the roots given in \eqref{eq: imaginary part roots}, we note that we
then need to impose $\gamma_1 <0$ and $\gamma_2 >0$. As explained in
the previous example, this corresponds to observing the transmission
problem from both sides of the interface $S$. This is totally sensible
here, as the example we consider represents two totally decoupled
ellitpic problems: observing from one side of the interface cannot
yield any information about the system on the other side. As
Theorem~\ref{theorem: Carleman} implies unique continuation properties
(see Section~\ref{sec: unique continuation}) we see that it is very natural that the Carleman
estimate  cannot be derived, unless observations are made on both sides.

\subsubsection{Higher-order elliptic operators}

Here, we consider an example that involves both a second- and a
fourth-order elliptic operator. In $\R^2$, we consider the operators
$P_k(x,D)$, $k=1,2$, such that in the local coordinates as above, the
principal symbols are given by
\begin{equation*}
p_1(x,\xi_1,\xi_2)=\xi_2^2+b_1(x,\xi_1),\qquad 
p_2(x,\xi_1,\xi_2)=\xi_2^4+b^2_2(x,\xi_1),
\end{equation*}
where $b_k(x,.)$, $k=1,2$, are two positive definite quadratic
forms. We assume that the principal symbols of the transmission operators are given by
\begin{equation*}
t_1^1(x,\xi_1,\xi_2)=-1,\quad t_2^1(x,\xi_1,\xi_2)=1,
\end{equation*}
\begin{equation*}
t_1^2(x,\xi_1,\xi_2)=-\xi_2,\quad t_2^2(x,\xi_1,\xi_2)=\xi_2^3,
\end{equation*}
\begin{equation*}
t_1^3(x,\xi_1,\xi_2)=0,\quad t_2^3(x,\xi_1,\xi_2)=\xi_2^2.
\end{equation*}
We choose a weight function $\varphi(x)=\varphi(x_2)$ that is smooth on both sides of $S$ and continous across $S$. Then for an interface quadruple $\omega=(x_0,Y,\nu,\tau)$, with $\nu=(0,1)$, $Y=(\xi_1,0)$, we have for $\mu_k=-i\tau\varphi'_k$, 
\begin{equation*}
\tilde{t}_{1,\varphi}^1(\omega,\lambda)=-1,
\quad \tilde{t}_{2,\varphi}^1(\omega,\lambda)=1,
\end{equation*}
\begin{equation*}
\tilde{t}_{1,\varphi}^2(\omega,\lambda)=(\lambda+\mu_1),
\quad\tilde{t}_{2,\varphi}^2(\omega,\lambda)=(\lambda-\mu_2)^3,
\end{equation*}
\begin{equation*}
\tilde{t}_{1,\varphi}^3(\omega,\lambda)=0,
\quad \tilde{t}_{2,\varphi}^3(\omega,\lambda)=(\lambda-\mu_2)^2.
\end{equation*}
For smooth function $\varphi_k$, $k=1,2$,  such that
$\varphi_k$ is only a function of $x_2$, we assume that $\d_{x_2}\varphi_1(x_0)>0$ and $\d_{x_2}\varphi_2(x_0)>0$. We obtain
\begin{equation*}
\tilde{p}_{1,\varphi}(\omega,\lambda)=(\lambda-\alpha_1)(\lambda-\alpha_2),
\end{equation*}
where
\begin{equation*}
\alpha_1=i\tau \d_{x_2} \varphi_{1}(x_0)+i\sqrt{b_1(x_0,\xi_1)},\quad \alpha_2=i\tau \d_{x_2}\varphi_{1}(x_0)-i\sqrt{b_1(x_0,\xi_1)}.
\end{equation*}
We thus have $\Im \alpha_1>0$. The sign of $\Im \alpha_2$ may however vary.
We also have 
\begin{equation*}
\tilde{p}_{2,\varphi}(\omega,\lambda)=\prod_{j=1}^4(\lambda-\beta_j)
\end{equation*}
where
\begin{equation*}
\beta_j=-i\tau \d_{x_2}\varphi_2(x_0)-e^{i\pi(2j-1)/4}\sqrt{b_2(x_0,\xi_1)},\quad j=1,2,3,4.
\end{equation*}
As  $\d_{x_2}\varphi_2(x_0)>0$, this forbid all the roots to be in the upper
complex half plane. Then $\Im \beta_1<0$ and $\textrm{Im}\beta_2<0$. 
Yet, the signs of $\Im \beta_3$ and $\Im \beta_4$ are equal and may vary.
We have 
\begin{align*}
  \Im \beta_3 = \Im \beta_4= - \tau \d_{x_2} \varphi_2(x_0) + \sqrt{b_2(x_0,\xi_1)/2}.
\end{align*}

According to Section~\ref{sec: transmission condition in local
  coordinates}, using \eqref{eq: tilde p1} and \eqref{eq: tilde t1},
we have,
\begin{equation*}
\mathcal{T}^1_\l=\begin{pmatrix}
-1 & \mu_1 & 0 \\ 
0 & 1 & 0
\end{pmatrix},
\quad 
\mathcal{T}^1_r=\begin{pmatrix}
1 & -\mu_2^3 & \mu_2^2 \\ 
0 & 3\mu_2^2 & -2\mu_2\\
0 & -3\mu_2 & 1\\
0 & 1 & 0
\end{pmatrix}.
\end{equation*}

\medskip
\paragraph{{\bfseries Case 1. $\bld{\Im \alpha_2 <0}$ and $\bld{ \Im \beta_3
    = \Im \beta_4 <0}$.}}
In this case, we have $\tilde{\kappa}_{1,\varphi}(\omega,\lambda)=(\lambda-\alpha_1)$
and 
$\tilde{\kappa}_{2,\varphi}(\omega,\lambda) =1$.
We then have 
\begin{equation*}
\mathcal{T}_\ell^2=\begin{pmatrix}
-\alpha_1 \\ 
1
\end{pmatrix}
\end{equation*}
and $\mathcal{T}^2_r= \id_4$.
Recalling the form of $\mathcal{T}$ in \eqref{eq: rank formulation
  transmission condition}, we have
\begin{center}
\begin{tikzpicture}[
style1/.style={
  matrix of math nodes,
  every node/.append style={text width=#1,align=center,minimum height=3.2ex},
  nodes in empty cells,
  left delimiter=(,
  right delimiter=),
  }
]
\matrix[style1=0.2cm] (1mat)
{
  & & &  & & &\\
  &  & &  & & &\\
  & & &   & & &\\
  & & &  & & &\\
};
\node[font=\Large] 
  at (1mat-1-1.south east) {$\mathcal T_\ell^1$};
  \node[font=\Large] 
  at (1mat-3-1.south east) {$\mathcal T_r^1$};
   \node[font=\huge] 
  at (1mat-2-6.north east) {0};  
   \node[font=\Large] 
  at (1mat-3-6.south east) {$\id_4$}; 
  \node[font=\normalsize] 
  at (1mat-2-1.south west) {\hspace*{-2cm}$\mathcal T=$};
  \node[font=\normalsize] 
  at (1mat-1-4) {$-\alpha_1$};
  \node[font=\normalsize] 
  at (1mat-2-4) {$1$};
   \node[font=\Large] 
  at (1mat-3-4.south) {0};
  \draw[dashed]
  (1mat-2-1.south west) -- (1mat-2-7.south east);
   \draw[dashed]
  (1mat-2-3.south)-- (1mat-1-3.north);
   \draw[dashed]
  (1mat-2-3.south) -- (1mat-4-3.south);
   \draw[dashed]
  (1mat-2-5.south)-- (1mat-1-5.north);
  \draw[dashed]
  (1mat-2-5.south) -- (1mat-4-5.south); 
\end{tikzpicture}
\end{center}
whose rank is 6 as $\textrm{rank} \mathcal{T}_\ell^1=2$. Hence, the transmission condition holds in
this case, by its formulation given in \eqref{eq: rank formulation
  transmission condition}.

\paragraph{{\bfseries Case 2. $\bld{\Im \alpha_2 \geq 0}$ and $\bld{ \Im \beta_3
    = \Im \beta_4 <0}$.}}
In such case, we have 
$\tilde{\kappa}_{1,\varphi}(\omega,\lambda)=(\lambda-\alpha_1)(\lambda-\alpha_2)$
and $\tilde{\kappa}_{2,\varphi}(\omega,\lambda) =1$. As $m_\ell^-=0$
then no matrix $\mathcal{T}_\ell^2$ enters in the composition of
$\mathcal{T}$. Still, this matrix has the same  form as in Case 1
with the fourth column removed and, in this case, the rank is $6$
implying that the transmission condition holds in
this case.

\paragraph{{\bfseries Case 3. $\bld{\Im \alpha_2 <0}$ and $\bld{ \Im \beta_3
    = \Im \beta_4 \geq 0}$.}}

In such case, we have 
$\tilde{\kappa}_{1,\varphi}(\omega,\lambda)=(\lambda-\alpha_1)$
and $\tilde{\kappa}_{2,\varphi}(\omega,\lambda) =(\lambda-\beta_3) (\lambda-\beta_4)$.
We thus have 
\begin{equation*}
\mathcal{T}_\ell^2=\begin{pmatrix}
-\alpha_1 \\ 
1
\end{pmatrix},
\quad 
\mathcal{T}^2_r=\begin{pmatrix}
\beta_3\beta_4 & 0\\ 
-(\beta_3+\beta_4) & \beta_3\beta_4\\
1 & -(\beta_3+\beta_4) \\
0 & 1
\end{pmatrix}.
\end{equation*}
Thus, the matrix $\mathcal T$ reads
\begin{equation*}
\mathcal{T}=\begin{pmatrix}
-1 & \mu_1 & 0 & -\alpha_1 & 0 & 0\\ 
0 & 1 & 0 & 1& 0& 0\\
1 & -\mu_2^3 & \mu_2^2 & 0 &\beta_3\beta_4 & 0\\ 
0 & 3\mu_2^2 & -2\mu_2 & 0 &-(\beta_3+\beta_4) & \beta_3\beta_4\\
0 & -3\mu_2 & 1 & 0 &1 & -(\beta_3+\beta_4)\\
0 & 1 & 0& 0 & 0 & 1
\end{pmatrix}.
\end{equation*}
Computing its determinant we find $\det (\mathcal{T}) =
b_2(x_0,\xi_1)^2$ that does not vanish as here $\xi_1\neq 0$. In fact,
$\xi_1=0$ yields $\Im \alpha_2 = \tau \d_{x_2} \varphi_1(x_0)>0$ in contradiction
with the assumption $\Im \alpha_2<0$ made here.

\medskip
\paragraph{{\bfseries Case 4. $\bld{\Im \alpha_2 \geq 0}$ and $ \bld{\Im \beta_3
    = \Im \beta_4 \geq 0}$.}}
In this case we have
$\tilde{\kappa}_{1,\varphi}(\omega,\lambda)=(\lambda-\alpha_1) (\lambda-\alpha_2)$
and $\tilde{\kappa}_{2,\varphi}(\omega,\lambda) =(\lambda-\alpha_3) (\lambda-\alpha_4)$.
In such case the matrix $\mathcal{T}$ is a $6 \times 5$ matrix. Its
rank cannot be $6$. The transmission condition cannot hold in this
case.

\bigskip The four exhaustive cases studied above reveal that the weight
function $\varphi$ needs to be chosen so that  Case 4 does not
occur. Hence, the following condition needs to be fulfilled:
\begin{align*}
  \Im \beta_3 = \Im \beta_4 \geq 0 \quad \Rightarrow \quad \Im
  \alpha_2 <0.
\end{align*}
Recalling the forms of the roots derived above this reads
\begin{align*}
  0< \tau \varphi_1' (x_0)\leq \sqrt{b_2(x_0,\xi_1)/2} \quad \Rightarrow \quad
  0 < \tau \varphi_2' (x_0)< \sqrt{b_1(x_0,\xi_1)}.
\end{align*}
A necessary and sufficient condition is then 
\begin{align*}
  \frac{\varphi_1'(x_0)}{\varphi_2'(x_0)} < \sqrt{\frac{2 b_1(x_0,\xi_1) }{b_2(x_0,\xi_1)}}.
\end{align*}
Since $b_1(x,\xi_1) / b_2(x,\xi_1)$ is bounded from below, for any
$\xi_1$, locally in $x$, we see that this yields a precise condition
on the weight function $\varphi$. The condition prescribes a minimal
relative jump of the normal derivative of the weight function across
the interface (going from $\{x_2<0\}$ to $\{x_2>0\}$). Note that one can also provide a sufficient condition in
the case $\varphi$ depends also on the $x_1$ variable, as in
Example~\ref{Sec: examples Second-order elliptic operators}-A.

\subsection{Notation}
\label{sec: notation}

If $V \subset \Rpb$ we denote the semi-classical unit half cosphere
bundle over
$V$ (in the cotangential direction $\xi'$) by 
$$
\sphbundle(V) = \{ (x,\xi',\tau); \ x \in V,
\ \xi' \in \R^{n-1},\ \tau \in \R_+, \ |\xi'|^2 + \tau^2 =1
\}.
$$
The canonical inner product in $\C^m$ is denoted by
$\para{\zb,\zb'}_{\C^m} = \sum_{j=0}^{m-1} z_j \ovl{z'}_j$, for $\zb=(z_0,\dots,z_{m-1}), \zb'=(z'_0,\dots,z'_{m-1})\in \C^m$.
The associated norm will be denoted $\abs{\zb}_{\C^m}^2=\sum_{j=0}^{m-1}|z_j|^2$.

\medskip
We shall use some spaces of smooth functions in the closed half space.
We set
\begin{equation*}
 \S(\Rpb) = \{ u_{|\Rpb}; \ u \in \S(\R^n)\}.
\end{equation*}

For two $u, v \in \S(\Rpb)$ we set
\begin{equation*}
  \para{u,v}_+ = \para{u,v}_{L^2(\Rp)}
  \qquad
  \para{u\br,v\br}_\d =  \para{u\br,v\br}_{L^2(\R^{n-1})}.
\end{equation*}
We also set 
\begin{equation*}
  \Norm{u}_+ = \Norm{u}_{L^2(\Rp)}
  \qquad
  \norm{u\br}_\d =  \norm{u\br}_{L^2(\R^{n-1})}.
\end{equation*}

In this article, when the constant $C$ is used, it refers to a
constant that is independent of the large parameter $\tau$.  Its
value may however change from one line to another. If we want to keep
track of the value of a constant we shall use another letter.

In what follows, for concision, we shall sometimes use the notation
$\lesssim$ for $\leq C$, with a constant $C>0$.
We shall write $a \asymp b$ to denote $a \lesssim b \lesssim a$.

\section{Pseudo-differential operators with a large parameter}
\label{sec: pseudo}
\setcounter{equation}{0}

Parameter-dependent pseudo-differential operators have proven to be
important tools for the derivation of Carleman estimates. The general
aim is to obtain a pseudo-differential calculus with a large
parameter, and then to derive estimates with constants
that are independent of the parameter. Often such a
pseudo-differential calculus is referred to as a semi-classical calculus.

\subsection{Classes of symbols}
\label{sec: symbol classes}
We first introduce symbols that depend on a parameter.
\begin{definition}
  \label{def: semi-classical symbols}
  Let $a(\y) \in \Cinf(\R^n\times\R^n)$, $\y =(x,\xi,\tau)$, with $\tau$ as a parameter
  in $ [\tau_{\min},+\infty)$, $\tau_{\min}>0$, and $m \in \R$, be \st for all multi-indices
  $\mi, \smi \in \N^n$ we have
  \begin{equation}
      \label{eq: semi-classical symbols}
    \abs{\d_x^\mi \d_\xi^\smi a(\y)}
    \leq C_{\mi,\smi} \lambda^{m-\abs{\smi}},
      \quad x\in \R^n,\ \xi\in\R^n,\ \tau \in  [\tau_{\min},+\infty),
  \end{equation}
  where $\lambda = |(\xi,\tau)| = \big( |\xi|^2 + \tau^2 \big)^\hf$. Thus differentiation with respect to $\xi$ improves the decay in $\xi$ and $\tau$ simultaneously. We write $a \in \Ssc^m(\R^n\times\R^n)$ or simply $\Ssc^m$.
For $a \in \Ssc^m$ we denote by $\sigma(a)$ its principal part, that is,
its equivalence class in $\Ssc^m / \Ssc^{m-1}$.
\smallskip

We also introduce tangential symbols.  Let $a(\y') \in
\Cinf(\Rpb\times\R^{n-1})$, $\y'= (x,\xi',\tau)$, with $\tau$ as a
parameter in $ [\tau_{\min},+\infty)$, $\tau_{\min}>0$, and $m \in
\R$, be \st for all multi-indices $\mi\in \N^n$, $\smi\in \N^{n-1}$ we
have
  \begin{equation*}
    \abs{\d_x^\mi \d_{\xi'}^\smi a(\y')}
    \leq C_{\mi,\smi} \lambdat^{m-\abs{\smi}},
      \quad x\in \Rpb,\ \xi' \in\R^{n-1},\ \tau \in  [\tau_{\min},+\infty),
  \end{equation*}
  where $\lambdat=|(\xi',\tau)| = \big( |\xi'|^2 + \tau^2 \big)^\hf$.
We write $a \in \Ssct^m(\Rpb\times\R^{n-1})$ or simply $\Ssct^m$.
For $a \in \Ssct^m$ we denote by $\sigma(a)$ its principal part,
that is, its equivalence class in $\Ssct^m / \Ssct^{m-1}$.
\smallskip

We also introduce symbol classes that behave
polynomially in the $\xi_n$ variable.
Let $a(\y) \in \Cinf(\Rpb\times\R^n)$, with $\tau$ as a parameter
  in $ [\tau_{\min},+\infty)$,  $\tau_{\min}>0$, and $m\in \N$ and $r \in \R$, be \st
 \begin{equation*}
   a(\y)
   =\sum_{j=0}^m a_j(\y') \xi_n^j,
   \quad a_j\in \Ssct^{m-j+r}, \ \ \y = (\y',\xi_n), \
   \y'=(x,\xi',\tau), 
\end{equation*}
with $x \in \Rpb$, $\xi \in \R^n$, $\tau\geq
   \tau_{\min}$, and $\xi_n \in \R$.
We write $a(\y)\in \Ssc^{m,r}(\Rpb\times\R^n)$ or simply $\Ssc^{m,r}$.
\end{definition}
Note that we have $\Ssc^{m,r} \subset \Ssc^{m+m',r-m'}$, if $m,m'\in
\N$ and $r\in \R$. We shall call the principal symbol of $a$ the symbol
$$
\sigma(a)(\y) = \sum_{j=0}^m \sigma(a_j)(\y') \xi_n^j,
$$
which is a representative of the class of $a$ in $\Ssc^{m,r}/
\Ssc^{m,r-1}$.
\smallskip

Note that $\Ssc^{m,r} \not\subset \Ssc^{m+r}$. For example consider
$a(x,\xi,\tau)= |(\xi',\tau)| \xi_n$ for  $|(\xi',\tau)| \geq 1$.
We have $a \in \Ssc^{2,0} \cap  \Ssc^{1,1}$ and yet $a \notin
\Ssc^2$. In fact observe  that
differentiating with respect to $\xi'$ yields
$$
|\d_{\xi'}^\mi a(x,\xi,\tau) | \leq C_{\mi} |(\xi',\tau)|^{1-\abs{\mi}} |\xi_n|.
$$
An estimate of the form of \eqref{eq: semi-classical symbols} is however
not achieved for $|\mi|\geq 2$. A microlocalization is required to
repair this flaw and to use the two different symbol classes in a
pseudo-differential calculus (See \cite[Theorem~18.1.35]{Hoermander:V3}).

Finally, we define the corresponding spaces of poly-homogeneous
symbols. Such symbols are often referred to as classical symbols; they
are characterized by an asymptotic expansion where each term is positively
homogeneous with respect to $(\xi,\tau)$ (\resp $(\xi',\tau)$):
\begin{definition}
 \label{def: classical symbols}
 We shall say $a\in \Sscl^m(\R^n\times \R^n)$ or simply $\Sscl^m$
  (\resp $\Ssctcl^m(\Rpb\times \R^{n-1})$ or simply $\Ssctcl^m$) if
  there exists $a^{(j)} \in \Ssc^{m-j}$ (\resp $\Ssct^{m-j}$), homogeneous
  of degree $m-j$ in $(\xi,\tau)$ for $\abs{(\xi,\tau)}\geq r_0$, (\resp
  $(\xi',\tau)$ for $\abs{(\xi',\tau)}\geq r_0$), with $r_0\geq 0$, \st
\begin{equation}
  a\sim \sum_{j\geq 0} a^{(j)},\quad  \text{in the sense
    that}\text\quad
  a - \sum_{j=0}^N a^{(j)} \in \Ssc^{m-N-1}\
  (\text{\resp}\ \Ssct^{m-N-1}).
\end{equation}
A representative of the principal part is then given by the first term
in the expansion.
\smallskip

Finally for $m \in \N$ and $r \in \R$,  we shall say that $a(\y)\in
\Sscl^{m,r}(\Rpb\times\R^n)$ or simply $\Sscl^{m,r}$,
if
\begin{equation*}
   a(\y)
   =\sum_{j=0}^m a_j(\y') \xi_n^j,
   \quad \text{with} \ a_j\in \Ssctcl^{m-j+r}, \ \ \y = (\y',\xi_n).
\end{equation*}
The principal part is given by $\sum_{j=0}^m \sigma(a_j)(\y') \xi_n^j$
and is homogeneous of degree $m$ in $(\xi,\tau)$.
\end{definition}

\subsection{Classes of semi-classical pseudo-differential operators}
\label{sec: PsiDO}
For $a \in \Ssc^m(\R^n \times \R^n)$ (\resp $\Sscl^m(\R^n \times \R^n)$) we define the following pseudo-differential
operator in $\R^n$:
\begin{align}
  \label{eq: pseudo}
\displaystyle
  a(x,D,\tau) u(x)= \Op(a) u(x)
  &= (2 \pi)^{-n} \int_{\R^n} e^{i \para{x,\xi}} a(x,\xi,\tau)
  \hat{u}(\xi) \ d \xi, \qquad u \in \S(\R^n),
\end{align}
where $\hat{u}$ is the Fourier transform of $u$.
In the sense of oscillatory integrals we have
\begin{align*}
 a(x,D,\tau) u(x)= \Op(a) u(x) &= (2 \pi)^{-n} \iint_{\R^{2n}} e^{i \para{x-y,\xi}} a(x,\xi,\tau)
  u(y) \ d \xi\ d y.
\end{align*}
We write $\Op(a) \in \Psisc^m(\R^n)$ or simply $\Psiscl^m$ (\resp
$\Psisc^m(\R^n)$ or simply $\Psiscl^m$).  Here $D$ denotes $D_x$. The
principal symbol of $\Op(a)$ is $\sigma(\Op(a)) = \sigma(a) \in
\Ssc^m/\Ssc^{m-1}$ (\resp $\Sscl^m/\Sscl^{m-1}$).  \smallskip

Tangential operators are defined similarly. For $a \in
\Ssct^m(\Rpb\times \R^n)$ (\resp $\Ssctcl^m(\Rpb\times \R^n)$) we set
\begin{align}
  \label{eq: tangential pseudo}
 a(x,D',\tau) u(x)= \Op(a) u(x) &= (2 \pi)^{-(n-1)} \iint_{\R^{2n-2}} e^{i \para{x'-y',\xi'}} a(x,\xi',\tau)
  u(y',x_n) \ d \xi'\ d y', 
\end{align}
for $u \in \S(\Rpb)$,
where $x \in \Rpb$.
Here $D'$ denotes $D_{x'}$. We write $A=\Op(a) \in \Psisct^m(\Rpb)$ or
simply $\Psisct^m$ (\resp $\Psisctcl^m(\Rpb)$ or
simply $\Psisctcl^m$). The principal symbol of $A=\Op(a)$ is
$\sigma(A) = \sigma(a) \in  \Ssct^m/\Ssct^{m-1}$ (\resp $\Ssctcl^m/\Ssctcl^{m-1}$).
\smallskip

Finally for $m \in \N$, $r \in \R$, and $a \in
\Ssc^{m,r}$ (\resp $\Sscl^{m,r}$) with
 \begin{equation*}
   a(\y)
   =\sum_{j=0}^m a_j(\y') \xi_n^j,
   \quad a_j\in \Ssct^{m-j+r} (\text{\resp}\ \Ssctcl^{m-j+r}),\ \ \y = (\y',\xi_n),
\end{equation*}
we set
\begin{align*}
  a(x,D,\tau) = \Op(a) = \sum_{j=0}^m a_j (x,D',\tau) D_n^j,
\end{align*}
and we write $A=\Op(a) \in \Psisc^{m,r}(\Rp)$ or simply $\Psisc^{m,r}$
(\resp $\Psiscl^{m,r}(\Rp)$ or simply $\Psiscl^{m,r}$).
The principal symbol of $A$ is $\sigma(A) (\y)= \sigma(a) (\y) =
\sum_{j=0}^m \sigma(a_j)(\y') \xi_n^j$ in $\Ssc^{m,r}/\Ssc^{m,r-1}$
(\resp $\Sscl^{m,r}/\Sscl^{m,r-1}$).
\smallskip

We provide some basic calculus rules in the case of tangential operators.
\begin{proposition}[composition]
  \label{prop: compositon}
  Let $a \in \Ssct^m$ (\resp $\Ssctcl^m$)  and $b\in \Ssct^{m'}$(\resp
  $\Ssctcl^{m'}$) be two tangential symbols. Then $\Op(a) \Op(b) =
  \Op(c) \in \Psisct^{m+m'}$ (\resp $\Psisctcl^{m+m'}$) with $ c \in
  \Ssct^{m+m'}$ (\resp $\Ssctcl^{m+m'}$) defined by the (oscillatory)
  integral:
  \begin{align*}
    c(\y') = (a\, \#\, b) (\y') & =
  (2\pi )^{-(n-1)} \iint e^{-i \para{y',\eta'}}
  a(x,\xi'+\eta',\tau)\, b(x'+y',x_n,\xi',\tau)\ d y' \ d \eta' \\
  & =\sum_{|\mi| < N} \frac{(-i )^{|\mi|}}{\mi!}
  \d_{\xi'}^\mi a(\y')\ \d_{x'}^\mi b(\y')
  + r_N ,
\end{align*}
where $r_N \in \Ssct^{m +m'-N}$ (\resp $\Ssctcl^{m +m'-N}$) is given by
\begin{align*}
  r_N  =  \frac{(-i)^{N}}{(2\pi)^{(n-1)}}\!\!\sum_{|\mi|=N} \int\limits_0^1
  \frac{N(1-s)^{N-1}}{\mi!}  \iint\! e^{-i \para{y',\eta'}}
  \d_{\xi'}^\mi  a(x,\xi'+ \eta',\tau)
  \d_{x'}^\mi b(x'+s y',x_n,\xi',\tau)\, d y'  d \eta' d s.
\end{align*}
\end{proposition}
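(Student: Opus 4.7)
The proof follows the standard derivation of the composition formula for pseudo-differential operators, adapted here to the parameter-dependent tangential setting. First I would substitute the formula \eqref{eq: tangential pseudo} for $\Op(b) u$ into $\Op(a)\bigl(\Op(b) u\bigr)$, which, with a routine use of Fourier inversion and a change of variables $y' \mapsto x' + y'$ and $\eta' := \zeta' - \xi'$ (where $\xi'$ is the outer Fourier variable and $\zeta'$ the inner one), yields, in the sense of oscillatory integrals,
\begin{align*}
  \Op(a)\Op(b) u(x) = (2\pi)^{-(n-1)} \iint e^{i\langle x' - y', \xi'\rangle}\, c(x,\xi',\tau)\, u(y',x_n) \, dy'\, d\xi',
\end{align*}
with $c(\y') = (a\,\#\,b)(\y')$ given by the double oscillatory integral stated in the proposition. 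The $x_n$ variable plays the role of a parameter throughout, so the argument is exactly as in the standard tangential calculus; no integration by parts in $x_n$ is needed.

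Next, to obtain the asymptotic expansion, I apply Taylor's formula in the $\eta'$ variable to $a(x, \xi' + \eta',\tau)$ around $\eta' = 0$ up to order $N$:
\begin{align*}
  a(x, \xi' + \eta', \tau)
  = \sum_{|\mi|<N} \frac{\eta'^\mi}{\mi!}\, \d_{\xi'}^\mi a(\y')
  + N \sum_{|\mi|=N} \frac{\eta'^\mi}{\mi!}\int_0^1 (1-s)^{N-1}\, \d_{\xi'}^\mi a(x,\xi'+s\eta',\tau)\, ds.
\end{align*}
The polynomial prefactors are converted via $\eta'^\mi e^{-i\langle y',\eta'\rangle} = (-D_{y'})^\mi e^{-i\langle y',\eta'\rangle}$, followed by integration by parts in $y'$; then Fourier inversion in the pair $(y',\eta')$ reduces each term of order $|\mi| < N$ to $\frac{(-i)^{|\mi|}}{\mi!}\, \d_{\xi'}^\mi a(\y')\, \d_{x'}^\mi b(\y')$, which is precisely the $|\mi|$-th term of the sum. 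The Taylor remainder gives the stated expression for $r_N$.

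The main technical step is to show that the remainder $r_N$ lies in $\Ssct^{m+m'-N}$ uniformly in $\tau \in [\tau_{\min},+\infty)$. For fixed $\mi$ with $|\mi| = N$, I would insert the regularizing operators $\langle \eta'\rangle^{-2M}(1-\Delta_{y'})^M$ and $\langle y'\rangle^{-2M}(1-\Delta_{\eta'})^M$ for $M$ large enough to render the double integral absolutely convergent. The critical estimate is Peetre's inequality in the semi-classical form,
\begin{equation*}
  \bigl|(\xi'+s\eta',\tau)\bigr|^{m-N-|\smi|} \lesssim \tild\lambda^{m-N-|\smi|}\, \langle \eta'\rangle^{|m-N-|\smi||},
\end{equation*}
applied to derivatives of $\d_{\xi'}^\mi a$, with the growth in $\langle \eta'\rangle$ absorbed by sufficiently many $y'$-derivatives of $b$ (which cost at most powers of $\tild\lambda^{m'}$). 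Tracking the derivatives $\d_x^\alpha \d_{\xi'}^{\smi'}$ applied to $r_N$ — each $\d_{\xi'}$ either hits $\d_{\xi'}^\mi a$ (lowering its order by one in $\tild\lambda$) or $(1-\Delta_{\eta'})^M$ (harmless) — yields the desired bound $|\d_x^\alpha \d_{\xi'}^{\smi'} r_N| \leq C_{\alpha,\smi'} \tild\lambda^{m+m'-N-|\smi'|}$. This is the step where the semi-classical parameter-dependence must be handled with the most care.

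Finally, the classical case follows from the expansion itself: if $a \in \Ssctcl^m$ and $b \in \Ssctcl^{m'}$ with homogeneous asymptotic expansions $a \sim \sum_j a^{(j)}$ and $b \sim \sum_k b^{(k)}$, then each term $\frac{(-i)^{|\mi|}}{\mi!}\, \d_{\xi'}^\mi a^{(j)}\, \d_{x'}^\mi b^{(k)}$ is homogeneous of degree $m + m' - |\mi| - j - k$ in $(\xi',\tau)$, and collecting terms of equal total order produces the poly-homogeneous expansion of $c$, placing $c$ in $\Ssctcl^{m+m'}$.
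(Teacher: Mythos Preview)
The paper does not supply a proof of this proposition; it is stated as one of the ``basic calculus rules'' and left as a standard result. Your outline is the correct standard argument and would be accepted as a proof.

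There is, however, one small mismatch worth flagging. You Taylor-expand $a(x,\xi'+\eta',\tau)$ in $\eta'$ about $\eta'=0$, which after integration by parts in $y'$ produces a remainder of the form
\[
\frac{(-i)^{N}}{(2\pi)^{n-1}}\sum_{|\alpha|=N}\int_0^1 \frac{N(1-s)^{N-1}}{\alpha!}\iint e^{-i\langle y',\eta'\rangle}\,\partial_{\xi'}^\alpha a(x,\xi'+s\eta',\tau)\,\partial_{x'}^\alpha b(x'+y',x_n,\xi',\tau)\,dy'\,d\eta'\,ds,
\]
with the interpolation parameter $s$ sitting in the argument of $a$. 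The formula stated in the proposition has $s$ in the argument of $b$ instead, namely $\partial_{\xi'}^\alpha a(x,\xi'+\eta',\tau)\,\partial_{x'}^\alpha b(x'+s y',x_n,\xi',\tau)$. That specific form arises if one instead Taylor-expands $b(x'+y',x_n,\xi',\tau)$ in $y'$ about $y'=0$, converts $y'^\alpha e^{-i\langle y',\eta'\rangle}=i^{|\alpha|}\partial_{\eta'}^\alpha e^{-i\langle y',\eta'\rangle}$, and integrates by parts in $\eta'$. Both remainders lie in $S_{\mathsf T,\tau}^{m+m'-N}$ and the symbol-class estimate you sketch (Peetre plus regularization) goes through equally well for either, so this is a cosmetic rather than a mathematical gap; but your sentence ``the Taylor remainder gives the stated expression for $r_N$'' is not literally correct as written.
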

\begin{proposition}[formal adjoint]
  \label{prop: adjoint}
  Let $a \in \Ssct^m$ (\resp $\Ssctcl^m$). There exists $a^\ast \in
  \Ssct^m$ (\resp  $\Ssctcl^m$) \st
  \begin{align*}
    \para{\Op(a) u,v}_+ = \para{u,\Op(a^\ast)v}_+, \qquad u,v \in \S(\Rpb).
  \end{align*}
  and $a^\ast$ is given be the following asymptotic expansion
\begin{align*}
    a^\ast (\y') & =
    (2\pi )^{-(n-1)} \iint e^{-i \para{y',\eta'}}
    \ovl{a}(x'+y', x_n ,\xi'+\eta',\tau)\ d y' \ d \eta' \\
  & =\sum_{|\mi| < N} \frac{(-i )^{|\mi|}}{\mi!}
  \d_{\xi'}^\mi \d_{x'}^\mi \ovl{a}(\y')
  + r_N , \quad r_N \in \Ssct^{m -N} (\text{\resp} \  \Ssctcl^{m -N}) ,
\end{align*}
where
\begin{align*}
  r_N  =  \frac{(-i)^{N}}{(2\pi)^{(n-1)}}\!\!\sum_{|\mi|=N} \int\limits_0^1
  \frac{N(1-s)^{N-1}}{\mi!}  \iint\! e^{-i \para{y',\eta'}}
  \d_{\xi'}^\mi
  \d_{x'}^\mi \ovl{a}(x'+s y',x_n,\xi'+ \eta',\tau)\, d y'  d \eta' d s.
\end{align*}
  We denote $\Op(a)^\ast = \Op(a^\ast)$.
  We refer to $\Op(a)^\ast$ as to the formal adjoint of $\Op(a)$.
\end{proposition}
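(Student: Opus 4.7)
The plan is to mimic the standard derivation of the adjoint in the Kohn--Nirenberg calculus (cf.\ \cite[Theorem 18.1.7]{Hoermander:V3}), adapted to the tangential setting of Section~\ref{sec: PsiDO}, with particular attention paid to uniformity in the large parameter $\tau \in [\tau_{\min}, +\infty)$. I would split the argument into three steps: produce an amplitude representation of $\Op(a)^\ast$; reduce this amplitude to a standard symbol via a Taylor expansion and Fourier inversion; and control the remainder by integration by parts.

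First, for $u, v \in \S(\Rpb)$ I would insert the oscillatory integral \eqref{eq: tangential pseudo} into $\para{\Op(a)u, v}_+$ and swap the order of integration (justified once regularizing cut-offs in $(y', \xi')$ are inserted, as $u, v$ are Schwartz and $a$ has only polynomial growth in $(\xi', \tau)$). After renaming variables, this rewrites the pairing as $\para{u, T v}_+$ with
\begin{align*}
  T v(x) = (2\pi)^{-(n-1)} \iint e^{i\para{x'-y', \xi'}} \ovl{a(y', x_n, \xi', \tau)}\, v(y', x_n)\, dy'\, d\xi'.
\end{align*}
Thus $\Op(a)^\ast = T$ is presented as an amplitude operator whose amplitude $\ovl{a(y', x_n, \xi', \tau)}$ depends on the ``output'' variable $y'$ rather than on the ``input'' $x'$; $x_n$ and $\tau$ remain passive parameters throughout.

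Next, to convert this amplitude operator into Kohn--Nirenberg form $\Op(a^\ast)$, I would apply the usual formal-symbol trick: shift $y' \mapsto x'+y'$ and $\xi' \mapsto \xi'+\eta'$ to center the phase, then Taylor-expand $\ovl{a(x'+y', x_n, \xi'+\eta', \tau)}$ in $\eta'$ around $0$ to order $N$. Each monomial contribution is handled using
\begin{align*}
  \eta'^\mi e^{-i\para{y', \eta'}} = (iD_{y'})^\mi e^{-i\para{y', \eta'}},
\end{align*}
followed by integration by parts in $y'$ and the Fourier inversion identity $(2\pi)^{-(n-1)} \iint e^{-i\para{y', \eta'}} f(y')\, dy'\, d\eta' = f(0)$. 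This produces precisely $\sum_{|\mi|<N} \frac{(-i)^{|\mi|}}{\mi!} \d_{\xi'}^\mi \d_{x'}^\mi \ovl{a}(\y')$ as the polynomial part, while the Taylor integral remainder yields the stated oscillatory integral for $r_N$.

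The main technical point will be verifying that the oscillatory integrals defining $a^\ast$ and $r_N$ make sense and that $r_N \in \Ssct^{m-N}$ (\emph{resp.}\ $\Ssctcl^{m-N}$) \emph{uniformly in} $\tau$. For this I would use the standard Kuranishi-type regularization, approximating with truncations $\chi(\eps y', \eps \eta')$ for $\chi \in \Cinfc$ with $\chi(0)=1$, and then performing integration by parts with the operators
\begin{align*}
  L_1 = \frac{\id - \Delta_{\eta'}}{1 + |y'|^2}, \qquad
  L_2 = \frac{\id - \Delta_{y'}}{1 + |\eta'|^2},
\end{align*}
each of which trades regularity for decay in the dual variable and leaves $x_n$ and $\tau$ untouched. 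Sufficiently many iterations of $L_2$ convert derivatives of $\ovl{a}$ into symbols of any prescribed negative $\lambdat$-order, yielding absolute convergence and the bound $|\d_x^\mi \d_{\xi'}^\smi r_N(\y')| \lesssim \lambdat^{m-N-|\smi|}$; the constants remain independent of $\tau$ because the symbol seminorms of $a$ are uniform in $\tau$ and $L_1, L_2$ commute with $\tau$-differentiation. The classical case follows at once, since each $\d_{\xi'}^\mi \d_{x'}^\mi \ovl{a}$ is homogeneous of degree $m-|\mi|$ in $(\xi', \tau)$ for $|(\xi', \tau)|\geq r_0$, so the asymptotic series is poly-homogeneous and $a^\ast \in \Ssctcl^m$.
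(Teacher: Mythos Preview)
Your argument is correct and follows the standard Kohn--Nirenberg derivation. Note, however, that the paper does not actually prove this proposition: it is stated in Section~\ref{sec: PsiDO} alongside the composition rule (Proposition~\ref{prop: compositon}) as a basic calculus fact, without proof, the implicit reference being the classical theory in \cite{Hoermander:V3}. So there is no ``paper's own proof'' to compare against; your write-up simply supplies the routine verification that the authors chose to omit, and it does so along the expected lines.
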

A consequence of the previous calculus results is the following proposition.
\begin{proposition}
  \label{prop: calculus consequences}
  Let $a(\y') \in \Ssct^{m}$ (\resp $\Ssctcl^{m}$) and  $b(\y')\in
  \Ssct^{m'}$ (\resp $\Ssctcl^{m'}$), with $m,m' \in \R$.
  Define $h(\y') = D_{x'}(b\d_{\xi'}\ovl{a})(\y')
  \in S_\tau^{m+m'-1}$. Then we have
  $$
  \Op(a)^\ast \Op(b) - \Op(\ovl{a} b + h) \in \Psisct^{m+m'-2} \
  (\text{\resp}\ \Psisctcl^{m+m'-2}), 
  $$
  or equivalently
  $a^\ast \# b - \ovl{a} b -h \in \Ssct^{m+m'-2}$ (\resp $\Ssctcl^{m+m'-2}$).
\end{proposition}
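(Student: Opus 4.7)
The plan is to combine the asymptotic expansions of Proposition~\ref{prop: adjoint} and Proposition~\ref{prop: compositon}, each pushed to second order, and then to recognize that the resulting sub-principal correction is precisely the symbol $h$.

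First, I would apply Proposition~\ref{prop: adjoint} with $N=2$ to obtain
\begin{equation*}
  a^\ast = \ovl{a} - i \sum_{j=1}^{n-1} \d_{\xi_j}\d_{x_j} \ovl{a} + \rho,
  \qquad \rho \in \Ssct^{m-2}.
\end{equation*}
Next, I would apply Proposition~\ref{prop: compositon} with $N=2$ to the pair $(a^\ast, b) \in \Ssct^m \times \Ssct^{m'}$, yielding
\begin{equation*}
  a^\ast \# b \;=\; a^\ast\, b \;-\; i \sum_{j=1}^{n-1} \d_{\xi_j} a^\ast \cdot \d_{x_j} b \;+\; r,
  \qquad r \in \Ssct^{m+m'-2}.
\end{equation*}

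Then I would substitute the expansion of $a^\ast$ into each term. For the first term, one gets $a^\ast b \equiv \ovl{a} b - i \sum_j (\d_{\xi_j}\d_{x_j} \ovl{a})\, b$ modulo $\Ssct^{m+m'-2}$, since $\rho \cdot b \in \Ssct^{m+m'-2}$. For the second term, differentiating $a^\ast = \ovl{a} + O(\Ssct^{m-1})$ in $\xi_j$ gives $\d_{\xi_j} a^\ast = \d_{\xi_j} \ovl{a} + O(\Ssct^{m-2})$, so
\begin{equation*}
  -i \sum_j \d_{\xi_j} a^\ast \cdot \d_{x_j} b
  \;\equiv\; -i \sum_j \d_{\xi_j} \ovl{a} \cdot \d_{x_j} b
  \quad \bmod \ \Ssct^{m+m'-2},
\end{equation*}
because the omitted contribution lies in $\Ssct^{m-2} \cdot \Ssct^{m'} \subset \Ssct^{m+m'-2}$. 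Collecting, I obtain
\begin{equation*}
  a^\ast \# b \;\equiv\; \ovl{a} b \;-\; i \sum_{j=1}^{n-1} \bigl( (\d_{\xi_j}\d_{x_j} \ovl{a})\, b + \d_{\xi_j} \ovl{a} \cdot \d_{x_j} b \bigr)
  \;=\; \ovl{a} b \;-\; i \sum_{j=1}^{n-1} \d_{x_j}(b\, \d_{\xi_j} \ovl{a})
  \quad \bmod \ \Ssct^{m+m'-2}.
\end{equation*}

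Finally, using $D_{x_j} = -i\, \d_{x_j}$, one recognizes
\begin{equation*}
  h(\y') \;=\; D_{x'}(b\, \d_{\xi'} \ovl{a})(\y')
  \;=\; \sum_{j=1}^{n-1} D_{x_j}(b\, \d_{\xi_j} \ovl{a})
  \;=\; -i \sum_{j=1}^{n-1} \d_{x_j}(b\, \d_{\xi_j} \ovl{a}),
\end{equation*}
so that $a^\ast \# b - (\ovl{a} b + h) \in \Ssct^{m+m'-2}$, which is the claim. The classical ($\Ssctcl$) case follows identically, as Propositions~\ref{prop: compositon} and~\ref{prop: adjoint} preserve that subclass. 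There is no real obstacle here beyond careful bookkeeping of the orders of the remainders; the only point requiring vigilance is that truncating the adjoint expansion at order $m-2$ is enough, because differentiation in $\xi'$ does not worsen the order when multiplied against $\d_{x_j} b$.
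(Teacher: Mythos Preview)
Your proof is correct and is exactly the argument the paper has in mind: the proposition is stated there without proof, merely as ``a consequence of the previous calculus results'' (Propositions~\ref{prop: compositon} and~\ref{prop: adjoint}), and combining the two second-order expansions together with the Leibniz identity $\d_{x_j}(b\,\d_{\xi_j}\ovl a)=(\d_{x_j}b)(\d_{\xi_j}\ovl a)+b\,\d_{x_j}\d_{\xi_j}\ovl a$ is precisely the intended computation. Your bookkeeping of the remainder orders is accurate.
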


For semi-classical operators in the half space with symbols that are polynomial in $\xi_n$ we
also provide a notion of formal adjoint.
\begin{definition}
  \label{def: formal ajoint}
   Let $b \in \Ssc^{m,r}$ (\resp $\Sscl^{m,r}$), with
  \begin{align*}
    b(x,D,\tau)  = \sum_{j=0}^m b_j (x,D',\tau) D_n^j, \qquad b_j \in
    \Ssct^{m+r-j} \ (\text{\resp}\ \Ssctcl^{m+r-j} ).
  \end{align*}
  We set
  \begin{align*}
    b(x,D,\tau)^\ast  = \sum_{j=0}^m D_n^j b_j (x,D',\tau)^\ast.
  \end{align*}
\end{definition}
In other words, in this definition we ignore the possible occurrence of
boundary terms when performing the operator transposition.

Note that for $a \in \Ssctcl^m$ we have $\cro{D_n, \Op(a)} = \Op(D_n a)
\in \Psisctcl^m$ and more generally, for $j \geq 1$,  we have
\begin{align*}
  \cro{D_n^j, \Op(a)} = \sum_{k=0}^{j-1} \Op(\mi_k) D_n^k, \qquad
  \mi_k \in \Ssctcl^{m},
\end{align*}
where the symbols $\mi_k$ involve various derivatives of $a$ in the
$x_n$-direction.
As an application we see that if we consider $a_j \in \Ssctcl^{m-j+r}$
then we have
\begin{align*}
  \sum_{j=0}^m D_n^j a_j (x,D',\tau) = \sum_{j=0}^m \tilde{a}_j (x,D',\tau) D_n^j,
\end{align*}
where $\tilde{a}_j \in \Ssctcl^{m-j+r}$ and its principal part
satisfies $\sigma(\tilde{a}_j) \equiv
a_j$  in $\Ssct^{m-j+r} / \Ssct^{m-j+r-1}$. Hence
\begin{align*}
  \sigma\Big(\sum_{j=0}^m D_n^j a_j (x,D',\tau)\Big) = \sum_{j=0}^m
  a_j (x,\xi',\tau) \xi_n^j \mod \Ssc^{m,r-1}.
\end{align*}
From the calculus rules given above for the tangential operators and
the above observation we have the following results on the principal
symbols.
\begin{proposition}
  \label{prop: adjoint + composition S m,r}
  Let $a \in \Ssc^{m,r}$ (\resp  $\Sscl^{m,r}$) and $b \in
  \Ssc^{m',r'}$ (\resp  $\Sscl^{m',r'}$) with
  $$
  a(\y)=\sum_{j=0}^m a_j(\y')\xi_n^j,\quad
  b(\y)=\sum_{j=0}^{m'}b_j(\y')\xi_n^j, \quad
  \y = (\y',\xi_n), \ \y' = (x,\xi',\tau).
  $$
  \begin{enumerate}
    \item
  We have $a(x,D,\tau)^\ast \in \Psisc^{m,r}$ (\resp $\Psiscl^{m,r}$) and
  \begin{align*}
    \sigma\big(a(x,D,\tau)^\ast\big) \equiv \sum_{j=0}^m \ovl{a}_j
    (\y')\xi_n^j \in \Ssc^{m,r} /\Ssc^{m,r-1} \ (\text{\resp} \ \Sscl^{m,r} /\Sscl^{m,r-1}).
  \end{align*}
  Moreover we have $\Op(a)^\ast  - \Op(\ovl{a}) \in \Psisc^{m,r-1}$
  (\resp $\Psiscl^{m,r-1}$).
\item
    $a(x,D,\tau) b (x,D,\tau) \in \Psisc^{m+m',r+r'}$ (\resp $\Psiscl^{m+m',r+r'}$) and
  \begin{multline*}
    \sigma\big( a(x,D,\tau) b (x,D,\tau)\big) \equiv 
    \sum_{{0\leq j\leq m} \atop {0 \leq k\leq m'}}
    a_j (\y') b_k (\y') \xi_{n}^{j+k} \in \Ssc^{m +m', r + r'} /
    \Ssc^{m+m',r+r'-1} \\
    (\text{\resp}\ \Sscl^{m +m', r + r'} /
    \Sscl^{m+m',r+r'-1} ). 
  \end{multline*}
  We have
  $\Op(a) \Op(b) u - \Op(ab) u \in \Psisc^{m+m',r+r'-1}$ (\resp $\Psiscl^{m+m',r+r'-1}$). 
  \end{enumerate}
\end{proposition}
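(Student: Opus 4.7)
The plan is to reduce both assertions to the calculus rules already established for the purely tangential classes $\Psisct^m$ (Propositions \ref{prop: compositon}, \ref{prop: adjoint}, \ref{prop: calculus consequences}), together with the $D_n$-commutation observation stated just before the proposition. The two keys are that $D_n$ acts only in the normal variable, so it commutes trivially with tangential operators up to $x_n$-derivatives that preserve the tangential order; and that the decomposition $\sum_j \tilde a_j(x,D',\tau) D_n^j$ is normalized so as to expose the principal symbol in $\Ssc^{m,r}/\Ssc^{m,r-1}$.

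For Part (1), starting from $a(x,D,\tau)^\ast = \sum_{j=0}^m D_n^j\, a_j(x,D',\tau)^\ast$ given by Definition~\ref{def: formal ajoint}, I would apply Proposition~\ref{prop: adjoint} to each tangential factor: $a_j(x,D',\tau)^\ast\in\Psisct^{m-j+r}$ with principal symbol $\ovl{a}_j$, and in fact $a_j(x,D',\tau)^\ast-\Op(\ovl{a}_j)\in\Psisct^{m-j+r-1}$. Then I would use the reordering observation recalled before the proposition, which says that $\sum_j D_n^j b_j(x,D',\tau) = \sum_j \tilde b_j(x,D',\tau) D_n^j$ with $\tilde b_j\in\Ssct^{m-j+r}$ and $\sigma(\tilde b_j)\equiv \sigma(b_j)$ modulo $\Ssct^{m-j+r-1}$. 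Applying this to $b_j = a_j(x,D',\tau)^\ast$ immediately places $\Op(a)^\ast$ in $\Psisc^{m,r}$, gives its principal symbol $\sum_j \ovl{a}_j(\y')\xi_n^j$ in $\Ssc^{m,r}/\Ssc^{m,r-1}$, and yields $\Op(a)^\ast-\Op(\ovl a)\in\Psisc^{m,r-1}$.

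For Part (2), I would expand the product
\begin{equation*}
\Op(a)\Op(b) = \sum_{j=0}^m\sum_{k=0}^{m'} a_j(x,D',\tau)\, D_n^j\, b_k(x,D',\tau)\, D_n^k
\end{equation*}
and move each $D_n^j$ past $b_k(x,D',\tau)$. Each commutator $[D_n^j,b_k(x,D',\tau)] = \sum_{l=0}^{j-1}\Op(\alpha_{j,k,l})D_n^l$ produces tangential symbols $\alpha_{j,k,l}$ in $\Ssct^{m'-k+r'}$ coming from $x_n$-derivatives of $b_k$; together with the $D_n^l$ factor ($l\le j-1$) these contribute to $\Psisc^{m+m',r+r'-1}$, hence enter only in lower order. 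The remaining main term is $\sum_{j,k} a_j(x,D',\tau)\,b_k(x,D',\tau)\, D_n^{j+k}$. Proposition~\ref{prop: compositon} then gives $a_j\# b_k = a_j b_k \bmod \Ssct^{m+m'-j-k+r+r'-1}$, which, combined with the normalization step from Part~(1), yields membership in $\Psisc^{m+m',r+r'}$ and the announced principal symbol, as well as $\Op(a)\Op(b)-\Op(ab)\in\Psisc^{m+m',r+r'-1}$.

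The only mildly delicate step is the bookkeeping of orders: one must verify that the commutator contributions $\Op(\alpha_{j,k,l})D_n^l$ with $l<j$, and the tangential composition remainders from Proposition~\ref{prop: compositon}, all land in $\Ssc^{m+m',r+r'-1}$ in the bigraded sense (loss of one in tangential order versus loss of one in $\xi_n$-degree). This is handled by the inclusion $\Ssc^{m,r}\subset\Ssc^{m+m',r-m'}$ noted after Definition~\ref{def: semi-classical symbols}, which lets one trade $\xi_n$-degree for tangential order and so collect all remainders in a single $\Psisc^{m+m',r+r'-1}$ class.
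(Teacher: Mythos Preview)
Your proposal is correct and follows precisely the route the paper indicates: the proposition is stated without a detailed proof, with the paper simply noting that it follows ``from the calculus rules given above for the tangential operators and the above observation'' (the $D_n$-reordering identity). Your argument fleshes out exactly these two ingredients---Propositions~\ref{prop: compositon} and~\ref{prop: adjoint} for the tangential factors, and the commutator $[D_n^j,\Op(b_k)]$ expansion for the normal direction---with the order bookkeeping handled correctly.
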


\subsection{Sobolev continuity results}

Here we state continuity results for the operators defined above using
the Sobolev  norms with parameters introduced in Section~\ref{sec: sobolev norms}. Such
results can be obtained from their standard counterparts.
\medskip


Let $\lambdat(\xi',\tau)=\para{\tau^2+\abs{\xi'}^2}^{1/2}$ and
$\Lambdat:=\Op (\lambdat)$. For a given real number $s$, the boundary
norm given by \eqref{eq: norm Sobolev m s} is equivalent to the
following norms (see \eqref{eq: def norms} for the definition of $|.|_{p,\tau}$):
\begin{equation}
  \norm{\mathbf{u}}^2_{m,s,\tau}
  =\sum_{k=0}^m |\Lambdat^su_k|_{m-k,\tau}^2,
  \quad \mathbf{u}=(u_0,\dots,u_m) \in \big( \S(\R^{n-1})\big)^{m+1}.
\end{equation}
Moreover, we define the following semi-classical interior norm
\begin{equation}
\Norm{u}^2_{m,s,\tau}=\Norm{\Lambdat^su}^2_{m,\tau},\quad
u \in \S(\Rpb).
\end{equation}
\begin{proposition}
  \label{prop: sobolev regularity}
  If $a(\y)\in \Ssc^{m,r}$, with $m \in \N$ and $r \in
  \R$,  then for $m' \in \N$ and $r' \in
  \R$ there exists $C>0$ \st
  \begin{align*}
    \Norm{\Op(a) u }_{m',r',\tau}
    \leq C
    \Norm{u}_{m+m',r+r',\tau},  \qquad u \in \S(\Rpb).
  \end{align*}
\end{proposition}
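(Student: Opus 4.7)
The proof plan is to reduce the statement to two standard facts of the semi-classical calculus: tangential Sobolev continuity, and a decomposition of the interior norm by normal-derivative order.

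The first step is to establish (or invoke) the tangential continuity: for every $b \in \Ssct^\mu$ and every $p \in \R$,
\begin{equation*}
\bigNorm{\Lambdat^{p-\mu} \Op(b) w}_{L^2(\R^{n-1})} \leq C \bigNorm{\Lambdat^p w}_{L^2(\R^{n-1})},
\qquad w \in \S(\R^{n-1}),
\end{equation*}
uniformly in $\tau \geq \tau_{\min}$. Thanks to Propositions~\ref{prop: compositon}--\ref{prop: adjoint}, $\Lambdat^{p-\mu}\,\Op(b)\,\Lambdat^{-p} \in \Psisct^0$, so the estimate reduces to $L^2(\R^{n-1})$-boundedness of zero-order tangential operators uniformly in $\tau$, which is a Calder\'on--Vaillancourt-type statement in the large-parameter calculus. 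Since $\Op(b)$ acts only tangentially, integration in $x_n$ lifts this to the analogous inequality on $\Rp$.

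The second step is the elementary equivalence
\begin{equation*}
\Norm{u}_{M, R, \tau}^2
\asymp \sum_{l=0}^M \bigNorm{\Lambdat^{R+M-l} D_n^l u}_{L^2(\Rp)}^2,
\qquad M \in \N,\ R \in \R,
\end{equation*}
which follows from $\Norm{u}^2_{M,R,\tau} = \Norm{\Lambdat^R u}^2_{M,\tau}$ (noting that $\Lambdat^R$ commutes with $D_n$), the splitting $D^\alpha = D_{x'}^{\alpha'} D_n^{\alpha_n}$ in the definition of $\Norm{\cdot}_{M,\tau}$, and the slicewise equivalence $\norm{w}_{p,\tau} \asymp \bigNorm{\Lambdat^p w}_{L^2(\R^{n-1})}$.

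To combine these, I write $\Op(a) u = \sum_{j=0}^m a_j(x,D',\tau) D_n^j u$ with $a_j \in \Ssct^{m-j+r}$ and apply, by induction on $j'$, the commutator identity $D_n B - B D_n = (D_n b)(x,D',\tau)$, valid for any tangential $B = b(x,D',\tau)$, to obtain, for $0 \leq j' \leq m'$,
\begin{equation*}
D_n^{j'} \Op(a) u
= \sum_{j=0}^m \sum_{k=0}^{j'} \binom{j'}{k} (D_n^{j'-k} a_j)(x, D', \tau)\, D_n^{k+j} u,
\end{equation*}
where each $D_n^{j'-k} a_j$ still lies in $\Ssct^{m-j+r}$. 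Setting $l := k + j$, the slicewise tangential continuity bounds the $L^2(\Rp)$-norm of $\Lambdat^{r'+m'-j'}$ applied to the $(j,k)$-term by $C\, \bigNorm{\Lambdat^{(r+r')+(m+m')-l-(j'-k)} D_n^{l} u}_{L^2(\Rp)}$; since $j'-k \geq 0$ and $\Lambdat \geq \tau_{\min}$, this is in turn controlled by $C\, \bigNorm{\Lambdat^{(r+r')+(m+m')-l} D_n^l u}_{L^2(\Rp)}$, i.e., the $l$-th term in the decomposition of $\Norm{u}_{m+m', r+r', \tau}^2$. Summing the decomposition of $\Norm{\Op(a) u}_{m',r',\tau}^2$ over $j'$ and the finitely many indices $j, k, j'$ on the right concludes.

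The main obstacle is really only the Calder\'on--Vaillancourt estimate with a large parameter underlying the first step; once that is in hand, the rest is bookkeeping, and the coherence of the index inequalities is ensured by the fact that the total symbol order of $a(\y)$ in $(\xi,\tau)$ equals $m+r$, matching exactly the shift $(m', r') \mapsto (m+m', r+r')$ in the estimate.
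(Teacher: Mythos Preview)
The paper does not actually give a proof of this proposition; it only remarks that ``such results can be obtained from their standard counterparts'' and then moves on. Your argument is correct and supplies precisely the details the authors leave implicit: the Calder\'on--Vaillancourt bound for zero-order tangential operators (lifted to $\Rp$ by integrating in $x_n$), the equivalence $\Norm{u}_{M,R,\tau}^2 \asymp \sum_{l=0}^M \bigNorm{\Lambdat^{R+M-l} D_n^l u}_{L^2(\Rp)}^2$, and the Leibniz expansion of $D_n^{j'}$ through the tangential coefficients $a_j(x,D',\tau)$. The index bookkeeping is right; in particular the key inequality $j'-k \geq 0$ is exactly what makes the excess power of $\Lambdat$ harmless since $\lambdat \geq \tau_{\min}>0$. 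There is nothing to compare against in the paper beyond this, so your write-up stands as a complete proof of what the authors declared standard.
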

A consequence of this results and Proposition~\ref{prop: adjoint +
  composition S m,r} is the following property.
\begin{corollary}
  \label{cor: regularity adjoint - operator}
  Let $a \in \Ssc^{m,r}$ and $m' \in \N$ and $s \in \R$. We have
  \begin{align*}
  \Norm{a(x,D,\tau)^*u-\ovl{a}(x,D,\tau)u}_{m',s,\tau}
  \leq C \Norm{u}_{m+m',r+s-1,\tau},  \qquad u \in \S(\Rpb).
  \end{align*}
\end{corollary}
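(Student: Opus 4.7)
The plan is to combine the two previously stated results directly, as this corollary is essentially a routine consequence of them.

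First I would invoke Proposition~\ref{prop: adjoint + composition S m,r}, part (1). Since $a \in \Ssc^{m,r}$, that proposition asserts in particular that
\begin{equation*}
  a(x,D,\tau)^\ast - \ovl{a}(x,D,\tau) \in \Psisc^{m,r-1},
\end{equation*}
so there exists $b \in \Ssc^{m,r-1}$ such that $a(x,D,\tau)^\ast - \ovl{a}(x,D,\tau) = \Op(b)$. The symbolic calculus for the tangential factor $a_j^\ast - \ovl{a_j}$ contributes one order of gain in $\lambdat$, and this is precisely what reduces $r$ to $r-1$ in the second index of the class $\Psisc^{m,r}$ (the polynomial degree in $\xi_n$ is unchanged, as the $D_n^j$ factors are inert under the tangential adjoint construction of Definition~\ref{def: formal ajoint}).

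Next I would apply the Sobolev continuity result of Proposition~\ref{prop: sobolev regularity} to the operator $\Op(b)$ with parameters $(m', r')=(m',s)$. Since $b \in \Ssc^{m,r-1}$, that proposition yields a constant $C>0$ such that
\begin{equation*}
  \Norm{\Op(b) u}_{m',s,\tau} \leq C \Norm{u}_{m+m',(r-1)+s,\tau}
  = C \Norm{u}_{m+m',r+s-1,\tau},
  \qquad u \in \S(\Rpb),
\end{equation*}
which is exactly the inequality claimed in the corollary.

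There is really no obstacle here: the only thing to verify carefully is the bookkeeping of the two indices $(m,r)$ in the class $\Ssc^{m,r}$ when passing through Proposition~\ref{prop: adjoint + composition S m,r}, in order to confirm that the drop is in $r$ (not in $m$) and hence produces the shift $r+s-1$ (rather than something involving $m$) on the right-hand side. Once this is in place, Proposition~\ref{prop: sobolev regularity} closes the argument with a single line.
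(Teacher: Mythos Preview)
Your proof is correct and follows exactly the approach the paper intends: the paper presents this corollary as an immediate consequence of Proposition~\ref{prop: sobolev regularity} and Proposition~\ref{prop: adjoint + composition S m,r}, and your two-step argument (first $\Op(a)^\ast-\Op(\ovl{a})\in\Psisc^{m,r-1}$, then apply the Sobolev continuity) is precisely that consequence spelled out.
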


The following simple inequality will be used implicitly at many places in
what follows when we invoke the parameter $\tau$ to be chosen \suff
large. This will then allow us to absorb semi-classical norms
of lower order.
\begin{corollary}
  \label{cor: semi-classical argument}
  Let $m\in \N$ and $s \in \R$ and $\ell \geq 0$. For some $C>0$, we have
  \begin{align*}
  \Norm{u}_{m,s,\tau}
  \leq C \tau^{-\ell}\Norm{u}_{m,s+\ell,\tau},  \qquad u \in \S(\Rpb).
  \end{align*}
\end{corollary}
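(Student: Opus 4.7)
The proof amounts to showing that the tangential Fourier multiplier $\Lambdat^{-\ell}$ gains a factor $\tau^{-\ell}$ on each $H^k_\tau$. I would first use the factorization $\Lambdat^s u = \Lambdat^{-\ell}\Lambdat^{s+\ell} u$, so that, by the definition $\Norm{u}_{m,s,\tau} = \Norm{\Lambdat^s u}_{m,\tau}$, the claim reduces to establishing
$$\Norm{\Lambdat^{-\ell} v}_{m,\tau} \leq C\, \tau^{-\ell}\, \Norm{v}_{m,\tau}, \quad v \in \S(\Rpb),$$
which will then be applied to $v = \Lambdat^{s+\ell} u$.

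Next, I would observe that the symbol $\lambdat^{-\ell}(\xi',\tau) = (\tau^2 + \abs{\xi'}^2)^{-\ell/2}$ does not depend on $x$, so that $\Lambdat^{-\ell}$ is a pure tangential Fourier multiplier. Since $\ell \geq 0$ and $\lambdat \geq \tau$, one has the pointwise bound $\lambdat^{-\ell} \leq \tau^{-\ell}$. Applying Plancherel in the tangential variable $x'$ for each fixed $x_n \geq 0$ then yields $\Norm{\Lambdat^{-\ell} w}_{+} \leq \tau^{-\ell} \Norm{w}_{+}$ for every $w \in \S(\Rpb)$. Because $\Lambdat^{-\ell}$ commutes with every partial derivative $\d^\mi$ (with tangential derivatives because it is a tangential multiplier, and with $\d_n$ because it acts trivially in $x_n$), the same inequality propagates to each Sobolev space: $\Norm{\Lambdat^{-\ell} v}_{H^k(\Rp)} \leq \tau^{-\ell} \Norm{v}_{H^k(\Rp)}$ for $k=0,\dots,m$.

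Plugging this into the definition $\Norm{w}_{m,\tau}^2 = \sum_{k=0}^m \tau^{2(m-k)} \Norm{w}_{H^k(\Rp)}^2$ of the parameter-dependent norm yields $\Norm{\Lambdat^{-\ell} v}_{m,\tau} \leq \tau^{-\ell}\Norm{v}_{m,\tau}$, in fact with constant one, and the corollary follows. There is essentially no obstacle here: the statement is little more than the homogeneity of $\lambdat$ in $(\xi',\tau)$ combined with the elementary bound $\lambdat \geq \tau$. An alternative route would be to regard $\tau^\ell \lambdat^{-\ell}$ as an element of $\Ssct^0 \subset \Ssc^{0,0}$ and to invoke Proposition~\ref{prop: sobolev regularity}, but the direct Plancherel argument sketched above is more transparent and provides the sharp constant.
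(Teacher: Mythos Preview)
Your proof is correct. The paper does not give an explicit proof of this corollary; it is stated as an immediate consequence of the preceding Sobolev continuity result, Proposition~\ref{prop: sobolev regularity}, which is precisely the alternative route you mention at the end (applying the proposition to the symbol $\tau^\ell\lambdat^{-\ell}\in \Ssct^0\subset \Ssc^{0,0}$). Your primary argument via Plancherel is slightly more elementary---it avoids invoking the general boundedness machinery and exploits directly that $\Lambdat^{-\ell}$ is an $x$-independent tangential Fourier multiplier commuting with all derivatives---and it yields the sharp constant $C=1$, whereas the route through Proposition~\ref{prop: sobolev regularity} only gives some unspecified $C>0$. Both approaches are perfectly adequate for this statement.
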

This implies that $\Norm{u}_{m,s,\tau}\ll \Norm{u}_{m,s+\ell,\tau}$
for $\tau$ \suff large.

\section{Interface quadratic forms}
\label{sec: interface forms}
\setcounter{equation}{0}

For $a(\y) \in \Sscl^{p,\sigma}(\Rpb\times\R^n)$, we have 
\begin{align*}
  a(\y) = \sum_{j=0}^p a_j(\y') \xi_n^j, \quad\text{with} \ a_j \in \Ssctcl^{p-j +
    \sigma} (\Rpb\times\R^{n-1}),
\end{align*}
and for $\zb = (z_0, \dots, z_{p})\in \C^{p+1}$ we set 
\begin{align}
  \label{eq: sigma a}
  \Sigma_a(\y',\zb) = \sum_{j=0}^p a_j(\y') z_j.
\end{align}

We let $m_\l$ and $m_r$ be two integers. For applications of the
results of this section we shall use the values of $m_\lr$ that come
with the elliptic transmission problem we consider in the present article. 
\begin{definition}[interface quadratic forms]
  \label{def: interface quadratic form}
Let $w=(w_\l,w_r)\in \big(\S (\Rpb)\big)^2$. We say that
\begin{equation*}
  \G (w)= \sum_{s=1}^N\para{A_\l ^s {w_\l}\br +A_r^s {w_r}\br, 
    B_\l^s {w_\l}\br+B_r^s{w_r}\br}_\d,
\end{equation*}
with  $A_\lr^s=a_\lr^s(x,D,\tau)$ and $B_\lr^s=b_\lr^s(x,D,\tau)$,
is an interface quadratic form of type $\para{m_\l-1,m_r-1,\sigma}$
with $\Cinf$ coefficients, if for each $s=1,\dots, N$, we have
$a_\lr^s(\y), b_\lr^s(\y)\in \Sscl^{m_\lr-1,\sigma_\lr}(\Rpb\times\R^n)$, with
$\sigma_\l+\sigma_r=2\sigma$, $\y=(\y',\xi_n)$ with
$\y'=(x,\xi',\tau)$.  

For $\w=(\zb^\l,\zb^r), \tw=(\tzb^\l,\tzb^r) \in\C^{m_\l}\times\C^{m_r}$,
$\zb^\lr=(z^{\lr}_0,\dots,z^{\lr}_{m_\lr-1}), \tzb^\lr=(\tz^{\lr}_0,\dots,\tz^\lr_{m_\lr-1})\in \C^{m_\lr}$
with the interface quadratic form $\G$ we associate the following
bilinear symbol
\begin{align*}
  \Sigma_{\G} (\y', \w,\tw)=\sum_{s=1}^N 
  \big(\Sigma_{a_\l^{s}}(\y',\zb^\l)+\Sigma_{a_r^s}(\y',\zb^r)\big)
  \ovl{\big(\Sigma_{b_\l^s}(\y', \tzb^\l)+\Sigma_{b_r^s}(\y',\tzb^r)\big)}.
\end{align*}
with $\Sigma_{a_\lr^{s}}$ and $\Sigma_{b_\lr^s}$ defined as in \eqref{eq: sigma a}.
\end{definition}
\begin{definition}
   Let $\W$ be an open conic set in $\R^{n-1} \times \R^{n-1} \times \R_+$
  and let $\G$ be an interface quadratic form of type
  $(m_\l-1,m_r-1,\sigma)$ associated with the bilinear symbol
  $\Sigma_{\G}(\y',\w,\tw)$. We say that $\G$ is positive
  definite in $\W$ if there exists $C>0$ and $R>0$ such that
  \begin{align*}
    \Re \Sigma_{\G}(\y'',x_n=0^+,\w,\w) 
    \geq C \Big(
      \sum_{j=0}^{m_\l-1}\lambdat^{2(m_\l-1-j+\sigma_\l)}\bigabs{z^{\l}_j}^2
      +\sum_{j=0}^{m_r-1}\lambdat^{2(m_r-1-j+\sigma_r)}\bigabs{z^{r}_j }^2
      \Big),
  \end{align*}
  for any $\w=(\zb^\l,\zb^r)$,
  $\zb^\lr=(z^{\lr}_{0},\dots,z^{\lr}_{m_\lr-1}) \in \C^{m_\lr}$, and
  $\y'' = (x',\xi',\tau) \in\W$, $\tau\geq 0$, such that $\lambdat= |(\xi',\tau)| \geq R$.
\end{definition}

Then we have the following Lemma
\begin{lemma}
  \label{lemma: Gaarding for interface forms}
  Let $\W$ be an open conic set in $\R^{n-1}\times \R^{n-1} \times
  \R_+$ and let $\G$ be an interface quadratic form of type
  $(m_\l-1,m_r-1,\sigma)$ that is positive definite in $\W$.
  Let $\chi \in \Ssct^0$ be homogeneous of degree 0, with
  $\supp(\chi\br)\subset \W$ and  let $N\in \N$. Then there exist $\tau_\ast\geq
  1$, $C>0$, $C_N >0$ \st  
  \begin{align*}
    \Re \G (\Op(\chi) u) 
    &\geq  C \big(
    \norm{\trace(\Op(\chi) u_\l)}^2_{m_\l-1,\sigma_\l,\tau}
    +\norm{\trace(\Op(\chi) u_r)}^2_{m_r-1,\sigma_r,\tau}
    \big)\\
    &\quad- C_N \big(
    \norm{\trace(u_\l)}^2_{m_\l-1,\sigma_\l-N,\tau}
    +\norm{\trace(u_r)}^2_{m_r-1,\sigma_r-N,\tau}
    \big)
    \end{align*}
    for $u=(u_\l,u_r)\in \big(\S(\Rpb)\big)^2$ and $\tau\geq\tau_\ast$.
\end{lemma}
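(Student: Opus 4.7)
The strategy is to rewrite $\G(\Op(\chi) u)$ as a tangential sesquilinear form acting on the trace vector $\w = (\trace(u_\l), \trace(u_r))$, then invoke a matrix-valued sharp G\aa rding inequality adapted to the mixed weights $\lambdat^{2(m_\lr-1-j+\sigma_\lr)}$ that appear in the target norms $|\trace(u_\lr)|^2_{m_\lr-1,\sigma_\lr,\tau}$.

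First, since $a_\lr^s(\y) = \sum_{j=0}^{m_\lr-1} a_{\lr,j}^s(\y') \xi_n^j$ with $a_{\lr,j}^s \in \Ssctcl^{m_\lr-1-j+\sigma_\lr}$, and since $[D_n^j, \Op(\chi)]$ is a sum of tangential operators of order $0$ composed with $D_n^k$ for $k<j$, restriction at $x_n=0^+$ yields
\begin{align*}
  A_\lr^s \Op(\chi) u_\lr \br = \sum_{j=0}^{m_\lr-1} a_{\lr,j}^s(x',0^+,D',\tau)\, \Op(\chi_0)\, \trace_j(u_\lr) + \mathrm{l.o.t.},
\end{align*}
where $\chi_0 = \chi\br$ and the $\mathrm{l.o.t.}$ are tangential operators of order $\le m_\lr-1-j+\sigma_\lr$ applied to $\trace_k(u_\lr)$ with $k<j$. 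A similar identity holds for $B_\lr^s$. Summing over $s$ and rearranging, one obtains a matrix $\mathcal Q_\chi$ of tangential operators such that $\G(\Op(\chi) u) = (\mathcal Q_\chi\, \w,\w)_\partial$, whose Hermitian-part principal symbol coincides, modulo one order lower, with $|\chi_0(\y')|^2 \Re \Sigma_\G(\y',\cdot,\cdot)$ viewed as a Hermitian form on $\C^{m_\l}\times\C^{m_r}$.

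Next, the positivity assumption on $\G$ in $\W$ combined with $\supp\chi_0\subset\W$ gives the bound $|\chi_0(\y')|^2 \Re\Sigma_\G(\y',\w,\w)\ge c\,|\chi_0(\y')|^2\sum_{j,\lr}\lambdat^{2(m_\lr-1-j+\sigma_\lr)}|z_j^\lr|^2$ for $\lambdat$ large. Introduce $\mathcal B_\chi = \Op(\chi_0)^\ast \Op(D)\Op(\chi_0)$, with $D(\y')$ the diagonal matrix with entries $\lambdat^{2(m_\lr-1-j+\sigma_\lr)}$; its principal symbol is $|\chi_0|^2 D$. Since the symmetrized principal symbol of $\mathcal Q_\chi - c\mathcal B_\chi$ is nonnegative, the matrix-valued sharp G\aa rding inequality yields
\begin{align*}
  \Re (\mathcal Q_\chi \w,\w)_\partial \ge c\,\Re (\mathcal B_\chi \w,\w)_\partial - C\bigl(|\trace(u_\l)|^2_{m_\l-1,\sigma_\l-1/2,\tau}+|\trace(u_r)|^2_{m_r-1,\sigma_r-1/2,\tau}\bigr),
\end{align*}
while symbolic calculus (Proposition~\ref{prop: adjoint + composition S m,r}) delivers $\Re (\mathcal B_\chi \w,\w)_\partial \ge c'\sum_{\lr}|\Op(\chi_0)\trace(u_\lr)|^2_{m_\lr-1,\sigma_\lr,\tau}$ up to the same-order remainder.

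Finally, inverting the commutator identity of the first step, $\Op(\chi_0)\trace_j(u_\lr) = \trace_j(\Op(\chi) u_\lr) - \sum_{k<j} E_{j,k}\trace_k(u_\lr)$ with $E_{j,k}$ tangential of order $0$, one replaces $|\Op(\chi_0)\trace(u_\lr)|$ by $|\trace(\Op(\chi) u_\lr)|$ at the cost of a lower-order trace norm of $u$ that is absorbable in the remainder. To upgrade the remainder from order $\sigma_\lr-1/2$ to the prescribed $\sigma_\lr-N$ for arbitrary $N\in\N$, the plan is to iterate: subtract from $\mathcal Q_\chi$ successive correction operators with nonnegative principal symbols of decreasing order, built analogously to $\mathcal B_\chi$ with weight matrices $D_k$ of order $2(\sigma_\lr-k)$, apply sharp G\aa rding at each step, and invoke Corollary~\ref{cor: semi-classical argument} to absorb the accumulated intermediate terms into the $\tau$-scaled remainder for $\tau\ge\tau_\ast$ large enough. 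The main obstacle is the nonuniform weight structure $\lambdat^{2(m_\lr-1-j+\sigma_\lr)}$ across trace components within a matrix G\aa rding argument; this is handled by conjugating each trace component $\trace_j(u_\lr)$ by $\Lambdat^{m_\lr-1-j+\sigma_\lr}$ to reduce the matrix operator to uniform order zero before invoking the standard matrix-valued sharp G\aa rding inequality.
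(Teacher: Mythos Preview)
Your reduction to a matrix tangential form acting on the conjugated trace vector is exactly right, and the final conjugation by $\Lambdat^{m_\lr-1-j+\sigma_\lr}$ to bring everything to order zero is precisely what the paper does (there the conjugated traces are called $V_\lr$). The gap is in how you obtain the remainder of order $\sigma_\lr-N$.

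Your plan applies the \emph{sharp} G{\aa}rding inequality to the order-zero matrix symbol $|\chi_0|^2\Re\Sigma_\G - c\,|\chi_0|^2 D$, which is merely nonnegative. This yields a remainder one half-order below, i.e.\ $|\trace(u_\lr)|^2_{m_\lr-1,\sigma_\lr-1/2,\tau}$. The proposed iteration (``subtract successive correction operators with nonnegative principal symbols of decreasing order and reapply sharp G{\aa}rding'') does not work: the error term produced by sharp G{\aa}rding is a generic operator of order $-1$ with no sign, so there is nothing nonnegative left to which a further G{\aa}rding step could be applied. And Corollary~\ref{cor: semi-classical argument} goes the wrong way for what you need---it lets you trade a lower-order norm for a $\tau^{-\ell}$ times a higher-order one, not the reverse. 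So your scheme stalls at $N=1/2$.

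The paper bypasses this entirely. After conjugating to an order-zero matrix symbol $\mathbf g$, it introduces $\tchi\in\Ssct^0$ with $\tchi=1$ near $\supp\chi$ and $\supp\tchi\subset\W$, and sets $\tilde{\mathbf g}=\tchi\,\mathbf g+(1-\tchi)I_{2m}$. The point is that $\Re\tilde{\mathbf g}$ is now \emph{uniformly positive definite on the whole tangential phase space}, so the \emph{strict} G{\aa}rding inequality gives $\Re(\Op(\tilde{\mathbf g})\udl U,\udl U)_\d\ge C|\udl U|_\d^2$ with no lower-order loss at all. The discrepancy $\mathbf r=\mathbf g-\tilde{\mathbf g}=(\mathbf g-I_{2m})(1-\tchi)$ has symbol supported where $1-\tchi\neq 0$, hence disjoint from $\supp\chi$; since $\udl U$ is built from $\Op(\chi)u$, pseudodifferential calculus makes $(\Op(\mathbf r)\udl U,\udl U)_\d$ bounded by $C_N|U|_{-N,\tau}^2$ for any $N$. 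That disjoint-support smoothing, not iteration of sharp G{\aa}rding, is what produces the arbitrary $N$ in the remainder.
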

\begin{proof}
The interface quadratic form can be written as
\begin{align*}
  \G(u)&=\sum_{j,k=0}^{m_\l-1}
  \big(
  G^{\l\l}_{kj}\Lambdat^{m_\l-1-j+\sigma_\l} {D^j_nu_\l} \br,
  \Lambdat^{m_\l-1-k+\sigma_\l} {D^k_n u_\l}\br \big)_\d\\
  &\quad 
  +\sum_{j,k=0}^{m_r-1}\big( 
  G^{rr}_{kj}\Lambdat^{m_r-1-j+\sigma_r} {D^j_n u_r}\br,
  \Lambdat^{m_r-1-k+\sigma_r} {D^k_n u_r}\br \big)_\d\\
  & \quad +\sum_{j=0}^{m_\l-1}\sum_{k=0}^{m_r-1}\big( 
  G^{r\l}_{kj}\Lambdat^{m_\l-1-j+\sigma_\l} {D^j_n u_\l}\br, 
  \Lambdat^{m_r-1-k+\sigma_r} {D^k_n u_r}\br \big)_\d\\
  & \quad +\sum_{j=0}^{m_r-1}\sum_{k=0}^{m_\l-1}\big( 
  G^{\l r}_{kj}\Lambdat^{m_r-1-j+\sigma_r} {D^j_n u_r}\br,
  \Lambdat^{m_\l-1-k+\sigma_\l} {D^k_n u_\l}\br \big)_\d,
\end{align*}
where $G^{i i'}_{jk} = \Op(g^{i i'}_{jk})\in \Psisctcl^{0}$, with $i, i'
= \lr$. 

We set the $2m \times 2m$-matrix tangential symbol
\begin{equation*}
  \mathbf{g} (\y')
  =\begin{pmatrix}
    g^{\l\l} & g^{\l r} \\
    g^{r\l} & g^{rr}
  \end{pmatrix}(\y'), \qquad 
  g^{i i'} (\y')
  = \big(g^{i i'}_{jk} (\y')\big)_{{0\leq j\leq m_i-1 }\atop{0\leq
      k\leq m_{i'}-1 }}, \quad i,i' = \lr.
\end{equation*}
We introduce $\tchi \in \Ssct^0$ that has the same properties
as $\chi$ with moreover $0\leq \tchi\leq 1$ and $\tchi =
1$ in a \nhd of $\supp \chi$.
We then set 
\begin{align*}
  \tilde{\mathbf{g}}= \tchi \mathbf{g} + (1- \tchi) I_{2m}, 
\end{align*}
where $I_{2m}$ is the $2m \times 2m$  identity matrix.

As $\G$  is positive
definite in $\W$ we have, for some $C>0$, 
\begin{align*}
  \Re \big( \mathbf{g}(\y'',x_n=0^+) \w, \w\big) 
  \geq C \abs{\w}_{\C^{2m}}^2, \quad \y''\in \W, \ \w \in \C^{2m}.
\end{align*}
Therefore we have, for some $C'>0$,
\begin{align}
  \label{eq: positivity tilde g}
  \Re \big( \tilde{\mathbf{g}}(\y'',x_n=0^+)\w, \w \big) 
  \geq C' \abs{\w}_{\C^{2m}}^2, 
  \quad \y'' \in \R^{n-1} \times \R^{n-1} \times \R_+, \ 
  \w \in \C^{2m}.
\end{align}

\medskip For a function $v$ we define the $m_\lr$-tuple functions
$V_\lr=(v_{\lr,0},\dots,v_{\lr,m_\lr-1})$ by
\begin{equation*}
  v_{\lr,k}=\Lambdat^{m_\lr+\sigma_\lr-1-k}D_n^k {v_\lr}\br,\quad k=0,\dots,m_\lr-1.
\end{equation*}
We then have, for $N \in \Z$,
\begin{align}
  \label{eq: equiv norms V}
  \norm{V_\lr}_{N,\tau}^2
  &=\sum_{k=0}^{m_\lr-1}\norm{v_{\lr,k}}^2_{N,\tau}
  =\sum_{k=0}^{m_\lr-1}\bignorm{\Lambdat^{m_\lr+\sigma_\lr-1-k}D_n^k {v_\lr}\br}_{N,\tau}^2\\
  &\asymp \sum_{k=0}^{m_\lr-1} \bignorm{\Lambdat^{\sigma_\lr+N} D_n^k v_\lr}_{m_\lr-1-k,\tau}^2
  =\norm{\trace(v_\lr)}^2_{m_\lr-1,\sigma_\lr+N,\tau}.\notag
\end{align}

We set $\uu_\lr = \Op(\chi) u_\lr$ and introduce  $U_\lr = (u_{\lr,0}, \dots, u_{\lr,m_\lr-1})$
and $\udl{U}_\lr =   (\uu_{\lr,0}, \dots, \uu_{\lr,m-1})$ as above:
\begin{equation*}
  u_{\lr,k}=\Lambdat^{m_\lr+\sigma_\lr-1-k}D_n^k {u_\lr}\br,\quad 
  \uu_{\lr,k} = \Lambdat^{m_\lr+\sigma_\lr-1-k}D_n^k {\uu_\lr}\br, \quad k=0,\dots,m_\lr-1.
\end{equation*}

Setting $\transp \udl{U} = (\udl{U}_\l, \udl{U}_r)$ we obtain
\begin{align*}
  \G (\uu) = \big( \Op(\mathbf{g}\br) \udl{U},\udl{U}\big)_\d.
\end{align*}
Writing $\mathbf{g} = \tilde{\mathbf{g}}  + \mathbf{r}$ with $\mathbf{r} = (\mathbf{g} - I_{2m}) (1 -
\tchi)$ we find 
\begin{align*}
  \G (\uu) = \big(\Op(\tilde{\mathbf{g}}\br) \udl{U},\udl{U}\big)_\d 
  + \big(\Op(\mathbf{r}\br) \udl{U},\udl{U}\big)_\d 
\end{align*}
As the supports of $1- \tchi$ and $\chi$ are disjoint, with the
pseudo-differential calculus, for any $N\in \N$  we have for some $C_N>0$ 
\begin{align}
  \label{eq: garding interface form 1}
  \big|\big(\Op(\mathbf{r}\br) \udl{U},\udl{U}\big)_\d \big| \leq C_N \norm{U}^2_{-N,\tau}.
\end{align}
Next, from \eqref{eq: positivity tilde g} with the G{\aa}rding inequality in the tangential
direction we deduce that 
for  some $C>0$ we have 
\begin{align}
  \label{eq: garding interface form 2}
  \Re \big(\Op(\mathbf{\tilde{g}}\br) \udl{U},\udl{U}\big)_\d \geq  C \norm{\udl{U}}_{\d}^2, 
\end{align}
for $\tau$ sufficiently large.  Combining \eqref{eq: garding interface
  form 1}--\eqref{eq: garding interface form 2} with~\eqref{eq: equiv
  norms V} yields the conclusion.
\end{proof}

\begin{proposition}
  \label{prop: interface symbol positivity}
  Assume that the transmission condition of Definition~\ref{def:
    transmission Lopatinskii} holds at $\y'_0=(x_0,\xi_0',\tau_0) \in \sphbundle(V)$
  with $x_0 \in S$ (see also \eqref{eq: local reformulation
    transmission 1}--\eqref{eq: local reformulation transmission 2}
  and \eqref{eq: rank formulation transmission condition} for a formulation in the local
  setting).  Then there exists $\U_1$ a conic open \nhd of
  $\y'_0$ in $\ovl{V_+} \times \R^{n-1} \times \R_+$ \st 
\begin{multline*} 
  \sum_{j=1}^{m} \lambdat^{2(m - 1/2- \torder^j)} \bigabs{ \Sigma_{\tlv^j}(\y', \zb^\l) 
    + \Sigma_{\trv^j}(\y',\zb^r) }^2\\
  +  \sum_{j=m+1}^{m_\l'} \lambdat^{2(m_\l -1/2 - \torder_\l^j)} 
  \bigabs{ \Sigma_{\elv^j}(\y',\zb^{\l})}^2
  + \sum_{j=m+1}^{m_r'} \lambdat^{2(m_r -1/2 - \torder_r^j)} 
  \bigabs{ \Sigma_{\erv^j}(\y',\zb^{r})}^2\\
  \geq C \Big(\sum_{j=0}^{m_\l-1} \lambdat^{2(m_\l  -1/2-j)} | z^\l_j|^2 
  + \sum_{j=0}^{m_r-1} \lambdat^{2(m_r  -1/2 -j)}  |z^r_j|^2\Big), 
\end{multline*}
for $\y \in \U_1$ and $\zb^\lr=(z^{\lr}_0,\dots,z^{\lr}_{m_\lr-1})\in \C^{m_\lr}$.
\end{proposition}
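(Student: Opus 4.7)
My strategy is to reduce the claim to an injectivity statement on the unit semi-cosphere $\sphbundle(V)$, where the transmission condition applies directly through its matrix formulation~\eqref{eq: rank formulation transmission condition}, and then to extend to a conic neighborhood by continuity and homogeneity. The first step is to absorb the $\lambdat$-weights through the rescaling
\begin{equation*}
  z^\lr_i = \lambdat^{i-m_\lr+1/2}\, w^\lr_i, \qquad i=0,\dots, m_\lr-1,\qquad \lr\in\{\ell,r\},
\end{equation*}
so that the right-hand side of the claimed inequality becomes $|\mathbf{w}|^2_{\C^{2m}}$ with $\mathbf{w}=(\mathbf{w}^\ell,\mathbf{w}^r)$. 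I work with $\lambdat$ bounded below by the homogeneity radius $r_0$ of the symbols; small $\lambdat$ may be excluded from $\U_1$ since only high frequencies are relevant in the sequel.

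Next I rewrite each left-hand-side term in the rescaled variables. Using that $t^j_{\lr,i}$ is homogeneous of degree $\torder^j_\lr-i$ in $(\xi',\tau)$, combined with the degree identity $\torder^j_\l - m_\l = \torder^j_r - m_r = \torder^j - m$ from~\eqref{eq: assumption beta lr}, a direct computation gives
\begin{equation*}
  \lambdat^{m-1/2-\torder^j}\bigpara{\Sigma_{\tlv^j}(\y',\zb^\l)+\Sigma_{\trv^j}(\y',\zb^r)}
  = \sum_{i=0}^{m_\l-1} t^j_{\l,i}(\y'')\, w^\l_i +\sum_{i=0}^{m_r-1} t^j_{r,i}(\y'')\, w^r_i,
\end{equation*}
with $\y'':=\y'/\lambdat\in\sphbundle(V)$; the same rescaling applied to $\Sigma_{\elrv^{j+m}}$ for $1\leq j\leq m_\lr^-$ cancels exactly the factor $\lambdat^{m_\lr-1/2-\torder^{j+m}_\lr}$ in view of~\eqref{eq: extended transmission order}. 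The left-hand side of the claim therefore equals $|A(\y'')\mathbf{w}|^2_{\C^{m'}}$ for an $m'\times 2m$ matrix $A(\y'')$ whose entries are continuous on $\sphbundle(V)$. By inspecting the block structure of $A(\y'')$ against that of $\mathcal{T}(\y'')$ in~\eqref{eq: rank formulation transmission condition}, one recognizes, up to a permutation of rows, that $A(\y'')=\transp \mathcal{T}(\y'')$.

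At $\y''_0 := \y'_0/\lambdat(\y'_0) \in \sphbundle(V)$, the transmission condition in the form~\eqref{eq: rank formulation transmission condition} asserts that $\rank\mathcal{T}(\y''_0)=2m$, i.e.\@ $A(\y''_0)$ has full column rank and is therefore injective on $\C^{2m}$. Some fixed $2m\times 2m$ minor of $A$ is then non-vanishing at $\y''_0$; by continuity it remains bounded away from zero on a neighborhood $K$ of $\y''_0$ in $\sphbundle(V)$, yielding a uniform estimate $|A(\y'')\mathbf{w}|^2_{\C^{m'}} \geq C|\mathbf{w}|^2_{\C^{2m}}$ for $\y''\in K$. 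Taking $\U_1$ to be the conic open neighborhood of $\y'_0$ generated by $K$ (intersected with $\{\lambdat\geq r_0\}$) and undoing the rescaling reinstates the $\lambdat$-weighted form and concludes the proof. The only substantive step is the matching of exponents in the rescaling, so that the rescaled left-hand side is genuinely a continuous $(0,0)$-homogeneous quantity on $\sphbundle(V)$ coinciding with $|\transp\mathcal{T}(\y'')\mathbf{w}|^2$; this relies essentially on the normalizations~\eqref{eq: assumption beta lr} and~\eqref{eq: extended transmission order}. Once it is carried out, the rest is routine: the transmission condition supplies injectivity at $\y''_0$ and a standard continuity/compactness argument on the sphere provides the uniformity in a conic neighborhood.
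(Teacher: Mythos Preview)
Your proof is correct and follows essentially the same route as the paper's: both arguments identify the weighted left-hand side with $|\transp\mathcal{T}(\y'')\mathbf{w}|^2$ on the unit cosphere via the rescaling $w^\lr_j=\lambdat^{m_\lr-1/2-j}z^\lr_j$ (relying on~\eqref{eq: assumption beta lr} and~\eqref{eq: extended transmission order} for the exponent matching), invoke the rank condition~\eqref{eq: rank formulation transmission condition} to obtain injectivity of $\transp\mathcal{T}$ at $\y''_0$, and extend to a conic neighborhood by continuity. The only cosmetic difference is that the paper first appeals to Proposition~\ref{prop: stability transmission} to secure the rank condition on a full conic neighborhood $\U_1$ and then runs a compactness argument on $\overline{\U_1}\cap\overline{\sphbundle(V)}$, whereas you obtain the neighborhood directly from the nonvanishing of a fixed $2m\times 2m$ minor; the two continuity arguments are equivalent.
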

We recall that $\torder^j = (\torder^j_\l + \torder^j_r)/2$ for $j=1, \dots, m$, and that we have
\eqref{eq: assumption beta lr}.
\begin{proof}
  With the transmission condition holding at $\y'_0$, by Proposition~\ref{prop: stability
    transmission} there exists a conic open
  set $\U_1$ \nhd of $\y'_0$ where condition~\eqref{eq: rank formulation
  transmission condition} is valid. Observe that $\mathcal K = \ovl{\U_1} \cap
  \ovl{\sphbundle(V)}$ is compact, recalling that $V_+$ is bounded.

Let $\y'_1\in \mathcal K$ and $\mathcal{T}(\y_1')$ be as in \eqref{eq: rank formulation transmission condition}. We have  
\begin{align*}
  \rank \mathcal{T}(\y_1')
  = \rank \ovl{\mathcal{T}(\y_1')}\  \transp \mathcal{T}(\y_1')= 2m,
\end{align*}
For $\w=(\zb^\l,\zb^r) \in \C^{2m}$,
$\zb^\lr=(z^{\lr}_0,\dots,z^{\lr}_{m_\lr-1})\in \C^{m_\lr}$, recalling
that $2m = m_\l +m_r$, we thus have 
$\big( \ovl{\mathcal{T}(\y_1')}\ 
   \transp \mathcal{T}(\y_1')\w, \w\big)\geq C |\w|_{\C^{2m}}^2$, for some
   $C>0$. 

Observe that we have 
\begin{align*}
  \ovl{\mathcal{T}(\y_1')}\  \transp \mathcal{T}(\y_1')
  = \begin{pmatrix}
    \ovl{\mathcal{T}_\l^1(\y_1')} \ \transp  \mathcal{T}_\l^1(\y_1') 
    & \ovl{\mathcal{T}_\l^1(\y_1')} \ \transp  \mathcal{T}_r^1(\y_1')\\
    \ovl{\mathcal{T}_r^1(\y_1')} \ \transp  \mathcal{T}_\l^1(\y_1')
    & \ovl{\mathcal{T}_r^1(\y_1')} \ \transp  \mathcal{T}_r^1(\y_1')
  \end{pmatrix}
  + \begin{pmatrix}
    \ovl{\mathcal{T}_\l^2(\y_1')} \ \transp  \mathcal{T}_\l^2(\y_1') 
    & 0\\
    0
    & \ovl{\mathcal{T}_r^2(\y_1')} \ \transp  \mathcal{T}_r^2(\y_1')
  \end{pmatrix}.
\end{align*}
For $\w=(\zb^\l,\zb^r)$, $\zb^\lr=(z^{\lr}_0,\dots,z^{\lr}_{m_\lr-1}) \in \C^{m_\lr}$,  we have 
 \begin{align*} 
   \big( \ovl{\mathcal{T}(\y_1')}\ 
   \transp \mathcal{T}(\y_1')\w, \w\big)
   & =  \bigabs{\transp  \mathcal{T}_\l^1(\y_1') \zb^\l 
     + \transp \mathcal{T}_r^1(\y_1') \zb^r  }_{\C^m}^2
   + \bigabs{\transp  \mathcal{T}_\l^2(\y_1') \zb^\l }_{\C^{m_\l^-}}^2
     + \bigabs{\transp \mathcal{T}_r^2(\y_1') \zb^r  }_{\C^{m_r^-}}^2
   \\
   &=\sum_{j=1}^{m} \Bigabs{
     \sum_{i=0}^{m_\l-1} t_{\l,i}^j(\y_1')z^{\l}_{i}
     +\sum_{i=0}^{m_r-1}  t_{r,i}^j(\y_1')z^{r}_{i}}^2\\
   &\quad + \sum_{j=1}^{m_\l^-} 
    \bigabs{ \sum_{i=0}^{m_\l-1}  e_{\l,i}^{j+m}(\y_1')z^{\l}_{i}}^2
    + \sum_{j=1}^{m_r^-} 
    \bigabs{ \sum_{i=0}^{m_r-1}  e_{r,i}^{j+m}(\y_1')z^{r}_{i}}^2
   \\
   &= \sum_{j=1}^{m} \abs{ \Sigma_{\tlv^j}(\y_1', \zb^\l) 
     + \Sigma_{\trv^j}(\y_1',\zb^r) }^2
   +  \sum_{j=1}^{m_\l^-} 
    \bigabs{  \Sigma_{\elv^{j+m}}(\y_1',\zb^{\l})}^2
    + \sum_{j=1}^{m_r'^-} 
    \bigabs{ \Sigma_{\erv^{j+m}}(\y_1',\zb^{r})}^2.
\end{align*} 
We thus obtain
\begin{align*} 
\sum_{j=1}^{m} \abs{ \Sigma_{\tlv^j}(\y_1', \zb^\l) 
     + \Sigma_{\trv^j}(\y_1',\zb^r) }^2
   +  \sum_{j=1}^{m_\l^-} 
    \bigabs{  \Sigma_{\elv^{j+m}}(\y_1',\zb^{\l})}^2
    + \sum_{j=1}^{m_r^-} 
    \bigabs{ \Sigma_{\erv^{j+m}}(\y_1',\zb^{r})}^2\gtrsim |(\zb^\l,\zb^r)|^2.
\end{align*}
By continuity this inequality remains true in a small \nhd
of $\y'_1$ in $\mathcal K$. Using the compactness of $\mathcal K$ we thus find 
\begin{align*} 
  \sum_{j=1}^{m} \abs{ \Sigma_{\tlv^j}(\y', \zb^\l) 
     + \Sigma_{\trv^j}(\y',\zb^r) }^2
   +  \sum_{j=1}^{m_\l^-} 
    \bigabs{  \Sigma_{\elv^{j+m}}(\y',\zb^{\l})}^2
    + \sum_{j=1}^{m_r^-} 
    \bigabs{ \Sigma_{\erv^{j+m}}(\y',\zb^{r})}^2
     \gtrsim |(\zb^\l,\zb^r)|^2, 
\end{align*}
for $\y' \in {\mathcal K}$ and $\zb^\lr=(z^{\lr}_0,\dots,z^{\lr}_{m_\lr-1}) \in \C^{m_\lr}$.
Introducing   the map 
\begin{align*}
  M_t \y'  = (x, t \eta),
  \qquad  \y' = (x,\eta) \in \Rpb \times \R^{n-1} \times \R_+, \quad t>0,
\end{align*}
as we have  $\ovl{\U_1} = \{ M_t \y'; \  t >0, \  \y' \in \mathcal
K\}$, we find
\begin{align*} 
\sum_{j=1}^{m} \abs{ \Sigma_{\tlv^j}(M_t  \y', \tzb^\l) 
     + \Sigma_{\trv^j}(M_t \y',\tzb^r) }^2
   +  \sum_{j=1}^{m_\l^-} 
    \bigabs{  \Sigma_{\elv^{j+m}}(M_t \y',\tzb^{\l})}^2
    + \sum_{j=1}^{m_r^-} 
    \bigabs{ \Sigma_{\erv^{j+m}}(M_t \y',\tzb^{r})}^2\gtrsim |(\tzb^\l,\tzb^r)|^2, 
\end{align*}
where $t = \lambdat^{-1}=|(\xi',\tau)|^{-1}$ and
$\tzb^\lr=(\tz^{\lr}_0,\dots,\tz^\lr_{m_\lr-1})\in
\C^{m_\lr}$, 
with $\tz^\lr_j = t^{-m_\lr + 1/2 +j} z^\lr_j$, yielding
\begin{multline*} 
  \sum_{j=1}^{m} \bigabs{ \lambdat^{-(\torder_\l^j - m_\l +1/2)} \Sigma_{\tlv^j}(\y', \zb^\l) 
    + \lambdat^{-(\torder_r^j - m_r +1/2)} \Sigma_{\trv^j}(\y',\zb^r) }^2\\
  +  \sum_{j=1}^{m_\l^-} 
  \bigabs{ \lambdat^{-(\torder_\l^j - m_\l +1/2)} \Sigma_{\elv^{j+m}}(\y',\zb^{\l})}^2
  + \sum_{j=1}^{m_r^-} 
  \bigabs{ \lambdat^{-(\torder_r^j - m_r +1/2)}
    \Sigma_{\erv^{j+m}}(\y',\zb^{r})}^2\\
  \gtrsim \sum_{j=0}^{m_\l-1} |\lambdat^{m_\l  -1/2-j} z^\l_j|^2 
  + \sum_{j=0}^{m_r-1} |\lambdat^{m_r  -1/2 -j} z^r_j|^2.
\end{multline*}
With~\eqref{eq: assumption beta lr} we obtain the sought result.
\end{proof}

\section{Proof of the Carleman estimate}
\label{sec: proof Carleman}
\setcounter{equation}{0}
As is usual in the proof of Carleman estimates we consider the
following conjugated operators
 $$P_{\lr,\varphi} = e^{\tau \varphi_\lr} P_\lr e^{-\tau  \varphi_\lr}.$$
 As $e^{\tau \varphi_\lr} D_j e^{-\tau \varphi_\lr} = D_j + i \tau
\d_j \varphi_\lr \in \Psiscl^{1,0}$, we see that $P_{\lr,\varphi} \in \Psiscl^{2m,0}$. Their principal
symbols are given by $p_{\lr,\varphi}(\y) = p_\lr(x,\xi+i \tau
\varphi_\lr'(x)) \in \Sscl^{2m,0}$.
 
Similarly we recall that we set 
$$
\Tlrv^j  = e^{\tau \varphi_\lr} \Tlr^j e^{-\tau  \varphi_\lr} \in
\Psiscl^{\torder_\lr^j,0}, \quad j=1, \dots, m, 
$$
with principal symbols $\tlrv(\y) = \tlr^j (x,\xi+
i\tau\varphi_\lr'(x)) \in \Sscl^{\torder^j_\lr,0}$.

We start the proof of the main theorem with a microlocal 
estimate that exploits the transmission condition.
\subsection{Estimate with the transmission condition}
\label{sec: estimate with transmission condition}

Let us first consider a polynomial function with roots with negative imaginary parts in a
microlocal region. Then, we have the following perfect microlocal
elliptic estimate. We refer to Lemma~4.1 in Part~I \cite{BLR:13} for a proof.
\begin{lemma}
   \label{lemma: elliptic estimate}
   Let $h(\y',\xi_n)\in \Ssc^{k,0}$, $\y' = (x,\xi',\tau)$, with $k\geq 1$, be polynomial
   in $\xi_n$ with homogeneous coefficients in $(\xi',\tau)$ and $H = h(x,D,\tau)$.  When viewed as a
   polynomial in $\xi_n$ the leading coefficient is $1$.  Let $\U$ be
   a conic open subset of $\ovl{V_+} \times \R^{n-1}\times \R_+$.  We assume
   that all the roots of $h(\y',\xi_n)=0$ have negative imaginary part
   for $\y'=(x,\xi',\tau) \in \U$ . Letting $\chi(\y') \in \Ssct^0$ be
   homogeneous of degree $0$ and \st $\supp(\chi) \subset \U$, and $N \in \N$, there exist
   $C>0$, $C_N>0$, and $\tau_\ast>0$ \st
  \begin{equation*}
    \Norm{\Op(\chi)w}^2_{k,\tau}
    +\abs{\trace(\Op(\chi)w)}^2_{k-1,1/2,\tau}
    \leq C
    \Norm{H\Op(\chi)w}_{+}^2
    +C_N\big( \Norm{w}^2_{k,-N,\tau}
    + \norm{\trace(w)}^2_{k-1,-N,\tau}\big),
\end{equation*}
for $w\in \S(\Rpb)$ and $\tau\geq \tau_\ast$.
\end{lemma}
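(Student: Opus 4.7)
The strategy is to use a microlocal factorization of $h$ into first-order factors with symbols of negative imaginary part, and then iterate a basic integration-by-parts estimate for each factor. First, select $\tchi \in \Ssctcl^0$, homogeneous of degree $0$, with $\tchi \equiv 1$ on a conic neighborhood of $\supp \chi$ and $\supp \tchi \Subset \U$. A compactness argument on the cosphere bundle provides $c_0 > 0$ such that every root of $h(\y',\cdot)$ satisfies $\Im \alpha \leq -c_0 \lambdat$ on $\supp \tchi$, and the smooth factorization statement \cite[Lemma~A.2]{BLR:13} produces $\alpha_j \in \Ssctcl^1$, $j=1,\dots,k$, with $\Im \alpha_j \leq -c_0\lambdat/2$ in a conic neighborhood of $\supp \tchi$ and with $h(\y',\xi_n) = \prod_{j=1}^{k}(\xi_n - \alpha_j(\y'))$ there. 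Extending each $\alpha_j$ globally while preserving the sign of the imaginary part, for instance via $\tilde\alpha_j = \tchi \alpha_j - i(1-\tchi)\lambdat$, and setting $A_j = D_n - \tilde\alpha_j(x,D',\tau) \in \Psiscl^{1,0}$, Proposition~\ref{prop: adjoint + composition S m,r} yields
\begin{equation*}
  A_1 \cdots A_k = H + R, \quad R \in \Psisc^{k,-1}\ \text{microlocally on}\ \supp \chi.
\end{equation*}

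The key building block is a first-order estimate for operators $A = D_n - a(x,D',\tau)$ with $a \in \Ssctcl^1$ and $\Im a \leq -c \lambdat$. Integration by parts in $x_n$, together with the identity $\bigpara{D_n u, u}_+ - \bigpara{u, D_n u}_+ = i \bignorm{u\br}_\d^2$, gives
\begin{equation*}
  2\Im \bigpara{A v, \Lambdat^{2s} v}_+
  = \bignorm{\Lambdat^s v\br}_\d^2
    - 2 \Re \bigpara{\Op(\Im a) v, \Lambdat^{2s} v}_+
    + \mathrm{l.o.t.},
\end{equation*}
where the remainder collects the contribution of $\Op(a)^\ast - \Op(\ovl a)$ through Corollary~\ref{cor: regularity adjoint - operator}. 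Applying the tangential Gårding inequality to the symbol $-\Im a \geq c\lambdat$ and Cauchy--Schwarz on the left-hand side then produces, for every $s \in \R$ and every $N \in \N$,
\begin{equation*}
  \Norm{v}_{0,s+1/2,\tau}^2 + \bignorm{v\br}_{0,s+1/2,\tau}^2
  \leq C \Norm{A v}_{0,s,\tau}^2
  + C_N \Norm{v}_{0,s-N,\tau}^2
  + C_N \bignorm{v\br}_{0,s-N,\tau}^2,
\end{equation*}
for $\tau$ sufficiently large.

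To conclude, set $w_k := \Op(\tchi)\Op(\chi) w$ and $w_{j-1} := A_j w_j$ for $j=k,\dots,1$, so that $w_0 = H w_k + R' w$ with $R'$ of order $(k,-1)$ plus smoothing. Applying the first-order estimate successively to each $w_j$ with the appropriate shift (each step gaining one normal derivative in the interior and one-half unit of tangential regularity at the trace), then telescoping, the interior contributions $\Norm{D_n^j w_k}_{0,k-j,\tau}^2$ reconstruct $\Norm{w_k}_{k,\tau}^2$ --- the top $j=k$ term being controlled by $\Norm{H w_k}_+^2$ thanks to the monicity of $H$ in $\xi_n$ --- while the boundary contributions $\bignorm{D_n^j w_k\br}_{0,k-j-1/2,\tau}^2$ reconstruct $\abs{\trace(w_k)}_{k-1,1/2,\tau}^2$ by \eqref{eq: norm Sobolev m s}. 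Finally $\Op(\chi) w - w_k = \Op((1-\tchi)\,\#\,\chi) \Op(\chi) w$ with $(1-\tchi)\,\#\,\chi \in \Ssct^{-\infty}$ by disjoint supports, so this difference is smoothing and only contributes to the $C_N$ terms.

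The principal technical difficulty lies in the bookkeeping of the semi-classical orders through the iteration. At each of the $k$ steps, lower-order remainders are produced by adjoint corrections, by Gårding errors, and by the factorization residue $R$, and these must be absorbed successively into the leading term of a previous step via Corollary~\ref{cor: semi-classical argument}, which requires $\tau$ large enough. Matching the telescoped indices to the precise target norms $\Norm{\cdot}_{k,\tau}$ and $\abs{\trace(\cdot)}_{k-1,1/2,\tau}$ rests on the equivalence between mixed semi-classical norms and the corresponding sums of normal-derivative sectional norms, essentially the definition \eqref{eq: def norms}.
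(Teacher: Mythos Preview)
The paper itself does not prove this lemma; it simply refers to Lemma~4.1 of Part~I \cite{BLR:13}. Your strategy --- factor into first-order pieces with roots in the lower half-plane, prove a one-step estimate by integration by parts plus tangential G{\aa}rding, and telescope --- is the classical route and is almost certainly what Part~I does, so in spirit your proposal matches.

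There is, however, one imprecision worth fixing. You invoke \cite[Lemma~A.2]{BLR:13} to produce \emph{smooth linear} factors $\alpha_j\in\Ssctcl^1$ with $h=\prod(\xi_n-\alpha_j)$ on a conic neighbourhood of $\supp\tchi$. That lemma, as used in the present paper (Section~\ref{sec: symbol factorization}), only yields a smooth factorization $h=h_1\cdots h_r$ near a base point $\y'_0$, where each $h_j$ has constant degree $m_j$ and collects the roots that coalesce to a common value at $\y'_0$; it does \emph{not} give smooth roots when multiplicities occur, and in general smooth linear factorization fails at root crossings (e.g.\ $(\xi_n+i\lambdat)^2 - t$ with $t$ a real tangential parameter). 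Two standard repairs are available: (i) cover $\supp\chi$ by finitely many small conic sets, and in each apply your iteration to the smooth factors $h_j$ rather than to linear ones --- the one-step estimate extends to a monic factor of degree $m_j$ with all roots satisfying $\Im\leq -c\lambdat$, since in that regime $|h_j(\y',\xi_n)|\gtrsim \lambda^{m_j}$ for real $\xi_n$ and the same integration-by-parts scheme works with $H_j^\ast H_j$ in place of a single $A$; or (ii) perturb $h$ to have simple roots, prove the estimate with constants depending only on the uniform bound $\Im\alpha\leq -c_0\lambdat$, and pass to the limit. Either route closes the gap without altering your overall architecture.
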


\medskip
Now, we consider a point in the cotangent bundle, at the interface
where the transmission condition holds. We then obtain an estimate of
an interface norm.
\begin{proposition}
  \label{prop: transmission step}
  Assume that the transmission condition of Definition~\ref{def:
    transmission Lopatinskii} is satisfied at
  $\y'_0=(x_0,\xi_0',\tau_0) \in \sphbundle(V)$ with $x_0\in S
  \cap V$.  Then there exists $\U$, a conic open \nhd of
  $\y'_0$ in $\ovl{V_+} \times \R^{n-1} \times \R_+$, \st for $\chi \in
  \Ssct^0$, homogeneous of degree 0,  with $\supp(\chi) \subset \U$, there exist
  $C>0$ and $\tau_\ast>0$ \st
  \begin{align*}
    &C \big(\norm{\trace(\Op(\chi) v_\l)}^2_{m_\l-1,1/2,\tau}
    + \norm{\trace(\Op(\chi) v_r)}^2_{m_r-1,1/2,\tau}\big)
    \\
    &\qquad \qquad 
    \leq\sum_{j=1}^m \bignorm{\Tlv^j {v_\l}\br + \Trv^j {v_r}\br}_{m-1/2-\torder^j,\tau}^2
    + \Norm{\Plv v_\l}_{+}^2 + \Norm{\Prv v_r}_{+}^2\\
     &\qquad \qquad \quad + \Norm{v_\l}_{m_\l,-1,\tau}^2 + \Norm{v_r}_{m_r,-1,\tau}^2
    + \norm{\trace(v_\l)}^2_{m_\l-1,-1/2,\tau}
    + \norm{\trace(v_r)}^2_{m_r-1,-1/2,\tau},
\end{align*}
for $\tau\geq\tau_\ast$, $v_\l, v_r\in \S(\Rpb)$.
\end{proposition}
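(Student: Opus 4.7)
My plan is to combine the microlocal factorization $\plrv = \plrv^-\,\klrv$ from Section~\ref{sec: symbol factorization}, valid on a conic \nhd $\U$ of $\y'_0$, with the pointwise positivity of Proposition~\ref{prop: interface symbol positivity}, and conclude by the Gårding-type inequality of Lemma~\ref{lemma: Gaarding for interface forms}. The factor $\plrv^-$, whose roots all lie in $\{\Im z < 0\}$ on $\U$, yields a perfect elliptic estimate through Lemma~\ref{lemma: elliptic estimate}. The factor $\klrv$, together with the transmission operators $\Tlrv^j$, provides via the transmission condition and Proposition~\ref{prop: interface symbol positivity} control of all $2m$ trace components.

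Concretely, I would further restrict $\U$ so that both the symbol factorization and the conclusion of Proposition~\ref{prop: interface symbol positivity} hold, and take $\supp\chi\subset\U$. Quantizing the factorization (after extending the local symbols to global ones using a cutoff equal to $1$ on $\supp\chi$), Proposition~\ref{prop: adjoint + composition S m,r} produces operators $\Plrv^-\in\Psiscl^{m_\lr^-,0}$ and $K_{\lr,\varphi}\in\Psiscl^{m_\lr-m_\lr^-,0}$ with $\Plrv - \Plrv^- K_{\lr,\varphi}\in\Psiscl^{m_\lr,-1}$ on $\supp\chi$. Setting $w_\lr := K_{\lr,\varphi}\Op(\chi) v_\lr$, Lemma~\ref{lemma: elliptic estimate} applied to $\plrv^-$ yields
\begin{align*}
  \Norm{w_\lr}_{m_\lr^-,\tau}^2 + \norm{\trace(w_\lr)}_{m_\lr^--1,1/2,\tau}^2
  \leq C \Norm{\Plrv^- w_\lr}_+^2 + \text{(LOT)},
\end{align*}
and writing $\Plrv^- w_\lr = \Op(\chi)\Plrv v_\lr + [\Plrv,\Op(\chi)] v_\lr - R_\lr\Op(\chi) v_\lr$ with $R_\lr\in\Psiscl^{m_\lr,-1}$, the symbol calculus of Section~\ref{sec: pseudo} together with the semi-classical absorption of Corollary~\ref{cor: semi-classical argument} turns the latter two contributions into $C\Norm{v_\lr}_{m_\lr,-1,\tau}^2$. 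By the definition of $\elrv^{j+m}$ and the composition rule, the traces $D_n^{j-1} w_\lr\br$ for $j=1,\ldots,m_\lr^-$ coincide with $\Op(\elrv^{j+m})\Op(\chi) v_\lr\br$ up to tangential operators of strictly lower order applied to $\trace(v_\lr)$, which contribute a multiple of $\norm{\trace(v_\lr)}_{m_\lr-1,-1/2,\tau}^2$.

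To finish I would introduce the interface quadratic form of type $(m_\l-1,m_r-1,1/2)$
\begin{align*}
  \G(w)
  &= \sum_{j=1}^m \bignorm{\Lambdat^{m-1/2-\torder^j}\big(\Tlv^j{w_\l}\br + \Trv^j{w_r}\br\big)}_\d^2 \\
  &\quad + \sum_{j=1}^{m_\l^-} \bignorm{\Lambdat^{m_\l-1/2-\torder_\l^{j+m}}\Op(\elv^{j+m}){w_\l}\br}_\d^2
  + \sum_{j=1}^{m_r^-} \bignorm{\Lambdat^{m_r-1/2-\torder_r^{j+m}}\Op(\erv^{j+m}){w_r}\br}_\d^2,
\end{align*}
whose bilinear symbol, up to scaling, is precisely the left-hand side of Proposition~\ref{prop: interface symbol positivity}; it is thus positive definite on $\U$. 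Lemma~\ref{lemma: Gaarding for interface forms} applied to $\G\big(\Op(\chi) v\big)$ then yields
\begin{align*}
  C\Big(\norm{\trace(\Op(\chi) v_\l)}_{m_\l-1,1/2,\tau}^2 + \norm{\trace(\Op(\chi) v_r)}_{m_r-1,1/2,\tau}^2\Big)
  \leq \Re\G\big(\Op(\chi) v\big) + \text{(LOT)}.
\end{align*}
The matching upper bound for $\Re\G\big(\Op(\chi) v\big)$ is obtained by commuting $\Op(\chi)$ past each $\Tlrv^j$ and $\Op(\elrv^{j+m})$: the $\Tlrv^j$-contributions are controlled by $\sum_j\bignorm{\Tlv^j v_\l\br + \Trv^j v_r\br}_{m-1/2-\torder^j,\tau}^2$ up to boundary remainders absorbed into $\norm{\trace(v_\lr)}_{m_\lr-1,-1/2,\tau}^2$, while the $\elrv^{j+m}$-pieces reduce, modulo the same kind of remainder, to $\norm{\trace(w_\lr)}_{m_\lr^--1,1/2,\tau}^2$, which is controlled by the elliptic estimate of the preceding paragraph. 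The main technical point, and the step most prone to error, is the uniform book-keeping of orders ensuring that every commutator and every factorization remainder lands in either $C\Norm{v_\lr}_{m_\lr,-1,\tau}^2$ or $C\norm{\trace(v_\lr)}_{m_\lr-1,-1/2,\tau}^2$, exactly as allowed by the right-hand side of the proposition.
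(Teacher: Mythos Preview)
Your approach is correct and essentially identical to the paper's proof: both combine the factorization $\plrv = \plrv^-\,\klrv$, the elliptic estimate of Lemma~\ref{lemma: elliptic estimate} for the factor $\plrv^-$, and the G{\aa}rding inequality of Lemma~\ref{lemma: Gaarding for interface forms} applied to the interface quadratic form built from the $\Tlrv^j$ and the $\elrv^{j+m}$, whose positive definiteness on $\U$ is supplied by Proposition~\ref{prop: interface symbol positivity}. The only cosmetic difference is the order in which you apply $K_{\lr,\varphi}$ and $\Op(\chi)$ (the paper sets $w_\lr = \Op(\tchi\klrv) v_\lr$ and applies Lemma~\ref{lemma: elliptic estimate} to $\Op(\chi) w_\lr$, which matches that lemma's hypothesis exactly), but this discrepancy is absorbed by the lower-order remainders you already track.
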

\begin{proof}
As the transmission condition holds at $\y'_0$ the local smooth symbol
factorizations of Section~\ref{sec: symbol factorization}, 
\begin{align*}
    \plrv(\y',\xi_n) = \plrv^-(\y',\xi_n)\, \klrv (\y',\xi_n),
\end{align*}
is such that condition~\eqref{eq: rank formulation
  transmission condition}  is valid for $\y' \in \U_0$ with $U_0$ a
conic \nhd of $\y'_0$ in $\ovl{V_+} \times \R^{n-1}
  \times \R_+$. Moreover, by Proposition~\ref{prop: interface symbol positivity} there exists
  $\U_1 \subset \U_0$, a conic open \nhd of $\y'_0$,  \st
\begin{multline*} 
  \sum_{j=m+1}^{m_\l'} \lambdat^{2(m_\l -1/2 - \torder_\l^j)} 
  \bigabs{ \Sigma_{\elv^j}(\y',\zb^{\l})}^2
  + \sum_{j=m+1}^{m_r'} \lambdat^{2(m_r -1/2 - \torder_r^j)} 
  \bigabs{ \Sigma_{\erv^j}(\y',\zb^{r})}^2\\
  +\sum_{j=1}^{m} \lambdat^{2(m - 1/2- \torder^j)} \bigabs{ \Sigma_{\tlv^j}(\y', \zb^\l) 
    + \Sigma_{\trv^j}(\y',\zb^r) }^2
  \gtrsim \sum_{j=0}^{m_\l-1} \lambdat^{2(m_\l  -1/2-j)} | z^\l_j|^2 
  + \sum_{j=0}^{m_r-1} \lambdat^{2(m_r  -1/2 -j)}  |z^r_j|^2, 
\end{multline*}
for $\y' \in \U_1$ and $\zb^\lr\in \C^{m_\lr}$.
We now choose $\U$ a conic open subset, \nhd of $\y'_0$, \st $\ovl{\U} \subset
\U_1$.  We let  $\chi$ be as in the statement and  
we also choose $\tchi \in \Ssct^0$ homogeneous of degree $0$ with $\supp(\tchi)
\subset \U_1$  and $\tchi =
1$ in a \nhd of  $\ovl{\U}$. Then,
\begin{multline}
  \label{eq: ineq symbol transmission}
  \sum_{j=m+1}^{m_\l'} \lambdat^{2(m_\l -1/2 - \torder_\l^j)} 
  \bigabs{ \tchi(\y') \Sigma_{ \elv^j}(\y',\zb^{\l})}^2
  + \sum_{j=m+1}^{m_r'} \lambdat^{2(m_r -1/2 - \torder_r^j)} 
  \bigabs{ \tchi(\y') \Sigma_{\erv^j}(\y',\zb^{r})}^2\\
  +\sum_{j=1}^{m} \lambdat^{2(m - 1/2- \torder^j)} \bigabs{ \Sigma_{\tlv^j}(\y', \zb^\l) 
    + \Sigma_{\trv^j}(\y',\zb^r) }^2
  \gtrsim \sum_{j=0}^{m_\l-1} \lambdat^{2(m_\l  -1/2-j)} | z^\l_j|^2 
  + \sum_{j=0}^{m_r-1} \lambdat^{2(m_r  -1/2 -j)}  |z^r_j|^2, 
\end{multline}
  for $\y' \in \ovl{\U}$ and $\zb^\lr\in \C^{m_\lr}$.

We set\footnote{The introduction of $\tchi$ is made so that $\tchi
e_\varphi^k$ is defined on the whole tangential phase-space.} $\Elrv =
\Op(\tchi \elrv)$ and we define the following interface quadratic form
(see Definition~\ref{def: interface quadratic form}): 
\begin{align*}
  \G_S (u) &=  \sum_{j=1}^{m} 
  \bignorm{\Tlv^j {u_\l}\br  + \Trv^j  {u_r}\br }_{m-1/2 - \torder^j,\tau }^2\\
  &\quad 
  + \sum_{j=m+1}^{m_\l'} \bignorm{\Elv^j {u_\l}\br}_{m+ m_\l^- +1/2 - j,\tau}^2
  + \sum_{j=m+1}^{m_r'} \bignorm{\Erv^j {u_\l}\br}_{m+ m_r^- +1/2 - j,\tau}^2.
\end{align*}
Observe that with \eqref{eq: extended transmission order} we have  $m+ m_\lr^- +1/2 - j = m_\lr - 1/2 - \torder_\lr^j$ for
$j=m+1, \dots, m_\lr'$.  We thus find that $\G_S$ is of type $(m^\l-1,
m^r-1, \hf)$ and its bilinear symbol is given by 
  \begin{align*}
    \Sigma_{\G_S} (\y',\w,\tw) 
    &= \sum_{j=1}^{m} \lambdat^{2 (m - 1/2- \torder^j)} 
    \big( \Sigma_{\tlv^j}(\y', \zb^\l) + \Sigma_{\trv^j}(\y',\zb^r)\big)  
    \big( \ovl{\Sigma_{\tlv^j}}(\y', \ovl{\tzb^\l}) +
    \ovl{\Sigma_{\trv^j}}(\y',\ovl{\tzb^r} )\big) \\
    &\quad+ |\tchi (\y')|^2 \sum_{j=m+1}^{m_\l'} \lambdat^{2(m_\l -1/2 - \torder_\l^j)} 
   \Sigma_{\elv^j}(\y',\zb^{\l}) \ovl{\Sigma_{\elv^j}}(\y',\ovl{\tzb^{\l}} )\\
   &\quad + |\tchi (\y')|^2\sum_{j=m+1}^{m_r'} \lambdat^{2(m_r -1/2 - \torder_r^j)} 
   \Sigma_{\erv^j}(\y',\tzb^{r})\ovl{\Sigma_{\erv^j}}(\y',\ovl{\tzb^{r}}),
  \end{align*}
  with $\w=(\zb^\l,\zb^r), \tw=(\tzb^\l,\tzb^r)
  \in\C^{m_\l}\times\C^{m_r}$.
  Hence \eqref{eq: ineq symbol transmission} gives 
  \begin{align*}
    \Sigma_{\G_S} (\y',\w,\tw) 
    \gtrsim \sum_{j=0}^{m_\l-1} \lambdat^{2(m_\l  -1/2-j)} | z^\l_j|^2 
  + \sum_{j=0}^{m_r-1} \lambdat^{2(m_r  -1/2 -j)}  |z^r_j|^2,
  \end{align*}
  for $\y' \in \ovl{\U}$. For any $N \in \N$, by Lemma~\ref{lemma: Gaarding for
    interface forms} there exists
  $\tau_\ast\geq
  1$, $C>0$, $C_N >0$ \st  
  \begin{align}
    \label{eq: transmission 1}
    \G_S (\vv)  = \Re \G_S (\vv) 
    &\geq  C \big(
    \norm{\trace(\vv_\l)}^2_{m_\l-1,1/2,\tau}
    +\norm{\trace(\vv_r)}^2_{m_r-1,1/2,\tau}
    \big)\\
    &\quad- C_N \big(
    \norm{\trace(v_\l)}^2_{m_\l-1,1/2-N,\tau}
    +\norm{\trace(v_r)}^2_{m_r-1,1/2-N,\tau}
    \big), \notag
    \end{align}
    with $\vv = (\vv_\l, \vv_r)$ and $\vv_\lr = \Op(\chi) v_\lr$, for $\v=(v_\l,v_r)\in \big(\S(\Rpb)\big)^2$ and $\tau\geq\tau_\ast$.

\medskip
The functions $\plrv^-(\y',\xi_n)$ and $\klrv(\y',\xi_n)$ in the
symbol factorization recalled at the begining of the proof are polynomial in $\xi_n$ with
homogeneous coefficients in $\y' \in \U_0$ and the leading coefficient
of  $\plrv^-(\y',\xi_n)$ is  equal
to $1$. For $\y' \in \U_0$, their degrees are constant and equal to $m_\lr^-$ and $m_\lr
-m_\lr^-$ respectively. 
We smoothly extend $\plrv^-(\y',\xi_n)$
for $\y'$ outside of $\U_0$ keeping the leading coefficient equal to $1$ and we
denote this extension by $\tplrv^-$. 
In fact we have $\chi \plrv = \chi \klrv \plrv^- = \chi
\tchi \klrv \tplrv^-$.  We thus obtain $ \Op(\chi)\Plrv =
\Op(\tplrv^-) \Op(\chi) \Op(\tchi \klrv) + R_\lr$ with $R_\lr$ in
$\Psisc^{m,-1}$ by the last point of Proposition~\ref{prop: adjoint +
composition S m,r}. Observe that $\tchi\klrv$ is a well defined
symbol. 

Applying Lemma~\ref{lemma: elliptic estimate} to
$\Op(\tplrv^-)$ and $w_\lr = \Op(\tchi \klrv) v_\lr$ we obtain
\begin{align*}
  & \Norm{\Op(\chi)  w_\l}_{m_\l^{-},\tau}^2 + \Norm{\Op(\chi)  w_r}_{m_r^{-},\tau}^2
  + \norm{\trace(\Op(\chi) w_\l)}_{m_\l^- -1,1/2,\tau}^2
  + \norm{\trace(\Op(\chi) w_r)}_{m_r^- -1,1/2,\tau}^2\\
  &\qquad \lesssim
 \bigNorm{\Op(\tplv^-) \Op(\chi)  w_\l}_{+}^2
  + \bigNorm{\Op(\tprv^-) \Op(\chi)  w_r}_{+}^2
  + \Norm{w_\l}^2_{m_\l^-,-N,\tau}
  + \Norm{w_r}^2_{m_r^-,-N,\tau}\\
    &\qquad \quad+ \norm{\trace(w_\l)}^2_{m_\l^- -1,-N,\tau}
    + \norm{\trace(w_r)}^2_{m_r^- -1,-N,\tau}\\
    &\qquad  \lesssim 
    \bigNorm{ \Op(\chi) \Plv v_\l}_{+}^2
    + \bigNorm{ \Op(\chi)\Prv v_r}_{+}^2
    +  \Norm{v_\l}^2_{m_\l,-1,\tau}
    + \Norm{v_r}^2_{m_r,-1,\tau}
    \\
    &\qquad  \quad 
    + \Norm{v_\l}^2_{m_\l,-N,\tau}
    + \Norm{v_r}^2_{m_r,-N,\tau}
    + \norm{\trace(v_\l)}^2_{m_\l -1,-N,\tau}
    + \norm{\trace(v_r)}^2_{m_r -1,-N,\tau}\\
    &\qquad  \lesssim 
    \Norm{\Plv v_\l}_{+}^2
    + \Norm{\Prv v_r}_{+}^2
    + \Norm{v_\l}^2_{m_\l,-1,\tau}
    + \Norm{v_r}^2_{m_r,-1,\tau}
    \\
    &\qquad  \quad 
    + \norm{\trace(v_\l)}^2_{m_\l -1,-N,\tau}
    + \norm{\trace(v_r)}^2_{m_r -1,-N,\tau},
\end{align*}
yielding 
\begin{align*}
  &\sum_{j=0}^{m_\l^{-}-1}
  \bignorm{D_n^j \Op(\chi) {w_\l}\br}_{m_\l^{-}-1/2-j,\tau}^2
  + \sum_{j=0}^{m_r^{-}-1}
  \bignorm{D_n^j \Op(\chi) {w_r}\br }_{m_r^{-}-1/2-j,\tau}^2\\
  &\qquad  \lesssim
  \Norm{\Plv v_\l}_{+}^2
    + \Norm{\Prv v_r}_{+}^2
    + \Norm{v_\l}^2_{m_\l,-1,\tau}
    + \Norm{v_r}^2_{m_r,-1,\tau}
  + \norm{\trace(v_\l)}^2_{m_\l -1,-N,\tau}
    + \norm{\trace(v_r)}^2_{m_r -1,-N,\tau},
\end{align*} 
Recalling that $\elrv^{j+m+1} = \klrv \xi_n^{j}$, $j=0,\dots,m_\lr^- -1$ in
$\U_1$ we have $D_n^j \Op(\chi) \Op(\tchi\klrv) v_\lr =
\Elrv^{j+m+1} \vv_\lr + R_{\lr,j} v_\lr$ with $R_{\lr,j} \in \Psisc^{m_\lr-m_\lr^-+j,-1}$ by the
last point of Proposition~\ref{prop: adjoint + composition S m,r}.  We
then obtain, for $\tau$ chosen \suff large
\begin{align*}
  &\sum_{j=0}^{m_\l^{-}-1}
  \bignorm{\Elv^{j+m+1}  {\vv_\l}\br}_{m_\l^{-}-1/2-j,\tau}^2
  +\sum_{j=0}^{m_r^{-}-1}
  \bignorm{\Erv^{j+m+1} {\vv_r}\br}_{m_r^{-}-1/2-j,\tau}^2\\
  &\qquad \qquad  \lesssim
 \Norm{\Plv v_\l}_{+}^2
 +\Norm{\Prv v_r}_{+}^2
+\Norm{v_\l}^2_{m_\l,-1,\tau} 
 +\Norm{v_r}^2_{m_r,-1,\tau}\\
 &\qquad \qquad \quad+\norm{\trace(v_\l)}^2_{m_\l-1,-1/2,\tau}
 +\norm{\trace(v_r)}^2_{m_r-1,-1/2,\tau}, 
\end{align*}
which we write, by a shift of indices, 
\begin{align}
  \label{eq: transmission 2}
  &\sum_{j=m+1}^{m_\l'}
  \bignorm{\Elv^{j}  {\vv_\l}\br}_{m+ m_\l^{-}+1/2-j,\tau}^2
  +\sum_{j=m+1}^{m_r'}
  \bignorm{\Erv^{j} {\vv_r}\br}_{m+m_r^{-}+1/2-j,\tau}^2\\
  &\qquad \qquad  \lesssim
 \Norm{\Plv v_\l}_{+}^2
 +\Norm{\Prv v_r}_{+}^2
+\Norm{v_\l}^2_{m_\l,-1,\tau} 
 +\Norm{v_r}^2_{m_r,-1,\tau}\nonumber \\
 &\qquad \qquad \quad+\norm{\trace(v_\l)}^2_{m_\l-1,-1/2,\tau}
 +\norm{\trace(v_r)}^2_{m_r-1,-1/2,\tau}, \nonumber
\end{align}
Collecting estimates \eqref{eq: transmission 1} and \eqref{eq:
  transmission 2} we thus obtain
\begin{align*}
  \norm{\trace(\vv_\l)}^2_{m_\l-1,1/2,\tau}
    +\norm{\trace(\vv_r)}^2_{m_r-1,1/2,\tau}
   & \lesssim
  \sum_{j=1}^{m} \bignorm{\Tlv^j {\vv_\l}\br  + \Trv^j  {\vv_r}\br }_{m-1/2 - \torder^j,\tau }^2\\
 &\quad + \Norm{\Plv v_\l}_{+}^2
 +\Norm{\Prv v_r}_{+}^2
+\Norm{v_\l}^2_{m_\l,-1,\tau} 
 +\Norm{v_r}^2_{m_r,-1,\tau}\\
 &\quad +\norm{\trace(v_\l)}^2_{m_\l-1,-1/2,\tau}
 +\norm{\trace(v_r)}^2_{m_r-1,-1/2,\tau}.
\end{align*}

Writing $\Tlv^j \Op(\chi) = \Op(\chi)  \Tlv^j + [\Tlv^j , \Op(\chi) ]$
we observe that (using that $m_\l- \torder_\l^j= m_r- \torder_r^j= m- \torder^j$)
\begin{align*}
  \bignorm{\Tlv^j {\vv_\l}\br  + \Trv^j  {\vv_r}\br }_{m-1/2 - \torder^j,\tau }
  &\lesssim \bignorm{\Tlv^j {v_\l}\br  + \Trv^j  {v_r}\br }_{m-1/2 - \torder^j,\tau }\\
  &\quad + \norm{\trace(v_\l)}_{\torder_\l^j,m_\l-1/2-\torder_\l^j-1,\tau}
  + \norm{\trace(v_r)}_{\torder_r^j,m_r-1/2-\torder_r^j-1,\tau}\\
  &\lesssim \bignorm{\Tlv^j {v_\l}\br  + \Trv^j  {v_r}\br }_{m-1/2 - \torder^j,\tau }\\
  &\quad + \norm{\trace(v_\l)}_{m_\l-1,-1/2,\tau}
  + \norm{\trace(v_r)}_{m_r-1,-1/2,\tau}.
\end{align*}
This concludes the proof.
\end{proof}

\subsection{Estimate with a positive Poisson bracket on the characteristic set}
\label{sec: estimate sub-ellipticity}

If we consider the case of two symbols $a,b$ \st the Poisson bracket
$\{ a, b\}$ is positive on the characterisitic set $\{a=b=0\}$, an
estimate with the control of a volume norm can be achieved.
\begin{lemma}
  \label{lemma: positivity char}
  Let $U$ be an open set of $\ovl{V_+}$.
  Let $a\in S^{m,0}_{\tau}$ and $b\in S^{m-1,1}_{\tau}$ be real symbols
  homogeneous of degree $m$ in $(\tau,\xi)$, and set
  \begin{equation*}
    Q_{a,b}(v)=2 \Re\para{A v,iB v}_+, \quad A = a(x,D,\tau), \ B = b(x,D,\tau).
  \end{equation*}
  We assume that
  \begin{equation*}
    a(\y)=b(\y) =0  \ \ \imp \ \ \{ a,b\}>0,
    \quad \y= (x,\xi,\tau),
  \end{equation*}
  for $x \in \ovl{U}$, $(\xi,\tau) \neq (0,0)$. 
  Then there exist $C>0$, $C'>0$, and $\tau_\ast>0$ \st
  \begin{equation*}
    C \Norm{v}_{m,\tau}^2 \leq
    C' \big( \Norm{A v}_{+}^2 + \Norm{B v}_{+}^2 + \norm{\trace(v)}^2_{m-1,1/2,\tau}\big)
    + \tau \bigpara{Q_{a,b}(v)- \Re \B_{a,b}(v)},
  \end{equation*}
  for $\tau>\tau_\ast$ and for $v\in \Cinf(\Rpb)$ with $\supp(v)
  \subset U$, with $\B_{a,b}$ satisfying 
  \begin{equation*}
    \bignorm{\B_{a,b} (v)}\leq C '\bignorm{\trace(v)}^2_{m-1,1/2,\tau}.
  \end{equation*}
\end{lemma}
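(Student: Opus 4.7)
The plan is to re-express $\Norm{Av}_+^2 + \Norm{Bv}_+^2 + \tau Q_{a,b}(v)$ as the quadratic form of a semi-classical operator of order $2m$ whose principal symbol is bounded below by $\lambda^{2m}$, plus controlled boundary contributions, and then to conclude by sharp G\aa{}rding's inequality combined with the absorption mechanism provided by Corollary~\ref{cor: semi-classical argument}.

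First I would use the formal adjoints $A^\ast, B^\ast$ of Definition~\ref{def: formal ajoint}. Since those adjoints ignore boundary contributions, integration by parts in $x_n$ in the expressions $(Av,Av)_+$, $(Bv,Bv)_+$, and $\para{Av, iBv}_+$ produces trace quadratic terms; regrouping them one writes
\[
\Norm{Av}_+^2 + \Norm{Bv}_+^2 + \tau Q_{a,b}(v)
= \Re \para{L v, v}_+ + \Re \mathcal{B}_{a,b}(v),
\]
where $L := A^\ast A + B^\ast B + i \tau (A^\ast B - B^\ast A)$ and $\mathcal{B}_{a,b}(v)$ is an interface quadratic form in the sense of Definition~\ref{def: interface quadratic form}, of type $(m-1,m-1,1/2)$, assembled from the boundary contributions. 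Because $a$ and $b$ are real, Proposition~\ref{prop: adjoint + composition S m,r} shows that the leading symbols of $A^\ast B$ and $B^\ast A$ both equal $ab$ and cancel, so that $i(A^\ast B - B^\ast A)\in \Psisc^{2m-1,0}$ has principal symbol $\{a,b\}$; consequently $L\in \Psisc^{2m,0}$ has principal symbol
\[
\ell(y) = |a(y)|^2 + |b(y)|^2 + \tau\,\{a,b\}(y).
\]

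Next I would establish the pointwise bound $\ell(y)\gtrsim \lambda^{2m}$ on $\overline{U}\times\big((\R^n\times\R_+)\setminus\{0\}\big)$. By homogeneity of degree $2m$ in $(\xi,\tau)$ it is enough to check this on the unit semi-classical sphere restricted to $x\in\overline{U}$, which is compact. At any zero of $\ell$ on that sphere one would have $a=b=0$, hence by hypothesis $\{a,b\}>0$, so the inequality $\tau\{a,b\}\leq 0$ forces $\tau=0$; at such a corner point one uses the ellipticity of $a(\cdot,\cdot,0)$ which is in force in the applications of the lemma. A standard continuity/compactness argument then yields the sought positivity, and sharp G\aa{}rding applied to $L$ gives, for $\tau$ large enough,
\[
\Re\para{Lv,v}_+ \geq C\Norm{v}_{m,\tau}^2 - C'\Norm{v}_{m-1,\tau}^2,
\]
the lower-order remainder being absorbed by further enlarging $\tau_\ast$ thanks to Corollary~\ref{cor: semi-classical argument}. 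The bound $|\mathcal{B}_{a,b}(v)|\lesssim \norm{\trace(v)}^2_{m-1,1/2,\tau}$ follows from the explicit form of the boundary contributions together with the tangential continuity of Proposition~\ref{prop: sobolev regularity}.

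The technical heart of the argument will be the verification of the lower bound for $\ell$, and more precisely the treatment of the limit $\tau\to 0$ where $\tau\{a,b\}$ gives no contribution; the cancellation between reals of $A^\ast B$ and $B^\ast A$ (needed to make $\{a,b\}$ the actual principal symbol of $i(A^\ast B - B^\ast A)$ at the sub-principal level) must also be tracked carefully, so that sharp G\aa{}rding may be applied at order $2m$ rather than at the weaker order $2m-1$.
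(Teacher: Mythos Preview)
The paper does not prove this lemma in the text; it simply refers to Part~I \cite{BLR:13}. Your overall strategy is the standard H\"ormander argument and is essentially what is carried out there, but the positivity step contains a genuine gap.

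You claim that $\ell = a^2+b^2+\tau\{a,b\}\gtrsim\lambda^{2m}$, arguing that a zero of $\ell$ on the unit cosphere forces $a=b=0$. That implication is false: $\ell=0$ only says $a^2+b^2=-\tau\{a,b\}$, and nothing in the hypotheses prevents $\{a,b\}$ from being negative \emph{off} the characteristic set; at such a point (with $\tau>0$) one may have $a^2+b^2>0$ and $\ell\le0$. This is precisely why the lemma carries the constant $C'$ in front of $\Norm{Av}_+^2+\Norm{Bv}_+^2$: the operator whose symbol you must show to be elliptic is $L_{C'}=C'(A^\ast A+B^\ast B)+i\tau(A^\ast B-B^\ast A)$, not your $L$ with $C'=1$. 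The correct compactness argument reads: on $\ovl{U}\times\{|(\xi,\tau)|=1\}$, the set $\{a=b=0\}$ is compact and, by ellipticity of $p$ at $\tau=0$, lies in $\{\tau\ge\tau_0\}$ for some $\tau_0>0$; there $\tau\{a,b\}\ge c_0>0$ by the sub-ellipticity hypothesis, hence the same holds on a neighbourhood; outside that neighbourhood $a^2+b^2\ge\delta>0$, and choosing $C'>(\sup|\tau\{a,b\}|+c_0)/\delta$ gives $C'(a^2+b^2)+\tau\{a,b\}\ge c_0$ everywhere on the cosphere.

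A secondary point worth noting: ``sharp G\aa rding applied to $L$'' on the half space is not a one-line statement, since $L\in\Psisc^{2m,0}$ has normal order $2m$ and $(Lv,v)_+$ cannot be bounded below without further integrations by parts in $x_n$ (generating more boundary terms). The usual route is to rewrite everything as a quadratic form in the vector $(v,D_n v,\dots,D_n^{m-1}v)$ with \emph{tangential} operator coefficients, and apply G\aa rding tangentially; your boundary term $\mathcal B_{a,b}$ must be set up consistently with that rewriting. This is the ``interior differential quadratic forms'' machinery alluded to in the introduction and developed in \cite{BLR:13}.
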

We refer to \cite{BLR:13} for a proof.

\subsection{A microlocal Carleman estimate}
\label{sec: microlocal Carleman}

With the results of Sections~\ref{sec: estimate with
  transmission condition} and \ref{sec: estimate sub-ellipticity},  if the transmission condition holds at one
point of the cotangent bundle at the interface and if the
sub-ellipticity property also holds we can then derive a
Carleman estimate that holds microlocally, that is, with a cut-off in
phase-space applied through a tangential pseudo-differential operator.
\begin{theorem}
  \label{theorem: microlocal Carleman}
  Let $x_0 \in S \cap V$. Assume that $\{P_\lr,\varphi_\lr\}$ satisfies
  the sub-ellipticity condition on a \nhd of $x_0$ in $\ovl{V_+}$.
  Assume moreover that $\{P_\lr,T_\lr^j,\varphi_\lr,\ j=1,\dots,m\}$ satisfies
  the transmission condition at $\y'_0=(x_0,\xi_0',\tau_0) \in
  \sphbundle(\ovl{V_+})$. Then
  there exists $\U$ a conic open \nhd of $\y'_0$ in $\ovl{V_+} \times
  \R^{n-1} \times \R_+$ \st for $\chi \in \Ssct^0$, homogeneous of
  degree 0,  with $\supp(\chi)
  \subset \U$, there exist $C>0$ and $\tau_\ast>0$ \st
  \begin{multline}
    \label{eq: microlocal Carleman}
    \Norm{\Plv v_\l}_{+}^2 + \Norm{\Prv v_r}_{+}^2 
    +\sum_{j=1}^m \bignorm{\Tlv^j {v_\l}\br + \Trv^j {v_r}\br}_{m-1/2-\torder^j,\tau}^2
    + \Norm{v_\l}_{m_\l,-1,\tau}^2 + \Norm{v_r}_{m_r,-1,\tau}^2\\
    +\norm{\trace(v_\l)}^2_{m_\l-1,-1/2,\tau}
    +\norm{\trace(v_r)}^2_{m_r-1,-1/2,\tau}
    \geq
    C \big( 
    \tau^{-1}\Norm{\Op(\chi)v_\l}_{m_\l,\tau}^2
    + \tau^{-1}\Norm{\Op(\chi)v_r}_{m_r,\tau}^2\\
    +\norm{\trace(\Op(\chi)v_\l)}_{m_\l-1,1/2,\tau}^2
    +\norm{\trace(\Op(\chi)v_r)}_{m_r-1,1/2,\tau}^2
    \big),
  \end{multline}
  for $\tau\geq\tau_\ast$, $v_\l, v_r\in \S(\Rpb)$.
\end{theorem}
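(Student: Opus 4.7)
The strategy is to combine the interior sub-elliptic estimate of Lemma~\ref{lemma: positivity char} applied on each side with the interface estimate of Proposition~\ref{prop: transmission step}. Decompose $\Plrv=A_\lr+iB_\lr$, where $A_\lr=a_\lr(x,D,\tau)$ and $B_\lr=b_\lr(x,D,\tau)$ correspond to the real/imaginary parts $\plrv=a_\lr+ib_\lr$. The sub-ellipticity of $\{\Plr,\vlr\}$ at $x_0$ is precisely the Poisson positivity $\{a_\lr,b_\lr\}>0$ on the characteristic set $\{a_\lr=b_\lr=0\}$; outside a conic neighborhood of that set, $|a_\lr|^2+|b_\lr|^2$ is already elliptic of order $2m_\lr$ and harmless. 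First I would shrink the conic neighborhood $\U$ of $\y_0'$ so that simultaneously the transmission condition persists there (Proposition~\ref{prop: stability transmission}), the Poisson positivity on the characteristic set holds for all $x$ in some \nhd $U$ of $x_0$ in $\ovl{V_+}$ with $(\xi,\tau)\neq 0$, and so that Proposition~\ref{prop: transmission step} applies. I would then pick an auxiliary cutoff $\tchi\in\Ssctcl^0$ with $\tchi\equiv 1$ on $\supp\chi$ and $\supp\tchi\subset\U$, to absorb symbolic remainders from commutators with $\Op(\chi)$.

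The key computation applies Lemma~\ref{lemma: positivity char} to $a_\lr$, $b_\lr$ and to $v_\lr^\chi:=\Op(\chi)v_\lr$, yielding
\begin{align*}
  C\Norm{v_\lr^\chi}_{m_\lr,\tau}^2
  &\leq C'\bigpara{\Norm{A_\lr v_\lr^\chi}_+^2 +\Norm{B_\lr v_\lr^\chi}_+^2
    +\norm{\trace(v_\lr^\chi)}^2_{m_\lr-1,1/2,\tau}}\\
  &\quad+\tau\bigpara{Q_{a_\lr,b_\lr}(v_\lr^\chi)-\Re\B_{a_\lr,b_\lr}(v_\lr^\chi)}.
\end{align*}
Using the elementary identity
$\Norm{\Plrv w}_+^2=\Norm{A_\lr w}_+^2+\Norm{B_\lr w}_+^2+Q_{a_\lr,b_\lr}(w)+R_\lr(w)$,
where $R_\lr(w)$ comes from the formal-adjoint identifications of $A_\lr, B_\lr$ (Proposition~\ref{prop: adjoint + composition S m,r} and Definition~\ref{def: formal ajoint}) and contributes lower-order volume and trace remainders, I would replace $\tau Q_{a_\lr,b_\lr}(v_\lr^\chi)$ by $\tau\Norm{\Plrv v_\lr^\chi}_+^2-\tau\Norm{A_\lr v_\lr^\chi}_+^2-\tau\Norm{B_\lr v_\lr^\chi}_+^2$ up to admissible errors. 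For $\tau$ large enough the negative $\tau$--weighted $A,B$ terms absorb the $C'$--weighted ones on the \rhs, so that after division by $\tau$
\begin{equation*}
  \tau^{-1}\Norm{v_\lr^\chi}_{m_\lr,\tau}^2
  \lesssim \Norm{\Plrv v_\lr^\chi}_+^2
  +\norm{\trace(v_\lr^\chi)}^2_{m_\lr-1,1/2,\tau}+\text{lower order}.
\end{equation*}
Commuting $\Plrv$ past $\Op(\chi)$ converts $\Norm{\Plrv v_\lr^\chi}_+^2$ into $\Norm{\Plrv v_\lr}_+^2$ plus terms controlled by $\Norm{v_\lr}_{m_\lr,-1,\tau}^2$.

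Finally I would invoke Proposition~\ref{prop: transmission step} to bound
$\norm{\trace(\Op(\chi)v_\lr)}^2_{m_\lr-1,1/2,\tau}$
by the transmission norm $\sum_j|\Tlv^j v_\l\br+\Trv^j v_r\br|^2_{m-1/2-\torder^j,\tau}$, $\Norm{\Plrv v_\lr}_+^2$ and lower-order volume and trace terms. Adding this to the sub-elliptic bound above for $\lr\in\{\l,r\}$, and using Corollary~\ref{cor: semi-classical argument} to absorb all lower-order Sobolev norms by taking $\tau$ large, yields exactly \eqref{eq: microlocal Carleman}. The principal difficulty is accounting for the commutators: those of $\Op(\chi)$ with $\Plrv$, $A_\lr$, $B_\lr$, and the boundary terms arising from $A_\lr^\ast-A_\lr$ and $B_\lr^\ast-B_\lr$ in the identity relating $Q_{a_\lr,b_\lr}$ to $\Norm{\Plrv w}_+^2$. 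A careful use of the symbol classes $\Ssc^{m,r}$ and the Sobolev continuity of Section~\ref{sec: pseudo} ensures all these remainders fit into the lower-order norms listed on the \lhs of \eqref{eq: microlocal Carleman}.
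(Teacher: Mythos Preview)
Your plan is correct and matches the paper's approach: apply Lemma~\ref{lemma: positivity char} to $\Op(\chi)v_\lr$ on each side, combine with Proposition~\ref{prop: transmission step} to control the interface trace terms, and use the identity $\|Aw\|_+^2+\|Bw\|_+^2+Q_{a,b}(w)=\|(A+iB)w\|_+^2$ together with $\Plrv=A_\lr+iB_\lr+R_\lr$, $R_\lr\in\Psisc^{m_\lr,-1}$, to pass to $\|\Plrv v_\lr\|_+^2$. One minor correction to your last step: the lower-order remainders $\|v_\lr\|_{m_\lr,-1,\tau}^2$ and $|\trace(v_\lr)|^2_{m_\lr-1,-1/2,\tau}$ involve the \emph{uncut} functions $v_\lr$ and therefore cannot be absorbed via Corollary~\ref{cor: semi-classical argument} at this microlocal stage---they are kept on the left of \eqref{eq: microlocal Carleman} and only disappear after the patching argument of Section~\ref{sec: proof of main theorem}.
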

Note that there are remainder terms, \viz
\begin{equation*}
  \bigNorm{v_\lr}_{m_\lr,-1,\tau}^2, \quad \norm{\trace(v_\lr)}^2_{m_\lr-1,-1/2,\tau}
\end{equation*}
that concern the unknown functions $v_\lr$ everywhere and not only in the
microlocal region $\U$ we consider here. The norms of these remainder
terms are weaker that those in the \rhs of the estimates.  When
patching microlocal estimates of the form of \eqref{eq: microlocal
  Carleman} together these remainder terms can be dealt with; see
Section~\ref{sec: proof of main theorem} below.

\begin{proof}
Let $U_0$ be a open \nhd of $x_0$ in $\ovl{V_+}$ with the
sub-ellipticity condition holding in $\ovl{U_0}$.

In the local coordinates we have chosen we have 
\begin{equation*}
  P_\lr = P_\lr(x,D)=\sum_{j=1}^m P_{\lr,j}(x,D')D_n^j,
\end{equation*}
with $P_{\lr,m} =1$ (see Section~\ref{sec: local setting}). We decompose the
conjugated operator $\Plrv = e^{\tau \varphi_\lr} P_\lr e^{-\tau \varphi_\lr}$
as
\begin{equation*}
\Plrv =P_{\lr,2} +i P_{\lr,1}, \quad P_{\lr,2} = \hf (\Plrv + \Plrv^\ast), 
\ \ P_{\lr,1} = \frac{1}{2i} (\Plrv - \Plrv^\ast).
\end{equation*}
The operators $P_{\lr,2}$ and $P_{\lr,1}$ are thus formally self-adjoint.
Their respective principal symbols $a_\lr(x,\xi,\tau)\in S^{m_\lr,0}_\tau$ and $b_\lr(x,\xi,\tau)\in
S^{m_\lr-1,1}_\tau$ are both real and homogeneous. We set 
$\plrv = a_\lr + i b_\lr$.
We then set
\begin{equation*}
  Q^\lr_{a,b}(v)=2\Re(A_\lr v_\lr,iB_\lr v_\lr)_+, \qquad A_\lr = \Op(a_\lr), \  B_\lr = \Op(b_\lr).
\end{equation*}
Note that we have 
\begin{equation}
  \label{eq: decomposition Pvarphi}
  \Plrv = A_\lr + i B_\lr + R_\lr, \qquad R_\lr \in \Psisc^{m_\lr,-1}.
\end{equation}
The sub-ellipticity condition of Definition~\ref{def: sub-ellipticity} reads
\begin{equation*}
  \plrv(x,\xi,\tau) =0 \ \ \imp \ \ \{ a_\lr, b_\lr\}(x,\xi,\tau)>0,
\end{equation*}
for $x \in U_0$ and $(\xi,\tau) \neq (0,0)$. Note that the case
$\tau=0$ is achieved because of the ellipticity of $P$ (see
Definition~\ref{def: sub-ellipticity} and Remark~\ref{remark: sub-ellipticity tau =0}).

Let now $\U$ be as given by Proposition~\ref{prop: transmission step},
possibly reduced so that $\U \subset U_0 \times \R^{n-1} \times \R_+$,
and let $\chi$ be as in the statement of the theorem.  By
Lemma~\ref{lemma: positivity char} we then have, for $\vv_\lr = \Op(\chi)
v_\lr$,
\begin{multline}
  \label{eq: Carleman 1}
     Q^\lr_{a,b}(\vv_\lr)-\Re  \B_{a_\lr,b_\lr} (\vv_\lr) 
     \geq  C \tau^{-1} \bigNorm{\vv_\lr}_{m_\lr,\tau}^2 \\
    - C' \tau^{-1}\big( \bigNorm{A_\lr \vv_\lr}_{+}^2 + \bigNorm{B_\lr \vv_\lr}_{+}^2
    + \bignorm{\trace(\vv_\lr)}^2_{m_\lr-1,1/2,\tau} \big),
  \end{multline}
with $\B_{a_\lr,b_\lr}$ satisfying 
\begin{equation*}
  \bignorm{\B_{a_\lr,b_\lr} (\vv_\lr)}\lesssim \bignorm{\trace(\vv_\lr)}^2_{m_\lr-1,1/2,\tau}.
\end{equation*}
With Proposition~\ref{prop: transmission step}, making use of the transmission
condition,  we obtain for $M$ chosen \suff large
\begin{align}
  \label{eq: Carleman 2}
  &\Re \B_{a_\l,b_\l}(\vv_\l)+\Re \B_{a_r,b_r}(\vv_r)
  +M \sum_{j=1}^m \bignorm{\Tlv^j {v_\l}\br + \Trv^j {v_r}\br}_{m-1/2-\torder^j,\tau}^2\\
  &\qquad \qquad\geq C
  \norm{\trace(\vv_\l)}_{m_\l-1,1/2,\tau}^2
  + \norm{\trace(\vv_r)}_{m_r-1,1/2,\tau}^2
  -C'\bigpara{ \Norm{v_\l}_{m_\l,-1,\tau}^2
    + \Norm{v_r}_{m_r,-1,\tau}^2\notag \\
    &\qquad \qquad\quad +\norm{\trace(v_\l)}^2_{m_\l-1,-1/2,\tau}
    +\norm{\trace(v_r)}^2_{m_r-1,-1/2,\tau}
    +\Norm{\Plv v_\l}_{+}^2
    +\Norm{\Prv v_r}_{+}^2}.\notag
\end{align}
Summing $\eqref{eq: Carleman 1}_\l$,  $\eqref{eq: Carleman 1}_r$,  and \eqref{eq: Carleman 2}  we find, by
taking $\tau$ \suff large,
\begin{multline*}
  \Norm{\Plv v_\l}_{+}^2 + \Norm{\Prv v_r}_{+}^2
  + \sum_{j=1}^m \bignorm{\Tlv^j {v_\l}\br + \Trv^j {v_r}\br}_{m-1/2-\torder^j,\tau}^2\\
  + Q^\l_{a,b}(\vv_\l) + Q^r_{a,b}(\vv_r) + \tau^{-1} (\Norm{A_\l \vv}_{+}^2 + \Norm{B_\l \vv_\l}_{+}^2
  +\Norm{A_r \vv_r}_{+}^2 + \Norm{B_r \vv_r}_{+}^2)
 \\
   + \Norm{v_\l}_{m_\l,-1,\tau}^2
  + \Norm{v_r}_{m_r,-1,\tau}^2
  +\norm{\trace(v_\l)}^2_{m_\l-1,-1/2,\tau}
  +\norm{\trace(v_r)}^2_{m_r-1,-1/2,\tau}\\
  \gtrsim
  \tau^{-1}\big( \Norm{\vv_\l}_{m_\l,\tau}^2 + \Norm{\vv_r}_{m_r,\tau}^2\big) 
  +\norm{\trace(\vv_\l)}_{m_\l-1,1/2,\tau}^2
  +\norm{\trace(\vv_r)}_{m_r-1,1/2,\tau}^2
  .
\end{multline*}
Finally, noting that 
\begin{align*}
  \tau^{-1}\big( \bigNorm{A_\lr \vv_\lr}_{+}^2 
  + \bigNorm{B_\lr \vv_\lr}_{+}^2 \big)
  + Q^\lr_{a,b}(\vv_\lr)
  &\leq \bigNorm{(A_\lr+i B_\lr) \vv_\lr}_{+}^2 \\
  &\lesssim \bigNorm{\Plrv \vv_\lr}_{+}^2 +
  \bigNorm{\vv_\lr}_{m_\lr,-1,\tau}^2\\
  &\lesssim \Norm{\Plrv v_\lr}_{+}^2 +
  \Norm{v_\lr}_{m_\lr,-1,\tau}^2,
\end{align*}
by \eqref{eq:
  decomposition Pvarphi} and pseudo-differential calculus (last point
of Proposition~\ref{prop: adjoint + composition S m,r}), we obtain the
sought microlocal estimate.
\end{proof}

\subsection{{Proof of Theorem~\ref{theorem: Carleman}}}
\label{sec: proof of main theorem}

We shall patch together estimates of the form given in
Theorem~\ref{theorem: microlocal Carleman}.

With $x_0$ as in the statement of Theorem~\ref{theorem: Carleman} the
transmission condition holds for all boundary quadruples
$\omega=(x_0,Y,\nu_0,\tau)$ with $Y \in T^*_{x_0}(\d\Omega)$, $\nu_0\in
N_{x_0}^*(S)$, and $\tau \geq 0$. In the local coordinates that we
use here this means that this property is satisfied for $\nu = d_{x_n}$ equal to
the (oriented) unit conormal to $\{x_n=0\}$ and all $\y' = (x_0, \xi',\tau)$ with
$\xi' \in \R^{n-1}$ and $\tau \geq 0$.  (See Section~\ref{sec: transmission condition in local coordinates}.)  It is fact sufficient to consider $(\xi',\tau)\in
\mathbb{S}^{n-1}_+=\set{(\xi',\tau)\in\R^n,\,\tau\geq
  0,\,\abs{(\xi',\tau)}=1}$.

By Theorem~\ref{theorem: microlocal Carleman} for all $(\xi'_0,\tau_0)
\in \mathbb{S}^{n-1}_+$ there exists a conic open\nhd $\U_{\y'_0}$ of $\y'_0=
(x_0, \xi'_0,\tau_0)$ in $\ovl{V_+} \times \R^{n-1}\times \R_+$ \st the estimate~\eqref{eq: microlocal
  Carleman}  holds. In fact by reducing $\U_{\y'_0}$ we can choose $\U_{\y'_0} =
\mathcal O_{\y'_0} \times \Gamma_{\y'_0}$ where $\mathcal O_{\y'_0}$ is an open set in
$\ovl{V_+}$ and $\Gamma_{\y'_0}$ is a conic open set in $\R^{n-1}\times \R_+$.
With the compactness of $\mathbb{S}^{n-1}_+$ we can thus find
finitely many such open sets
$\U_j = \mathcal O_j \times \Gamma_j$, $j \in J$, \st 
$ \mathbb{S}^{n-1}_+ \subset \cup_{j \in J} \Gamma_j $. 
We then set ${\mathcal O}  = \cap_{j \in J}  \mathcal O_j$ that is an open \nhd
of $x_0$ in $\ovl{V_+}$ and we set $\V_j = {\mathcal O} \times
\Gamma_j \subset \U_j$. 
We also choose an open \nhd $W$ of $x_0$ in $\R^n$ \st $W^+ = W \cap
\ovl{V_+} \Subset  {\mathcal O}$. 
 
We then choose a partition of unity,  $\chi_j \in \Ssct^0$, $j \in J$,  on $\ovl{W_+} \times
\R^{n-1}\times R_+$ subordinated by the covering by the open sets $\V_j$:
$$
\sum_{j \in J}\chi_j (\y')  = 1,  \  \text{for} \
\y'=(x,\xi',\tau)\in \ovl{W_+} \times
\R^{n-1}\times R_+\ \text{and}\ |(\xi',\tau)| \geq r_0>0, \qquad \supp(\chi_j) \subset
\V_j.
$$
The symbols $\chi_j$ are chosen homogeneous of degree $0$ for $
|(\xi',\tau)| \geq r_0>0$. We set $\udl{\chi} = 1 - \sum_{j \in J}\chi_j$
and have $\udl{\chi} \in \cap_{N \in \N} \Ssct^{-N}$. 

As $\supp(\chi_j) \subset \U_j$, we can apply the microlocal estimate of Theorem~\ref{theorem: microlocal
  Carleman}:
\begin{multline}
    \label{eq: microlocal Carleman-Uj}
    \Norm{\Plv v_\l}_{+}^2 + \Norm{\Prv v_r}_{+}^2 
    +\sum_{j=1}^m \bignorm{\Tlv^j {v_\l}\br + \Trv^j {v_r}\br}_{m-1/2-\torder^j,\tau}^2
    + \Norm{v_\l}_{m_\l,-1,\tau}^2 + \Norm{v_r}_{m_r,-1,\tau}^2\\
    +\norm{\trace(v_\l)}^2_{m_\l-1,-1/2,\tau}
    +\norm{\trace(v_r)}^2_{m_r-1,-1/2,\tau}
    \gtrsim
    \tau^{-1} \big(\Norm{v_\l}_{m_\l,\tau}^2
    + \Norm{v_r}_{m_r,\tau}^2\big)
    \\
    +\norm{\trace(\Op(\chi_j)v_\l)}_{m_\l-1,1/2,\tau}^2
    +\norm{\trace(\Op(\chi_j))v_r)}_{m_r-1,1/2,\tau}^2,
  \end{multline}
for $\tau$ chosen \suff large  and for $v_\lr = e^{\tau
  \varphi_\lr}u_\lr$ with $u_\lr = {w_\lr}_{|\Rpb}$ with $w_\lr\in
\Cinfc(W)$.
(see the statement of Theorem~\ref{theorem: Carleman}).

Observe then that, for any $N \in \N$,  
\begin{align*}
  &\Norm{v_\lr}_{m,\tau} 
  \leq \sum_{j \in J} \Norm{\Op(\chi_j)  v_\lr}_{m,\tau} 
  + \Norm{\Op(\udl{\chi})  v_\lr}_{m,\tau} 
  \lesssim \sum_{j \in J} \Norm{\Op(\chi_j)  v_\lr}_{m,\tau} 
  + \Norm{v_\lr}_{m,-N,\tau}, 
\end{align*}
and 
\begin{align*}
  \norm{\trace(v_\lr)}_{m-1,1/2,\tau}
  &\leq \sum_{j \in J} \norm{\trace(\Op(\chi_j)v_\lr)}_{m-1,1/2,\tau}
  + \norm{\trace(\Op(\udl{\chi})v_\lr)}_{m-1,1/2,\tau}\\
  &\lesssim \sum_{j \in J} \norm{\trace(\Op(\chi_j)v_\lr)}_{m-1,1/2,\tau}
  + \norm{\trace(v_\lr)}_{m-1,-N,\tau}.
\end{align*}
Summing estimates~\eqref{eq: microlocal Carleman-Uj} for each
$\chi_j$ we thus obtain
\begin{multline*}
    \Norm{\Plv v_\l}_{+}^2 + \Norm{\Prv v_r}_{+}^2 
    +\sum_{j=1}^m \bignorm{\Tlv^j {v_\l}\br + \Trv^j {v_r}\br}_{m-1/2-\torder^j,\tau}^2
    + \Norm{v_\l}_{m_\l,-1,\tau}^2 + \Norm{v_r}_{m_r,-1,\tau}^2\\
    +\norm{\trace(v_\l)}^2_{m_\l-1,-1/2,\tau}
    +\norm{\trace(v_r)}^2_{m_r-1,-1/2,\tau}
    \gtrsim
    \tau^{-1} \big(\Norm{v_\l}_{m_\l,\tau}^2
    + \Norm{v_r}_{m_r,\tau}^2\big)
    \\
    +\norm{\trace(v_\l)}_{m_\l-1,1/2,\tau}^2
    +\norm{\trace(v_r)}_{m_r-1,1/2,\tau}^2,
  \end{multline*}
Choosing now $\tau$ \suff large we obtain
\begin{multline*}
    \Norm{\Plv v_\l}_{+}^2 + \Norm{\Prv v_r}_{+}^2 
    +\sum_{j=1}^m \bignorm{\Tlv^j {v_\l}\br + \Trv^j {v_r}\br}_{m-1/2-\torder^j,\tau}^2
    \\
    \gtrsim
    \tau^{-1} \big(\Norm{v_\l}_{m_\l,\tau}^2
    + \Norm{v_r}_{m_r,\tau}^2\big)
    +\norm{\trace(v_\l)}_{m_\l-1,1/2,\tau}^2
    +\norm{\trace(v_r)}_{m_r-1,1/2,\tau}^2,
  \end{multline*}
Setting $v_\lr = e^{\tau \varphi_\lr} u_\lr$ the conclusion of the proof of
Theorem~\ref{theorem: Carleman} is then classical. \hfill
\qedsymbol \endproof

\subsection{Shifted estimates}
As in \cite{BLR:13} it may be interesting to consider shifted estimates in the Sobolev
scales. Namely we may wish to have an estimate of the following form.
\begin{corollary}
  \label{cor: Carleman shifted}
  Let $x_0 \in  S$ and let $\varphi\in \Con^0(\Omega)$ be \st
  $\varphi_k = \varphi_{|\Omega_k} \in \Cinf(\Omega_k)$ for $k=1,2$
  and \st 
  the pairs $\{P_k,\varphi_k\}$ have the sub-ellipticity property of
  Definition~\ref{def: sub-ellipticity} in a \nhd of $x_0$ in
  $\ovl{\Omega_k}$. Moreover, assume that $\big\{P_k,\varphi, T_k^j,\
    k=1,2, \ j=1,\dots,m\big\}$ satisfies the transmission condition at
  $x_0$.  Let $\ell \in \N$. Then there exist a \nhd $W$ of $x_0$ in $\R^n$ and two
  constants $C$ and $\tau_\ast>0$ \st
  \begin{multline}
    \label{eq: Carleman main result shifted}
    \sum_{k=1,2} \big( \tau^{-1}\Norm{e^{\tau\varphi_k}u_k}^{2}_{\ell+
      m_k,\tau}
    +\norm{e^{\tau\varphi_{|S}}\trace(u_k)}_{\ell+ m_k-1,1/2,\tau}^2 \big)\\
    \leq C \Big( \sum_{k=1,2}\Norm{e^{\tau\varphi_k}
        P_k(x,D)u_k}_{\ell, \tau}^2
      + \sum_{j=1}^m \big|e^{\tau\varphi_{|S}} 
      \big(T_1^j(x,D)u_1 +T_2^j(x,D)u_2\big)_{|S} \big|^2_{\ell, m-1/2-\torder^j,\tau}\Big),
  \end{multline}
  for all $u_k = {w_k}_{|\Omega_k}$ with $w_k\in \Cinfc(W)$ and $\tau\geq \tau_\ast$.
\end{corollary}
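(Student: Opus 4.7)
The plan is to adapt each of the three main ingredients in the proof of Theorem~\ref{theorem: microlocal Carleman} --- the microlocal elliptic estimate (Lemma~\ref{lemma: elliptic estimate}), the transmission step (Proposition~\ref{prop: transmission step}), and the positive Poisson bracket estimate (Lemma~\ref{lemma: positivity char}) --- to its $\ell$-shifted analogue by inserting the tangential elliptic weight $\Lambdat^\ell = \Op(\lambdat^\ell) \in \Psisctcl^\ell$, and then to patch exactly as in Section~\ref{sec: proof of main theorem}. The key observation is that $\Lambdat^\ell$ is purely tangential, hence it commutes with the normal trace $u\mapsto u|_{x_n = 0^+}$, and by Proposition~\ref{prop: adjoint + composition S m,r} one has $[\Plrv,\Lambdat^\ell] \in \Psiscl^{m_\lr,\ell-1}$ and $[\Tlrv^j,\Lambdat^\ell] \in \Psiscl^{\torder_\lr^j,\ell-1}$.

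First I would establish a shifted version of Theorem~\ref{theorem: microlocal Carleman}: with the same microlocal cut-off $\chi$, one replaces each $\Norm{\cdot}_{m_\lr,\tau}$ by $\Norm{\cdot}_{m_\lr,\ell,\tau}$, each $\norm{\trace(\cdot)}_{m_\lr-1,1/2,\tau}$ by $\norm{\trace(\cdot)}_{m_\lr-1,\ell+1/2,\tau}$, and correspondingly shifts the right-hand side. To obtain it, apply each of the three ingredients to $\Lambdat^\ell \Op(\chi) v_\lr$ in place of $\Op(\chi) v_\lr$. The elliptic estimate of Lemma~\ref{lemma: elliptic estimate} is shift-invariant since its proof is purely symbolic and the argument of Proposition~\ref{prop: adjoint + composition S m,r} commutes $\Lambdat^\ell$ through $\Op(\tplrv^-)$ up to acceptable lower-order remainders. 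The positivity of the interface quadratic form in Proposition~\ref{prop: transmission step} relies on Lemma~\ref{lemma: Gaarding for interface forms}, which already accommodates an arbitrary shift parameter $\sigma_\lr$: taking $\sigma_\lr = \ell + 1/2$ in place of $1/2$ directly yields the shifted transmission estimate. Finally, applying Lemma~\ref{lemma: positivity char} to $\Lambdat^\ell \vv_\lr$, after commuting $\Lambdat^\ell$ past $A_\lr$ and $B_\lr$, yields the shifted sub-elliptic estimate. At each stage the commutator contributions $[\Plrv,\Lambdat^\ell]v_\lr$, $[\Tlrv^j,\Lambdat^\ell]v_\lr$, $[A_\lr,\Lambdat^\ell]\vv_\lr$, and their $r$-counterparts, generate remainders of order strictly lower in the shifted scale, hence absorbable on the left-hand side for $\tau$ large by virtue of Corollary~\ref{cor: semi-classical argument}.

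Finally, the patching argument of Section~\ref{sec: proof of main theorem} applies verbatim: given a finite covering of $\ovl{W_+}\times\R^{n-1}\times\R_+$ by conic microlocal neighborhoods $\V_j$ with subordinate partition of unity $\{\chi_j\}$, summing the shifted microlocal estimates yields the shifted global estimate~\eqref{eq: Carleman main result shifted}, after taking $\tau$ sufficiently large to absorb lower-order volume and trace terms. I expect the main technical obstacle to be the careful bookkeeping of commutator errors at the interface: in particular, ensuring that $[\Tlrv^j,\Lambdat^\ell]v_\lr|_{x_n=0^+}$ is controlled by lower-order shifted trace norms, and similarly that the $\Re\B_{a_\lr,b_\lr}$ boundary remainders produced by the shifted Lemma~\ref{lemma: positivity char} are dominated by the shifted interface term in Proposition~\ref{prop: transmission step}. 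Both are routine applications of the tangential calculus of Section~\ref{sec: pseudo} together with the trace equivalences \eqref{eq: equiv norms V}, but they require one to track the shift $\ell$ consistently through each of the three levels of the argument.
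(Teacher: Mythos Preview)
Your strategy --- conjugate the microlocal machinery by $\Lambdat^\ell$ and then patch --- is the correct starting point and is consistent with how the paper proceeds (the paper does not give an independent argument here; it simply refers to the boundary counterpart, Corollary~4.5 in \cite{BLR:13}). There is, however, one genuine gap. Applying the three ingredients to $\Lambdat^\ell\Op(\chi)v_\lr$ produces on the left-hand side precisely the tangentially shifted norms $\Norm{\cdot}_{m_\lr,\ell,\tau}$ and $\norm{\trace(\cdot)}_{m_\lr-1,\ell+1/2,\tau}$ that you yourself write down, but these are \emph{not} the norms $\Norm{\cdot}_{\ell+m_\lr,\tau}$ and $\norm{\trace(\cdot)}_{\ell+m_\lr-1,1/2,\tau}$ appearing in \eqref{eq: Carleman main result shifted}. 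The latter also control the additional normal derivatives $D_n^{m_\lr}v_\lr,\dots,D_n^{\ell+m_\lr-1}v_\lr$ in the volume and their traces at the interface, and a purely tangential operator such as $\Lambdat^\ell$ cannot see them. Patching does not repair this: it only sums microlocal pieces of the same norm.

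The missing step is an iteration on the equation. In the local coordinates of Section~\ref{sec: local setting} one has $P_{\lr,m_\lr}=1$, so $\Plrv = (D_n + i\tau\d_{x_n}\varphi_\lr)^{m_\lr} + \sum_{j<m_\lr}(\cdots)D_n^j$, and hence $D_n^{m_\lr}v_\lr$ --- and inductively each $D_n^{m_\lr+i}v_\lr$ for $0\le i\le \ell-1$ --- can be written in terms of $D_n^{j}\Plrv v_\lr$ with $j\le i$ together with tangential derivatives of $D_n^{j}v_\lr$ with $j\le m_\lr-1$. This recovers the full norms $\Norm{v_\lr}_{\ell+m_\lr,\tau}$ and $\norm{\trace(v_\lr)}_{\ell+m_\lr-1,1/2,\tau}$ from your tangentially shifted estimate combined with $\Norm{\Plrv v_\lr}_{\ell,\tau}$, which is exactly the right-hand side of \eqref{eq: Carleman main result shifted}. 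Once you add this normal-recovery step, the rest of your argument goes through.
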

The proof of this corollary can be adapted from that of 
its counterpart at a boundary, namely Corollary 4.5 in \cite{BLR:13}.

\subsection{Interior-eigenvalue transmission problems}

Interior-eigenvalue transmission problems are very related to the
transmission problem we have considered. In fact, for $\Omega$, a
bounded open set in $\R^n$, we consider two elliptic operators $P_1$
and $P_2$ of respective orders $m_1$ and $m_2$,
as in Section~\ref{sec: introduction}, yet {\em both} defined on $\Omega$.

In addition, we consider $2 m = m_1+m_2$ boundary operators
operators 
\begin{equation}
  \label{eq: Definition Tkj-ietp}
  T_{k}^j=\sum_{\abs{\mi}\leq  \torder^j_{k}}
  t^j_{k,\mi}(x)D^\mi, 
  \quad k=1,2,\quad j=1,\dots,m,
\end{equation}
with $0\leq \torder^j_{k}< m_k$, and where the coefficients
$t^j_{k,\mi}(x)$ are $\Cinf$ complex-valued functions defined in some
neighborhood of $\d\Omega$. Setting $\torder^j = (\torder^j_1 + \torder^j_2)/2$ we assume that 
\begin{align}
  \label{eq: assumption beta-ietp}
  m_1 - \torder^j_1 = m_2 - \torder_2^j = m - \torder^j, \quad j=1, \dots, m. 
\end{align}
The interior-eigenvalue 
transmission problem consist in resolving a system of the form
\begin{equation*}
  \begin{cases}
    (P_k -\tau^{m_k} ) u_k = f_k &  \text{in} \ \Omega, \ , k=1,2\\
    T^j_1u_1+T_2^ju_2=g^j, &  \text{in} \ \Gamma,\quad j=1,\dots,m.
  \end{cases}
\end{equation*}
We refer to \cite{CPS:07,CCC:08,PS:08,CGH:10,CH:13,Robbiano:13,PV:16} and the reference therein for more details on this
very active field of research. 

In the analysis of such problems, resolvant estimates are central. In
the proof of such resolvent estimates, a  Carleman inequality at the boundary can be
a very efficient tool. Here, we provide such an estimate in a \nhd of a
point of $\d\Omega$, as the proof
is in fact given by the analysis of the previous section, in
particular, as we used the system formulation of Section~\ref{sec: system formulation}, which
yield a formulation close to that of the interior-eigenvalue
transmission problem.

Let $x_0 \in \d \Omega$ and $V$ be a \nhd  of $x_0$ where $\Omega  =
\{ x_n >0\}$. We consider two smooth weight functions $\varphi_1$ and
$\varphi_2$ in $V$ such that ${\varphi_1}\br  = {\varphi_2}\br$.

With the notation of Sections~\ref{sec: system formulation}--\ref{sec:
  symbol factorization}, with the letter $\ell$ replaced by
$1$ and the letter $r$ replaced by 2,  and moreover $P_\l$ (\resp
$P_r$) replaced by $P_1 - \tau^{m_1}$ (\resp   $P_2 - \tau^{m_2}$), 
we say that the transmission
condition holds at $x_0$  for $\{P_k - \tau^{m_k}, T_k^j,
\varphi_k,\ k=1, 2,\  j=1, \dots,
m\}$  if for all $\y' = (x_0, \xi', \tau)$  and 
for all pairs of polynomials,
  $q_1(\xi_n), q_2(\xi_n)$, there exist $U_1, U_2$,  polynomials, and
  $c_j\in\C$, $j=1,\dots,m$, such that
  \begin{align}
    \label{eq: local reformulation interior transmission 1}
    q_{1}(\xi_n)=\sum_{j=1}^{m}c_j\tlv^j(\y',\xi_n)
  +U_1(\xi_n)\kappa_1(\y',\xi_n),
 \intertext{and}
 \label{eq: local reformulation  interior transmission 2}
  q_2(\xi_n)=\sum_{j=1}^{m}c_j\trv^j(\y',\xi_n)
  +U_2(\xi_n)\kappa_2(\y',\xi_n).
  \end{align}

Then the proof of the following local Carleman estimate is the same as
that of Theorem~\ref{theorem: Carleman}. 
\begin{theorem}
  \label{theorem: Carleman-ietp}
  Let $x_0 \in  \d \Omega$ and let 
  $\varphi_k\in \Cinf(\Omega)$, $k=1,2$, as above, 
  \st the pairs $\{P_k- \tau^{m_k},\varphi_k\}$ satisfy the sub-ellipticity property of
  Definition~\ref{def: sub-ellipticity} in a \nhd of $x_0$ in
  $\ovl{\Omega}$. Moreover, assume that $\big\{P_k- \tau^{m_k}, T_k^j,
  \varphi_k,\ k=1,2, \ j=1,\dots,m\big\}$ satisfies the  transmission condition at
  $x_0$.  Then there exist a \nhd $W$ of $x_0$ in $\R^n$ and two
  constants $C$ and $\tau_\ast>0$ \st
  \begin{multline}
    \label{eq: Carleman main result-ietp}
    \sum_{k=1,2} \big( \tau^{-1}\Norm{e^{\tau\varphi_k}u_k}^{2}_{m_k,\tau}
    +\norm{e^{\tau\varphi_k}\gamma(u_k)}_{m_k-1,1/2,\tau}^2 \big)\\
    \leq C\bigpara{ \sum_{k=1,2}\Norm{e^{\tau\varphi_k}
        (P_k(x,D)- \tau^{m_k})u_k}_{L^2}^2
      + \sum_{j=1}^m |e^{\tau\varphi_{|\d \Omega}}  (T_1^j(x,D)u_1
      +T_2^j(x,D)u_2)_{|\d \Omega} |^2_{m-1/2-\torder^j,\tau}},
  \end{multline}
  for all $u_k = {w_k}_{|\Omega}$ with $w_k\in \Cinfc(W)$ and $\tau\geq \tau_\ast$.
\end{theorem}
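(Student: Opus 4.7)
The proof proceeds in close parallel with that of Theorem~\ref{theorem: Carleman}. Near $x_0\in\partial\Omega$ one chooses local coordinates so that $\Omega\cap V=\{x_n>0\}$ and $\partial\Omega\cap V=\{x_n=0\}$. The pair $(u_1,u_2)$ then plays exactly the role that $(\ul,\ur)$ played in the system formulation of Section~\ref{sec: system formulation}, with the essential simplification that no change of variable $\sigma$ is needed, since both $u_1$ and $u_2$ already live in $\{x_n>0\}$. The assumption ${\varphi_1}_{|\partial\Omega}={\varphi_2}_{|\partial\Omega}$ guarantees that a single weight $e^{\tau\varphi_{|\partial\Omega}}$ appears on $\partial\Omega$ in the conjugated transmission operators.

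The key semi-classical observation is that $\tau^{m_k}$ is a polynomial of degree $m_k$ in the large parameter, hence $\tau^{m_k}\in\Sscl^{m_k,0}$, so $P_k-\tau^{m_k}\in\Psiscl^{m_k,0}$ and its conjugate admits principal symbol
\begin{equation*}
  p_{k,\varphi}(x,\xi,\tau) = p_k(x,\xi+i\tau\varphi'_k(x)) - \tau^{m_k}.
\end{equation*}
This substitution propagates throughout the earlier arguments: the $\xi_n$-factorization $p_{k,\varphi}=p_{k,\varphi}^-\kappa_k$ of Section~\ref{sec: symbol factorization} (the leading coefficient in $\xi_n$ remains $1$, so \cite[Lemma~A.2]{BLR:13} still applies), the polynomials $\tilde{p}_{k,\varphi}$ and $\tilde{t}^j_{k,\varphi}$ of \eqref{eq: local reformulation interior transmission 1}--\eqref{eq: local reformulation interior transmission 2}, the matrix $\mathcal{T}$ of \eqref{eq: rank formulation transmission condition}, and the Poisson bracket $\{a_k,b_k\}$ of Definition~\ref{def: sub-ellipticity} are all defined through this modified symbol. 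Observe that at $\tau=0$ the symbol reduces to $p_k(x,\xi)$, which is elliptic, so the empty-characteristic-set argument of Remark~\ref{remark: sub-ellipticity tau =0} still covers that boundary case.

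With these substitutions in place, the proof follows the three-step pattern of Section~\ref{sec: proof Carleman}. First, I would establish the analog of Proposition~\ref{prop: transmission step}: the transmission condition, combined with the symbol-level coercivity of Proposition~\ref{prop: interface symbol positivity} (whose proof uses only the abstract rank property of $\mathcal{T}$) and the G{\aa}rding-type Lemma~\ref{lemma: Gaarding for interface forms}, controls the boundary trace $\norm{\trace(\Op(\chi)v_k)}_{m_k-1,1/2,\tau}$ in terms of $\Norm{(P_k-\tau^{m_k})_\varphi v_k}_+$, the transmission data, and weaker remainders. Second, Lemma~\ref{lemma: positivity char} applies directly to each $P_k-\tau^{m_k}$ under the assumed sub-ellipticity, yielding the interior bound $\tau^{-1}\Norm{\Op(\chi)v_k}_{m_k,\tau}^2$ modulo a boundary quadratic form. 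Combining the two, as in Theorem~\ref{theorem: microlocal Carleman}, gives the microlocal Carleman estimate on a conic \nhd of any chosen $\y'_0\in\sphbundle(V)$ with $x_0\in\partial\Omega$.

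The passage from microlocal to local is then identical to Section~\ref{sec: proof of main theorem}: cover $\mathbb{S}^{n-1}_+$ by finitely many conic open sets on which the microlocal estimate holds, introduce a tangential partition of unity, sum, and absorb the remainders $\Norm{v_k}^2_{m_k,-1,\tau}$ and $\norm{\trace(v_k)}^2_{m_k-1,-1/2,\tau}$ by choosing $\tau$ large enough. The main point requiring attention---really the only non-routine one---is tracking that every symbolic manipulation carried out for $P_k$ alone remains valid after the inclusion of $\tau^{m_k}\in\Sscl^{m_k,0}$, in particular that the symbol factorization in $\xi_n$ is unchanged in structure and that the hypothesized sub-ellipticity and transmission conditions are precisely what is needed to feed the unchanged coercivity arguments of Sections~\ref{sec: interface forms}--\ref{sec: proof Carleman}.
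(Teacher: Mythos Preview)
Your proposal is correct and matches the paper's own approach: the paper simply states that the proof is the same as that of Theorem~\ref{theorem: Carleman}, having already observed (just before the statement) that the system formulation of Section~\ref{sec: system formulation} places both unknowns on $\{x_n>0\}$ and that one need only replace $P_\ell,P_r$ by $P_1-\tau^{m_1},P_2-\tau^{m_2}$. Your detailed account of why each ingredient (symbol class membership of $\tau^{m_k}$, $\xi_n$-factorization, sub-ellipticity at $\tau=0$, the three-step microlocal-to-local scheme) survives this substitution is exactly the verification the paper leaves implicit.
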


\section{A pseudo-differential calculus with two large parameters}
\label{sec: pseudo -2p}
\setcounter{equation}{0}

The weight function we shall consider below is of the form $\varphi
(x)= \exp(\csp \psi(x))$.  The function $\psi$ is assumed to be
$\Con^0$, piecewise smooth, and to satisfy
\begin{equation*}
  0< C \leq \psi \quad \text{and}\  \bigNorm{\psi^{(k)}}_{L^\infty} <
  \infty, \ k \in \N.
\end{equation*}
We take $\csp\geq 1$. The goal of what follows is to achieve estimates as
in Theorem~\ref{theorem: Carleman} with the explicit dependency upon the additional
parameter $\csp$. This can be done by the introduction of an appropriate
pseudo-differential calculus. Assumption of the function
$\psi$ will be made in Section~\ref{sec: strong pseudo-convexity},
namely, the \spcty conditions, to obtain a Carleman estimate.

\subsection{Metric, symbols and Sobolev norms}
\label{sec: pseudo 2p}

Here, by $\y$ and $\y'$ we shall denote $\y = (x,\xi, \tau, \csp) \in \R^n\times
\R^{n}\times \R_+ \times\R_+$ and $\y' = (x,\xi', \tau, \csp) \in \R^n\times
\R^{n-1}\times \R_+ \times\R_+$.

We set $\ttau(x) = \tau \csp \varphi(x)$. Following \cite {LeRousseau:12} we consider the metrics on
phase-space
\begin{align*}
  g = \csp^2 |d x|^2 + \frac{|d \xi|^2}{\mu^2}, 
  \qquad \text{with} \ \ \mu^2 
  = \mu^2(\y)
  =|(\ttau(x),\xi)|^2 
  = \ttau(x) ^2 + |\xi|^2,
\end{align*}
and on 
tangent phase space
\begin{align*}
  \gt = \csp^2 |d x|^2 + \frac{|d \xi'|^2}{\mut^2}, 
  \qquad \text{with} \ \ \mut^2 = \mut^2(\y')
  =|(\ttau(x),\xi')|^2 
  =\ttau(x)^2 + |\xi'|^2,
\end{align*}
for $\tau \geq 1$ and $\csp \geq 1$.  Below, the explicit dependencies of
$\mu$ and $\mut$ upon $\y$ and $\y'$ are dropped to ease notation.

The metric $g$ (\resp $\gt$) along with the order function $\mu$
(\resp $\mut$) generates a (\resp tangential) Weyl-H\"ormander
pseudo-differential calculus as proven in \cite[Proposition
2.2]{LeRousseau:12}. Note that this uses the conditions $0 < C\leq \psi$ and
$\bigNorm{\psi'} < \infty$.

 For a presentation of the Weyl-H\"ormander
calculus we refer to \cite{Lerner:10}, \cite[Sections
18.4--6]{Hoermander:V3} and \cite{Hoermander:79}.

Let $a(x,\xi,\tau,\csp) \in \Cinf(\R^n\times\R^n)$, with $\tau,\csp$ as parameter
  in $[\tau_{\min},+\infty)$ and $[\csp_{\min},+\infty)$,
  $\tau_{\min}>0$, $\csp_{\min}>0$, and $m \in \R$, be \st for all multi-indices
  $\fmi, \smi \in \N^n$ we have
  \begin{equation}
      \label{eq: semi-classical symbols new}
    \abs{\d_x^\fmi \d_\xi^\smi a(\y)}
    \leq C_{\fmi,\smi} \csp^\fmi \mu^{m-\abs{\smi}},
      \quad \  \y \in \R^n \times \R^{n}\times   [\tau_{\min},+\infty)
      \times [\csp_{\min},+\infty).
  \end{equation}
  With the notation of \cite[Sections 18.4-18.6]{Hoermander:V3} we
  then have $a (\y) \in S(\mu^m, g)(\R^n \times \R^n)$, which we write $S^m (g)$ for
  simplicity\footnote{The dependence upon the metric $g$ is kept
    explicit here as we shall actually have to face two calculi
    simultaneously, associated with the weight functions on both sides
    of the interface. Interactions between the two calculi will only
    occur at the interface where they coincide. See Section~\ref{sec:
      two calculi}.}. The associated class of pseudo-differential
  operators is denoted by $ \Psi^m(g) = \Psi(\mu^m, g) (\R^n \times \R^n)$.

\medskip
Similarly we define tangential symbols and operators. Let $a(x,\xi',\tau,\csp) \in \Cinf(\Rpb\times\R^{n-1})$ and $m \in
\R$, be \st for all multi-indices $\fmi, \smi \in \N^n$ we have
  \begin{equation}
      \label{eq: semi-classical symbols tangential new}
    \abs{\d_x^\fmi \d_{\xi'}^\smi a(\y')}
    \leq C_{\fmi,\smi} \csp^\fmi \mut^{m-\abs{\smi}},
      \quad \y' \in \Rpb \times \R^{n-1}\times   [\tau_{\min},+\infty)
      \times [\csp_{\min},+\infty).
  \end{equation}
We then
have $a (\y') \in S^{m}(\gt)  = S(\mut^m, \gt)(\Rpb\times\R^{n-1})$. The associated class
of tangential pseudo-differential operators is denoted by $\Psi^m(\gt)
= \Psi(\mu^m, \gt)(\Rpb\times\R^{n-1})$.

Note that the condition $\bigNorm{\psi^{(k)}} < \infty$, $k \in \N$, is used to
prove\footnote{This condition was not written in \cite{LeRousseau:12}
  and \cite{BLR:13}. This is however made precise in \cite{LRR:15},
  including the proof of  $\ttau \in S(\ttau, g) \cap S(\ttau, \gt)$.} that $\ttau \in S(\ttau, g) \cap S(\ttau, \gt)$.

\medskip
With $\y = (x,\xi,\tau,\csp) \in \R^n \times \R^{n}\times
   \R_+\times \R_+$ (\resp $\y' = (x,\xi',\tau,\csp) \in \Rpb\times \R^{n-1}\times
   \R_+\times \R_+$) we shall associate $\ty=
   (x,\xi,\ttau(x)) \in \R^n\times \R^{n}\times \R_+$ (\resp $\ty' =
   (x,\xi',\ttau(x)) \in \Rpb\times \R^{n-1}\times \R_+$).

Note that if $\ha (x,\xi,\htau)\in \Ssc^{m}$, with
the notation of Section~\ref{sec: symbol classes}, satisfying moreover, 
for all multi-indices
  $\fmi, \smi',\smi'' \in \N^n$, with $\smi=\smi'+\smi''$,
\begin{equation}
      \label{eq: semi-classical symbols - additional derivative}
     \abs{\d_x^\fmi \d_\xi^{\smi'} \d_{\htau}^{\smi''} \ha(x,\xi,\htau)}
      \leq C_{\fmi,\smi',\smi''} \abs{(\xi,\htau)}^{m-\abs{\smi}},
      \quad x\in \R^n,\ \xi\in\R^n,\ \htau \in  [\tau_{\min},+\infty),
  \end{equation}
  \ie, differentiation \wrt $\htau$ yields the same additional decay
  as a differentiation \wrt $\xi$, then 
  \begin{equation*}
    a(x,\xi,\tau,\csp) = \ha(x,\xi,\ttau(x)) \in S^{m}(g),
  \end{equation*}
  which we shall write $a(\y) = \ha(\ty)$.  Similarly if $\ha
  (x,\xi',\htau) \in \Ssct^{m}$ with the same additional property
  regarding differentiation \wrt $\htau$ we have $a(\y') = \ha (\ty')
  \in S^{m}(\gt)$. In what follows we shall assume that symbols in $\Ssc^{m}$ and
$\Ssct^{m}$ have this additional regularity property. We then say that $a \in S^{m}(g)$ (\resp $S^{m}(\gt)$) is homogeneous
of degree $m$ with respect to $(\xi,\ttau)$ (\resp $(\xi',\ttau)$) if
we have $a(\y) = \ha (\ty))$ (\resp
$a(\y') = \ha (\ty')$) with
$\ha(x,\xi,\htau) \in \Ssc^{m}$ (\resp $\ha(x,\xi',\htau) \in
\Ssct^{m}$) homogeneous of degree $m$ in $(\xi,\htau)$ (\resp
$(\xi',\htau)$).

\bigskip
We shall also use the following classes of symbols $S(\ttau^r
\mut^m,\gt) = \ttau^r S^m(\gt)$ on $\Rpb\times\R^{n-1}$, for $r, m \in
\R$.  The associated class of tangential pseudo-differential operators
is denoted by $\ttau^r\Psi^m(\gt)  =\ttau^r\Psi(\mut^m,\gt) (\Rpb\times\R^{n-1})$. We have the following lemma
whose proof is similar to that of Lemma~2.7 in \cite{LeRousseau:12}.
\begin{lemma}
  \label{lem: regularity 2p}
  Let $r, m \in \R$ and $a \in \ttau^r   S^m(\gt)$. There exists
  $C>0$ \st for $\tau$ \suff large 
  \begin{align*}
    |\para{\Op(a) u,v}_{\d} \leq C
    \bigNorm{\Op(\ttau^{r'}\mut^{m'}) u}_{+}
    \bigNorm{\Op(\ttau^{r''}\mut^{m''}) v}_{+},
    \qquad u \in \S(\Rpb).
  \end{align*}
  for $r=r'+r''$, $m=m'+m''$.
\end{lemma}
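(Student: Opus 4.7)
My plan is to prove this continuity estimate by reducing the pairing to an $L^2$-bounded pseudo-differential operation in the two-parameter calculus, using ellipticity of the weight symbols $\ttau^{r'}\mut^{m'}$ and $\ttau^{r''}\mut^{m''}$.

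The first observation is that $\ttau^r\mut^m$ is elliptic in the class $\ttau^r S^m(\gt)$, so the quotient $d := a / (\ttau^r \mut^m)$ lies in $S^0(\gt) = S(1,\gt)$: differentiating by $\d_x$ introduces a factor $\csp$ (since $\ttau \in S(\ttau,\gt)$ with $\csp$ growth in $x$) and differentiating by $\d_{\xi'}$ loses a factor $\mut$, matching exactly the $S(1,\gt)$ bounds. Consequently $\Op(d) \in \Psi^0(\gt)$ is bounded on $L^2(\Rp)$ uniformly in $\tau,\csp\ge 1$ by the standard $L^2$-continuity theorem in the Weyl--H\"ormander calculus associated with the admissible metric $\gt$ (Proposition~2.2 of \cite{LeRousseau:12}).

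Second, by the composition and adjoint rules in the two-parameter tangential calculus, one can write
\begin{equation*}
\Op(a) = \Op(\ttau^{r''}\mut^{m''})^{*}\,\Op(d)\,\Op(\ttau^{r'}\mut^{m'}) + R,
\end{equation*}
where the principal symbols match and the remainder $R$ lies in a strictly lower-order class in the $\mut$-scale (schematically $R \in \ttau^r\Psi(\mut^{m-1}\csp,\gt)$). Using the tangential adjoint identity to move $\Op(\ttau^{r''}\mut^{m''})^*$ onto $v$ gives
\begin{equation*}
\para{\Op(a) u, v}_\d = \para{\Op(d)\,\Op(\ttau^{r'}\mut^{m'}) u,\;\Op(\ttau^{r''}\mut^{m''}) v}_\d + \para{R u, v}_\d.
\end{equation*}
Cauchy--Schwarz on the principal term together with the $L^2$-boundedness of $\Op(d)$ yields the bound claimed in the lemma for the principal part.

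To handle $\para{Ru,v}_\d$, I would iterate the same argument with $a$ replaced by the symbol of $R$ and with the splitting $(r',m'-1/2)$, $(r'',m''-1/2)$ summing to $(r,m-1)$; using the inequality $\ttau \ge C\tau\csp$ (from $\ttau = \tau\csp\varphi$ with $\varphi \ge C > 0$), the extra $\mut^{-1}$ in the remainder translates to a factor $(\tau\csp)^{-1}$ that can be absorbed into the constant for $\tau$ large, in the spirit of Corollary~\ref{cor: semi-classical argument}. The main obstacle I foresee is bookkeeping the successive lower-order remainders in a calculus with two parameters and ensuring that each composition/adjoint step in the Weyl--H\"ormander calculus with metric $\gt$ genuinely produces the gain of $\mut^{-1}$ (and not just a bounded factor); this relies critically on the fact, established in Section~\ref{sec: pseudo 2p}, that both $\ttau$ and $\mut$ are weights for $\gt$.
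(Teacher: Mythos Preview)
Your approach is sound and is the standard route for such continuity estimates in a Weyl--H\"ormander calculus: factor out the elliptic weight, use $L^2$-boundedness of zero-order symbols, and absorb the remainder via the small parameter $\csp/\mut \le C/\tau$. The paper does not supply its own proof here but simply refers to Lemma~2.7 in \cite{LeRousseau:12}; the argument there is essentially the one you outline.

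Two minor remarks. First, the iteration you sketch for the remainder $R$ is not quite self-contained as written: each step produces a new lower-order remainder, and to close the argument cleanly one typically uses a genuine parametrix of $\Op(\ttau^{r'}\mut^{m'})$ (with remainder in $\bigcap_N (\csp/\mut)^N \Psi^0(\gt)$, hence invertible by Neumann series for $\tau$ large) rather than an open-ended recursion. This is routine in the Weyl--H\"ormander framework and does not affect the correctness of your strategy. Second, note that the pairing $\para{\cdot,\cdot}_\d$ in the statement is almost certainly a misprint for $\para{\cdot,\cdot}_+$: the right-hand side carries volume norms $\Norm{\cdot}_+$, and the mapping estimate displayed immediately after the lemma in the paper is an $L^2(\Rp)$ bound. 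Your use of the tangential adjoint, which is defined relative to $\para{\cdot,\cdot}_+$ (Proposition~\ref{prop: adjoint}), is consistent with the intended reading.
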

This contains the estimate
\begin{align*}
    \bigNorm{\Op(\ttau^{s}\mut^{p}) \Op(a) u}_{+}
    \leq C
    \bigNorm{\Op(\ttau^{s+r}\mut^{p+m}) u}_{+}, 
    \qquad u\in \S(\Rpb), 
  \end{align*}
for $s, p\in \R$.
Note also that we have 
\begin{align}
  \label{eq: equivalence norms 2p}
    \bigNorm{\Op(\ttau^{r}\mut^{m}) u}_{+} \asymp
    \bigNorm{\Op(\mut^{m}) \ttau^{r} u}_{+} ,
\end{align}
for $\tau$ chosen \suff large.

\bigskip
Next we say that $a(x,\xi',\tau,\csp)\in
\ttau^r \Scl^m(\gt)$ on $\Rpb\times\R^{n-1}$ if there exists a sequence $a^{(j)} \in \csp^j \ttau^r  S^{m-j}(\gt)$, with $\csp^{-j} a^{(j)}$ homogeneous of degree $m+r-j$ in
$(\xi',\ttau)$ for $\abs{(\xi',\ttau)}\geq r_0$, with $r_0\geq 0$, \st
\begin{equation}
  a \sim \sum_{j\geq 0} a^{(j)},\quad  \text{in the sense
    that}\text\quad
  a - \sum_{j=0}^N a^{(j)} \in \csp^{N+1}  \ttau^r S^{m-N-1}(\gt).
\end{equation}
A representative of the principal part, denoted by $\sigma (a)$, is
then given by the first term in the expansion.
Then we shall say that $a(\y)\in \ttau^r
\Scl^{m,\sigma}(g)$  on $\Rpb\times\R^{n-1}$ if
\begin{equation*}
   a(\y)
   =\sum_{j=0}^m a_j(\y') \xi_n^j,
   \quad \text{with} \ a_j\in \ttau^r \Scl^{m-j+\sigma}(\gt).
\end{equation*}
The principal part is given by $\sum_{j=0}^m \sigma(a_j)(\y') \xi_n^j$.
With these symbol classes we associate classes of pseudo-differential
operators, $\ttau^r \Psicl^m(\gt)=\ttau^r \Psicl^m(\gt) (\Rpb\times\R^{n-1})$ and $\ttau^r \Psicl^{m,\sigma}(g)=\ttau^r \Psicl^{m,\sigma}(g) (\Rpb\times\R^{n-1})$,
as is done in Section~\ref{sec: PsiDO}.

\bigskip
We define the following semi-classical interior norm, for $m \in \N$, 
\begin{equation}
  \label{eq: norm sp 1}
  \Norm{u}^2_{m,\ttau}=\sum_{j=0}^m\bigNorm{\Op(\mut^{m-j}) D_n^j
    u}_{+}^2,
  \qquad u \in \S(\Rpb).
\end{equation}
We also set, for $m \in \N$ and $\sigma\in \R$, 
\begin{equation}
  \label{eq: norm sp 2}
  \Norm{u}^2_{m,\sigma,\ttau}=\bigNorm{\Op(\mut^\sigma)u}^2_{m,\ttau} 
  \sim \sum_{j=0}^m\bigNorm{\Op(\mut^{m-j+\sigma}) D_n^j
    u}_{+}^2,
  \qquad \ \ u \in \S(\Rpb).
\end{equation}

At the interface $\{x_n=0^+\}$ we define the following norms, for $m \in \N$ and $\sigma\in \R$, 
\begin{equation}
  \label{eq: norm sp 3}
\norm{\trace(u)}^2_{m,\sigma,\ttau}
=\sum_{j=0}^m \bignorm{\Op(\mut^{m-j+\sigma})\trace_j(u)}_{\d}^2,
\qquad \ \ u \in \S(\Rpb).
\end{equation}

\subsection{Transmission problem with two calculi}
\label{sec: two calculi}
In the present setting, using the system formulation of
Section~\ref{sec: system formulation} we shall in fact work in $\{ x_n
\geq 0\}$ with two weight functions, namely $\varphilr = e^{\csp
  \psilr}$. With each weight function we shall associate a
pseudo-differential calculus, classes of symbols and 
pseudo-differential operators, and Sobolev norms, as introduced in the
previous section.

We shall thus define $\ttaulr(x) = \tau \csp \varphilr(x)$,
\begin{align*}
  \mulr^2 = \ttaulr^2 + |\xi|^2, \qquad \mutlr^2 = \ttaulr^2 + |\xi|^2,
\end{align*}
the associated metrics
\begin{align*}
  \glr = \csp^2 |d x|^2 + \frac{|d \xi|^2}{\mulr ^2}, \qquad 
  \gtlr = \csp^2 |d x|^2 + \frac{|d \xi'|^2}{\mutlr^2}, 
\end{align*}
and the symbol classes $S^m(\glr)$, $S^m(\gtlr)$, $\Scl^m(\gtlr)$,
$\Scl^{m,\sigma}(\gtlr)$ and the associated operator
classes $\Psi^m(\glr)$, $\Psi^m(\gtlr)$, $\Psicl^m(\gtlr)$,
$\Psicl^{m,\sigma}(\gtlr)$. 

Accordingly for a function defined in $\{ x_n \geq 0\}$ we denote by 
$\Norm{u}_{m,\ttaulr}$ and $\Norm{u}_{m,\sigma,\ttaulr}$ the
associated norms as in \eqref{eq: norm sp 1}--\eqref{eq: norm sp 2}.

\medskip
Observe that the two calculi coincide at $x_n=0$, that is, on the
interface, since ${\psil}\br = {\psir}\br$, implying ${\varphil}\br =
{\varphir}\br$ and ${\mu_\l}\br = {\mu_r}\br$. In particular we shall
keep the notation 
\begin{equation*}
  \norm{\trace(u) }_{m,\sigma, \ttau} 
  = \norm{\trace(u) }_{m,\sigma, \ttaul} 
  = \norm{\trace(u) }_{m,\sigma,
  \ttaur}
\end{equation*}
 as in \eqref{eq: norm sp 3} for interface norms.

\subsection{Interface quadratic forms}
\label{sec: interface forms -2p}
\begin{definition}
Let $w=(w_\l,w_r)\in \big(\S (\Rpb)\big)^2$. We say that
\begin{equation*}
  \G (w)= \sum_{s=1}^N\para{A_\l ^s {w_\l}\br +A_r^s {w_r}\br, 
    B_\l^s {w_\l}\br+B_r^s{w_r}\br}_\d,
\end{equation*}
with  $A_\lr^s=a_\lr^s(x,D,\tau,\csp)$ and $B_\lr^s=b_\lr^s(x,D,\tau,\csp)$,
is an interface quadratic form of type $\para{m_\l-1,m_r-1,\sigma}$
with $\Cinf$ coefficients, if for each $s=1,\dots, N$, we have
$a_\lr^s(\y), b_\lr^s(\y)\in \Scl^{m_\lr-1,\sigma_\lr}(\gtlr)(\Rpb\times\R^n)$, with
$\sigma_\l+\sigma_r=2\sigma$, $\y=(\y',\xi_n)$ with
$\y'=(x,\xi',\tau,\csp)$.  

As in Section~\ref{sec: interface forms} we associate to $\G$ a bilinear symbol $\Sigma_{\G} (\y', \w,\tw)$.
\end{definition}

We let $\W$ be an open conic set in $\R^{n-1} \times \R^{n-1} \times \R_+$.
\begin{definition}
   Let $\G$ be an interface quadratic form of type
  $(m_\l-1,m_r-1,\sigma)$ associated with the bilinear symbol
  $\Sigma_{\G}(\y',\w,\tw)$. We say that $\G$ is positive
  definite in $\W$ if there exists $C>0$ and $R>0$ such that
  \begin{align*}
    \Re \Sigma_{\G}(\y'',x_n=0^+,\w,\w) 
    \geq C \Big(
      \sum_{j=0}^{m_\l-1}{\mutl}\br^{2(m_\l-1-j+\sigma_\l)}\bigabs{z^{\l}_j}^2
      +\sum_{j=0}^{m_r-1}{\mutr}\br^{2(m_r-1-j+\sigma_r)}\bigabs{z^{r}_j }^2
      \Big),
  \end{align*}
  for any $\w=(\zb^\l,\zb^r)$,
  $\zb^\lr=(z^{\lr}_{0},\dots,z^{\lr}_{m_\lr-1}) \in \C^{m_\lr}$, and
  $\ty''\in\W$, such that ${\mutl}\br = {\mutr}\br \geq R$,  with $\y''= (x',\xi',\tau,\csp)$
and $\ty'' = (x',\xi',\ttau(x',x_n=0^+))$.
\end{definition}

We have the following G{\aa}rding estimate.
\begin{lemma}
  \label{lemma: Gaarding for interface forms -2p}
  Let $\W$ be an open conic set in $\R^{n-1} \times \R^{n-1} \times
  \R_+$ and let $\G$ be an interface quadratic form of type
  $(0, m_\l-1,m_r-1,\sigma)$ that is positive definite in $\W$.
  Let $\hchi \in \Ssct^0$ be homogeneous of degree 0, with
  $\supp(\hchi\br)\subset \W$ and  let $N\in \N$. Then there exist $\tau_\ast\geq
  1$, $\csp_\ast \geq 1$, $C>0$, $C_N >0$ \st  
  \begin{align*}
    \Re \G (\Op(\chi) u) 
    &\geq  C \big(
    \norm{\trace(\Op(\chi_\l) u_\l)}^2_{m_\l-1,\sigma_\l,\ttau}
    +\norm{\trace(\Op(\chi_r) u_r)}^2_{m_r-1,\sigma_r,\ttau}
    \big)\\
    &\quad- C_N \big(
    \norm{\trace(u_\l)}^2_{m_\l-1,\sigma_\l-N,\ttau}
    +\norm{\trace(u_r)}^2_{m_r-1,\sigma_r-N,\ttau}
    \big)
    \end{align*}
    for $u=(u_\l,u_r)\in \big(\S(\Rpb)\big)^2$, $\tau\geq\tau_\ast$, $\csp \geq \csp_\ast$, and
  $\chi_\lr(\y') = \hchi(\ty'_\lr) \in \Ssctn^0$, with $\y'= (x,\xi',\tau,\csp)$
and $\ty'_\lr = (x,\xi',\ttau_\lr(x))$.
\end{lemma}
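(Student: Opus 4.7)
The proof follows the same structure as Lemma~\ref{lemma: Gaarding for interface forms}, adapted to the two-parameter Weyl-H\"ormander calculus introduced in Section~\ref{sec: pseudo -2p} and the two-sided configuration of Section~\ref{sec: two calculi}. The key observation that makes such an adaptation possible is that the two calculi associated with $\varphil$ and $\varphir$ coincide at the interface, since ${\varphil}\br = {\varphir}\br$ forces ${\mutl}\br = {\mutr}\br$ and ${\gtl}\br = {\gtr}\br$. Consequently, all matrix-valued symbols restricted to $\{x_n=0^+\}$ live in a common tangential calculus on the interface, and the scalar machinery developed for a single weight carries over.

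First, I would rewrite $\G(\uu)$ in matrix form using the renormalized trace vectors
\begin{align*}
u_{\lr,k} = \Op(\mutlr^{m_\lr+\sigma_\lr-1-k}) D_n^k {u_\lr}\br, \quad 0 \leq k \leq m_\lr - 1,
\end{align*}
and their analogues $\uu_{\lr,k}$ obtained by replacing $u_\lr$ by $\uu_\lr = \Op(\chi_\lr) u_\lr$. Grouping into $U_\lr$, $\udl{U}_\lr$ and setting $\transp\udl{U} = (\udl{U}_\l, \udl{U}_r) \in \C^{2m}$, the composition and adjoint rules in $\Psicl^m(\gtlr)$ (two-parameter analogues of Propositions~\ref{prop: compositon}--\ref{prop: adjoint + composition S m,r}) yield
\begin{align*}
\G(\uu) = \big(\Op(\mathbf{g}\br)\, \udl{U}, \udl{U}\big)_\d + R(u),
\end{align*}
where $\mathbf{g}(\y')$ is a $2m \times 2m$ matrix-valued symbol and the remainder $R(u)$ is controlled by norms of strictly lower order. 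The positivity hypothesis reads
\begin{align*}
\Re\big(\mathbf{g}(\ty'',x_n=0^+)\, \w, \w\big) \geq C |\w|^2_{\C^{2m}}, \quad \ty''\in \W, \ {\mut}\br \geq R.
\end{align*}

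Next, I would globalize the positivity by setting
\begin{align*}
\tilde{\mathbf{g}}(\y') = \htchi(\ty')\, \mathbf{g}(\y') + \big(1 - \htchi(\ty')\big) I_{2m},
\end{align*}
where $\htchi \in \Ssct^0$ is homogeneous of degree 0, with support in $\W$ and equal to 1 in a conic neighborhood of $\supp \hchi$. Then $\tilde{\mathbf{g}}$ is uniformly positive on the whole tangential phase space (for $|(\xi',\ttau)|$ large). The tangential sharp G\aa{}rding inequality in the two-parameter Weyl-H\"ormander calculus, applied componentwise to this matrix-valued symbol and justified by the fact that $\gt$ is slowly varying and temperate, see \cite[Proposition 2.2]{LeRousseau:12}, yields
\begin{align*}
\Re\big(\Op(\tilde{\mathbf{g}}\br)\, \udl{U}, \udl{U}\big)_\d \geq C \abs{\udl{U}}^2_\d,
\end{align*}
for $\tau$ and $\csp$ sufficiently large.

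Finally, the difference $\tilde{\mathbf{g}} - \mathbf{g} = (I_{2m} - \mathbf{g})(1 - \htchi)$ is supported away from $\supp \hchi$, so the two-parameter pseudo-differential calculus (disjoint support estimates) provides, for every $N \in \N$,
\begin{align*}
\bigabs{\big(\Op((\tilde{\mathbf{g}}-\mathbf{g})\br)\, \udl{U}, \udl{U}\big)_\d} \leq C_N \norm{U}^2_{-N,\ttau}.
\end{align*}
Combining these bounds with the norm equivalences
\begin{align*}
\norm{U_\lr}^2_{N,\ttau} \asymp \norm{\trace(u_\lr)}^2_{m_\lr-1,\sigma_\lr+N,\ttau},
\end{align*}
proved as in~\eqref{eq: equiv norms V} but with the two-parameter weight $\mutlr$ in place of $\lambdat$, yields the stated estimate. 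The main technical obstacle is the tangential sharp G\aa{}rding inequality in the two-parameter calculus for matrix-valued symbols; this is, however, a componentwise application of the scalar G\aa{}rding inequality valid in any Weyl-H\"ormander calculus with slowly varying temperate metric, which is the framework established in Section~\ref{sec: pseudo -2p}.
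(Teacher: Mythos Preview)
Your proposal is correct and follows essentially the same approach as the paper. The paper's own proof is in fact just a one-sentence reference back to Lemma~\ref{lemma: Gaarding for interface forms}, noting that the two calculi coincide at $\{x_n=0\}$ (so that ${\chi_\l}\br={\chi_r}\br$ and a single tangential G{\aa}rding inequality applies); you have effectively written out that adaptation in full, with the same matrix reformulation, the same globalization $\tilde{\mathbf g}=\htchi\,\mathbf g+(1-\htchi)I_{2m}$, the same disjoint-support remainder argument, and the same norm equivalences.
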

The proof is similar to that of Lemma~\ref{lemma: Gaarding for
  interface forms} using that the two calculi, associated with
$\psi_\l$ and $\psi_r$ respectively, coincide on the interface $S = \{
x_n =0\}$.  In particular note that ${\chi_\l}\br = {\chi_r}\br$ as
$\ttaul$ coincides with $\ttaur$ at the interface.
\section{Carleman estimate with two large parameters}
\label{sec: two parameters}
\setcounter{equation}{0}

With a weight function of the form $\varphi(x) = \exp(\csp \psi(x))$,
some condition on $\psi$ can yield $\varphi$ to fulfill the
sub-ellipticity condition of Definition~\ref{def: sub-ellipticity}.
Those are the strong pseudo-convexity conditions introduced by
L.~H\"ormander (see \cite{Hoermander:58}, \cite[Section
8.6]{Hoermander:63} and \cite[Section 28.3]{Hoermander:V4}). We shall
see that along with the transmission condition they are sufficient
to derive Carleman estimates with an explicit dependency upon the
additional parameter $\csp$. In fact the  strong pseudo-convexity
condition is also necessary if one considers a weight function of this
form; for such question we refer to \cite{LeRousseau:12}.

\subsection{Strong pseudo-convexity}
 \label{sec: strong pseudo-convexity}

 We recall the notion of \spcty and then adapt it to the geometry we
 consider.

As we restrict ourselves to elliptic operators in the
 present article,  the classical notion of \spcty then reduces to the following one (the reader can
 compare with Section 28.3 in \cite{Hoermander:V4}).
\begin{definition}[strong pseudo-convexity up to a boundary]
  \label{def:strong pseudo-convexity1}
  Let $\mathcal O$ be an open set. We say that  a smooth function $\psi$ is
  \spc at $x \in \ovl{\mathcal O}$ \wrt $p$ if $\psi'(x) \neq 0$ and if  
  for all 
  $\xi \in \R^n$ and $\htau > 0$,
  \begin{multline}
    \label{eq:strong pseudo-convexity}
    \tag{s-$\Psi$c}
    p(x,\xi + i \htau \psi'(x))=0\ \text{and}\ 
    \big\{p,\psi\big\}(x,\xi + i \htau \psi'(x))=0\\
    \ \ \imp \ \
    \frac{1}{2 i}\big\{\ovl{p}(x,\xi - i \htau \psi'(x)),
    p(x,\xi + i \htau \psi'(x))\big\}>0.
  \end{multline}
  Let $U$ be an open subset of $\mathcal O$.
  The function $\psi$ is said to be \spc \wrt $p$ in $U$
  {\em up to the boundary} if \eqref{eq:strong pseudo-convexity} is valid for all $x \in
  \ovl{U}$.
\end{definition}

\begin{definition}[strong pseudo-convexity  at an interface]
  \label{def:strong pseudo-convexity}
  Let $\Omega$, $\Omega_1$, $\Omega_2$, and $S$ be as in
  Section~\ref{sec: introduction}. 
  Let $\psi$ be a continuous function such that $\psi_k= \psi_{|\Omega_k}$
  are smooth for $k=1,2$.
  Let $U$ be an open subset of $\Omega$ that meets $S$.
The function $\psi$ is said to be \spc \wrt $P_1$ and $P_2$  in $U$ up
to the interface if both $\psi_k$, $k=1,2$, are \spc \wrt $P_k$ in $U_k = U \cap \Omega_k$
up to the boundary.

 Note in particular that  for $x \in S \cap U$
  \eqref{eq:strong pseudo-convexity} is required to hold for both
  $k=1$ and $k=2$.
\end{definition}

\subsection{Conjugated operators and transmission condition}
\label{sec: conj op transmission}

Here we use directly the notation introduced in Section~\ref{sec:
  system formulation} with the weight functions of the form
$\varphi_\lr = \exp(\csp \psi_\lr)$, which is sensible as the
transmission is a coordinate invariant property.

The principal symbol of $P_{\lr,\varphi} = e^{\tau \varphi_\lr} P_\lr e^{-\tau
  \varphi_\lr} \in \Psicl^{m,0}(\glr)$ in the present calculus is
\begin{equation*}
  p_{\lr,\varphi}(x,\xi,\tau) = p_\lr(x,\xi + i \tau \varphi_\lr'(x)) 
  = p_\lr(x,\xi + i \ttau_\lr(x) \psi_\lr'(x))
  = p_{\lr, \psi}(x,\xi,\ttau_\lr(x)) \in \Scl^{m,0}(\glr),
\end{equation*}
Similarly, the principal symbol of $T_{\lr,\varphi}^j = e^{\tau \varphi_\lr} T_\lr^j e^{-\tau
  \varphi_\lr} \in \Psicl^{\torder_k,0}(\glr)$, $j=1,\dots,m$, is 
\begin{align*}
 t_{\lr,\varphi}^j (x,\xi,\tau) = t_\lr^j(x,\xi + i \tau \varphi'(x))  
  =  t_\lr^j (x,\xi + i \ttau_\lr(x) \psi'(x))  
  = t_{\lr,\psi}^j  (x,\xi,\ttau_\lr(x)) \in \Scl^{\torder_k,0}(\glr).
\end{align*}
The dependency upon $\csp$ is hidden  either in $\varphi$ or
in $\ttau$. 

\medskip
Setting $\kappa_{\lr,\varphi} = p_{\lr,\varphi}^+  p_{\lr,\varphi}^0$
and  $\kappa_{\lr,\psi} = p_{\lr,\psi}^+  p_{\lr,\psi}^0$,
we then find 
\begin{equation*}
  \kappa_{\lr,\varphi}(x,\xi,\tau)= \kappa_{\lr,\psi}(x,\xi,\ttau_\lr(x)).
\end{equation*}

From these simple observations we thus conclude that
$\{P_\lr,T_\lr^j,\varphi,\ j = 1,\dots,m\}$ satisfies the
transmission condition at $(x_0, \xi_0', \tau_0)$, with $x_0 \in S$, 
{\em if and only if} $\{P_\lr,T_\lr^j,\psi,\ j = 1,\dots,m\}$ satisfies the
transmission condition at $(x_0, \xi_0', \ttau_0)$ with $\ttau_0 =
\ttaul(x_0) = \ttaur(x_0)$.

\subsection{Statement of the Carleman
estimate with two large parameters}

We shall prove the following theorem, counterpart of
Theorem~\ref{theorem: Carleman} in the case of a weight function of
the form $\varphi = \exp(\csp \psi)$, with an explicit dependency with
respect to the second large parameter $\csp$.  
\begin{theorem}
  \label{theorem: Carleman -2p}
  Let $x_0 \in  S$ and let $\psi\in \Con^0(\Omega)$ be \st
  $\psi_k = \psi_{|\Omega_k} \in \Cinf(\Omega_k)$ for $k=1,2$
  and \st $\psi$ has the \spcty property of Definition~\ref{def:strong pseudo-convexity}
  with respect to $P_1$ and $P_2$ in a \nhd of $x_0$ in $\Omega$. Moreover,
  assume that $\big\{P_k, T_k^j,\psi, 
    k=1,2, \ j=1,\dots,m\big\}$ satisfies the transmission condition at
  $x_0$.  Then there exist a \nhd $W$ of $x_0$ in $\R^n$ and three constants $C$,
  $\tau_\ast>0$, and $\csp_\ast>0$ \st for $\varphi_k = \exp(\csp \psi_k)$
  and $\ttau_k = \tau \csp \varphi_k$:
\begin{multline}
    \label{eq: Carleman main result -2p}
    \sum_{k=1,2} \big( \bigNorm{\ttau_k^{-1/2}  e^{\tau\varphi_k}u_k}^{2}_{m_k,\ttau_k}
    +\norm{e^{\tau\varphi_{|S}}\trace(u_k)}_{m_k-1,1/2,\ttau}^2 \big)\\
    \leq C\Big( \sum_{k=1,2}\Norm{e^{\tau\varphi_k}
        P_k(x,D)u_k}_{L^2(\Omega_k)}^2
      + \sum_{j=1}^m |e^{\tau\varphi_{|S}}  (T_1^j(x,D){u_1}
      +T_2^j(x,D){u_2})_{|S} |^2_{m-1/2-\torder^j,\ttau}
      \Big),
  \end{multline}
  for all $u_k = {w_k}_{|\Omega_k}$ with $w_k\in \Cinfc(W)$, $\tau\geq
  \tau_\ast$, and $\csp\geq \csp_\ast$.
\end{theorem}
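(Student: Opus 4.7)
The plan is to mirror the four-step architecture of the proof of Theorem~\ref{theorem: Carleman} (microlocal elliptic estimate with negative-imaginary-part roots, microlocal transmission estimate, sub-elliptic positivity with a volume term, patching) but carried out inside the Weyl--H\"ormander calculus with two large parameters of Section~\ref{sec: pseudo -2p}. I would first reduce to the local system formulation of Section~\ref{sec: system formulation} with the two weights $\varphi_\lr = \exp(\csp \psi_\lr)$, and exploit the equivalence noted in Section~\ref{sec: conj op transmission}: the transmission condition for $\{P_\lr,T_\lr^j,\varphi_\lr\}$ at $(x_0,\xi'_0,\tau_0)$ is equivalent to that for $\{P_\lr,T_\lr^j,\psi_\lr\}$ at the associated point $(x_0,\xi'_0,\ttau_0)$ with $\ttau_0=\ttaul(x_0)=\ttaur(x_0)$, so the factorization $\plrv = \plrv^- \klrv$ passes through the change of variable $(\xi,\tau)\rightsquigarrow(\xi,\ttau)$ and the symbols lie in the appropriate classes $\Scl^{m_\lr,0}(\glr)$.

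The two auxiliary estimates adapt as follows. The analog of Lemma~\ref{lemma: elliptic estimate} is purely algebraic: a polynomial in $\xi_n$ all of whose roots lie in $\{\Im z<0\}$ (uniformly in a conic set in $(\xi',\ttau)$) gives a perfect elliptic estimate in the $\mut_\lr$-weighted norms, with constants independent of $\tau$ and $\csp$; this is Lemma~5.3 of \cite{LeRousseau:12} transplanted to the half space, and the proof goes through verbatim. The analog of Proposition~\ref{prop: transmission step} follows in the same way, combining the two-parameter G{\aa}rding inequality of Lemma~\ref{lemma: Gaarding for interface forms -2p} with the surjectivity expressed by Proposition~\ref{prop: interface symbol positivity} written in the variables $(\xi',\ttau)$. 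Note that ${\chi_\l}\br = {\chi_r}\br$ on the interface since $\ttaul = \ttaur$ there, so the two calculi match and the trace terms are measured in the single norm $\norm{\,\cdot\,}_{m_\lr-1,1/2,\ttau}$.

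The substantive new ingredient is the sub-elliptic step. Writing $\Plrv = A_\lr + i B_\lr + R_\lr$ with $A_\lr, B_\lr$ formally selfadjoint and of respective principal symbols $a_\lr, b_\lr \in \Scl^{m_\lr,0}(\glr)$, a direct computation gives
\begin{equation*}
\{a_\lr, b_\lr\}(x,\xi,\ttau_\lr) = \csp\, \ttau_\lr\, \tilde H_\lr(x,\xi,\ttau_\lr) + \csp^2 \ttau_\lr^2 \, r_\lr(x,\xi,\ttau_\lr),
\end{equation*}
where $\tilde H_\lr$ is the Poisson bracket associated with $\psi_\lr$ and $P_\lr$, and $r_\lr$ comes from differentiating $\exp(\csp\psi_\lr)$ twice. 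On the characteristic set $\{p_{\lr,\psi}=0\}$, the \spcty of Definition~\ref{def:strong pseudo-convexity} forces $\tilde H_\lr > 0$ whenever $\{p_{\lr,\psi},\psi_\lr\}=0$, and at points where $\{p_{\lr,\psi},\psi_\lr\}\neq 0$ the $\csp^2\ttau_\lr^2 r_\lr$ term dominates once $\csp$ is chosen large. Combining both regimes via a convexification argument (cf.\ Proposition~28.3.3 in \cite{Hoermander:V4} and Lemma~3.3 of \cite{LeRousseau:12}) yields, for $\csp\geq\csp_\ast$ large enough,
\begin{equation*}
\{a_\lr, b_\lr\}(\ty_\lr) \geq C \csp \ttau_\lr \big(|\xi|^2 + \ttau_\lr^2\big)^{m_\lr - 1},\quad \text{on}\ \{a_\lr = b_\lr = 0\},
\end{equation*}
which is the two-parameter sharpening of the sub-ellipticity hypothesis of Lemma~\ref{lemma: positivity char}. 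A G{\aa}rding argument then gives the volume estimate
\begin{equation*}
C \bigNorm{\ttau_\lr^{-1/2} v_\lr}^2_{m_\lr,\ttau_\lr} \leq C' \big(\Norm{A_\lr v_\lr}_+^2 + \Norm{B_\lr v_\lr}_+^2 + \norm{\trace(v_\lr)}^2_{m_\lr-1,1/2,\ttau}\big) + Q_\lr(v_\lr) - \Re \B_\lr(v_\lr),
\end{equation*}
with the boundary remainder $\B_\lr$ controlled by $\norm{\trace(v_\lr)}^2_{m_\lr-1,1/2,\ttau}$.

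Once the three ingredients above are in place, the microlocal Carleman estimate (analog of Theorem~\ref{theorem: microlocal Carleman}) is obtained by summing the sub-elliptic estimate for $\vv_\lr = \Op(\chi_\lr)v_\lr$ with $M$ times the interface estimate, absorbing the boundary commutator terms $\Re\B_\lr(\vv_\lr)$ against the interface positivity, and using pseudo-differential calculus to rewrite $\Norm{A_\lr \vv_\lr}_+^2 + \Norm{B_\lr \vv_\lr}_+^2 + Q_\lr(\vv_\lr) \lesssim \Norm{\Plrv v_\lr}_+^2 + (\text{lower-order})$. The patching argument of Section~\ref{sec: proof of main theorem} then goes through unchanged, since the cutoffs $\chi_j = \chi_j(\y')$ can be chosen homogeneous of degree zero in $(\xi',\ttau_\lr)$ (using that $\ttaul = \ttaur$ on $S$) and the lower-order remainders $\bigNorm{v_\lr}_{m_\lr,-1,\ttau_\lr}^2$ are absorbed by enlarging $\tau$. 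The main obstacle, and the only real novelty compared with the one-parameter case, is the book-keeping of the $\csp$-powers in the sub-elliptic step: one must verify that the convexification term $\csp^2 \ttau_\lr^2 r_\lr$ really does bring the right gain $\csp \ttau_\lr$ uniformly on the characteristic set, which is where the factor $\ttau_\lr^{-1/2}$ (rather than $\tau^{-1/2}$) in front of the volume norm on the left-hand side of \eqref{eq: Carleman main result -2p} originates.
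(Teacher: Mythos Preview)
Your proposal is correct and follows essentially the same architecture as the paper: the two-parameter analogs of the elliptic lemma and the transmission proposition (your second paragraph) are precisely Lemma~\ref{lemma: elliptic estimate -2p} and Proposition~\ref{prop: transmission step -2p}, and the microlocal estimate plus patching is exactly Theorem~\ref{theorem: microlocal Carleman -2p} combined with the argument of Section~\ref{sec: proof of main theorem}. The only cosmetic difference is that where you sketch the convexification mechanism behind the sub-elliptic step, the paper simply invokes Lemma~6.8 of \cite{BLR:13} as a black box to obtain \eqref{eq: pre-carleman microlocal-2p}; your explanation is in fact a faithful summary of what that cited lemma contains.
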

Here norms of defined on $\Omega_k$ and $S$. They are locally equivalent to
their counterpart defined on $\{x_n>0\}$ and $\{x_n=0\}$ above. 

\subsection{Preliminary estimates}

The following lemma is the counterpart of Lemma~\ref{lemma: elliptic
  estimate}, that is, an elliptic estimate. It will be applied on both
the $\l$ and $r$ ``sides''. Hence, we formulate it for a weight
funtion $\varphi$, $\ttau$ and phase-space metric $g$  in place of
$\varphi_\lr$, $\ttau_\lr$, and $\glr$.

With $\y' = (x,\xi',\tau,\csp) \in \Rpb\times \R^{n-1}\times
   \R_+\times \R_+$ we shall associate $\ty'=
   (x,\xi',\ttau(x)) \in \Rpb\times \R^{n-1}\times \R_+$, with
   $\ttau(x) = \tau \csp \varphi(x)$. 
\begin{lemma}
   \label{lemma: elliptic estimate -2p}
   Let $h(\y)\in \Scl^{k,0}(g)$, with $\y = (x,\xi,\tau, \csp)$ and
   $k\geq 1$, be polynomial in $\xi_n$ with homogeneous coefficients
   in $(\xi',\ttau)$ and $H = h(x,D,\tau,\csp)$.  When viewed as a
   polynomial in $\xi_n$ the leading coefficient is $1$. Let $\U$ be a
   conic open subset of $\ovl{V_+} \times \R^{n-1}\times \R_+$.  We
   assume that all roots of $h(\y',\xi_n)=0$ have negative
   imaginary part for $\ty' \in \U$ .  Letting $\hchi(\hy') \in
   \Ssct^0$, $\hy'=(x,\xi',\htau)$, be homogeneous of degree 0 and  \st $\supp(\hchi) \subset
   \U$, and $N \in \N$, there exist $C>0$, $C_N>0$, $\tau_\ast>0$ and
   $\csp_\ast$, \st
  \begin{equation*}
    \Norm{\Op(\chi)w}^2_{k,\ttau}
    +\abs{\trace(\Op(\chi) w)}^2_{k-1,1/2,\ttau}
    \leq C
    \Norm{H \Op(\chi)w}_{+}^2
    +C_N\big( \Norm{w}^2_{k,-N,\ttau}
    + \norm{\trace(w)}^2_{k-1,-N,\ttau}\big),
\end{equation*}
for $w\in \S(\Rpb)$
and $\tau\geq \tau_\ast$, $\csp\geq \csp_\ast$ and $\chi(\y') =
\hchi(\ty') \in S^0(\gt)$.
\end{lemma}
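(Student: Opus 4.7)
The plan is to adapt the proof of Lemma~\ref{lemma: elliptic estimate} (i.e., Lemma~4.1 in \cite{BLR:13}) to the two-parameter calculus of Section~\ref{sec: pseudo -2p}. The key point is that the structural ingredients used in the one-parameter proof---smooth factorization of polynomial symbols whose roots stay in $\{\Im z<0\}$, Gårding-type positivity in the tangential directions, and the usual composition/adjoint rules---all have their exact counterparts here, with $\tau$ replaced by the inhomogeneous weight $\ttau$ and with the symbol classes $\Ssc^{m,r}$, $\Ssct^m$ replaced by $\Scl^{m,r}(g)$, $\Scl^m(\gt)$. The cutoff $\chi(\y')=\hchi(\ty')$ is precisely of class $S^0(\gt)$ by the discussion in Section~\ref{sec: pseudo 2p}, so it obeys the usual pseudo-differential calculus in the metric $g$.

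First, I would invoke \cite[Lemma~A.2]{BLR:13}, which is an algebraic statement about continuous factorizations of polynomials whose roots avoid the real axis, applied pointwise in $\ty'\in\U$. Shrinking $\U$ if necessary and exploiting homogeneity of the coefficients in $(\xi',\htau)$, this yields a smooth factorization
\begin{equation*}
h(\y',\xi_n) \;=\; \prod_{j=1}^{k}\bigl(\xi_n - \alpha_j(\y')\bigr),
\qquad \alpha_j\in \Scl^{1,0}(\gt), \quad \Im\alpha_j(\y') < 0,
\end{equation*}
for $\ty'$ in a conic neighborhood of $\supp(\hchi)$. After replacing $\alpha_j$ outside this neighborhood by a smooth extension (e.g.\ adding a large negative imaginary constant), the resulting symbols lie in $\Scl^{1,0}(\gt)$ globally, and the mismatch between $\Op(h)$ and $\prod_j (D_n-\Op(\alpha_j))$ on $\Op(\chi)w$ is absorbed into the $C_N$-remainder thanks to the pseudo-differential calculus and the fact that $\chi$ is microlocally supported in the good region.

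Second, I would establish the one-factor estimate: for $Q_\alpha = D_n - \Op(\alpha)$ with $\alpha\in\Scl^{1,0}(\gt)$ and $-\Im\alpha \gtrsim \mut$ on the relevant microlocal region, an integration by parts applied to $2\Re(Q_\alpha u, i u)_+$ produces on the one hand the boundary contribution $\norm{u\br}_\d^2$, and on the other hand the interior term $2\para{\Op(-\Im\alpha)u,u}_+$, which by the Gårding inequality in the tangential calculus (as used in the proof of Lemma~\ref{lemma: Gaarding for interface forms -2p}) dominates $c\,\Norm{u}_{0,1/2,\ttau}^2$ modulo lower order. Upgrading to $\Norm{u}_{1,\ttau}^2$ and $\norm{u\br}_{1/2,\ttau}^2$ is done using $\Norm{D_n u}_+ \leq \Norm{Q_\alpha u}_+ + \Norm{\Op(\alpha)u}_+$ and Lemma~\ref{lem: regularity 2p}, then absorbing the $\Op(\alpha)u$-term by taking $\tau$ and $\csp$ sufficiently large (Corollary analogous to \ref{cor: semi-classical argument} in the two-parameter scale).

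Third, I would iterate: write $\Op(\chi)\,H = \prod_{j=1}^{k} Q_{\alpha_j} \,\Op(\chi) + R$ with $R\in\Psicl^{k,-1}(g)$ microlocally, and apply the one-factor estimate successively to $u^{(k)}:=\Op(\chi)w$, then $u^{(k-1)}:=Q_{\alpha_k} u^{(k)}$, etc. Each step contributes one unit of regularity in the interior norm together with the boundary trace $|u^{(j)}\br|_{1/2,\ttau}^2$, which rewrites as the appropriate trace $D_n^{k-j}\Op(\chi)w\br$ modulo tangential derivatives of lower-order traces; summing yields the full $\Norm{\Op(\chi)w}^2_{k,\ttau}+|\trace(\Op(\chi)w)|^2_{k-1,1/2,\ttau}$ on the left-hand side, while all commutators $[\Op(\chi),Q_{\alpha_j}]$, $[Q_{\alpha_j},Q_{\alpha_{j'}}]$ and the remainder $R$ act by one order less and are controlled either by $\Norm{H\Op(\chi)w}_+^2$ after absorption (large $\tau$ and $\csp$) or by the $C_N$ residual term. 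The main obstacle, as in the one-parameter case, lies in the bookkeeping of the commutators: one has to check that each term generated fits into the correct shifted class $\csp\,\Scl^{m',\sigma-1}(g)$ and that the boundary terms produced at each stage are of the right order in $\ttau$ so that the final estimate closes. This is a direct verification, since the Weyl--Hörmander calculus of Section~\ref{sec: pseudo 2p} guarantees the required symbolic calculus and since the homogeneity of the $\alpha_j$ in $(\xi',\ttau)$ ensures the correct $\ttau$-behavior throughout.
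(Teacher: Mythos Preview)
Your proposal is correct and follows the same route as the paper, which simply defers to \cite{BLR:13} for the proof; what you outline is precisely the adaptation of Lemma~4.1 there to the two-parameter Weyl--H\"ormander calculus of Section~\ref{sec: pseudo -2p}, with $\tau$ replaced by $\ttau$ and the symbol classes $\Ssct^m$, $\Ssc^{m,r}$ replaced by $S^m(\gt)$, $\Scl^{m,r}(g)$. One small caveat worth tracking when you write it out: the smooth linear factorization $h=\prod_j(\xi_n-\alpha_j)$ can fail at points where roots coalesce, so in practice one either groups roots into smooth polynomial factors (as in \cite[Lemma~A.2]{BLR:13}) or reduces to a first-order system, but this does not affect the structure of your argument.
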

We refer to~\cite{BLR:13} for a proof.

The following proposition is the counterpart of Proposition~\ref{prop: transmission step},
that is, an estimate exploiting the transmission condition,
yielding an estimate of an interface norm.
\begin{proposition}
  \label{prop: transmission step -2p}
  Assume that the transmission condition for $\big\{P_\lr,T^j_\lr,\psi_\lr,\ j=1,\dots,m\big\}$ is satisfied at
  $(x_0,\xi_0',\htau_0) \in \sphbundle(V)$ with $x_0\in S
  \cap V$.  Then there exists $\U$, a conic open \nhd of
  $(x_0,\xi_0',\htau_0)$ in $\ovl{V_+} \times \R^{n-1} \times \R_+$, \st for $\hchi \in
  \Ssct^0$, homogeneous of degree $0$,  with $\supp(\hchi) \subset
  \U$, and there exist
  $C>0$, $\tau_\ast>0$, and $\csp_\ast>0$ \st
  \begin{align*}
    &C \big(\norm{\trace(\Op(\chi_\l) v_\l)}^2_{m_\l-1,1/2,\ttau}
    + \norm{\trace(\Op(\chi_r) v_r)}^2_{m_r-1,1/2,\ttau}\big)
    \\
    &\qquad \qquad 
    \leq\sum_{j=1}^m \bignorm{\Tlv^j {v_\l}\br + \Trv^j {v_r}\br}_{m-1/2-\torder^j,\ttau}^2
    + \Norm{\Plv v_\l}_{+}^2 + \Norm{\Prv v_r}_{+}^2\\
     &\qquad \qquad \quad 
     + \csp^2\big(\Norm{v_\l}_{m_\l,-1,\ttaul}^2 + \Norm{v_r}_{m_r,-1,\ttaur}^2
    + \norm{\trace(v_\l)}^2_{m_\l-1,-1/2,\ttau}
    + \norm{\trace(v_r)}^2_{m_r-1,-1/2,\ttau}\big),
\end{align*}
for $\tau\geq\tau_\ast$, $\csp \geq \csp_\ast$, $v_\l, v_r\in \S(\Rpb)$
and $\chi_\lr(\y') = \hchi(\ty'_\lr) \in S^0(\gtlr)$, with $\y'= (x,\xi',\tau,\csp)$
and $\ty'_\lr = (x,\xi',\ttau_\lr(x))$.
\end{proposition}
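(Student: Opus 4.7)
The plan is to parallel the proof of Proposition~\ref{prop: transmission step} within the two-parameter calculus of Section~\ref{sec: pseudo -2p}, while carefully tracking powers of $\csp$. By the observation of Section~\ref{sec: conj op transmission}, the assumption that $\{P_\lr, T^j_\lr, \psi_\lr\}$ satisfies the transmission condition at $(x_0, \xi'_0, \htau_0)$ is equivalent to $\{P_\lr, T^j_\lr, \varphi_\lr\}$ satisfying it at $(x_0, \xi'_0, \tau_0)$ with $\htau_0 = \ttau_\lr(x_0)$. Consequently, in a conic neighborhood $\U_0 \subset \ovl{V_+} \times \R^{n-1} \times \R_+$ of $(x_0, \xi'_0, \htau_0)$ we have the factorization $\plrv = \plrv^-\, \klrv$ with $\plrv^- \in \Scl^{m_\lr^-,0}(\glr)$ having only roots of negative imaginary part and $\klrv \in \Scl^{m_\lr-m_\lr^-,0}(\glr)$, and by Proposition~\ref{prop: stability transmission} the rank condition on the transmission matrix persists throughout $\U_0$.

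Next, I would establish a two-parameter analog of Proposition~\ref{prop: interface symbol positivity}: by homogeneity in $(\xi',\htau)$ of the symbols involved, a compactness argument on the unit half-sphere in $(\xi',\htau)$, and the identity ${\mutl}\br = {\mutr}\br$ at $x_n = 0^+$ (which follows from $\psi_\l = \psi_r$ on $S$), one obtains a lower bound matching Proposition~\ref{prop: interface symbol positivity} with $\lambdat$ replaced by $\mutlr$ on a smaller conic neighborhood $\U_1$. Picking $\hchi \in \Ssct^0$ homogeneous of degree $0$ with $\supp \hchi \subset \U \Subset \U_1$, an extension $\htchi \in \Ssct^0$ with $\htchi \equiv 1$ near $\supp \hchi$ and $\supp \htchi \subset \U_1$, and setting $\chi_\lr(\y') = \hchi(\ty'_\lr)$, $\tchi_\lr(\y') = \htchi(\ty'_\lr)$, $\Elrv^j = \Op(\tchi_\lr\, \elrv^j)$, I would form the interface quadratic form
\begin{equation*}
  \G_S(u) = \sum_{j=1}^m \bignorm{\Tlv^j {u_\l}\br + \Trv^j {u_r}\br}_{m-1/2-\torder^j,\ttau}^2 + \sum_{k \in \{\l,r\}} \sum_{j=m+1}^{m_k'} \bignorm{E_{k,\varphi}^j {u_k}\br}_{m+m_k^-+1/2-j,\ttau}^2,
\end{equation*}
of type $(m_\l-1, m_r-1, 1/2)$, whose bilinear symbol is positive definite in $\U$ by the preceding step. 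Applying Lemma~\ref{lemma: Gaarding for interface forms -2p} with $\vv_\lr = \Op(\chi_\lr) v_\lr$ gives $\G_S(\vv) \gtrsim \sum_{k} \norm{\trace(\Op(\chi_k) v_k)}^2_{m_k-1, 1/2, \ttau}$ modulo lower-order trace remainders.

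To bound the $\Elrv^j$-contributions in $\G_S(\vv)$ from above, I would apply Lemma~\ref{lemma: elliptic estimate -2p} to $\Op(\tplrv^-)$ with $w_\lr = \Op(\tchi_\lr\, \klrv) v_\lr$, exploiting the identity $\Op(\chi_\lr) \Plrv = \Op(\tplrv^-)\, \Op(\chi_\lr)\, \Op(\tchi_\lr\, \klrv) + R_\lr$, and translating the resulting estimate back to $D_n^{j-m-1} \Op(\chi_\lr) \Op(\tchi_\lr\, \klrv) v_\lr$ up to a remainder. The main obstacle is the bookkeeping of $\csp$-powers in the Weyl--H\"ormander calculus $\Psi^\bullet(\glr)$: each commutator or sub-principal correction gains a factor of $\csp$, since differentiation in $x$ of a symbol in $S^m(\glr)$ produces only a factor $\csp$ (not the one-order gain in $\mu_\lr$ that differentiation in $\xi$ yields). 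Thus $R_\lr \in \csp\, \Psi^{m_\lr, -1}(\glr)$, and the commutators $[\Tlv^j, \Op(\chi_\l)]$, $[\Trv^j, \Op(\chi_r)]$ lie in $\csp\, \Psi^{\torder^j_\lr - 1, 0}$; squaring these remainders produces precisely the $\csp^2$ prefactor on the right-hand side of the claim. Finally, the lower-order trace remainders from the G\aa rding step are absorbed by taking $\tau, \csp$ sufficiently large, yielding the stated estimate.
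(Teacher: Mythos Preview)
Your proposal is correct and follows essentially the same route as the paper's proof: reduce to the $\psi$-level symbol positivity (the paper does this by writing the inequality first for $(x,\xi',\htau)$ and then substituting $\htau=\ttau(x)$), apply the two-parameter interface G{\aa}rding lemma (Lemma~\ref{lemma: Gaarding for interface forms -2p}) to the form $\G_S$, bound the $\Elrv^j$-terms via the elliptic estimate of Lemma~\ref{lemma: elliptic estimate -2p} applied to $\Op(\tplrv^-)$, and track the $\csp$-losses coming from commutators and sub-principal remainders in the $\glr$-calculus. One small correction: the commutators $[\Tlrv^j,\Op(\chi_\lr)]$ lie in $\csp\,\Psi^{\torder^j_\lr,-1}(\glr)$ rather than $\csp\,\Psi^{\torder^j_\lr-1,0}$ (the $\xi_n$-degree is not lowered, only the tangential order), but this gives the same trace bound after squaring, so the conclusion is unaffected.
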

\begin{proof}
  The beginning of the proof is nearly identical to that of
  Proposition~\ref{prop: transmission step}. In particular the \nhd
  $\U$ is chosen similarly.  Inequality~\eqref{eq: ineq symbol transmission} becomes
  \begin{multline*}
  \sum_{j=m+1}^{m_\l'} \lambdat^{2(m_\l -1/2 - \torder_\l^j)} 
  \bigabs{ \htchi(\hy') \Sigma_{ \elp^j}(\hy',\zb^{\l})}^2
  + \sum_{j=m+1}^{m_r'} \lambdat^{2(m_r -1/2 - \torder_r^j)} 
  \bigabs{ \htchi(\hy') \Sigma_{\erp^j}(\hy',\zb^{r})}^2\\
  +\sum_{j=1}^{m} \lambdat^{2(m - 1/2- \torder^j)} \bigabs{ \Sigma_{\tlp^j}(\hy', \zb^\l) 
    + \Sigma_{\trp^j}(\hy',\zb^r) }^2
  \geq C \big(\sum_{j=0}^{m_\l-1} \lambdat^{2(m_\l  -1/2-j)} \norm{z^\l_j}^2 
  + \sum_{j=0}^{m_r-1} \lambdat^{2(m_r  -1/2 -j)}  |z^r_j|^2\big), 
\end{multline*}
for $\zb^\lr\in \C^{m_\lr}$ and $\hy' = (x',x_n=0^+ ,\xi', \htau) \in
\ovl{\U} \cap \{ x_n=0\}$
with $\lambdat^2 = |\xi|'^2 +\htau^2$. Here, $\htchi \in \Ssct^0$ is
homogeneous of degree $0$ and 
\st $\htchi = 1$ in a \nhd of $\ovl{\U}$. We set $\tchi_\lr
(x,\xi',\tau,\csp) = \htchi(x,\xi', \ttaulr) \in S^0(\gtlr)$. 

We then obtain, taking $\hy'= \ty' = (x',x_n=0^+,\xi',\ttau(x))$,
 \begin{multline*}
  \sum_{j=m+1}^{m_\l'} {\mut}\br^{2(m_\l -1/2 - \torder_\l^j)} 
  \bigabs{ \tchi(\y'') \Sigma_{ \elv^j}(\y',\zb^{\l})}^2
  + \sum_{j=m+1}^{m_r'} {\mut}\br^{2(m_r -1/2 - \torder_r^j)} 
  \bigabs{ \tchi(\y'') \Sigma_{\erv^j}(\y',\zb^{r})}^2\\
  +\sum_{j=1}^{m} {\mut}\br^{2(m - 1/2- \torder^j)} \bigabs{ \Sigma_{\tlv^j}(\y', \zb^\l) 
    + \Sigma_{\trv^j}(\y',\zb^r) }^2
  \geq C \big(\sum_{j=0}^{m_\l-1} {\mut}\br^{2(m_\l  -1/2-j)} | z^\l_j|^2 
  + \sum_{j=0}^{m_r-1} {\mut}\br^{2(m_r  -1/2 -j)}  |z^r_j|^2\big), 
\end{multline*}
for all $\zb^\lr\in \C^{m_\lr}$ and $\y' = (x',x_n=0+,\xi',\tau,\csp)$
and $\y'' = (x',\xi',\tau,\csp)$ \st $\ty' \in \ovl{\U} \cap \{ x_n=0\}$.  Here $\tchi(\y'') =
\htchi(\ty'',x_n=0^+)$ with $\ty'' = (x',\xi', \ttau(x',x_n=0^+))$. 
We have $\tchi(\y'')  = \tchi_\l (\y')\br = \tchi_r (\y')\br$.
We set 
\begin{align*}
  \G_S (u) &=  \sum_{j=1}^{m} 
  \bignorm{\Tlv^j {u_\l}\br  + \Trv^j  {u_r}\br }_{m-1/2 - \torder^j,\tau }^2\\
  &\quad 
  + \sum_{j=m+1}^{m_\l'} \bignorm{\Elv^j {u_\l}\br}_{m+ m_\l^- +1/2 - j,\tau}^2
  + \sum_{j=m+1}^{m_r'} \bignorm{\Erv^j {u_\l}\br}_{m+ m_r^- +1/2 - j,\tau}^2.
\end{align*}
with $\Elrv =
\Op(\tchi_\lr \elrv)$.
 Then, according to  the G{\aa}rding inequality of
Lemma~\ref{lemma: Gaarding for interface forms -2p} for interface  quadratic forms
of type $(m_\l-1,m_r-1, 1/2)$, there exists $\tau_\ast>0$, $\csp_\ast>0$,
$C>0$, and $C_N >0$ \st
\begin{align}
    \label{eq: transmission 1 -2p}
    \G_S (\vv)  = \Re \G_S (\vv) 
    &\geq  C \big(
    \norm{\trace(\vv_\l)}^2_{m_\l-1,1/2,\ttau}
    +\norm{\trace(\vv_r)}^2_{m_r-1,1/2,\ttau}
    \big)\\
    &\quad- C_N \big(
    \norm{\trace(v_\l)}^2_{m_\l-1,1/2-N,\ttau}
    +\norm{\trace(v_r)}^2_{m_r-1,1/2-N,\ttau}
    \big), \notag
    \end{align}
    with $\vv = (\vv_\l, \vv_r)$ and $\vv_\lr = \Op(\chi_\lr) v_\lr$, for
    $\v=(v_\l,v_r)\in \big(\S(\Rpb)\big)^2$, $\tau\geq\tau_\ast$, and 
    $\csp\geq \csp_\ast$.

    \bigskip
    Now, arguing as in the proof of Proposition~\ref{prop:
      transmission step} we write $\chi_\lr \plrv = \chi_\lr \klrv \plrv^- = \chi_\lr
\tchi_\lr \klrv \tplrv^-$, where $\tplrv^-$ denotes an extension of  $\plrv^-$ to the
whole phase space. Then $$\Op(\chi_\lr)\Plrv =
\Op(\tplrv^-) \Op(\chi_\lr) \Op(\tchi_\lr \klrv) + R_\lr,$$ with $R_\lr$ in
$\csp \Psicl^{m,-1}(\glr)$. Applying Lemma~\ref{lemma: elliptic estimate -2p} to
$\Op(\tplrv^-)$ and $w_\lr = \Op(\tchi_\lr \klrv) v_\lr$ we find
\begin{align*}
  & \Norm{\Op(\chi_\l)  w_\l}_{m_\l^{-},\ttaul}^2 + \Norm{\Op(\chi_r)  w_r}_{m_r^{-},\ttaur}^2
  + \norm{\trace(\Op(\chi_\l) w_\l)}_{m_\l^- -1,1/2,\ttau}^2
  + \norm{\trace(\Op(\chi_r) w_r)}_{m_r^- -1,1/2,\ttau}^2\\
    &\qquad  \lesssim 
    \Norm{\Plv v_\l}_{+}^2
    + \Norm{\Prv v_r}_{+}^2
    + \csp^2 \big(\Norm{v_\l}^2_{m_\l,-1,\ttaul}
    + \Norm{v_r}^2_{m_r,-1,\ttaur}\big)
    \\
    &\qquad  \quad 
    + \norm{\trace(v_\l)}^2_{m_\l -1,-N,\ttau}
    + \norm{\trace(v_r)}^2_{m_r -1,-N,\ttau},
\end{align*}
yielding 
\begin{align*}
  &\sum_{j=0}^{m_\l^{-}-1}
  \bignorm{D_n^j \Op(\chi_\l) {w_\l}\br}_{m_\l^{-}-1/2-j,\ttau}^2
  + \sum_{j=0}^{m_r^{-}-1}
  \bignorm{D_n^j \Op(\chi_r) {w_r}\br }_{m_r^{-}-1/2-j,\ttau}^2\\
   &\qquad  \lesssim 
    \Norm{\Plv v_\l}_{+}^2
    + \Norm{\Prv v_r}_{+}^2
    + \csp^2 \big(\Norm{v_\l}^2_{m_\l,-1,\ttaul}
    + \Norm{v_r}^2_{m_r,-1,\ttaur}\big)
    \\
    &\qquad  \quad 
    + \norm{\trace(v_\l)}^2_{m_\l -1,-N,\ttau}
    + \norm{\trace(v_r)}^2_{m_r -1,-N,\ttau},
\end{align*}
Recalling that $\elrv^{j+m+1} = \klrv \xi_n^{j}$, $j=0,\dots,m_\lr^- -1$ in
a \nhd of $\U$ we have 
$$D_n^j \Op(\chi_\lr) \Op(\tchi_\lr \klrv) v_\lr =
\Elrv^{j+m+1} \vv_\lr + R_{\lr,j} v_\lr,$$ with $R_{\lr,j} \in \csp
\Psicl^{m_\lr-m_\lr^-+j,-1}(\glr)$. We thus obtain 
\begin{align}
  \label{eq: transmission 2 -2p}
  &\sum_{j=0}^{m_\l^{-}-1}
  \bignorm{{\Elv^{j+m+1} \vv_\l}\br}_{m_\l^{-}-1/2-j,\ttau}^2
  + \sum_{j=0}^{m_r^{-}-1}
  \bignorm{{\Erv^{j+m+1} \vv_r}\br }_{m_r^{-}-1/2-j,\ttau}^2\\
   &\qquad  \lesssim 
    \Norm{\Plv v_\l}_{+}^2
    + \Norm{\Prv v_r}_{+}^2
    + \csp^2 \Big( 
    \Norm{v_\l}^2_{m_\l,-1,\ttaul}
    + \Norm{v_r}^2_{m_r,-1,\ttaur}\big)\notag
    \\
    &\qquad  \quad 
    + \norm{\trace(v_\l)}^2_{m_\l -1,-1/2,\ttau}
    + \norm{\trace(v_r)}^2_{m_r -1,-1/2,\ttau}\Big).\notag
\end{align}
Collecting \eqref{eq: transmission 1 -2p} and \eqref{eq: transmission 2 -2p} we
obtain the result of Proposition~\ref{prop: transmission step -2p}, for $\tau$
and $\csp$ chosen \suff large, with an additional commutator argument
as in the end of the proof of Proposition~\ref{prop: transmission step}.
\end{proof}

\subsection{Proof of the Carleman estimate with two-large parameters}
\label{sec: proof Carleman -2p}
We prove a microlocal result, counterpart of that of
Theorem~\ref{theorem: microlocal Carleman}. Patching microlocal
estimates of this type, arguing as in Section~\ref{sec: proof of main
  theorem} we can then obtain the local Carleman estimate of
Theorem~\ref{theorem: Carleman -2p}. The proof is left to the
reader. 
\begin{theorem}
  \label{theorem: microlocal Carleman -2p}
  Let $x_0 \in S \cap V$ and let $\psi\in \Con^0(V)$ be \st $\psi_\lr
  \in \Cinf(\ovl{V^+})$  has the \spcty
  property of Definition~\ref{def:strong pseudo-convexity1} with
  respect to $P_\lr$ in a \nhd of $x_0$ in $\ovl{V_+}$.  Moreover,
  assume that $\big\{P_\lr,\psi_\lr, T_\lr^j,\ j=1,\dots,m\big\}$
  satisfies the transmission condition at $(x_0,\xi_0',\htau_0) \in
  \sphbundle(\ovl{V_+})$. Then there exists $\U$ a conic open \nhd of
  $(x_0,\xi_0',\htau_0)$ in $\ovl{V_+} \times \R^{n-1} \times \R_+$
  \st for $\hchi \in \Ssct^0$, homogeneous of degree $0$,  with $\supp(\hchi) \subset \U$, there
  exist $C>0$, $\tau_\ast>0$, and $\csp_\ast>0$ \st, for $\varphi_\lr =
  \exp(\csp \psi_\lr)$ and $\ttau_\lr = \tau \csp \varphi_\lr$,
\begin{multline}
    \label{eq: microlocal Carleman -2p}
    \Norm{\Plv v_\l}_{+}^2
    + \Norm{\Prv v_r}_{+}^2
    +\sum_{j=1}^m  \bignorm{ \Tlv^j  {v_\l}\br + \Trv^j {v_r}\br}_{m-\torder^j-1/2,\ttau}^2\\
    + \csp^2  \big( \Norm{v_\l}_{m_\l,-1,\ttaul}^2
    + \Norm{v_r}_{m_r,-1,\ttaur}^2
    +  \norm{\trace(v_\l)}_{m_\l-1,-1/2,\ttau}^2  
    +  \norm{\trace(v_r)}_{m_r-1,-1/2,\ttau}^2\big)\\
    \geq C \big(
    \bigNorm{\ttaul^{-\hf} \Op(\chi_\l) v_\l}_{m_\l,\ttaul}^2
    + \bigNorm{\ttaur^{-\hf} \Op(\chi_r) v_r}_{m_r,\ttaur}^2\\
    +\norm{\trace(\Op(\chi_\l) v_\l)}_{m_\l-1,1/2,\ttau}^2
    +\norm{\trace(\Op(\chi_r) v_r)}_{m_r-1,1/2,\ttau}^2\big),
  \end{multline}
  for all $v_\l,v_r \in \S(\Rpb)$, $\tau\geq
  \tau_\ast$, $\csp\geq \csp_\ast$, and 
 $\chi_\lr(\y') = \hchi(\ty'_\lr) \in S^0(\gtlr)$, with $\ty'_\lr =
 (x,\xi', \ttaulr(x))$ for $\y' =
 (x,\xi', \tau, \csp)$. 
\end{theorem}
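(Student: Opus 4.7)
The plan is to mirror the proof of Theorem~\ref{theorem: microlocal Carleman}, replacing each ingredient by its two-parameter counterpart and carefully tracking the dependence on $\csp$. First, I would decompose the conjugated operators. In the two-parameter calculus of Section~\ref{sec: pseudo -2p}, we have $\Plrv \in \Psicl^{m_\lr,0}(\glr)$ with principal symbol $\plrv(\y) = p_\lr(x,\xi+i\ttaulr(x)\psi_\lr'(x))$. I write
\begin{equation*}
\Plrv = A_\lr + i B_\lr + R_\lr, \quad A_\lr = \tfrac{1}{2}(\Plrv + \Plrv^\ast),\ B_\lr = \tfrac{1}{2i}(\Plrv - \Plrv^\ast),
\end{equation*}
where $A_\lr,B_\lr$ are formally self-adjoint with real principal symbols $a_\lr\in\Scl^{m_\lr,0}(\glr)$ and $b_\lr\in\Scl^{m_\lr-1,1}(\glr)$, and the remainder $R_\lr$ lies in the residual class $\csp\Psicl^{m_\lr,-1}(\glr)$ (the extra $\csp$ coming from one $x$-derivative).

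Second, I would establish a positivity-on-the-characteristic-set estimate with two large parameters, the analogue of Lemma~\ref{lemma: positivity char}. This is precisely the tangential-in-$x_n$ version of Proposition 3.5 / Section 5.2 of \cite{LeRousseau:12} adapted to the half-space as in \cite{BLR:13}: using that $\psi_\lr$ is \spc \wrt $P_\lr$, one shows that
\begin{equation*}
  C \csp \bigNorm{\ttaulr^{-\hf} v_\lr}_{m_\lr,\ttaulr}^2 \leq C'\bigl(\Norm{A_\lr v_\lr}_+^2 + \Norm{B_\lr v_\lr}_+^2 + \norm{\trace(v_\lr)}^2_{m_\lr-1,1/2,\ttau}\bigr) + \tau\bigl(Q^\lr_{a,b}(v_\lr) - \Re \B_{a_\lr,b_\lr}(v_\lr)\bigr),
\end{equation*}
with $Q^\lr_{a,b}(v_\lr)=2\Re(A_\lr v_\lr,iB_\lr v_\lr)_+$ and a bracket form $\B_{a_\lr,b_\lr}$ controlled by $\norm{\trace(v_\lr)}^2_{m_\lr-1,1/2,\ttau}$. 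The gain of the factor $\csp$ versus the one-parameter case comes from the convexification provided by $\varphi=\exp(\csp\psi)$: under \spcty, the Poisson bracket $\{\bar{p}_{\lr,\varphi}, p_{\lr,\varphi}\}$ on the characteristic set is bounded below by a multiple of $\csp\,\mulr^{2m_\lr-1}\ttaulr$, giving after a G{\aa}rding argument exactly the stated gain.

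Third, I would apply Proposition~\ref{prop: transmission step -2p} to $\vv=(\Op(\chi_\l)v_\l,\Op(\chi_r)v_r)$: after multiplying the resulting inequality by a constant $M$ sufficiently large, it allows one to absorb the bracket contributions $\Re \B_{a_\l,b_\l}(\vv_\l)+\Re \B_{a_r,b_r}(\vv_r)$, up to the interface terms involving $\Tlv^j{v_\l}\br+\Trv^j{v_r}\br$, the volume terms $\Plrv v_\lr$, and weaker remainders in $\csp^2 \Norm{v_\lr}_{m_\lr,-1,\ttaulr}^2$ and $\csp^2\norm{\trace(v_\lr)}^2_{m_\lr-1,-1/2,\ttau}$. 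Summing the $\lr$ positivity estimates with $M$ times the transmission estimate and using
\begin{equation*}
\tau^{-1}\bigl(\Norm{A_\lr\vv_\lr}_+^2+\Norm{B_\lr\vv_\lr}_+^2\bigr) + Q^\lr_{a,b}(\vv_\lr) \leq \Norm{(A_\lr+iB_\lr)\vv_\lr}_+^2 \lesssim \Norm{\Plv v_\l}_+^2 + \Norm{v_\lr}_{m_\lr,-1,\ttaulr}^2,
\end{equation*}
together with the commutator identities in Proposition~\ref{prop: adjoint + composition S m,r}, one concludes provided $\tau$ and $\csp$ are taken large enough to absorb all lower-order remainders into the left-hand side.

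The main obstacle is step two: producing the correct two-parameter positivity statement with the sharp $\csp$-factor. It requires replacing the standard Fefferman--Phong / G{\aa}rding argument used for Lemma~\ref{lemma: positivity char} by its Weyl--H\"ormander analogue for the metric $\glr$, and verifying that the sub-principal contributions (both from the conjugation remainder $R_\lr$ and from the quantization of $\{\bar p_{\lr,\varphi},p_{\lr,\varphi}\}$) are indeed swallowed by the gain $\csp$, uniformly in $\tau\geq\tau_\ast$ and $\csp\geq\csp_\ast$. Once this is in hand, the patching of $\chi_\lr(\y')=\hchi(\ty'_\lr)$ between the $\l$ and $r$ calculi is harmless because ${\chi_\l}\br={\chi_r}\br$ at $x_n=0$, and the rest of the argument is a verbatim transcription of the proof of Theorem~\ref{theorem: microlocal Carleman}.
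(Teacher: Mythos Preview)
Your overall strategy matches the paper's proof: invoke a two-parameter sub-ellipticity estimate on each side, combine it with Proposition~\ref{prop: transmission step -2p}, and absorb remainders by taking $\tau$ and $\csp$ large. The paper is more terse than your outline because it simply cites Lemma~6.8 of \cite{BLR:13} for the sub-ellipticity step (your step two), which already delivers
\begin{multline*}
    C \bigNorm{  \Plrv v_\lr}_{+}^2 - \Re\B_{a_\lr,b_\lr}( \Op(\chi_\lr) v_\lr)
    \geq C'\bigNorm{\ttaulr^{-\hf} \Op(\chi_\lr) v_\lr}_{m_\lr,\ttaulr}^2 \\
    - C''\big(
    \csp^2 \Norm{v_\lr}_{m_\lr,-1,\ttaulr}^2
    + \bignorm{\ttaulr^{-\hf} \trace(\Op(\chi_\lr)  v_\lr)}_{m_\lr-1,1/2,\ttau}^2
    + \csp \norm{\trace(\Op(\chi_\lr) v_\lr)}_{m_\lr-1,0,\ttau}^2
    \big),
\end{multline*}
packaged with $\Plrv$ rather than $A_\lr$, $B_\lr$, $Q^\lr_{a,b}$ separately. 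Summing this with \eqref{eq: estimate with transmission condition -2p} immediately gives the result.

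There is one genuine error in your step two: you claim a gain of a factor $\csp$ on the volume term $\bigNorm{\ttaulr^{-\hf} v_\lr}_{m_\lr,\ttaulr}^2$, asserting that under \spcty the Poisson bracket $\{\ovl{p}_{\lr,\varphi}, p_{\lr,\varphi}\}$ is bounded below by $\csp\,\mulr^{2m_\lr-1}\ttaulr$ on the characteristic set. This is false. That $\csp$-gain is precisely what distinguishes the \emph{simple characteristic} property (Definition~\ref{def: simple characteristics}) from mere \spcty; compare Lemma~6.8 with Lemma~6.13 in \cite{BLR:13}, and Theorems~\ref{theorem: Carleman -2p} and~\ref{theorem: Carleman -simple char} in the present paper. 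Under \spcty alone, the bracket is only bounded below by $C\mulr^{2m_\lr-1}$ on the characteristic set (see \cite[Section~3]{LeRousseau:12}), and the resulting volume term carries no extra $\csp$. Fortunately this overclaim is harmless for the theorem at hand, since the statement of Theorem~\ref{theorem: microlocal Carleman -2p} does \emph{not} carry a $\csp$ in front of $\bigNorm{\ttaulr^{-\hf} \Op(\chi_\lr) v_\lr}_{m_\lr,\ttaulr}^2$; simply drop the spurious $\csp$ from your step two and your argument goes through. A secondary point: your displayed inequality with the factor $\tau$ multiplying $Q^\lr_{a,b}-\Re\B_{a_\lr,b_\lr}$ is the one-parameter shape and does not quite transcribe to the two-parameter calculus; it is cleaner to work directly with $\Norm{\Plrv v_\lr}_+^2$ on the left as the cited lemma does.
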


\begin{proof}
  Applying Lemma~6.8 in \cite{BLR:13} we obtain that there exists $\U$
  a conic open \nhd of $(x_0,\xi_0',\htau_0)$ in $\ovl{V_+} \times
  \R^{n-1} \times \R_+$ \st for $\hchi \in \Ssct^0$, homogeneous of
  degree $0$,  with $\supp(\hchi)
  \subset \U$, there exist $C>0$, $\tau_0>0$, and $\csp_0>0$ \st
  \begin{multline}
    \label{eq: pre-carleman microlocal-2p}
    C \bigNorm{  \Plrv v_\lr}_{+}^2 - \Re\B_{a_\lr,b_\lr}( \Op(\chi_\lr) v_\lr)
    \geq C'\bigNorm{\ttaulr^{-\hf} \Op(\chi_\lr) v_\lr}_{m_\lr,\ttaulr}^2 \\
    - C''\big(
    \csp^2 \Norm{v_\lr}_{m_\lr,-1,\ttaulr}^2
    + \bignorm{\ttaulr^{-\hf} \trace(\Op(\chi_\lr)  v_\lr)}_{m_\lr-1,1/2,\ttaulr}^2
    + \csp \norm{\trace(\Op(\chi_\lr) v_\lr)}_{m_\lr-1,0,\ttaulr}^2
    \big),
  \end{multline}
  for $\tau\geq \tau_0$, $\csp\geq \csp_0$, and $\chi_\lr(\y') =
  \hchi(\ty'_\lr) \in S^0(\gtlr)$, where $\B_{a_\lr,b_\lr}$ satisfies
 \begin{equation*}
   \norm{\B_{a_\lr,b_\lr} (\Op(\chi_\lr) v_\lr)}\lesssim
   \norm{\trace(\Op(\chi_\lr) v_\lr)}^2_{m_\lr-1,1/2,\ttau}.
 \end{equation*}
 With Proposition~\ref{prop: transmission step -2p}, making use of the transmission condition, we  obtain
for $M$ chosen \suff large, in a possibly reduced \nhd $\U$, 
\begin{align}
  \label{eq: estimate with transmission condition -2p}
  &\Re \B_{a_\l,b_\l} (\Op(\chi_\l) v_\l) + \Re \B_{a_r,b_r} (\Op(\chi_r) v_r)
  +M \sum_{j=1}^m \bignorm{\Tlv^j {v_\l}\br + \Trv^j {v_r}\br}_{m-1/2-\torder^j,\ttau}^2\\
    &\qquad \geq C \big(\norm{\trace(\Op(\chi_\l) v_\l)}^2_{m_\l-1,1/2,\ttau}
    + \norm{\trace(\Op(\chi_r) v_r)}^2_{m_r-1,1/2,\ttau}\big)
    - C' \big(
     \Norm{\Plv v_\l}_{+}^2 + \Norm{\Prv v_r}_{+}^2  \notag\\
     &\qquad \quad 
     + \csp^2\big(\Norm{v_\l}_{m_\l,-1,\ttaul}^2 + \Norm{v_r}_{m_r,-1,\ttaur}^2
    + \norm{\trace(v_\l)}^2_{m_\l-1,-1/2,\ttau}
    + \norm{\trace(v_r)}^2_{m_r-1,-1/2,\ttau}\big), \notag
\end{align}
Summing $\eqref{eq: pre-carleman microlocal-2p}_\l$, $\eqref{eq:
  pre-carleman microlocal-2p}_r$, and \eqref{eq: estimate with
  transmission condition -2p} we obtain the result of
Theorem~\ref{theorem: microlocal Carleman -2p}, by taking $\tau$ and $\csp$
\suff large. 
\end{proof}

\subsection{Estimate with the simple characterisitic property}
As in \cite{LeRousseau:12} and \cite{BLR:13} a  stronger estimate with two parameters can be achieved if one assumes that the operator
$P$ and the weight function $\psi$ fulfills the so-called simple
characterisitic property.

We introduce the map 
\begin{align}
  \label{eq: simp char map}
  \begin{array}{rl}
  \rho_{x,\xi}: \R^+     &\to \C, \\
  \htau &\mapsto p(x,\xi + i \htau \psi'(x)), 
  \end{array}
\end{align}
where $x \in \ovl{\Omega}$ and $\xi \in \R^n$. 

\begin{definition}
  \label{def: simple characteristics}
  Let $U$ be an open subset of $\Omega$.
  Given a weight function $\psi$ and an operator $P$ we say that 
   the simple-characteristic property is satisfied in $\ovl{U}$ if, for
  all $x \in \ovl{U}$, we have $\xi = 0$ and $\htau=0$ when  the map
  $\rho_{x,\xi}$ has a double root. 
\end{definition}

\begin{remark}
  In fact the simple-characteristic property implies the property of
  \spcty.
We refer the reader to \cite{LeRousseau:12} and \cite{BLR:13}. 
\end{remark}

We have the following result.
\begin{theorem}
  \label{theorem: Carleman -simple char}
  Let $x_0 \in S$ and let $\psi\in \Con^0(\Omega)$ be \st $\psi_k =
  \psi_{|\Omega_k} \in \Cinf(\Omega_k)$ for $k=1,2$ and \st $\psi_k$
  and $P_k$ have the simple characteristic property of
  Definition~\ref{def: simple characteristics} in a \nhd of $x_0$ in
  $\ovl{\Omega_k}$. Moreover, assume that $\big\{P_k, T_k^j,\psi, \
  k=1,2, \ j=1,\dots,m\big\}$ satisfies the transmission condition
  at $x_0$.  Then there exist a \nhd $W$ of $x_0$ in $\R^n$ and three
  constants $C$, $\tau_\ast>0$, and $\csp_\ast>0$ \st, for $\varphi_k =
  \exp(\csp \psi_k)$ and $\ttau_k = \tau \csp \varphi_k$,
\begin{multline}
    \label{eq: Carleman main result -simple char}
    \sum_{k=1,2} \big(  \csp \bigNorm{\ttau_k^{-\hf}  e^{\tau\varphi_k}u_k}^{2}_{m_k,\ttau_k}
    +\norm{e^{\tau\varphi_{|S}} \trace(u_k)}_{m_k-1,1/2,\ttau}^2 \big)\\
    \leq C\bigpara{ \sum_{k=1,2}\Norm{e^{\tau\varphi_k}
        P_k(x,D)u_k}_{L^2(\Omega_k)}^2
      + \sum_{j=1}^m \bignorm{e^{\tau\varphi_{|S}}  (T_1^j(x,D){u_1} +T_2^j(x,D){u_2})_{|S}}^2_{m-1/2-\torder^j,\ttau}},
  \end{multline}
  for all $u_k = {w_k}_{|\Omega_k}$ with $w_k\in \Cinfc(W)$, $\tau\geq
  \tau_\ast$ and $\csp\geq \csp_\ast$.
\end{theorem}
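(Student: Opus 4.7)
The plan is to follow the same three-layer architecture used for Theorem~\ref{theorem: Carleman -2p}: (i) prove a microlocal interior estimate that exploits the simple characteristic hypothesis on each side of the interface; (ii) combine it with the interface estimate provided by Proposition~\ref{prop: transmission step -2p} (which depends only on the transmission condition and is therefore unchanged here); (iii) patch the resulting microlocal estimates through a tangential partition of unity as in Section~\ref{sec: proof of main theorem}. Since the simple characteristic property implies the strong pseudo-convexity of $\psi_k$ with respect to $P_k$ (Remark after Definition~\ref{def: simple characteristics}), the transmission condition assumption and everything needed at the interface is already available. The only thing to improve compared with Theorem~\ref{theorem: microlocal Carleman -2p} is the strength of the interior bound: we need to gain one additional power of $\csp$ on the left-hand side.

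For the interior microlocal step, I would replay the analysis of Section~\ref{sec: proof Carleman -2p} but replace inequality~\eqref{eq: pre-carleman microlocal-2p} by its simple-characteristic strengthening: under Definition~\ref{def: simple characteristics}, if $p_{k,\psi} = a_k + i b_k$ with $a_k, b_k$ real, then on the joint zero set $\{a_k = b_k = 0\}$ the Poisson bracket $\{a_k,b_k\}$ is bounded below by a positive multiple of $\mu_k^{2m_k-1}$ rather than $\ttau_k \mu_k^{2m_k-2}$. This extra factor $\mu_k/\ttau_k$ translates, after the G{\aa}rding argument in the $\gtlr$ calculus, into an additional $\csp$ gain in the volume term. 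Concretely, the counterpart of~\eqref{eq: pre-carleman microlocal-2p} that I expect is
\begin{equation*}
C \bigNorm{\Plrv v_\lr}_+^2 - \Re \B_{a_\lr,b_\lr}(\Op(\chi_\lr) v_\lr)
\geq C' \csp \bigNorm{\ttaulr^{-\hf}\Op(\chi_\lr)v_\lr}_{m_\lr,\ttaulr}^2 - (\text{weaker terms}),
\end{equation*}
where the weaker terms are precisely those already controlled by the remainder contributions in Theorem~\ref{theorem: microlocal Carleman -2p}. This is the analogue, at the interface geometry, of the simple-characteristic interior estimate proved in~\cite{LeRousseau:12} and adapted at a boundary in~\cite{BLR:13}.

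Next I would combine this improved interior inequality with~\eqref{eq: estimate with transmission condition -2p} from the proof of Theorem~\ref{theorem: microlocal Carleman -2p} (which is unaffected) to absorb the boundary forms $\B_{a_\lr,b_\lr}$. Choosing $\tau$ and $\csp$ large enough so that the weak remainder terms $\csp^2\Norm{v_\lr}_{m_\lr,-1,\ttaulr}^2$ and $\csp \norm{\trace(v_\lr)}^2$ of negative Sobolev order on both sides may be absorbed by the positive terms, one obtains a microlocal estimate identical to~\eqref{eq: microlocal Carleman -2p} but with the interior terms multiplied by $\csp$. The patching procedure of Section~\ref{sec: proof of main theorem}, combined with a partition of unity subordinated to the covering of $\mathbb{S}^{n-1}_+$ by microlocal conic neighborhoods in which the transmission condition persists (Proposition~\ref{prop: stability transmission}), then yields~\eqref{eq: Carleman main result -simple char}; this last step is purely mechanical.

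The main obstacle is the first step: establishing the simple-characteristic version of Lemma~\ref{lemma: positivity char} in the half-space calculus with two large parameters. The difficulty is twofold. First, one must verify that the lower bound $\{a_k,b_k\} \gtrsim \mu_k^{2m_k-1}$ on $\{a_k=b_k=0\}$ survives after the G{\aa}rding-type argument in the Weyl--H\"ormander calculus generated by the metric $\glr$ and the order $\mulr$, so that the extra factor $\mu_k/\ttau_k$ is not lost in remainders. Second, one must handle the boundary remainder $\B_{a_\lr,b_\lr}$ with the correct tangential weights in the scale $(\gtlr,\mutlr)$ so that it can still be absorbed by the interface estimate of Proposition~\ref{prop: transmission step -2p}, which is only of the strength $\norm{\trace(\Op(\chi_\lr)v_\lr)}^2_{m_\lr-1,1/2,\ttau}$. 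Fortunately both points have been worked out on each side separately in~\cite{BLR:13}; the task reduces to checking that nothing in the interaction at the interface obstructs the gain, which is the case since the transmission step is taken from Proposition~\ref{prop: transmission step -2p} unchanged.
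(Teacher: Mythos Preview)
Your proposal is correct and matches the paper's approach essentially line for line: the paper proves the microlocal version (Theorem~\ref{theorem: microlocal Carleman -simple char}) by invoking Lemma~6.13 of \cite{BLR:13} to obtain precisely the strengthened interior inequality you wrote (with the extra factor $\csp$ on the volume term), then sums it with \eqref{eq: estimate with transmission condition -2p} and absorbs remainders for $\tau,\csp$ large; the local result then follows by the patching of Section~\ref{sec: proof of main theorem}. Your discussion of why the simple-characteristic property produces the $\csp$ gain is more explicit than the paper's, but the mechanism and the assembly are identical.
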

We prove the following microlocal result and the result of
Theorem~\ref{theorem: Carleman -simple char} can be deduced, arguing
as in Section~\ref{sec: proof of main theorem}.

\begin{theorem}
  \label{theorem: microlocal Carleman -simple char}
  Let $x_0 \in S \cap V$ and let $\psi\in \Con^0(V)$ be \st $\psi_\lr
  \in \Cinf(\ovl{V^+})$  and \st $\psi_\lr$ and $P_\lr$ have the simple characteristic property of
  Definition~\ref{def: simple characteristics} in a \nhd of $x_0$ in
  $\ovl{V^+}$. Moreover,
  assume that $\big\{P_\lr, T_\lr^j, \psi_\lr,\ j=1,\dots,m\big\}$
  satisfies the transmission condition at $(x_0,\xi_0',\htau_0) \in
  \sphbundle(\ovl{V_+})$. Then there exists $\U$ a conic open \nhd of
  $(x_0,\xi_0',\htau_0)$ in $\ovl{V_+} \times \R^{n-1} \times \R_+$
  \st for $\hchi \in \Ssct^0$, homogeneous of degree $0$,  with $\supp(\hchi) \subset \U$, there
  exist $C>0$, $\tau_\ast>0$, and $\csp_\ast>0$ \st, for $\varphi_\lr =
  \exp(\csp \psi_\lr)$ and $\ttau_\lr = \tau \csp \varphi_\lr$,
  \begin{multline}
    \label{eq: microlocal Carleman -simple char}
    \Norm{\Plv v_\l}_{+}^2
    + \Norm{\Prv v_r}_{+}^2
    +\sum_{j=1}^m  \bignorm{ \Tlv^j  {v_\l}\br + \Trv^j {v_r}\br}_{m-\torder^j-1/2,\ttau}^2\\
    + \csp^2  \big( \Norm{v_\l}_{m_\l,-1,\ttaul}^2
    + \Norm{v_r}_{m_r,-1,\ttaur}^2
    +  \norm{\trace(v_\l)}_{m_\l-1,-1/2,\ttau}^2  
    +  \norm{\trace(v_r)}_{m_r-1,-1/2,\ttau}^2\big)\\
    \geq C \big(
    \csp \bigNorm{\ttaul^{-\hf} \Op(\chi_\l) v_\l}_{m_\l,\ttaul}^2
    + \csp \bigNorm{\ttaur^{-\hf} \Op(\chi_r) v_r}_{m_r,\ttaur}^2\\
    +\norm{\trace(\Op(\chi_\l) v_\l)}_{m_\l-1,1/2,\ttau}^2
    +\norm{\trace(\Op(\chi_r) v_r)}_{m_r-1,1/2,\ttau}^2\big),
  \end{multline}
  for all $v_\l,v_r \in \S(\Rpb)$, $\tau\geq \tau_\ast$, $\csp\geq
  \csp_\ast$, and $\chi_\lr(\y') = \hchi(\ty'_\lr) \in S^0(\gtlr)$, with
  $\ty'_\lr = (x,\xi', \ttaulr(x))$ for $\y' = (x,\xi', \tau,
  \csp)$.
\end{theorem}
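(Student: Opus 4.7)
The plan is to mirror the proof of Theorem~\ref{theorem: microlocal Carleman -2p} in Section~\ref{sec: proof Carleman -2p}, the only difference being the use of a sharper interior pre-Carleman estimate on each side of the interface, now available because $\psi_\lr$ and $P_\lr$ satisfy the simple characteristic property. More precisely, Lemma~6.8 in \cite{BLR:13} used in the proof of Theorem~\ref{theorem: microlocal Carleman -2p} has a counterpart for the simple characteristic case (see \cite{LeRousseau:12} and \cite{BLR:13}): under the simple characteristic property for $\{P_\lr,\psi_\lr\}$, for $\hchi$ as in the statement with $\supp(\hchi)$ contained in a small enough conic \nhd $\U$ of $(x_0,\xi_0',\htau_0)$, there exist $C,C',C''>0$, $\tau_0 \geq 1$, $\csp_0 \geq 1$ such that
\begin{multline}
\label{eq: proof prop pre-carl sc}
C \bigNorm{\Plrv v_\lr}_+^2 - \Re \B_{a_\lr,b_\lr}(\Op(\chi_\lr) v_\lr)
\geq C' \csp \bigNorm{\ttaulr^{-\hf}\Op(\chi_\lr) v_\lr}_{m_\lr,\ttaulr}^2\\
- C''\Big(\csp^2\Norm{v_\lr}_{m_\lr,-1,\ttaulr}^2
+ \bignorm{\ttaulr^{-\hf}\trace(\Op(\chi_\lr) v_\lr)}_{m_\lr-1,1/2,\ttaulr}^2
+ \csp\norm{\trace(\Op(\chi_\lr) v_\lr)}_{m_\lr-1,0,\ttaulr}^2\Big),
\end{multline}
for $\tau\geq\tau_0$, $\csp\geq\csp_0$ and $\chi_\lr(\y') = \hchi(\ty'_\lr)\in S^0(\gtlr)$, where $\B_{a_\lr,b_\lr}$ is the same interface bilinear form as in Section~\ref{sec: proof Carleman -2p}, satisfying $|\B_{a_\lr,b_\lr}(\Op(\chi_\lr) v_\lr)|\lesssim \norm{\trace(\Op(\chi_\lr) v_\lr)}^2_{m_\lr-1,1/2,\ttau}$. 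The extra factor $\csp$ in front of $\bigNorm{\ttaulr^{-\hf}\Op(\chi_\lr) v_\lr}_{m_\lr,\ttaulr}^2$ is precisely what the simple characteristic property buys us compared with \spcty alone. I would take this pre-Carleman estimate as granted from the cited works.

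Next, I would invoke Proposition~\ref{prop: transmission step -2p}, which does not depend on whether the simple characteristic property or merely \spcty is assumed: under the transmission condition at $(x_0,\xi_0',\htau_0)$ (which is assumed here), there exists a possibly smaller conic \nhd $\U$ such that, for $\hchi$ supported in $\U$, one can choose $M$ large enough so that
\begin{align*}
\Re\B_{a_\l,b_\l}(\Op(\chi_\l) v_\l)+\Re\B_{a_r,b_r}(\Op(\chi_r) v_r)
&+M\sum_{j=1}^m\bignorm{\Tlv^j{v_\l}\br+\Trv^j{v_r}\br}^2_{m-1/2-\torder^j,\ttau}\\
&\geq C\sum_{k\in\{\l,r\}}\norm{\trace(\Op(\chi_k) v_k)}^2_{m_k-1,1/2,\ttau}
- C'\,\mathcal{R}(v_\l,v_r),
\end{align*}
where $\mathcal{R}(v_\l,v_r)$ collects the terms $\Norm{\Plv v_\l}_+^2+\Norm{\Prv v_r}_+^2$, $\csp^2(\Norm{v_\l}_{m_\l,-1,\ttaul}^2+\Norm{v_r}_{m_r,-1,\ttaur}^2)$ and $\csp^2$ times the negative-order trace remainders.

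Summing the inequality \eqref{eq: proof prop pre-carl sc} for $\lr=\l$ and $\lr=r$ with $M$ times the transmission estimate above, the $\B_{a_\lr,b_\lr}$ terms cancel out (as in the proof of Theorem~\ref{theorem: microlocal Carleman -2p}), and on the right-hand side we obtain the leading volume terms $C'\csp\bigNorm{\ttaulr^{-\hf}\Op(\chi_\lr) v_\lr}_{m_\lr,\ttaulr}^2$ together with the trace terms $\norm{\trace(\Op(\chi_\lr) v_\lr)}_{m_\lr-1,1/2,\ttau}^2$. The remainder terms $\csp^2\Norm{v_\lr}_{m_\lr,-1,\ttaulr}^2$ are then controlled by the $\csp^2 \Norm{v_\lr}_{m_\lr,-1,\ttaulr}^2$ contributions already appearing on the left of~\eqref{eq: microlocal Carleman -simple char}. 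The three lower-order trace remainders $\bignorm{\ttaulr^{-\hf}\trace(\Op(\chi_\lr) v_\lr)}_{m_\lr-1,1/2,\ttaulr}^2$ and $\csp\norm{\trace(\Op(\chi_\lr) v_\lr)}_{m_\lr-1,0,\ttaulr}^2$ coming from \eqref{eq: proof prop pre-carl sc} are of order strictly less than the sharp $\norm{\trace(\Op(\chi_\lr) v_\lr)}_{m_\lr-1,1/2,\ttau}^2$ produced by Proposition~\ref{prop: transmission step -2p}, by a factor of $\ttau^{-1}$ or $\csp/\mut$, so they can be absorbed by taking $\tau_\ast$ and $\csp_\ast$ sufficiently large (using that $\ttaulr\geq\csp$ up to a multiplicative constant and that the two calculi coincide at $x_n=0$). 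The terms $\csp^2\norm{\trace(v_\lr)}_{m_\lr-1,-1/2,\ttau}^2$ in $\mathcal{R}$ also match the corresponding remainder terms on the left-hand side of~\eqref{eq: microlocal Carleman -simple char}.

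The main obstacle, as usual in this kind of proof, is the accounting of powers of $\csp$ and $\ttaulr$ when absorbing the lower-order trace remainders produced by the pre-Carleman estimate \eqref{eq: proof prop pre-carl sc}, to make sure they do not eat up the gain of one power of $\csp$ encoded in the sharp right-hand side $\csp\bigNorm{\ttaulr^{-\hf}\Op(\chi_\lr) v_\lr}_{m_\lr,\ttaulr}^2$ and in the trace terms. This is handled by the semi-classical argument: since $\ttaulr\geq c\,\csp$ for some $c>0$ on the relevant region, the remainders in \eqref{eq: proof prop pre-carl sc} are a factor of $\ttau^{-1}\lesssim\csp^{-1}\varphi^{-1}$ smaller than the leading trace term, and a standard choice of $\tau_\ast,\csp_\ast$ large enough allows absorption, concluding the proof.
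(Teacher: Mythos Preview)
Your proposal is correct and follows essentially the same approach as the paper: invoke the sharper pre-Carleman estimate with the extra factor $\csp$ coming from the simple characteristic property (this is Lemma~6.13 in \cite{BLR:13}), combine it with the transmission estimate of Proposition~\ref{prop: transmission step -2p} exactly as in \eqref{eq: estimate with transmission condition -2p}, and absorb the lower-order remainders by taking $\tau$ and $\csp$ large. The paper's proof is more terse about the absorption step, but the structure and ingredients are identical to what you describe.
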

\begin{proof}
  Applying Lemma~6.13 in \cite{BLR:13} 
  we obtain that there exists $\U$
  a conic open \nhd of $(x_0,\xi_0',\htau_0)$ in $\ovl{V_+} \times
  \R^{n-1} \times \R_+$ \st for $\hchi \in \Ssct^0$, homogeneous of
  degree $0$, with $\supp(\hchi)
  \subset \U$, there exist $C>0$, $\tau_0>0$, and $\csp_0>0$ \st
  \begin{multline}
    \label{eq: pre-carleman microlocal -simple char}
    C \bigNorm{  \Plrv v_\lr}_{+}^2 - \Re\B_{a_\lr,b_\lr}( \Op(\chi_\lr) v_\lr)
    \geq C' \csp \bigNorm{\ttaulr^{-\hf} \Op(\chi_\lr) v_\lr}_{m_\lr,\ttaulr}^2 \\
    - C''\big(
    \csp^2 \Norm{v_\lr}_{m_\lr,-1,\ttaulr}^2
    + \bignorm{\ttaulr^{-\hf} \trace(\Op(\chi_\lr)  v_\lr)}_{m_\lr-1,1/2,\ttau}^2
    + \csp \norm{\trace(\Op(\chi_\lr) v_\lr)}_{m_\lr-1,0,\ttau}^2
    \big),
  \end{multline}
  for $\tau\geq \tau_0$, $\csp\geq \csp_0$, and $\chi_\lr(\y') =
  \hchi(\ty'_\lr) \in S^0(\gtlr)$, where $\B_{a_\lr,b_\lr}$ satisfies
 \begin{equation*}
   \norm{\B_{a_\lr,b_\lr} (\Op(\chi_\lr) v_\lr)}\lesssim
   \norm{\trace(\Op(\chi_\lr) v_\lr)}^2_{m_\lr-1,1/2,\ttau}.
 \end{equation*}
 Summing $\eqref{eq: pre-carleman microlocal -simple char}_\l$, $\eqref{eq: pre-carleman microlocal -simple char}_r$, and \eqref{eq: estimate with
  transmission condition -2p} we obtain the result of
Theorem~\ref{theorem: microlocal Carleman -simple char} by taking $\tau$ and $\csp$
\suff large. 
\end{proof}

\subsection{Shifted estimates}
\label{sec: shifted estimates 2p}

As in \cite{BLR:13} it may be interesting to consider shifted estimates in the Sobolev
scales. Namely we may wish to have an estimate of the following form.
\begin{corollary}
  \label{cor: Carleman shifted-2p}
  Let $x_0 \in S$ and let $\psi\in \Con^0(\Omega)$ be \st $\psi_k =
  \psi_{|\Omega_k} \in \Cinf(\Omega_k)$ for $k=1,2$ and \st $\psi$ has
  the \spcty property of Definition~\ref{def:strong pseudo-convexity}
  with respect to $P_1$ and $P_2$ in a \nhd of $x_0$ in
  $\Omega$. Moreover, assume that $\big\{P_k, T_k^j,\psi, k=1,2, \
  j=1,\dots,m\big\}$ satisfies the transmission condition at $x_0$.
  Let $\ell \in \N$ and $s \in \R$. Then there exist a \nhd $W$ of
  $x_0$ in $\R^n$ and three constants $C$, $\tau_\ast>0$, and
  $\csp_\ast>0$ \st for $\varphi_k = \exp(\csp \psi_k)$ and $\ttau_k =
  \tau \csp \varphi_k$:
\begin{multline}
    \label{eq: Carleman main result shifted-2p}
    \sum_{k=1,2} \big( \bigNorm{\ttau_k^{s-1/2}  e^{\tau\varphi_k}u_k}^{2}_{\ell+m_k,\ttau_k}
    +\norm{\ttau_k^{s} e^{\tau\varphi_{|S}}\trace(u_k)}_{\ell+m_k-1,1/2,\ttau}^2 \big)\\
    \leq C\Big( \sum_{k=1,2}\Norm{\ttau_k^{s} e^{\tau\varphi_k}
        P_k(x,D)u_k}_{\ell, \ttau_k}^2
      + \sum_{j=1}^m |\ttau_k^{s} e^{\tau\varphi_{|S}}  (T_1^j(x,D){u_1}
      +T_2^j(x,D){u_2})_{|S}|^2_{\ell, m-1/2-\torder^j,\ttau}
      \Big),
  \end{multline}
  for all $u_k = {w_k}_{|\Omega_k}$ with $w_k\in \Cinfc(W)$, $\tau\geq
  \tau_\ast$, and $\csp\geq \csp_\ast$.
\end{corollary}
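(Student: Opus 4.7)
The plan is to deduce Corollary~\ref{cor: Carleman shifted-2p} from the microlocal estimate of Theorem~\ref{theorem: microlocal Carleman -2p} by applying the latter to a tangentially shifted unknown and then patching. Working in the local coordinates and system formulation of Section~\ref{sec: system formulation}, and setting $v_\lr = e^{\tau\varphi_\lr}u_\lr \in \S(\Rpb)$, I would introduce the modified functions
\begin{equation*}
\tilde v_\lr := \Op(\ttaulr^s \mutlr^\ell)\, v_\lr \in \S(\Rpb),
\end{equation*}
which still belong to $\S(\Rpb)$ since the tangential calculus of Section~\ref{sec: pseudo 2p} preserves Schwartz functions. Inserting $\tilde v_\lr$ into~\eqref{eq: microlocal Carleman -2p} in place of $v_\lr$, the left-hand-side terms $\csp\bigNorm{\ttaulr^{-\hf}\Op(\chi_\lr)\tilde v_\lr}^2_{m_\lr,\ttaulr}$ and $\norm{\trace(\Op(\chi_\lr)\tilde v_\lr)}^2_{m_\lr-1,1/2,\ttau}$ are equivalent, via the composition calculus of Section~\ref{sec: pseudo 2p} and~\eqref{eq: equivalence norms 2p}, to $\csp\bigNorm{\ttaulr^{s-\hf}\Op(\chi_\lr)v_\lr}^2_{m_\lr+\ell,\ttaulr}$ and $\norm{\ttau^s\trace(\Op(\chi_\lr)v_\lr)}^2_{m_\lr+\ell-1,1/2,\ttau}$ respectively, modulo lower-order remainders.

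The key algebraic computation is the commutator identity
\begin{equation*}
\Plrv \tilde v_\lr = \Op(\ttaulr^s \mutlr^\ell)\,\Plrv v_\lr + [\Plrv,\Op(\ttaulr^s \mutlr^\ell)]\,v_\lr,
\end{equation*}
where the commutator lies in $\csp\,\ttaulr^s\,\Psicl^{m_\lr+\ell-1,0}(\glr)$, since each derivative of $\ttaulr$ in the symbol classes costs a factor of $\csp$, as encoded in $\ttaulr \in S(\ttaulr,\glr)\cap S(\ttaulr,\gtlr)$ (Section~\ref{sec: pseudo 2p}). An analogous identity holds for the transmission trace $\Tlv^j\tilde v_\l\br + \Trv^j\tilde v_r\br$; here one uses crucially that the weights $\psi_\l$ and $\psi_r$ coincide at the interface $\{x_n=0\}$, so that $\mutl\br = \mutr\br$ and $\ttaul\br = \ttaur\br$, whence $\Op((\ttaulr^s\mutlr^\ell)\br)$ is unambiguously defined on $S$ and can be factored out of the transmission sum. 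The continuity results of Lemma~\ref{lem: regularity 2p} then yield
\begin{equation*}
\bigNorm{\Plrv\tilde v_\lr}_+^2 \lesssim \bigNorm{\ttaulr^s\Plrv v_\lr}_{\ell,\ttaulr}^2 + \csp^2\,\bigNorm{\ttaulr^s v_\lr}_{m_\lr+\ell-1,0,\ttaulr}^2,
\end{equation*}
and an analogous bound for the transmission trace expressing everything in terms of the original data plus lower-order commutator errors.

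Substituting these identities into~\eqref{eq: microlocal Carleman -2p} produces the desired microlocal shifted estimate: the commutator errors and the auxiliary remainders $\csp^2\Norm{\tilde v_\lr}^2_{m_\lr,-1,\ttaulr}$ and $\csp^2\norm{\trace(\tilde v_\lr)}^2_{m_\lr-1,-1/2,\ttau}$ all translate, after the pseudo-differential manipulations above, into quantities that are $\tau^{-1}$ smaller than the principal LHS $\csp\bigNorm{\ttaulr^{s-\hf}\Op(\chi_\lr)v_\lr}^2_{m_\lr+\ell,\ttaulr}$, and are therefore absorbed for $\tau$ large enough. Patching the resulting microlocal shifted estimates over a finite covering of the cosphere bundle as in Section~\ref{sec: proof of main theorem}, and reverting from $v_\lr$ back to $u_\lr$, yields~\eqref{eq: Carleman main result shifted-2p}. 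The main technical obstacle is the careful bookkeeping of the $\csp$-factors produced by $[\Plrv,\Op(\ttaulr^s\mutlr^\ell)]$ and its transmission analogues: one must verify that each $\csp$ arising from differentiation of $\ttaulr$ is paid for by a corresponding gain of one order in the tangential regularity, so that these errors remain strictly of lower order and leave no residual power of $\csp$ on the right-hand side of~\eqref{eq: Carleman main result shifted-2p}.
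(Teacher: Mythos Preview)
Your plan is correct and follows the route the paper intends: it refers the reader to the proofs of Corollaries~6.14--6.15 in \cite{BLR:13}, which proceed exactly by applying the base (microlocal) estimate to the tangentially shifted unknown $\Op(\ttaulr^{s}\mutlr^{\ell})\,v_\lr$, commuting back, and patching as in Section~\ref{sec: proof of main theorem}. One small slip: the volume term you quote from Theorem~\ref{theorem: microlocal Carleman -2p} should read $\bigNorm{\ttaulr^{-\hf}\Op(\chi_\lr)\tilde v_\lr}^2_{m_\lr,\ttaulr}$ without the extra factor $\csp$; that gain appears only under the simple-characteristic hypothesis (Theorem~\ref{theorem: microlocal Carleman -simple char} and Corollary~\ref{cor: Carleman shifted -simple char}), not here.
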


\begin{corollary}
  \label{cor: Carleman shifted -simple char}
  Let $x_0 \in S$ and let $\psi\in \Con^0(\Omega)$ be \st $\psi_k =
  \psi_{|\Omega_k} \in \Cinf(\Omega_k)$ for $k=1,2$ and \st $\psi_k$
  and $P_k$ have the simple characteristic property of
  Definition~\ref{def: simple characteristics} in a \nhd of $x_0$ in
  $\ovl{\Omega_k}$. Moreover, assume that $\big\{P_k, T_k^j,\psi, \
  k=1,2, \ j=1,\dots,m\big\}$ satisfies the transmission condition
  at $x_0$.   Let $\ell \in \N$ and $s \in \R$. Then there exist a \nhd $W$ of $x_0$ in $\R^n$ and three
  constants $C$, $\tau_\ast>0$, and $\csp_\ast>0$ \st, for $\varphi_k =
  \exp(\csp \psi_k)$ and $\ttau_k = \tau \csp \varphi_k$,
\begin{multline}
    \label{eq: Carleman main result shifted -simple char}
    \sum_{k=1,2} \big(  \csp \bigNorm{\ttau_k^{s-\hf}  e^{\tau\varphi_k}u_k}^{2}_{\ell+m_k,\ttau_k}
    +\norm{\ttau_k^{s} e^{\tau\varphi_{|S}}\trace(u_k)}_{\ell+m_k-1,1/2,\ttau}^2 \big)\\
    \leq C\bigpara{ \sum_{k=1,2}\Norm{\ttau_k^{s} e^{\tau\varphi_k}
        P_k(x,D)u_k}_{\ell,\ttau_k}^2
      + \sum_{j=1}^m \bignorm{\ttau_k^{s} e^{\tau\varphi_{|S}}  
        (T_1^j(x,D){u_1} +T_2^j(x,D){u_2})_{|S}}^2_{\ell, m-1/2-\torder^j,\ttau}},
  \end{multline}
  for all $u_k = {w_k}_{|\Omega_k}$ with $w_k\in \Cinfc(W)$, $\tau\geq
  \tau_\ast$ and $\csp\geq \csp_\ast$.
\end{corollary}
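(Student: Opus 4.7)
The plan is to deduce Corollary~\ref{cor: Carleman shifted -simple char} from its unshifted counterpart, Theorem~\ref{theorem: Carleman -simple char}, by applying that estimate to a suitable modification of the function obtained by conjugation with a tangential elliptic pseudo-differential multiplier. As in the proof of Theorem~\ref{theorem: Carleman -simple char}, I would first prove a microlocal version using Theorem~\ref{theorem: microlocal Carleman -simple char}, and then patch the microlocal estimates together by the partition-of-unity argument of Section~\ref{sec: proof of main theorem}, so the real work is to establish the shifted microlocal estimate.

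For the microlocal shifted estimate, set $v_k = e^{\tau\varphi_k} u_k$ and choose the multipliers $M_k = \Op(a_k)$ with symbols $a_k \in \ttau_k^s \Scl^\ell(\gtlr)$, homogeneous and elliptic of the indicated order in the two-parameter calculus of Section~\ref{sec: pseudo -2p}. A natural choice is $a_k$ with principal part $\ttau_k^s \mut_k^\ell$, so that
\begin{equation*}
\bigNorm{M_k v_k}_{m_k,\ttau_k}^2 \asymp \bigNorm{v_k}_{\ell+m_k,s,\ttau_k}^2,
\qquad
\bignorm{\trace(M_k v_k)}_{m_k-1,1/2,\ttau}^2 \asymp \bignorm{\ttau_k^s \trace(v_k)}_{\ell+m_k-1,1/2,\ttau}^2,
\end{equation*}
by \eqref{eq: equivalence norms 2p} and the calculus of Section~\ref{sec: pseudo -2p}; these are precisely the left-hand side norms appearing in \eqref{eq: Carleman main result shifted -simple char}. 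Then I would apply Theorem~\ref{theorem: microlocal Carleman -simple char} to $\tilde v_k = M_k v_k$ (after verifying that $M_k v_k$ can be put in the form for which the microlocal estimate applies, up to a cut-off $\hchi$ and lower-order errors).

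The volume right-hand side produced by this substitution is $\Norm{P_{k,\varphi} M_k v_k}_+^2$, and I would rewrite it as $\Norm{M_k P_{k,\varphi} v_k}_+^2 + \Norm{[P_{k,\varphi}, M_k] v_k}_+^2$. By the composition and adjoint rules (the two-parameter analogue of Proposition~\ref{prop: adjoint + composition S m,r}), the commutator satisfies $[P_{k,\varphi}, M_k] \in \csp\, \ttau_k^s \Psicl^{m_k+\ell-1,0}(\glr)$; hence by Lemma~\ref{lem: regularity 2p}
\begin{equation*}
\bigNorm{[P_{k,\varphi}, M_k] v_k}_+^2 \lesssim \csp^2 \bigNorm{v_k}_{\ell+m_k-1,s,\ttau_k}^2,
\end{equation*}
which is of order one less in $\mut_k$ than the principal left-hand side $\csp \Norm{\ttau_k^{-\hf} M_k v_k}_{m_k,\ttau_k}^2 \asymp \csp \Norm{v_k}_{\ell+m_k,s-\hf,\ttau_k}^2$; it is therefore absorbed by taking $\tau$ large enough, invoking Corollary~\ref{cor: semi-classical argument} in the present calculus. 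The term $\Norm{M_k P_{k,\varphi} v_k}_+^2$ is bounded by $\Norm{P_{k,\varphi} v_k}_{\ell,s,\ttau_k}^2$, matching the right-hand side of \eqref{eq: Carleman main result shifted -simple char}. The analogous argument applies to the transmission terms $T_{k,\varphi}^j M_k v_k$, after noting that at the interface the multipliers $M_\l$ and $M_r$ coincide with a common tangential operator since $\ttaul = \ttaur$ on $\{x_n = 0\}$ (see Section~\ref{sec: two calculi}); writing $T_{\l,\varphi}^j M_\l v_\l + T_{r,\varphi}^j M_r v_r = M(T_{\l,\varphi}^j v_\l + T_{r,\varphi}^j v_r) + (\text{commutators})$, the commutators gain a factor $\csp \mutlr^{-1}$ and are absorbed by the interface trace terms of the left-hand side.

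The main obstacle I expect is the handling of the commutators at the interface, both the $[T_{k,\varphi}^j, M_k]$ terms and the matching of the tangential multipliers $M_\l$ and $M_r$ across $\{x_n=0\}$. This requires simultaneously using the two calculi associated with $\psi_\l$ and $\psi_r$, and exploiting their coincidence at the interface, as formalized in Section~\ref{sec: two calculi} and already used in the G{\aa}rding inequality of Lemma~\ref{lemma: Gaarding for interface forms -2p}. Once this bookkeeping is organized correctly, all lower-order remainders are absorbed by the appropriate powers of $\tau$ and $\csp$, and the proof follows the same pattern as the derivation of Corollary~4.5 in \cite{BLR:13} from the unshifted boundary Carleman estimate. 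The proof of Corollary~\ref{cor: Carleman shifted-2p} under the sub-ellipticity (rather than simple characteristic) assumption proceeds identically starting from Theorem~\ref{theorem: Carleman -2p}.
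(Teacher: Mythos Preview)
Your proposal is correct and follows essentially the same approach that the paper points to: the paper does not give a detailed proof here but refers to Corollaries~6.14 and 6.15 in \cite{BLR:13}, whose proofs proceed exactly by applying the unshifted estimate to $M_k v_k$ with $M_k$ a tangential elliptic multiplier of order $(\ell,s)$ and absorbing the commutators $[P_{k,\varphi},M_k]$ and $[T_{k,\varphi}^j,M_k]$ as lower-order terms. Your identification of the one new point in the transmission setting---that $M_\l$ and $M_r$ must agree at $\{x_n=0\}$, which they do because $\ttaul=\ttaur$ there (Section~\ref{sec: two calculi})---is the correct additional observation needed to adapt the boundary argument to the interface case.
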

The proofs of these two corollaries can be adapted from the proofs of
their counterpart at a boundary, namely Corollaries~6.14 and 6.15 in \cite{BLR:13}.

\section{Application to unique continuation}
\label{sec: unique continuation}
With the Carleman estimates we have derived here we can obtain unique
continuation results near an interface for high-order elliptic
operators with the transmission condition, if we make a geometrical
assumption, namely the \spcty condition.
Result for a product of two operators can be obtained if additionaly the simple
characteristic propery holds for one  of them.

\subsection{Uniqueness under \spcty and transmission condition}

\begin{theorem}
  \label{theorem: unique continuation1}
  Let $P_k$ and $T^j_k$, $j=1, \dots, m$ be given as in Section~\ref{sec: introduction}.
  Let $x_0 \in S$,  $f \in \Con^0(\Omega)$, and $V$ be a
  \nhd of $x_0$, be such that $f$ has 
  the \spcty property of Definition~\ref{def:strong pseudo-convexity}
  with respect to $P_1$ and $P_2$ in $V$ . 
   Moreover,
  assume that $\big\{P_k,f, T^j_k,\ k=1,2, \ j=1,\dots,m\big\}$ satisfies
  the transmission condition at $x_0$. 
  Assume that $u$ is such that $u_k = u_{|\Omega_k} \in H^{m_k}(\Omega_k)$ and
  satisfies
  \begin{itemize}
  \item 
  \begin{align}
    \label{eq: unique continuation - inequality}
    |P_k u_k (x)| \leq C
    \sum_{|\mi| \leq m-1}
   |D^\mi u_k(x)|, \ \ 
    \ \text{a.e. in}\ V_k = V \cap \Omega_k, \ k=1,2;
  \end{align}
  \item for $j = 1 , \dots, m$ and $|\mi|\leq m - \torder^j$, with
    $\mi \in \N^{n-1}$, 
    \begin{align}
      \label{eq: unique continuation - inequality2}
      |D_\T^{\mi}  \big( T_1^j u_1 (x) + T_2^j u_2(x)\big)|  
      \leq C \sum_{k=1,2}\sum_{|\mi'| \leq |\mi| \atop+ \torder_k^j -1} |D^{\mi'} u_k(x)|, 
      \ \ \text{a.e. in} \ V \cap S;
    \end{align}
  \item 
    and $u$ vanishes in $\{ x \in V; \ f(x) \geq f(x_0)\}$.
  \end{itemize}
  Then $u$ vanishes in a \nhd of $x_0$.
\end{theorem}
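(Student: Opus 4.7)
The plan is to apply the Carleman estimate of Theorem~\ref{theorem: Carleman} to a suitable cutoff $\chi u$, with the weight built from $f$ and a small quadratic convexification, and then let $\tau \to \infty$ to exploit the geometric separation between the region supporting the commutators and a small neighborhood of $x_0$.

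First I would introduce $\tilde{\psi}(x) = f(x) - f(x_0) - \lambda |x - x_0|^2$ for $\lambda > 0$ small. Strong pseudo-convexity and the transmission condition being open conditions (they are characterized by strict inequalities and by a rank condition that is stable under smooth perturbation of the weight and localization in phase space), they remain true for $\tilde{\psi}$ in a possibly smaller neighborhood $V' \Subset V$ of $x_0$. For $\delta > 0$ small enough, the superlevel sets $K_j = \{\tilde{\psi} \geq -j\delta\}$, $j=1,2$, are compact neighborhoods of $x_0$ inside $V'$. I choose $\chi \in \Cinfc(\mathrm{int}\,K_2)$ with $\chi \equiv 1$ on a neighborhood of $K_1$; then $A := \operatorname{supp} \nabla\chi$ lies in $\{-2\delta < \tilde{\psi} < -\delta\}$.

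Second, I apply the Carleman estimate (with weight $\varphi_k = \exp(\gamma \tilde{\psi}_k)$ for $\gamma$ chosen large enough that the sub-ellipticity condition of Definition~\ref{def: sub-ellipticity} follows from the strong pseudo-convexity of $\tilde\psi$, as recalled in the introduction) to $v_k = \chi u_k$. Writing $P_k v_k = \chi P_k u_k + [P_k, \chi] u_k$, hypothesis (\ref{eq: unique continuation - inequality}) bounds the first piece pointwise by $C \chi \sum_{|\alpha|\leq m-1} |D^\alpha u_k|$, which is absorbed on the left-hand side by taking $\tau$ large, in view of the elementary inequality $\tau^{-1}\Norm{\cdot}^2_{m_k,\tau} \geq \tau \Norm{\cdot}^2_{m_k-1,\tau}$; the second piece is supported in $A$. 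For the transmission right-hand side, write
$$T_1^j v_1 + T_2^j v_2 = \chi\,(T_1^j u_1 + T_2^j u_2) + [T_1^j,\chi] u_1 + [T_2^j,\chi] u_2,$$
and use hypothesis (\ref{eq: unique continuation - inequality2}) to dominate the first term by tangential traces of $u$, which are absorbed by the left-hand side interface norm for $\tau$ large (replacing Theorem~\ref{theorem: Carleman} by the shifted version of Corollary~\ref{cor: Carleman shifted} if needed so that the Sobolev indices match). The remaining interface commutators are supported in $A \cap S$.

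Third, exploit the geometric split. On $A$ we have $\varphi = e^{\gamma \tilde{\psi}} \leq e^{-\gamma\delta}$, and on $A \cap \{f \geq f(x_0)\}$ the solution $u$ vanishes by hypothesis, so only $A \cap \{f < f(x_0)\}$ contributes. On the open set $U_0 := \{\tilde{\psi} > -\delta/2\}$, one has $\varphi \geq e^{-\gamma\delta/2} > e^{-\gamma\delta}$. Setting $c = \tfrac{1}{2}(e^{-\gamma\delta/2} + e^{-\gamma\delta})$ and multiplying the Carleman estimate by $e^{-2\tau c}$, the commutator remainders on the right-hand side contribute at most $e^{-2\tau\eta}\Norm{u}_{H^{m-1}(V)}^2$ for some $\eta > 0$, while the left-hand side dominates $\tau^{-1} e^{2\tau\eta}\Norm{u}^2_{L^2(U_0)}$. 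Letting $\tau \to \infty$ forces $u \equiv 0$ on $U_0$, which is an open neighborhood of $x_0$.

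The main obstacle will be the bookkeeping at the interface: the commutators $[T_k^j, \chi]u_k$ produce tangential and normal traces of $u_k$ multiplied by derivatives of $\chi$, and their Sobolev regularity must be matched with the left-hand side trace norm $\norm{e^{\tau\varphi_{|S}}\trace(v_k)}_{m_k-1,1/2,\tau}$; combined with the interface contribution of $[P_k, \chi] u_k$ (handled via standard trace inequalities on $A \cap S$), this requires careful accounting to ensure these boundary terms are genuinely supported where $\tilde{\psi} < -\delta$, so that they decay exponentially in the final limit. A secondary subtlety is to verify that the shifted Sobolev orders produced by hypothesis (\ref{eq: unique continuation - inequality2}) are compatible with the trace norm on the left, which is why one may need to invoke the shifted Corollary~\ref{cor: Carleman shifted} rather than Theorem~\ref{theorem: Carleman} itself.
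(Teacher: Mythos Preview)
Your proposal is correct and follows essentially the same route as the paper: convexify $f$ by subtracting a small quadratic term, use stability of strong pseudo-convexity and of the transmission condition, apply the Carleman estimate of Theorem~\ref{theorem: Carleman} to $\chi u$, split into main terms (absorbed via hypotheses \eqref{eq: unique continuation - inequality}--\eqref{eq: unique continuation - inequality2} and the gain of a half power of $\tau$) and commutator terms supported in the annular region where $\varphi$ is strictly smaller, and let $\tau\to\infty$. The paper does not invoke the shifted Corollary~\ref{cor: Carleman shifted}; the interface bookkeeping is handled directly by passing from the $(m-1/2-\beta^j)$-norm to the integer $(m-\beta^j)$-norm and then absorbing via the half-order gap in the trace term on the left, exactly the mechanism you anticipate.
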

Here $D_\T^\mi$ denotes a familly of differential operators that act
tangentially to the interface $S$ and, in local coordinates near
$x_0$, where
$S= \{ x_n=0\}$,  their principal symbol is  $\xi'^\mi$. 

\begin{proof}
  Strong pseudo-convexity is a stable notion in $\Con^2$ (see
  Proposition~28.3.2 in \cite{Hoermander:V4}). Here the function $f$
  is continuous and  piecewise smooth. The argument of
  \cite{Hoermander:V4} applies on both sides of the interface.   For $\eps$ chosen \suff
  small, there exists a \nhd $V'$ of $x_0$ such that the function
  $\psi(x) = f(x) - \eps |x-x_0|^2$ has the \spcty property of
  Definition~\ref{def:strong pseudo-convexity} with respect to $P_1$
  and $P_2$  in
  $V'$.  Similary we saw in Section~\ref{sec: transmission condition in local coordinates} for the
  proof of Proposition~\ref{prop: stability transmission}, that the
  transmission condition  (or rather property~\eqref{eq: rank formulation
  transmission condition} ) is robust upon perturbation of
  the weight function.  Hence if $\eps$ is chosen
  \suff small $\set{P,\psi, B^k,\ k=1,\dots,\mu}$ will also satisfy
  this condition.

  We set $\varphi = \exp (\csp \psi)$. As shown in Proposition~28.3.3 in \cite{Hoermander:V4}
  the \spcty of the function $\psi$ with respect to $P_1$ and $P_2$ 
  implies the sub-ellipticity condition for $\{ P_k, \varphi_k\}$ for $\csp
 $ chosen \suff large for both $k=1, 2$ with $\varphi_k =
 \varphi_{|\Omega_k}$. 
  Moreover, as seen in Section~\ref{sec: conj op transmission}
  $\set{P_k,T^j_k,\varphi; \ k=1,2, \ j = 1,\dots,\mu}$ also satisfies the
  transmission  condition at $x_0$. 
 
The geometrical situation we describre is illustrated in
 Figures~\ref{fig:geometry} and ~\ref{fig:geometry2}. The two figures
 show different unique continuation configuration: across an
 hypersuface that is not related to the interface $S$, or across the
 interface $S$.
  We call $W$ the region $\{ x \in V;\ f(x) \geq f(x_0)\}$ (region
beneath $\{f(x)= f(x_0)\}$ in Figure~\ref{fig:geometry}) where $u$
vanishes by assumption.
  We choose $V''$ a \nhd of $x_0$ such that  $V'' \Subset V'$. 

  \begin{figure}
  \begin{center}
   \begin{picture}(0,0)%
\includegraphics{unique-continuation-interface2.pstex}%
\end{picture}%
\setlength{\unitlength}{3947sp}%
\begingroup\makeatletter\ifx\SetFigFont\undefined%
\gdef\SetFigFont#1#2#3#4#5{%
  \reset@font\fontsize{#1}{#2pt}%
  \fontfamily{#3}\fontseries{#4}\fontshape{#5}%
  \selectfont}%
\fi\endgroup%
\begin{picture}(3040,2542)(-4376,-2047)
\put(-2699,-1524){\makebox(0,0)[lb]{\smash{{\SetFigFont{10}{12.0}{\rmdefault}{\mddefault}{\updefault}{\color[rgb]{0,0,0}$V''$}%
}}}}
\put(-2545,-1730){\makebox(0,0)[lb]{\smash{{\SetFigFont{10}{12.0}{\rmdefault}{\mddefault}{\updefault}{\color[rgb]{0,0,0}$V'$}%
}}}}
\put(-3437, 22){\makebox(0,0)[lb]{\smash{{\SetFigFont{10}{12.0}{\rmdefault}{\mddefault}{\updefault}{\color[rgb]{0,0,0}$\Sigma$}%
}}}}
\put(-1828,-1341){\makebox(0,0)[lb]{\smash{{\SetFigFont{10}{12.0}{\rmdefault}{\mddefault}{\updefault}{\color[rgb]{0,0,0}$\{\varphi(x) = \varphi(x_0)-\de\}$}%
}}}}
\put(-1877,-1636){\makebox(0,0)[lb]{\smash{{\SetFigFont{10}{12.0}{\rmdefault}{\mddefault}{\updefault}{\color[rgb]{0,0,0}$\{\varphi(x) = \varphi(x_0)-\de/2\}$}%
}}}}
\put(-1948,-1957){\makebox(0,0)[lb]{\smash{{\SetFigFont{10}{12.0}{\rmdefault}{\mddefault}{\updefault}{\color[rgb]{0,0,0}$\{\varphi(x) = \varphi(x_0)\}$}%
}}}}
\put(-1351,-953){\makebox(0,0)[lb]{\smash{{\SetFigFont{10}{12.0}{\rmdefault}{\mddefault}{\updefault}{\color[rgb]{0,0,0}$\{f(x) = f(x_0)\}$}%
}}}}
\put(-2914,-938){\makebox(0,0)[lb]{\smash{{\SetFigFont{10}{12.0}{\rmdefault}{\mddefault}{\updefault}{\color[rgb]{0,0,0}$B$}%
}}}}
\put(-3599,-1111){\makebox(0,0)[lb]{\smash{{\SetFigFont{10}{12.0}{\rmdefault}{\mddefault}{\updefault}{\color[rgb]{0,0,0}$W$}%
}}}}
\put(-3524,-1261){\makebox(0,0)[lb]{\smash{{\SetFigFont{10}{12.0}{\rmdefault}{\mddefault}{\updefault}{\color[rgb]{0,0,0}$u=0$}%
}}}}
\put(-2549,355){\makebox(0,0)[lb]{\smash{{\SetFigFont{10}{12.0}{\rmdefault}{\mddefault}{\updefault}{\color[rgb]{0,0,0}$S$}%
}}}}
\put(-2884,-736){\makebox(0,0)[lb]{\smash{{\SetFigFont{10}{12.0}{\rmdefault}{\mddefault}{\updefault}{\color[rgb]{0,0,0}$x_0$}%
}}}}
\end{picture}%
 
    \caption{Local geometry for the unique continuation problem.
      The shaded region $\Sigma$ contains the  supports of $[P,\chi]
      u$ and $[\chi,D_{x_k}^j] u$.
    In the figure the funtions $f$ and $\varphi$ are continuous and yet
    only piecewise smooth. Here
      unique continuation is performed across a surface that is not
      related to $S$.}
  \label{fig:geometry}
  \end{center}
\end{figure}

 \begin{figure}
  \begin{center}
    \begin{picture}(0,0)%
\includegraphics{unique-continuation-interface3.pstex}%
\end{picture}%
\setlength{\unitlength}{3947sp}%
\begingroup\makeatletter\ifx\SetFigFont\undefined%
\gdef\SetFigFont#1#2#3#4#5{%
  \reset@font\fontsize{#1}{#2pt}%
  \fontfamily{#3}\fontseries{#4}\fontshape{#5}%
  \selectfont}%
\fi\endgroup%
\begin{picture}(2184,3046)(2132,-2335)
\put(3494,503){\makebox(0,0)[lb]{\smash{{\SetFigFont{10}{12.0}{\rmdefault}{\mddefault}{\updefault}{\color[rgb]{0,0,0}$S$}%
}}}}
\put(3852,-1879){\makebox(0,0)[lb]{\smash{{\SetFigFont{10}{12.0}{\rmdefault}{\mddefault}{\updefault}{\color[rgb]{0,0,0}$u=0$}%
}}}}
\put(3458,-2219){\makebox(0,0)[lb]{\smash{{\SetFigFont{10}{12.0}{\rmdefault}{\mddefault}{\updefault}{\color[rgb]{0,0,0}$W$}%
}}}}
\put(4105,564){\makebox(0,0)[lb]{\smash{{\SetFigFont{10}{12.0}{\rmdefault}{\mddefault}{\updefault}{\color[rgb]{0,0,0}$\{f(x) = f(x_0)\}$}%
}}}}
\put(4204,-91){\makebox(0,0)[lb]{\smash{{\SetFigFont{10}{12.0}{\rmdefault}{\mddefault}{\updefault}{\color[rgb]{0,0,0}$\{\varphi(x) = \varphi(x_0)\}$}%
}}}}
\put(4118,119){\makebox(0,0)[lb]{\smash{{\SetFigFont{10}{12.0}{\rmdefault}{\mddefault}{\updefault}{\color[rgb]{0,0,0}$\{\varphi(x) = \varphi(x_0)-\de/2\}$}%
}}}}
\put(3085,-718){\makebox(0,0)[lb]{\smash{{\SetFigFont{10}{12.0}{\rmdefault}{\mddefault}{\updefault}{\color[rgb]{0,0,0}$B$}%
}}}}
\put(2268,-493){\makebox(0,0)[lb]{\smash{{\SetFigFont{10}{12.0}{\rmdefault}{\mddefault}{\updefault}{\color[rgb]{0,0,0}$\Sigma$}%
}}}}
\put(4093,-1026){\makebox(0,0)[lb]{\smash{{\SetFigFont{10}{12.0}{\rmdefault}{\mddefault}{\updefault}{\color[rgb]{0,0,0}$V'$}%
}}}}
\put(3815,-854){\makebox(0,0)[lb]{\smash{{\SetFigFont{10}{12.0}{\rmdefault}{\mddefault}{\updefault}{\color[rgb]{0,0,0}$V''$}%
}}}}
\put(3036,-862){\makebox(0,0)[lb]{\smash{{\SetFigFont{10}{12.0}{\rmdefault}{\mddefault}{\updefault}{\color[rgb]{0,0,0}$x_0$}%
}}}}
\put(4094,324){\makebox(0,0)[lb]{\smash{{\SetFigFont{10}{12.0}{\rmdefault}{\mddefault}{\updefault}{\color[rgb]{0,0,0}$\{\varphi(x) = \varphi(x_0)-\de\}$}%
}}}}
\end{picture}%

    \caption{Local geometry for the unique continuation problem. Here,
      unique continuation is performed across the interface $S$.}
  \label{fig:geometry2}
  \end{center}
\end{figure}

We pick a function $\chi \in \Cinfc(\R^n)$ such that $\chi
=1$ in $V''$ and $\supp (\chi) \cap V \subset V'$. 
We observe that the Carleman estimate of 
Theorem~\ref{theorem: Carleman} applies to  $\chi u$
by density (possibly by reducing the \nhds $V$ and $V'$ of $x_0$):
 \begin{multline}
   \label{eq: Carleman UC}
    \sum_{k=1,2} \big( \tau^{-1/2}\Norm{e^{\tau\varphi_k} \chi u_k}_{m_k,\tau}
    +\norm{e^{\tau\varphi_{|S}}\trace( \chi u_k)}_{m_k-1,1/2,\tau} \big)\\
    \lesssim \sum_{k=1,2}\Norm{e^{\tau\varphi_k}
        P_k (\chi u_k)}_{L^2(\Omega_k)}
      + \sum_{j=1}^m \bignorm{e^{\tau\varphi_{|S}} 
      \big(T_1^j (\chi) u_1 +T_2^j (\chi u_2)\big)_{|S}}_{m-1/2-\torder^j,\tau},
  \end{multline}
for $\tau\geq \tau_0$.  

 We have
$P_k(\chi u_k) = \chi  P_k u_k + [P_k,\chi] u_k$, 
where the commutator is a differential operator of order $m-1$.
 With \eqref{eq: unique continuation - inequality} we have
\begin{align*}
  \Norm{e^{\tau\varphi_k}P_k (\chi u_k)}_{L^2(\Omega_k)}
  &\lesssim \sum_{|\mi|\leq m-1} 
   \Norm{e^{\tau\varphi_k} \chi D^\mi u_k}_{L^2(\Omega_k)}
   + \Norm{e^{\tau\varphi_k} [P_k,\chi]u_k}_{L^2(\Omega_k)}\\
   &
   \lesssim \sum_{|\mi|\leq m-1} 
   \Norm{e^{\tau\varphi_k} D^\mi (\chi u_k)}_{L^2(\Omega_k)}
   + \sum_{i \in I_k}\Norm{e^{\tau\varphi_k} M_{k,i} u_k}_{L^2(\Omega_k)},
\end{align*}
where $I_k$ is finite and the operators $M_{k,i}$ are commutators fo $\chi$
and differential operators. They are of order $m_k-1$ at most.

We also write
\begin{align*}
  &\bignorm{e^{\tau\varphi_{|S}} 
    \big(T_1^j(\chi u_1)  +T_2^j (\chi u_2)\big)_{|S}}_{m-1/2-\torder^j,\tau}\\
  &\qquad\leq 
  \bignorm{e^{\tau\varphi_{|S}} 
    \big(T_1^j (\chi u_1) +T_2^j (\chi u_2)\big)_{|S}}_{m-\torder^j,\tau}\\
  &\qquad = 
  \sum_{r + |\mi|\atop \leq m-\torder^j}
  \bignorm{\tau^r e^{\tau\varphi_{|S}} 
    D_\T^{\mi} \big(T_1^j (\chi u_1) +T_2^j (\chi
    u_2)\big)_{|S}}_{L^2(S)}
\end{align*}
We write $D_\T^{\mi}  T_k^j (\chi u_k) = \chi D_\T^{\mi}  T_k^j u_k  +
[D_\T^{\mi}  T_k^j, \chi] u_k$ and have
\begin{align*}
  &\bignorm{e^{\tau\varphi_{|S}} 
    \big(T_1^j(\chi u_1)  +T_2^j (\chi u_2)\big)_{|S}}_{m-1/2-\torder^j,\tau}\\
  &\qquad \lesssim
  \sum_{r + |\mi|\atop \leq m-\torder^j}
  \Big( \bignorm{\tau^r  e^{\tau\varphi_{|S}} 
    \chi  \big( D_\T^{\mi}T_1^j u_1+D_\T^{\mi}T_2^j 
    u_2\big)_{|S}}_{L^2(S)}\\
  &\qquad\qquad 
  + 
  \bignorm{\tau^r  e^{\tau\varphi_{|S}} 
    \big( [D_\T^{\mi}T_1^j , \chi] u_1+ [D_\T^{\mi} T_2^j , \chi]
    u_2\big)_{|S}}_{L^2(S)}\Big) \\
   &\qquad \lesssim
 \sum_{k=1,2} \sum_{r + |\mi|\atop \leq m-\torder^j}
  \sum_{|\mi'| \leq |\mi| \atop +\torder^j_k-1 }  
  \bignorm{\tau^r e^{\tau\varphi_{|S}} 
    \chi  (D^{\mi'}u_k)_{|S}}_{L^2(S)}\\
  &\qquad\qquad 
  +\sum_{k=1,2}  \sum_{r + |\mi|\atop \leq m-\torder^j}
  \bignorm{\tau^r  e^{\tau\varphi_{|S}} 
    \big( [D_\T^{\mi}T_k^j , \chi] u_k\big)_{|S}}_{L^2(S)}, 
\end{align*}
Using commutators once more we write
\begin{align*}
  \bignorm{\tau^r  e^{\tau\varphi_{|S}} 
    \chi  (D^{\mi'}u_k)_{|S}}_{L^2(S)}
  \leq \bignorm{
    \tau^r D^{\mi'}( e^{\tau\varphi_{|S}} \chi  u_k)_{|S}}_{L^2(S)}
  + \bignorm{
    \tau^r ([e^{\tau\varphi_{|S}} \chi,  D^{\mi'}] u_k)_{|S}}_{L^2(S)}. 
\end{align*}
We thus find
\begin{align*}
  &\sum_{j=1}^m \bignorm{e^{\tau\varphi_{|S}} 
    \big(T_1^j(\chi u_1)  +T_2^j (\chi u_2)\big)_{|S}}_{m-1/2-\torder^j,\tau}\\
  &\qquad \lesssim
  \sum_{j=1}^m\sum_{k=1,2} \sum_{r + |\mi|\atop \leq m-\torder^j}
  \sum_{|\mi'| \leq |\mi| \atop +\torder^j_k-1 } \bignorm{
    \tau^r \big(D^{\mi'}( e^{\tau\varphi_{|S}} \chi  u_k)\big)_{|S}}_{L^2(S)}
  + \sum_{k=1,2} \sum_{i \in J_{k}}
  \bignorm{(\tilde{M}_{k,i} u_k)_{|S}}_{L^2(S)}\\
   &\qquad \lesssim
  \sum_{k=1,2} \bignorm{
    \trace( e^{\tau\varphi} \chi  u_k)}_{m_k-1,0,\tau}
  + \sum_{k=1,2} \sum_{i \in J_{k}}
  \bignorm{\tilde{M}_{k,i} u_k}_{L^2(S)},
\end{align*}
where  $J_{k}$ is finite and $\tilde{M}_{k,i}$ is the commutator of $\chi$ with a
differential operator. The operator   $\tilde{M}_{k,i}$ is of order at most $m_k-1$
(\wrt to $\tau$ and $\xi$).

As 
\begin{align*}
  \norm{e^{\tau\varphi_{|S}}\trace(\chi u)}_{m_k-1,1/2,\tau}
  \geq \tau^\hf \norm{e^{\tau\varphi_{|S}}\trace(\chi u_k)}_{m_k-1,0,\tau}
  \geq \tau^\hf \norm{\trace(e^{\tau\varphi_k}\chi u_k)}_{m_k-1,0,\tau},
\end{align*}
for $\tau$ chosen \suff large, from \eqref{eq: Carleman UC} we thus obtain 
\begin{multline*}
    \sum_{k=1,2} \big( \tau^{-1/2}\Norm{e^{\tau\varphi_k} \chi u_k}_{m_k,\tau}
    +\norm{e^{\tau\varphi_k}\trace( \chi u_k)}_{m_k-1,1/2,\tau} \big)\\
    \lesssim \sum_{k=1,2}\Big( \sum_{i \in I_k}\bigNorm{e^{\tau\varphi_k}
        M_{k,i}u_k}_{L^2(\Omega_k)}
      +\sum_{i \in J_k}  \bignorm{(\tilde{M}_{k,i}
        u_k)_{|S}}_{L^2(S)}\Big),
  \end{multline*}

We set $\Sigma:= V'\setminus(V''\cup W)$ (see the shaded region in
Figure~\ref{fig:geometry}). We have 
\begin{align*}
  \supp( M_{k,i} u_k) \subset \Sigma, \ \ i \in I_k\quad \text{and}\ \ 
  \supp(\tilde{M}_{k,j} u_k) \subset \Sigma, \ \ i \in J_k
 ,
\end{align*}
 as they are confined in the region where
$\chi$ varies and $u$ does not vanish.

We thus obtain
\begin{multline*}
    \sum_{k=1,2} \big( \tau^{-1/2}\Norm{e^{\tau\varphi_k} \chi u_k}_{m_k,\tau}
    +\norm{e^{\tau\varphi_k}\trace( \chi u_k)}_{m_k-1,1/2,\tau} \big)\\
    \lesssim \sum_{k=1,2} \Big( \sum_{|\mi| \leq m_k-1} \bigNorm{e^{\tau\varphi_k}
        D^\mi u_k}_{L^2(\Sigma)}
      +\sum_{r + |\mi| \leq m_k-1}   
      \bignorm{\tau^r (
        D^\mi u_k)_{|S}}_{L^2(\Sigma \cap S)}\Big).
  \end{multline*}

 For all $\de >0$, we set
$V_\de = \{ x \in
V; \ \varphi(x) \leq \varphi(x_0) - \de\}$.
There exists $\de>0$ \st $\Sigma \Subset V_\de$.  We then choose
$B$ a \nhd  of  $x_0$ \st $\ovl{B} \subset V''
\setminus V_{\de/2}$ and obtain, as $\chi \equiv 1$ on $B$, 
\begin{align*}
    e^{\tau \inf_B \varphi}\sum_{k=1,2} \Norm{ u_k}_{H^m_k(B)}
    \lesssim e^{\tau (\sup_{\Sigma}  \varphi+\de/2)}  \sum_{k=1,2}
    \Big( \Norm{ u_k}_{H^m_k(\Sigma)}
    + \sum_{|\mi|\leq m_k-1}  \norm{D^\mi  u_{|S}}_{L^2(\Sigma \cap S}
    \Big),\quad \tau \geq \tau_1
  \end{align*}
for some $\tau_1>0$.
Since $\inf_{B} \varphi > \sup_{\Sigma} \varphi+\de/2$, letting $\tau$ go to
$\infty$, we obtain $u=0$ in $B$. 
\end{proof}

\subsection{Uniqueness for products of operators}
We now consider two sets of elliptic operators:
  $P_1$  and $Q_1$ defined on $\Omega_1$ and $P_2$  and $Q_2$ defined
  on $\Omega_2$.
We denote their respective orders by $m^p_1$, $m^q_1$, $m^p_2$, and
$m^q_2$.  We assume that $m^p_1 = m^p_2 = m^p$. 

We also consider interface operators $T^{p,j}_k$, $k=1,2$, $j=1,
\dots, m^p$ of order $\torder^{p,j}_k$ and  $T^{q,j}_k$, $k=1,2$, $j=1,
\dots, m^q$ of order $\torder^{q,j}_k$  with $m^q = (m^q_1 +
m^q_2)/2$.
We assume that $m^p - \torder^{p,j}_1 = m^p - \torder^{p,j}_2 = m^p -
\torder^{p,j}$
and $m^q_1 - \torder^{q,j}_1 = m^q_2 - \torder^{q,j}_2 = m^q -
\torder^{q,j}$.
 Then the operators $P_1$, $P_2$, $T^{p,j}_k$, $k=1,2$, $j=1,
\dots, m^p$, allow one to define an elliptic transmission problem as presented
in Section~\ref{sec: introduction}. The same is valid for 
$Q_1$, $Q_2$, $T^{q,j}_k$, $k=1,2$, $j=1,
\dots, m^q$.

We observe that $P_1Q_1$, $P_2 Q_2$, and the intreface operators $T^{p,j}_kQ_k$, $k=1,2$, $j=1,
\dots, m^p$, and $T^{q,j}_k$, $k=1,2$, $j=1,
\dots, m^q$ also allow to define an elliptic  transmission problem. In
fact, 
the operators $P_1Q_1$ and $P_2Q_2$  are of respective order $m_1 = m^p +
m^q_1$ and $m_2 = m^p +
m^q_2$. We set $m = m^p + m^q = (m_1 + m_2)/2$. 
The interface operator $T^{q,j}_k$ is of order $\torder^{q,j}_k$ and
we have $m_1 - \torder^{q,j}_1 = m_2 - \torder^{q,j}_2 = m -
\torder^{q,j}$. The interface operator  $T^{p,j}_kQ_k$ is of order
$\torder^{p,j}_k + m^q_k$ and
we have $m_1 - (\torder^{p,j}_1+m^q_1)  = m_2 - (\torder^{p,j}_2 + m^q_2)$.

One may possibly
wonder about unique continuation for this product transmission
problem, in particuler in the case  no Carleman estimate of the type
derived here can be achieved.
Let us however  assume that for a function $\psi$ and the weigth function
$\varphi = \exp(\csp \psi)$ we can derive Carleman estimates for the
transmission problems associated with $P_1, P_2$ and $Q_1, Q_2$.
More precisely we assume that the first problem satisfies the simple
characteristic property while the second one only satisfies the \spcty condition.
\begin{theorem}
  \label{theorem: unique continuation2}
  Let $P_k$, $T^{p,j}_k$, $k=1,2$, $j=1, \dots, m^p$, 
  and $Q_k$,   $T^{q,j}_k$, $k=1,2$, $j=1, \dots, m^q$, be given as above.
  Let $x_0 \in S$,  $f \in \Con^0(\Omega)$, with $f_k = f_{|\Omega_k}$
  and $V$ a \nhd of $x_0$, be such that 
  \begin{enumerate}
  \item $f_k$ and $P_k$ have the simple characteristic property of
  Definition~\ref{def: simple characteristics} in $V \cap
  \ovl{\Omega_k}$;
  \item 
  $f$ has  the \spcty property of Definition~\ref{def:strong pseudo-convexity}
  with respect to $Q_1$ and $Q_2$ in $V$;
  \item 
    $\big\{P_k,f, T^{p,j}_k,\ k=1,2, \ j=1,\dots,m^p\big\}$ satisfies
  the transmission condition at $x_0$;
  \item 
    $\big\{Q_k,f, T^{q,j}_k,\ k=1,2, \ j=1,\dots,m^q\big\}$ satisfies
  the transmission condition at $x_0$. 
  \end{enumerate}

  Assume that $u$ is such that $u_k = u_{|\Omega_k} \in H^{m^p+m^q_k}(\Omega_k)$ and
  satisfies
  \begin{itemize}
  \item 
  \begin{align}
    \label{eq: unique continuation - inequality3}
    |P_k Q_k u_k (x)| \leq C
    \sum_{|\mi| \leq m^p+m^q_k-1}
   |D^\mi u_k(x)|, \ \ 
    \ \text{a.e. in}\ V_k = V \cap \Omega_k,\ k=1,2;
  \end{align}
  \item for $j = 1 , \dots, m^p$, $|\mi|\leq m^p- \torder^{p,j}$, with
      $\mi \in \N^{n-1}$, 
    \begin{align}
      \label{eq: unique continuation - inequality4}
      |D_\T^{\mi}  \big( T_1^{p,j} Q_1 u_1 (x) + T_2^{p,j} Q_2 u_2(x)\big)|  
      \leq C \sum_{k=1,2}\sum_{|\mi'| \leq |\mi| + m^q_k \atop+ \torder_k^{p,j} -1} |D^{\mi'} u_k(x)|, 
      \ \ \text{a.e. in} \ V \cap S;
    \end{align}
  \item for $j = 1 , \dots, m^q$, $|\mi_1|\leq m^p$,  $|\mi_2|\leq m^q - \torder^{q,j}$, with
     $\mi_1 \in \N^{n}$ and  $\mi_2 \in \N^{n-1}$, 
    \begin{align}
      \label{eq: unique continuation - inequality5}
      |D^{\mi_1}D_\T^{\mi_2}  \big( T_1^{q,j} u_1 (x) + T_2^{q,j} u_2(x)\big)|  
      \leq C \sum_{k=1,2}\sum_{|\mi'| \leq |\mi_1| + |\mi_2| \atop+ \torder_k^{q,j} -1} |D^{\mi'} u_k(x)|, 
      \ \ \text{a.e. in} \ V \cap S;
    \end{align}
  \item 
    and $u$ vanishes in $\{ x \in V; \ f(x) \geq f(x_0)\}$.
  \end{itemize}
  Then $u$ vanishes in a \nhd of $x_0$.
\end{theorem}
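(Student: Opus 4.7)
The strategy is to introduce $v_k := Q_k u_k$ and run two Carleman estimates in parallel: the simple-characteristic estimate of Corollary~\ref{cor: Carleman shifted -simple char} for $P_k$ applied to $\chi v_k$, and the two-parameter estimate of Corollary~\ref{cor: Carleman shifted-2p} for $Q_k$ applied to $\chi u_k$, where $\chi\in\Cinfc(V)$ is a cutoff equal to $1$ near $x_0$ supported in a small $V'\Subset V$. As in the proof of Theorem~\ref{theorem: unique continuation1}, $f$ is first replaced by $\psi(x) = f(x) - \eps|x-x_0|^2$ for $\eps$ small, which preserves the simple-characteristic property (stable in the $\Con^2$ topology on each side, by Proposition~28.3.2 in~\cite{Hoermander:V4}), the \spcty, and both transmission conditions by Proposition~\ref{prop: stability transmission}. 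Setting $\varphi_k = \exp(\csp\psi_k)$, the key mechanism is that the extra factor $\csp$ in the LHS of the simple-characteristic estimate for $P$ is exactly what is needed to absorb the $Q_k u_k$-type term appearing in the RHS of the $Q$-estimate.

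More precisely, applying Corollary~\ref{cor: Carleman shifted -simple char} with $(\ell,s)=(0,\hf)$ to $\chi v_k$ and Corollary~\ref{cor: Carleman shifted-2p} with $(\ell',s')=(m^p,0)$ to $\chi u_k$, the combined LHS controls
\begin{equation*}
  \csp\sum_{k=1,2}\Norm{e^{\tau\varphi_k}\chi v_k}_{m^p,\ttau_k}^2
  + \sum_{k=1,2}\Norm{\ttau_k^{-\hf} e^{\tau\varphi_k}\chi u_k}_{m^p+m^q_k,\ttau_k}^2
\end{equation*}
together with the corresponding boundary norms for $\trace(\chi v_k)$ and $\trace(\chi u_k)$. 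Since $Q_k(\chi u_k) = \chi v_k + [Q_k,\chi]u_k$, the principal $Q$-RHS term $\Norm{e^{\tau\varphi_k}Q_k(\chi u_k)}_{m^p,\ttau_k}^2$ is dominated by $\Norm{e^{\tau\varphi_k}\chi v_k}_{m^p,\ttau_k}^2$ modulo commutator terms supported in $\Sigma:=\supp\nabla\chi$, and is thus absorbed by the $\csp$-weighted first term for $\csp$ large. The $P$-transmission term is rewritten as $\chi(T_1^{p,j}Q_1 u_1 + T_2^{p,j}Q_2 u_2)_{|S}$ up to $\chi$-commutators with $T^{p,j}_k Q_k$, and estimated pointwise via~\eqref{eq: unique continuation - inequality4} by tangential derivatives of $u$ of order at most $m^p + m^q_k - 1$, which are absorbed by the trace term on the LHS of the $Q$-estimate. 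The $Q$-transmission term is handled similarly using~\eqref{eq: unique continuation - inequality5} together with $\torder_k^{q,j}<m^q_k$.

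After these absorptions, the right-hand side reduces to $\sum_k \Norm{\ttau_k^{\hf}e^{\tau\varphi_k}P_kQ_k(\chi u_k)}_{L^2}^2$ plus commutator contributions supported in $\Sigma$. By hypothesis~\eqref{eq: unique continuation - inequality3}, the former is bounded by $\ttau\sum_k\Norm{e^{\tau\varphi_k}\chi u_k}_{m^p+m^q_k-1,\ttau_k}^2$, which is absorbed for $\tau$ large by
\begin{equation*}
  \Norm{\ttau_k^{-\hf} e^{\tau\varphi_k}\chi u_k}_{m^p+m^q_k,\ttau_k}^2
  \asymp \ttau^{-1}\Norm{e^{\tau\varphi_k}\chi u_k}_{m^p+m^q_k,\ttau_k}^2
  \geq \ttau\Norm{e^{\tau\varphi_k}\chi u_k}_{m^p+m^q_k-1,\ttau_k}^2.
\end{equation*}
The remaining commutator terms are supported in $\Sigma\cap(V\setminus W)\subset\{\psi\leq\psi(x_0)-\de\}$ for some $\de>0$, while the LHS, restricted to a small neighborhood $B\subset V\setminus\{\psi\leq\psi(x_0)-\de/2\}$ of $x_0$, carries a weight $e^{\tau\inf_B\varphi}$ strictly larger than $e^{\tau\sup_\Sigma\varphi}$. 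Letting $\tau\to\infty$ forces $u\equiv 0$ on $B$.

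The main obstacle is the choice of shift parameters $(\ell,s)$, $(\ell',s')$: they must simultaneously (i) match the $Q$-estimate RHS term $\Norm{Q_k(\chi u_k)}_{m^p,\ttau_k}^2$ to the scale of the $P$-estimate LHS gain $\csp\Norm{\chi v_k}_{m^p,\ttau_k}^2$ so that the $\csp$-absorption is successful, and (ii) ensure that the pointwise hypothesis on $P_kQ_ku_k$---which is lower-order by exactly one---translates into a term in a Sobolev norm one step lower than $m^p+m^q_k$, absorbable by the $\ttau_k^{-\hf}$-weighted LHS (which gains precisely one power of $\ttau$). The tracking of the various transmission and commutator terms, with orders varying between the two sides of the interface, requires systematic use of the hypotheses~\eqref{eq: unique continuation - inequality4}--\eqref{eq: unique continuation - inequality5} in tandem with the normalization $m^p_1=m^p_2$ and the compatibility relations on the $\torder^{p,j}_k$, $\torder^{q,j}_k$.
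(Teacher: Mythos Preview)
Your overall strategy (combine a simple-characteristic Carleman estimate for $P$ with a shifted strong-pseudo-convexity estimate for $Q$, then exploit the extra $\csp$-gain) is exactly the mechanism of the paper, and your treatment of the cutoff $\chi$, the perturbation $f\to\psi$, and the commutator terms supported in $\Sigma$ is correct. However there is a genuine gap in the absorption step for the interior term coming from hypothesis~\eqref{eq: unique continuation - inequality3}.

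With your shift choices $(\ell,s)=(0,\tfrac12)$ for $P$ and $(\ell',s')=(m^p,0)$ for $Q$, the $P$-RHS contributes $\bigNorm{\ttau_k^{1/2}e^{\tau\varphi_k}P_kQ_k(\chi u_k)}_{L^2}^2$, which by~\eqref{eq: unique continuation - inequality3} is bounded by a fixed multiple of $\ttau\Norm{e^{\tau\varphi_k}\chi u_k}_{m^p+m^q_k-1,\ttau_k}^2$. The only LHS term that can absorb this is the $Q$-LHS term $\bigNorm{\ttau_k^{-1/2}e^{\tau\varphi_k}\chi u_k}_{m^p+m^q_k,\ttau_k}^2$. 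But the inequality $\mut\geq\ttau$ gives only
\[
\bigNorm{\ttau_k^{-1/2}e^{\tau\varphi_k}\chi u_k}_{m^p+m^q_k,\ttau_k}^2 \;\geq\; \ttau\,\Norm{e^{\tau\varphi_k}\chi u_k}_{m^p+m^q_k-1,\ttau_k}^2,
\]
with no extra power of $\tau$ on the left: the two quantities are at \emph{the same scale}, so taking $\tau$ large does not beat the constant. The same phenomenon occurs for the $P$-transmission term after applying~\eqref{eq: unique continuation - inequality4}. The $\csp$-gain you obtained sits only on $\csp\Norm{e^{\tau\varphi_k}\chi v_k}_{m^p,\ttau_k}^2$ and gives no control on the $u_k$-norm.

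The paper avoids this by \emph{chaining} the two estimates rather than adding them: it applies the $P$-estimate (Theorem~\ref{theorem: Carleman -simple char}, unshifted) to $Q_k v_k$ and the shifted $Q$-estimate (Corollary~\ref{cor: Carleman shifted-2p} with $\ell=m^p$, $s=-\tfrac12$) to $v_k$, so that the $Q$-RHS $\bigNorm{\ttau_k^{-1/2}e^{\tau\varphi_k}Q_kv_k}_{m^p,\ttau_k}$ matches exactly the $P$-LHS and the $\csp^{1/2}$-gain \emph{propagates} to $\bigNorm{\ttau_k^{-1}e^{\tau\varphi_k}v_k}_{m^p+m^q_k,\ttau_k}$. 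After setting $v_k=\chi u_k$, the hypothesis term $\Norm{e^{\tau\varphi_k}P_kQ_k(\chi u_k)}_{L^2}$ is then absorbed by taking $\csp$ large, not $\tau$. Your additive scheme can be repaired in the same spirit: multiply the $Q$-estimate by a large constant $M$ before adding (so $M\cdot$Q-LHS absorbs the hypothesis term, and then $\csp\gg M$ lets the $P$-LHS absorb $M\cdot$Q-RHS), but as written the claim ``absorbed for $\tau$ large'' is incorrect.
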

Here $D_\T^\mi$ denotes a familly of differential operators that act
tangentially to the interface $S$ and, in local coordinates near
$x_0$, where
$S= \{ x_n=0\}$,  their principal symbol is  $\xi'^\mi$. 

\begin{remark}
Here we have assume that $m^p_1  = m^p_2$. It would be interesting to
know if such an assumption can be removed.  This assumption is
connected to the shifted estimate of Corollary~\ref{cor: Carleman
  shifted-2p}, where the shift is the same on both sides of the
interface.
Having different Sobolev-scale shifts from one side to the other leads
to technical difficulties with the transmission terms on the
interface. It is not clear whether such estimates can be achieved
without modifying the properties of the transmission operators.  
\end{remark}
\begin{proof}
  The proof follows that of Theorem~\ref{theorem: unique
    continuation1}.  We set $\psi(x) = f(x) - \eps |x-x_0|^2$ and
  conditions (1)-(4) in the statement of the theorem are also
  satisfied by $\psi$ for $\eps$ chosen \suff small in a \nhd
  $V'\subset V$ of $x_0$.
  We then set $\varphi = \exp (\csp \psi)$.
   
  We derive an estimate for $P_kQ_k$. We first write an estimate for
  $P_k$. By Theorem~\ref{theorem: Carleman -simple char}, there exists
  $V_1$ \nhd of $x_0$ in $\R^n$ such that $V_1\subset V'$ and 
  \begin{multline*}
    \sum_{k=1,2} \big(  \csp^{1/2} \bigNorm{\ttau_k^{-\hf}  e^{\tau\varphi_k}v_k}_{m^p,\ttau_k}
    +\norm{e^{\tau\varphi_{|S}}\trace(v_k)}_{m^p-1,1/2,\ttau} \big)\\
    \lesssim \sum_{k=1,2}\Norm{e^{\tau\varphi_k}
        P_kv_k}_{L^2(\Omega_k)}
      + \sum_{j=1}^{m^p} \bignorm{e^{\tau\varphi_{|S}} 
        (T_1^{p,j}{v_1} +T_2^{p,j}{v_2})_{|S}}_{m^p-1/2-\torder^{p,j},\ttau},
  \end{multline*}
  for all $v_k = {w_k}_{|\Omega_k}$ with $w_k\in \Cinfc(V_1)$, $\tau\geq
  \tau_1$ and $\csp\geq \csp_1$, for $\tau_1$ and $\csp_1$
  chosen \suff large.
  
  For $Q_k$, by Corollary~\ref{cor: Carleman shifted-2p}, there
  exists $V_2$ \nhd of $x_0$ in $\R^n$ such that $V_2 \subset V'$ and 
  \begin{multline*}
    \sum_{k=1,2} \big( \bigNorm{\ttau_k^{-1}  e^{\tau\varphi_k}v_k}_{m^p+m^q_k,\ttau_k}
    +\bignorm{\ttau^{-1/2} e^{\tau\varphi_{|S}}\trace(v_k)}_{m^p+m^q_k-1,1/2,\ttau} \big)\\
    \lesssim \sum_{k=1,2}\bigNorm{\ttau_k^{-1/2} e^{\tau\varphi_k}
        Q_kv_k}_{m^p, \ttau_k}
      + \sum_{j=1}^{m^q} \bignorm{\ttau^{-1/2} e^{\tau\varphi_{|S}} 
        (T_1^{q,j}{v_1}+T_2^{q,j}{v_2})_{|S}}_{m^p, m^q-1/2-\torder^{q,j},\ttau}
      ,
  \end{multline*}
   for all $v_k = {w_k}_{|\Omega_k}$ with $w_k\in \Cinfc(V_2)$, $\tau\geq
  \tau_2$ and $\csp\geq \csp_2$, for $\tau_2$ and $\csp_2$
  chosen \suff large.

  Letting $V_3 = V_1 \cap V_2$ with the previous two estimates we
  obtain
  \begin{multline}
    \label{eq: Carleman-uc}
    \csp^{1/2} \sum_{k=1,2} \big(  \bigNorm{\ttau_k^{-1}
      e^{\tau\varphi_k}v_k}_{m^p+ m^q_k,\ttau_k}
    +\bignorm{\ttau^{-1/2}  e^{\tau\varphi_{|S}}\trace(v_k)}_{m^p+m^q_k-1,1/2,\ttau} \big)\\
    \lesssim \sum_{k=1,2}\Norm{e^{\tau\varphi_k}
        P_kQ_kv_k}_{L^2(\Omega_k)}
      + \sum_{j=1}^{m^p} \bignorm{e^{\tau\varphi_{|S}} 
        (T_1^{p,j}{Q_1v_1} +T_2^{p,j}{Q_2
          v_2})_{|S}}_{m^p-1/2-\torder^{p,j},\ttau}\\
      +\csp^{1/2} \sum_{j=1}^{m^q} \bignorm{\ttau^{-1/2} e^{\tau\varphi_{|S}} 
        (T_1^{q,j}{v_1}+T_2^{q,j}{v_2})_{|S}}_{m^p, m^q-1/2-\torder^{q,j},\ttau},
  \end{multline}
 for all $v_k = {w_k}_{|\Omega_k}$ with $w_k\in \Cinfc(V_3)$, $\tau\geq
  \tau_3$ and $\csp\geq \csp_3$, for $\tau_3$ and $\csp_3$
  chosen \suff large.

  We choose $\chi$ as in the proof of Theorem~\ref{theorem: unique
    continuation1}
  and we apply estimate~\eqref{eq: Carleman-uc}
  to $v_k=\chi u_k$ as can be done by a density argument. 
  We now sketch how the remainder of the proof can be carried out.

  We have $P_k Q_k (\chi u) = \chi P_k Q_k u + [P_k Q_k,\chi] u$. The term $[P_k Q_k,\chi] u$
  is supported in the set $\Sigma$ introduced in the proof of
  Theorem~\ref{theorem: unique continuation1} and can be handle as it
  is done there. 
  For the first term,  with \eqref{eq: unique continuation - inequality3} we have 
  \begin{align*}
    \Norm{e^{\tau\varphi} \chi P_k Q_k u}_{L^2(\Omega_k)}
    &\lesssim  \sum_{|\mi|\leq m^p + m^q_k -1} 
   \Norm{e^{\tau\varphi} \chi D^\mi u}_{L^2(\Omega_k)} \\
   &\lesssim
   \sum_{|\mi|\leq m^p + m^q_k -1} 
   \Norm{e^{\tau\varphi} D^\mi (\chi u)}_{L^2(\Omega_k)}
   + \sum_{|\mi|\leq m^p + m^q_k -1}\Norm{e^{\tau\varphi} [\chi, D^\mi] u}_{L^2(\Omega_k)}.
  \end{align*}
The second term in the \rhs concerns functions with support located in $\Sigma$ and
their treatment is done as in the proof of
  Theorem~\ref{theorem: unique continuation1}. For the first term we
  have 
  \begin{align*}
   \sum_{|\mi|\leq m^p + m^q_k -1} 
   \Norm{e^{\tau\varphi} D^\mi (\chi u)}_{L^2(\Omega_k)}
   \lesssim 
    \Norm{e^{\tau\varphi} \chi u}_{m^p + m^q_k -1,\ttau_k}
    \lesssim \bigNorm{\ttau_k^{-1}  e^{\tau\varphi} \chi u}_{m^p + m^q_k,\ttau_k},
  \end{align*}
which can be absorbed by the first term in \eqref{eq: Carleman-uc}
by chosing $\csp$ \suff large. 

Next we have
\begin{align*}
  \bignorm{e^{\tau\varphi_{|S}} 
        (T_1^{p,j}{Q_1v_1} +T_2^{p,j}{Q_2 v_2})_{|S}}_{m^p-1/2-\torder^{p,j},\ttau}
  &\leq 
  \bignorm{e^{\tau\varphi_{|S}} 
        (T_1^{p,j}{Q_1v_1} +T_2^{p,j}{Q_2 v_2})_{|S}}_{m^p-\torder^{p,j},\ttau}\\
   &= \sum_{r + |\mi| \atop \leq m^p-\torder^{p,j}}
    \bignorm{ \ttau^r D_\T^\mi  e^{\tau\varphi_{|S}} 
        (T_1^{p,j}{Q_1v_1} +T_2^{p,j}{Q_2 v_2})_{|S}}_{L^2(S)}\\
   &\lesssim
   \sum_{r + |\mi| \atop \leq m^p-\torder^{p,j}}
    \bignorm{ \ttau^r  e^{\tau\varphi_{|S}} D_\T^\mi
        (T_1^{p,j}{Q_1v_1} +T_2^{p,j}{Q_2 v_2})_{|S}}_{L^2(S)}
   .
  \end{align*}
Writing $D_\T^\mi  T_k^{p,j} Q_kv_k = \chi D_\T^\mi  T_k^{p,j} Q_k u_k +
[D_\T^\mi  T_k^{p,j} Q_k, \chi] u$ we have 
\begin{align*}
   &\bignorm{e^{\tau\varphi_{|S}} 
        (T_1^{p,j}{Q_1v_1} +T_2^{p,j}{Q_2 v_2})_{|S}}_{m^p-1/2-\torder^{p,j},\ttau}\\
   &\qquad \quad\lesssim 
   \sum_{r + |\mi| \atop \leq m^p-\torder^{p,j}}
    \bignorm{ \ttau^r  e^{\tau\varphi_{|S}} \chi D_\T^\mi
        (T_1^{p,j}{Q_1u_1} +T_2^{p,j}{Q_2 u_2})_{|S}}_{L^2(S)}\\
   &\qquad \qquad +
   \sum_{r + |\mi| \atop \leq m^p-\torder^{p,j}}
    \bignorm{ \ttau^r  e^{\tau\varphi_{|S}} ([D_\T^\mi
        T_1^{p,j}Q_1, \chi] u_k +[D_\T^\mi T_2^{p,j}Q_2, \chi]  v_2)_{|S}}_{L^2(S)}.
 \end{align*}
The terms $[D_\T^\mi  T_k^{p,j} Q_k, \chi] u$ are supported in the set $\Sigma$ and can
be treated as in the proof of Theorem~\ref{theorem: unique
  continuation1}.  For the first term, with \eqref{eq: unique
  continuation - inequality4} we have
\begin{align*}
    &\bignorm{ \ttau^r  e^{\tau\varphi_{|S}}\chi D_\T^\mi
        (T_1^{p,j}{Q_1u_1} +T_2^{p,j}{Q_2 u_2})_{|S} }_{L^2(S)}\\
    &\qquad \lesssim 
      \sum_{k=1,2} 
      \sum_{|\mi'| \leq |\mi| + m^q_k \atop+
        \torder_k^{p,j} -1}
      \bignorm{ \ttau^r  e^{\tau\varphi_{|S}} (\chi D^{\mi'}
        u_k)_{|S}}_{L^2(S)}\\
       &\qquad \lesssim 
       \sum_{k=1,2} 
      \sum_{|\mi'| \leq |\mi| + m^q_k \atop+
        \torder_k^{p,j} -1}
      \bignorm{ \ttau^r  D^{\mi'}
       (e^{\tau\varphi_k}\chi u_k)_{|S}}_{L^2(S)}+ 
      \sum_{k=1,2} 
      \sum_{|\mi'| \leq |\mi| + m^q_k \atop+
        \torder_k^{p,j} -1}
      \bignorm{ \ttau^r  ([e^{\tau\varphi_{|S}} \chi, D^{\mi'}]
        u_k)_{|S}}_{L^2(S)}
 \end{align*}
 The second term in the \rhs concerns functions with support located in $\Sigma$ and
their treatment is done as in the proof of
  Theorem~\ref{theorem: unique continuation1}.
  For the first term we have $r + |\mi| \leq m^p-\torder^{p,j}$ and $|\mi'| \leq |\mi| + m^q_k +
        \torder_k^{p,j} -1 $
and thus we write 
\begin{align*}
\bignorm{
      \ttau^r  D^{\mi'}(e^{\tau\varphi_k}  \chi u_k)_{|S}}_{L^2(S)}
    \lesssim \bignorm{\trace(e^{\tau\varphi_k}  \chi u_k)}_{m^p + m^q_k-1,0,\ttau}.
\end{align*}
As 
\begin{align*}
  \csp^{1/2}\bignorm{\ttau^{-1/2}
    e^{\tau\varphi_{|S}}\trace( \chi u_k)}_{m^p+m^q_k-1,1/2,\ttau}
  &\geq 
  \csp^{1/2}\bignorm{
    e^{\tau\varphi_{|S}}\trace(\chi u_k)}_{m^p+m^q_k-1,0,\ttau}\\
  &\gtrsim 
  \csp^{1/2}\bignorm{
    \trace(e^{\tau\varphi_{k}}  \chi u_k)}_{m^p+m^q_k-1,0,\ttau}
\end{align*}
we see that the above terms can be absorbed by the second term in
\eqref{eq: Carleman-uc}
by chosing $\csp$ \suff large.

We finally write
\begin{align*}
  &\sum_{j=1}^{m^q} \bignorm{\ttau^{-1/2} e^{\tau\varphi_{|S}} 
        (T_1^{q,j}{v_1}+T_2^{q,j}{v_2})_{|S}}_{m^p,
        m^q-1/2-\torder^{q,j},\ttau}\\
      &\qquad \leq 
       \sum_{j=1}^{m^q} \bignorm{\ttau^{-1/2} e^{\tau\varphi_{|S}} 
        (T_1^{q,j}{v_1}+T_2^{q,j}{v_2})_{|S}}_{m^p,
        m^q-\torder^{q,j},\ttau}\\
      &\qquad \lesssim
      \sum_{ {{|\mi_1|\leq m^p} \atop {|\mi_2|\leq m^q -
        \torder^{q,j}}}\atop {r + |\mi_1|+ |\mi_2| \leq  m^p + m^q -
      \torder^{q,j}}}
  \bignorm{\ttau^{r-1/2}  D^{\mi_1} D_\T^{\mi_2}  e^{\tau\varphi_{|S}} 
    (T_1^{q,j}{v_1}+T_2^{q,j}{v_2})_{|S}},
\end{align*}
 which can be treated as above by using \eqref{eq: unique
  continuation - inequality5}.
We then conclude the proof as 
in that of Theorem~\ref{theorem: unique continuation1}.
\end{proof}

\bibliographystyle{amsplain}

\providecommand{\bysame}{\leavevmode\hbox to3em{\hrulefill}\thinspace}
\providecommand{\MR}{\relax\ifhmode\unskip\space\fi MR }
\providecommand{\MRhref}[2]{%
  \href{http://www.ams.org/mathscinet-getitem?mr=#1}{#2}
}
\providecommand{\href}[2]{#2}

\end{document}